 \newcommand\sub{\@startsection%
     {subsubsection}{5}{0mm}{-1\baselineskip}{.01\baselineskip}%
     {\normalfont\itshape}}
 \renewcommand\subsubsection{\@startsection%
     {subsubsection}{3}{0mm}{-1\baselineskip}{.01\baselineskip}%
     {\normalfont\itshape}}
        \newcommand\Appendix[2][?]{%
            \refstepcounter{section}%
            \addcontentsline{toc}{appendix}%
                {\protect\numberline{\appendixname~\thesection}#1}%
            {\raggedleft\bfseries \appendixname\
                \thesection\par \centering#2\par}%
                \sectionmark{#1}%
                \@afterheading
                \addvspace{\baselineskip}}
        \newcommand\sAppendix[1]{%
            \raggedleft\bfseries\appendixname\par
            \@afterheading\addvspace{\baselineskip}}
\newcommand\eg{e.g.\xspace}
\newcolumntype{A}{>{\centering}p{100pt}}
\newlength\savedwidth
\def\coldot{.}%
{\catcode`\.=\active%
    \gdef.{$\egroup\setbox2=\hbox to \dimen0 \bgroup$\coldot}}
\def\rightdots#1{%
    \setbox0=\hbox{$1$}\dimen0=#1\wd0%
    \setbox0=\hbox{$\coldot$}\advance\dimen0 \wd0%
    \setbox2=\hbox to \dimen0 {}%
    \setbox0=\hbox\bgroup\mathcode`\.="8000 $}
\def\endrightdots{$\hfil\egroup\box0\box2}
\newcolumntype{d}[1]{D{.}{.}{#1}}
\newcolumntype{A}{>{\centering}p{100pt}}
\newcolumntype{.}{D{.}{.}{-1}}
\newcolumntype{P}[2]{>{#1\raggedright\arraybackslash}p{#2}}
\DeclareFontFamily{U}{euc}{}
\DeclareFontShape{U}{euc}{m}{n}{<-6>eurm5<6-8>eurm7<8->eurm10}{}%
\newtheorem{thm}{Theorem}
\theoremstyle{plain}      
\theoremstyle{plain}      
\theoremstyle{plain}      
\theoremstyle{definition} 
\theoremstyle{definition} 
\theoremstyle{definition} 
\theoremstyle{plain} \newtheorem{cor}{Corollary}
\theoremstyle{definition} 
\theoremstyle{plain} \newtheorem{pro}{Proposition}
\theoremstyle{definition} 
\newcounter{nctr}
\newcommand\tb{\textbf}
\newcommand\ti{\textit}
\newcommand\ds{\mathds}
\newcommand\bb{\mathbb}
\newcommand\te{\text}
\newcommand\ma[1]{\te{\bf{#1}}}
\newcommand\ca{\mathcal}
\newcommand\op{\operatorname}
\newcommand\as{^\ast}
\newcommand\argmin{\operatornamewithlimits{argmin}}
\newcommand\E{\bb{E}}
\newcommand\hi{\mathit}
\newcommand\iid{\op{iid}}
\newcommand\ind{\te{ind}}
\newcommand\lb{\lbrace}
\newcommand\lt{\left}
\newcommand\p{\bb{P}}           
\newcommand\pr{^\prime}
\newcommand\q{\quad}
\newcommand\qq{\qquad}
\newcommand\rb{\rbrace}
\newcommand\rt{\right}
\newcommand\ssel{\te{SSEL}}
\newcommand\stack{\stackrel} 
\newcommand\tth{^\text{th}}
\newcommand\var{\operatorname{\bb{V}ar}}
\newcommand\R{\ds{R}}  
\newcommand\unif{\op{Unif}}
\newcommand\mvn{\op{MVN}}
\newcommand\poi{\op{Pois}}
\newcommand\dgam{\op{Gam}}
\newcommand\ba{\ma{a}} %
\newcommand\bp{\ma{p}} 
\newcommand\bu{\ma{u}} 
\newcommand\bv{\ma{v}} 
\newcommand\bx{\ma{x}}
\newcommand\by{\ma{y}}
\newcommand\bz{\ma{z}}
\newcommand\bD{\ma{D}} 
\newcommand\bP{\ma{P}} 
\newcommand\bR{\ma{R}} 
\newcommand\bX{\ma{X}}
\newcommand\bone{\bm{1}} 
\newcommand\cB{\ca{B}} 
\newcommand\cD{\ca{D}} 
\newcommand\cE{\ca{E}} 
\newcommand\cI{\ca{I}} 
\newcommand\cP{\ca{P}} 
\newcommand\cY{\ca{Y}} %
\newcommand\cZ{\ca{Z}} %
\newcommand\al{\alpha}
\newcommand\ga{\gamma}
\newcommand\sig{\sigma}
\newcommand\ome{\omega}
\newcommand\Ga{\Gamma}
\newcommand\Sig{\Sigma}
\newcommand\Th{\Theta}
\newcommand\bsig{\bm\sigma}
\newcommand\bth{\bm\theta}
\newcommand\bxi{\bm\xi}
\newcommand\bphi{\bm\phi}
\newcommand\bSig{\bm\Sigma}
\newcommand\bTh{\bm\Theta}
\begin{document}

\setcounter{page}{1}
\sloppy

\begin{center} 
\end{center}
\begin{center}
   \huge
   \tb{Bayesian Decision-theoretic Methods 
    \\ for Parameter Ensembles 
    \\ with Application to 
    \\ Epidemiology}\\   
\end{center}
\vspace{1cm}
\begin{center}
  \includegraphics[width=6cm]{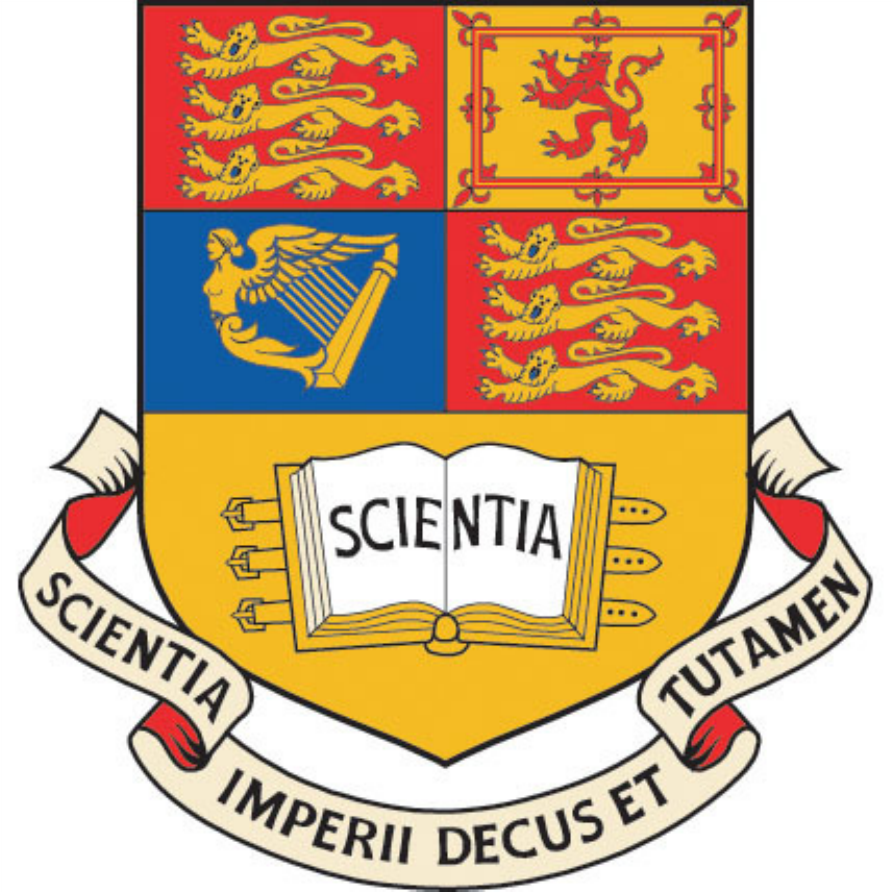}
\end{center}
\vspace{1cm}
\begin{center}
   \Large
   By\\
   \tb{Cedric E. Ginestet}\\
\end{center}
\begin{center}
   \Large
   Under the supervision of \\
   Nicky G. Best, Sylvia Richardson and David J. Briggs
\end{center}
\vspace{.25cm}
\begin{center}  
  \large
  In partial fulfilment of the requirements
  for the degree of \\
  \ti{Doctor of Philosophy.}\\
  \ti{February 2011}
\end{center}
\begin{center}
  \large
  Doctorate Thesis submitted to the \\
  Department of Epidemiology and Biostatistics \\
  Imperial College London.\\
\end{center}
\pagebreak[4]
\addcontentsline{toc}{subsection}{Abstract}
\begin{abstract}
   Parameter ensembles or sets of random effects constitute one of the
   cornerstones of modern statistical practice. This is especially the
   case in Bayesian hierarchical models, where several decision
   theoretic frameworks can be deployed to optimise the estimation of
   parameter ensembles. 
   The reporting of such ensembles in the form of sets of point estimates is an important concern in
   epidemiology, and most particularly in spatial
   epidemiology, where each element in these ensembles represent an epidemiological unit such
   as a hospital or a geographical area of interest. 
   The estimation of these parameter ensembles may substantially vary
   depending on which inferential goals are prioritised by the
   modeller. Since one may wish to satisfy a range of desiderata, 
   it is therefore of interest to investigate whether some sets of point estimates can
   simultaneously meet several inferential objectives. In this
   thesis, we will be especially concerned with identifying 
   ensembles of point estimates that produce good approximations of (i) the true
   empirical quantiles and empirical quartile ratio (QR) 
   and (ii) provide an accurate classification of the ensemble's elements above and
   below a given threshold. 
   For this purpose, we review various decision-theoretic frameworks, which have
   been proposed in the literature in relation to the optimisation of
   different aspects of the empirical distribution of a parameter
   ensemble. This includes the constrained Bayes (CB),
   weighted-rank squared error loss (WRSEL), and triple-goal (GR)
   ensembles of point estimates. In addition, we also consider the set
   of maximum likelihood estimates (MLEs) and the ensemble of posterior means --the latter
   being optimal under the summed squared error loss (SSEL).  
   Firstly, we test the performance of these different sets of point estimates
   as plug-in estimators for the empirical quantiles and empirical QR under a
   range of synthetic scenarios encompassing both spatial and non-spatial
   simulated data sets. Performance evaluation is here conducted using
   the posterior regret, which corresponds to the difference in
   posterior losses between the chosen plug-in estimator and the optimal
   choice under the loss function of interest. The triple-goal plug-in estimator is found
   to outperform its counterparts and produce close-to-optimal
   empirical quantiles and empirical QR. A real data set documenting schizophrenia
   prevalence in an urban area is also used to illustrate the
   implementation of these methods. 
   Secondly, two threshold classification losses (TCLs) --weighted and
   unweighted-- are formulated. The weighted TCL can be used
   to emphasise the estimation of false positives over
   false negatives or the converse. These weighted and unweighted TCLs are
   optimised by a set of posterior quantiles and a set of posterior
   medians, respectively. Under an unweighted classification framework, the SSEL point
   estimates are found to be quasi-optimal for all scenarios
   studied. In addition, the five candidate plug-in estimators are also evaluated
   under the rank classification loss (RCL), which has been previously
   proposed in the literature. The SSEL set of point estimates are
   again found to constitute quasi-optimal plug-in estimators under
   this loss function, approximately on a par with the CB and GR sets of point
   estimates. The threshold and rank classification loss functions are 
   applied to surveillance data reporting methicillin resistant
   \ti{Staphylococcus aureus} (MRSA) prevalence in UK hospitals. This
   application demonstrates that all the studied plug-in
   classifiers under TCL tend to be more liberal than the optimal
   estimator. That is, all studied plug-in estimators tended to classify
   a greater number of hospitals above the risk threshold than the set of
   posterior medians.
   In a concluding chapter, we discuss some possible generalisations of the loss
   functions studied in this thesis, and consider how model
   specification can be tailored to better serve the inferential
   goals considered.
\end{abstract}


\setcounter{page}{3}
\pagebreak[4]
\tableofcontents
\addcontentsline{toc}{subsection}{Table of Content}
\pagebreak[4]

\begin{center}
  \Large \tb{Abbreviations}
\end{center}
\begin{longtable}{>{\RaggedRight}p{125pt}>{\RaggedRight}p{200pt}}
AVL & Absolute value error loss \\
DAG & Directed Acyclic Graph \\
iid & Independent and identically distributed \\
ind & Independent  \\
BHM & Bayesian hierarchical model \\
BYM & Besag, York and Molli\'{e} model\\

CAR  & Conditional autoregressive \\
CB   & Constrained Bayes \\
CDI  & Carstairs' deprivation index \\
CDF  & Cumulative distribution function \\
DoPQ & Difference of posterior quartiles \\

EB & Empirical Bayes \\
EDF & Empirical distribution function \\
G-IG & Gamma-Inverse Gamma model \\
GR  &  Triple-goal ($G$ for empirical distribution and R for rank) \\
ICC  &  Intraclass Correlation \\
IQR  &  Interquartile range \\
IQR-SEL  &  Interquartile range squared error loss \\
ISEL  &  Integrated squared error loss \\

L1  & Laplace-based Besag, York and Molli\'{e} model \\
MCMC& Markov chain Monte Carlo \\
MLE & Maximum likelihood estimate\\
MOR & Median odds ratio \\
MRSA & Methicillin resistant \ti{Staphylococcus aureus} \\

N-N & Normal-Normal model \\

OR & Odds ratio \\
pdf & Probability density function \\
PM  & Posterior mean \\

Q-SEL  & Quantiles squared error loss \\
QR     & Quartile ratio  \\
QR-SEL & Quartile ratio squared error loss \\

RCL  & Rank classification loss \\
RoPQ & Ratio of Posterior Quartiles \\
RR   & Relative risk \\
RSEL   & Rank squared error loss \\

SEL & Squared error loss \\
SBR & Smoothing by roughening algorithm \\
SSEL & Summed squared error loss \\
WRSEL & Weighted rank squared error loss \\

TCL & Threshold classification loss 
\end{longtable}

\pagebreak[4]


\vspace{13cm}
\begin{center}
  \ti{\`{A} mes parents.}
\end{center}
\pagebreak[4]



\setstretch{1} 

\addcontentsline{toc}{subsection}{Acknowledgements}
\begin{center}
    \tb{Acknowledgements}
\end{center}

Writing a thesis is always an arduous process, and the present one has been
a particularly testing one. However, during that long journey, I had
the chance to receive support from distinguished scholars. For my
entry in the world of statistics, I am greatly indebted to Prof.
Nicky Best, my director of studies, who has entrusted me with the
capacity to complete a PhD in biostatistics, and has demonstrated
mountains of patience in the face of my often whimsical behaviour.
\\
 
I also owe a great debt of gratitude to my second supervisor, Prof.
Sylvia Richardson,  who has helped me a great deal throughout the PhD process, and
has been particularly understanding during the write-up stage. This revised thesis,
however, would not be in its current form without the advice and
recommendations of the two examiners --Prof. Angelika van der Linde
and Dr. Maria De Iorio-- and I am also extremely grateful to them. In
particular, I would like to thank Prof. Angelika van der Linde for
suggesting how to simplify the minimisation of the threshold
classification loss (TCL). 
\\

This work was supported by a capacity building scholarship awarded by
the UK Medical Research Council. In terms of technical resources, I
also wish to acknowledge the use of the Imperial College High
Performance Computing (HPC) Service. 
In particular, I would like to thank Simon Burbidge for his help in
using the HPC. In addition, I have also benefited form the help of
James Kirkbride from the Department of Psychiatry in Cambridge
University, who provided the schizophrenia prevalence data set. 
\\

Lastly, my greatest thanks go to Sofie Davis, who has been a
trove of patience and encouragement throughout the last four years. 

\pagebreak[4]	

%
%
\chapter{Introduction}\label{chap:introduction}
An important concern in epidemiology is the reporting of
ensembles of point estimates. In disease mapping, for example, one may
wish to ascertain the levels of risk for cancer in different regions of the British
Isles \citep{Jarup2002}, or evaluate cancer mortality rates in
different administrative areas \citep{Lopez2007}.
In public health, one may be interested in comparing different service
providers such as neonatal clinics \citep{MacNab2004}. 
Estimation of parameter ensembles may also be of interest as
performance indicators, such as when compiling league tables
\citep{Goldstein1996}. Thus, one of the fundamental tasks of the
epidemiologist is the summarising of data in the form of an ensemble of 
summary statistics, which constitutes an approximation of a `parameter
ensemble'.

Such a task, however, is complicated by the variety of goals that such
an ensemble of point estimates has to fulfil. A primary goal,
for instance, may be to produce element-specific point estimates,
which optimally reflect the individual level of risk in each area. Alternatively,
one may be required to select the ensemble of point estimates that
best approximate the histogram of the true parameter ensemble
\citep{Louis1984}. A related but distinct desideratum may be to choose
the set of point estimates that gives a good approximation of the true
heterogeneity in the ensemble. This is especially important from a
public health perspective since unexplained dispersion in the ensemble of
point estimates may indicate the effect of unmeasured covariates.  
Naturally, there does not exist a set of point estimates, which
simultaneously optimise all of these criteria. However, reporting
several ensembles of point estimates corresponding to different
desiderata can yield to some inconsistencies, which generally lead 
epidemiologists to solely report a single set of point
estimates when communicating their results to
the public or to decision makers. There is therefore a need for
considering how certain ensembles of point estimates can satisfy several 
epidemiological goals at once. 

A natural statistical framework within which these issues can be addressed is
Bayesian decision theory. This approach relies on the
formulation of a particular loss function and the fitting of a Bayesian model to the data of
interest. The former formalises one's inferential goals, whereas the
latter permits to derive the joint posterior distribution of the
parameters of interest, which will then be used for the optimisation of the
loss function. Once these two ingredients are specified, standard arguments
in decision theory imply that an optimal set of point estimates
can be obtained by minimising the posterior expected loss function. 
In spatial epidemiology, the use of Bayesian methods, thanks to the now wide
availability of computational resources, has increasingly become the
procedure of choice for the estimation of small-area statistics
\citep{Lawson2009}. This has paralleled an increase in the
amount of multilevel data routinely collected in most fields of
public health and in the social sciences. 
The expansion of Bayesian methods has especially been motivated by an
increased utilisation of hierarchical models, which are characterised
by the use of hierarchical priors
\citep{Best1996,Wakefield2000,Gelman2004}. 
This family of models has the desirable property of borrowing strength
across different areas, which permits to decrease the variability of
each point estimate in the ensemble.

Such Bayesian models are generally used in conjunction with 
summed of squared error loss (SSEL) function, whose optimal
estimator is the set of posterior means of the elements of the
parameter ensemble of interest. The SSEL is widely used in most
applications because it produces estimators, which are easy to handle
computationally, and often readily available from MCMC
summaries. Despite being the most popular
loss function in both spatial and non-spatial epidemiology, this 
particular choice of loss function, however, remains criticised by
several authors \citep{Lehmann1995,Robert1996}.
In particular, the use of the quadratic loss has been challenged by
researchers demonstrating that the posterior means 
tend to overshrink the empirical distribution of the ensemble's elements
\citep{Louis1984}. The empirical variance of the ensemble of
point estimates under SSEL can indeed be shown to represent only a fraction of the true
empirical variance of the unobserved parameters
\citep{Ghosh1992,Richardson2004}.

Due to these limitations and motivated by a need to produce
parameter ensemble estimates that satisfy other epidemiological
desiderata, several authors have suggested the use of
alternative loss functions. Specifically, \citet{Louis1984} and
\citet{Ghosh1992} have introduced a constrained loss function, which
produce sets of point estimates that match both the empirical mean and
empirical variance of the true parameter ensemble. 
Other authors have made use of the flexibility of the ensemble 
distribution to optimise the estimation of certain parts of the
empirical distribution to the detriment of the remaining ones. This
has been done by specifying a set of weights $\phi$, which emphasise
the estimation
of a subset of the elements of the target ensemble \citep{Wright2003,
  Craigmile2006}. A particularly successful foray in simultaneously
satisfying several inferential objectives was achieved by
\citet{Shen1998}, who proposed the use of triple-goal ensembles of 
point estimates. These sets of point estimates constitute a good
approximation of the empirical distribution of the parameter
ensemble. Moreover, their ranks are close to the optimal ranks. Finally, they also
provide good estimates of element-specific levels of risk. 

Two specific goals, however, do not appear to have hitherto been
fully addressed in the literature on Bayesian
decision theory. These are (i) the estimation of the empirical
quantiles and the empirical quartile ratio (QR) of a parameter ensemble of
interest, and (ii) the optimisation of the classification of the
elements of an ensemble above or below a given threshold. The first
objective lies close to a need of evaluating the amount of dispersion
of a parameter ensemble. The estimation of the QR indeed constitutes
a good candidate for quantifying the variability of the elements in
the ensemble, which can then be related to the presence or absence of
unmeasured risk factor. While some epidemiologists have considered the
problem of choosing a specific measure of dispersion for parameter
ensembles \citep{Larsen2000,Larsen2005}, these methods have been formulated
in a frequentist framework and little work has been conducted from a
Bayesian perspective. 

The second goal, which we wish to explore in this thesis, is the
classification of elements in a parameter ensemble. Increased interest 
in performance evaluation and the development of league tables in
education and health has led to the routine gathering of
surveillance data, which permit to trace the evolution of particular
institutions over time. Despite the range of drawbacks associated with
these methods \citep{Goldstein1996}, a combination of factors has made
the use of surveillance data particularly popular.
Excess mortality rates following paediatric cardiac surgery in Bristol
Royal Infirmary, for instance, has raised awareness about the
significance of this type of data \citep{Grigg2003}.
The Shipman inquiry, in addition, has stressed the need for a
closer monitoring of mortality rates in general practices in the UK
\citep{ShipmanInquiry2004}. These factors have been compounded by
the public and political attention to hospital-acquired
infections such as methicillin resistant \ti{Staphylococcus aureus}
(MRSA) or \ti{Clostridium difficile} \citep{Grigg2007,Grigg2009}. 
Such developments are reflected by the recent creation of 
a new journal entitled \ti{Advances in
Disease Surveillance}, published by the International Society for Diseases
Surveillance in 2007. While some statistical work has focused on
the monitoring of disease counts over time
\citep{Spiegelhalter2005,Grigg2009}, few authors have considered the
problem of classifying the elements of a parameter ensemble in low-
and high-risk groups \citep{Richardson2004}. Such classifications,
however, may be particularly useful for the mapping of risk in
spatial epidemiology, where different groups of areas could be
assigned different colours, according to each group's level of risk. 

However, while the estimation of the empirical quantiles and empirical
QR or the classification of the elements of a parameter ensemble can be
conducted optimally, these optimal estimators may not succeed to meet other
inferential goals. One of the central themes of this thesis will 
therefore be to identify sets of point estimates that 
can simultaneously satisfy several inferential
objectives. For this purpose, we will study the behaviour of
various plug-in estimators under the specific loss functions of
interest. Plug-in estimators are computed by applying a particular function to a 
candidate set of point estimates. In order to evaluate the performance
of each of these candidate ensembles in comparison to the optimal
choice of estimator under the loss functions of interest, we
will compute the posterior regret associated with the use of that
specific candidate plug-in estimator. We will compare different
plug-in estimators using spatial and non-spatial simulations, as well
as two real data sets documenting schizophrenia and MRSA prevalence.

This thesis is organised as follows. In chapter \ref{chap:review}, we
describe the general principles of decision theory and present the
specific modelling assumptions commonly made in epidemiology and
spatial epidemiology. Several loss functions, which have been proposed
for the estimation of parameter ensembles in hierarchical models are introduced
with their respective optimal estimators. Estimators resulting from
these loss functions will be
used throughout the thesis as plug-in estimators under other loss
functions of interest. Chapter \ref{chap:mrrr} is specifically dedicated to the optimisation
of the empirical quantiles and empirical QR of a parameter ensemble,
which permit to evaluate the amount of dispersion in the ensemble
distribution. In chapter \ref{chap:clas}, we consider the
issue of classifying the different elements of a parameter ensemble
above or below a given threshold of risk,
which particularly pertains to the analysis of surveillance data. 
Finally, in chapter \ref{chap:discussion}, we consider some possible
extensions and generalisations of the techniques proposed in this
thesis, with special emphasis on the specification of 
tailored Bayesian models, which may better serve 
the target inferential goals.

%
%
\chapter{Loss functions for Parameter Ensembles}\label{chap:review}
\hspace{2cm}
\begin{minipage}[c]{11.5cm}
\small 
\begin{center}\tb{Summary}\end{center}\vspace{-.3cm}
   In this introductory chapter, we briefly
   introduce the general decision-theoretic framework used in Bayesian
   statistics, with special attention to point estimation problems.
   We present three commonly used loss functions: the squared-error
   loss, the absolute
   error loss and the $0/1$-loss. A working definition of Bayesian
   hierarchical models is then provided, including the description of three
   specific families of models characterised by different types of
   prior structure, which will be used throughout the thesis. The
   concept of a parameter ensemble
   in a hierarchical model is also defined with some emphasis on 
   the optimal estimation of the functions of parameter
   ensembles. Issues related to hierarchical shrinkage are reviewed
   with a brief discussion of the Ghosh-Louis theorem. Several
   commonly adopted decision-theoretic approaches to the estimation of 
   a parameter ensemble are also introduced, including the
   constrained Bayes estimator (CB), the triple-goal estimator (GR)
   and the weighted-rank squared-error loss estimator (WRSEL). Finally,
   we discuss plug-in estimators, which consist of functions of an
   ensemble of point estimates. Differences between such plug-in estimators and
   the optimal estimators of various functions of parameter ensembles
   will be the object of most of the thesis at hand. 
   In particular, the performance of optimal and
   plug-in estimators will be studied within the context
   of two inferential objectives relevant to epidemiological practice: 
   (i) the estimation of the dispersion of parameter ensembles and 
   (ii) the classification of the elements of a parameter ensemble
   above or below a given threshold. We conclude with a description of
   the posterior regret, which will be used throughout the thesis as a
   criterion for performance evaluation. 
\end{minipage}

\section{Bayesian Decision Theory}\label{sec:decision theory}
In this section, we briefly introduce the premises of decision theory
with special attention to point estimation. We also consider the
differences between the frequentist and the Bayesian approach to the
problem of point estimation, and describe three classical loss
functions. A discussion of the specific issues arising from the
estimation of a function of the model's parameters is also given, as
this is especially relevant to the thesis at hand. 
Note that, throughout this chapter and the rest of this thesis, we
will not emphasise the niceties of
measure theory, but restrict ourselves to the level of formality and
the notation introduced by \citet{Berger1980} and
\citet{Robert2007}. Unless otherwise specified, all random variables in
this thesis will be assumed to be real-valued.

\subsection{Premises of Decision Theory}\label{sec:premises}
Decision theory formalises the statistical approach to decision making.
Here, decision making should be understood in its broadest
sense. The estimation of a particular parameter, for instance,
constitutes a decision as we opt for a specific value among a set of
possible candidates. The cornerstone of decision theory is the
specification of a utility function. The main strength and appeal of
decision theory is that once a utility
function has been selected, and a set of alternative options has been
delineated, the decision problem is then fully specified and the
optimal decision can be derived. 

A rigorous approach to decision theory is based on the definition of
three spaces. Firstly, let $\Theta$ denotes the space of the true states
of the world. Secondly, the decision space, denoted $\cD$, will comprise
the decisions --sometimes referred to as acts, actions or choices-- that
one can take. A third space encompasses the consequences of a
particular course of action. These are often expressed in monetary terms,
and for that reasons are generally termed rewards. This last space will be denoted $\cZ$.
The true states of the world, the decision space and the space of
consequences are linked together by a loss function defined as follows, 
\begin{equation}
     L: \Theta\times\cD\mapsto \cZ,
\end{equation}
where $\times$ denotes the Cartesian product.
A loss function is thus a criterion for evaluating a possible
procedure or action $\delta\in\cD$, given some true state of the world
$\theta\in\Theta$. This loss function takes values in the space of consequences $\cZ$.
A decision \ti{problem} is therefore fully specified when the above
four ingredients are known: $(\Theta,\cD,\cZ,L)$. Note that, in
axiomatic treatments of decision theory, the aforementioned quadruple
is generally replaced by $(\Theta,\cD,\cZ,\preceq)$, where $\preceq$ is a
total ordering on the space of consequences. Providing several
properties of $\preceq$ hold (e.g. transitivity, asymmetry), the
existence of a loss function $L$ can
be demonstrated. Decision theory originated in the context of game
theory \citep{Neumann1944}, and was adapted to statistics by
\citet{Savage1954}. The formalisation of a decision problem as the
quadruple $(\Theta,\cD,\cZ,\preceq)$ is probably the most accepted definition 
\citep{Fishburn1964,Kreps1988,DeGroot1970,Robert2007}, although some
other authors also include the set of all possible experiments, $\cE$, in
the definition of a decision problem \citep{Raiffa1960}. 

In this thesis, we will be especially interested in parameter
estimation. Since the space of the true states of the world is the set
of values that the parameter of interest can take, we will simply refer to
$\Theta$ as the parameter space. In estimation problems,
the space of decisions, denoted $\cD$, is generally taken to be
identical to the parameter space, $\Theta$. In addition, both spaces are
also usually defined as subsets of the Real line. That is, we have
\begin{equation}
      \cD = \Theta\subseteq\R. 
\end{equation}
For convenience, we will assume in this section that $\theta$ is
univariate and study the multivariate case in section \label{sec:parameter ensemble}.
Moreover, the space of consequences is defined as the positive half of
the Real line $[0,+\infty)$. Thus, when considering point estimation
problems, such as the ones of interest in the present thesis,
a decision problem will be defined as $(\Theta,\Theta,[0,+\infty),L)$, with $L$
satisfying 
\begin{equation}
     L: \Theta\times\Theta\mapsto [0,+\infty).
\end{equation}
Albeit our discussion has centered on the specification of a
particular loss function, historically, the definition of a total
ordering on $\cZ$ has been associated with the choice of an arbitrary \ti{utility}
function \citep[see][]{Neumann1944}. However, the use of an unbounded
utility leads to several problems, as exemplified by the Saint
Petersburg's paradox. (This paradox involves a
gamble, which results in an infinite average gain, thereby leading to
the conclusion that an arbitrarily high entrance fee should be paid to
participate in the game \citep[see][for a full
description]{Robert2007}.) As a result, the codomain of the utility
function is usually taken to be bounded, with
$U:\Th\times\Th\mapsto(-\infty,0]$. This gives the
following relationship with our preceding definition of the loss
function in estimation problems,
\begin{equation}
   U(\theta,\delta) = -L(\theta,\delta).
\end{equation}

In the context of decision theory, point estimation problems can be naturally
conceived as games, where the player attempts to minimise her losses
\citep[see][]{Berger1980}. Such losses, however, are associated
with some level of uncertainty. That is, there exists a space $\cP$ of
probability distributions on $\cZ$. The total ordering on the space of
consequences $\cZ$ can then be transposed onto the probability space
$\cP$, using the expectation operator. An axiomatic construction of a
total ordering $\preceq$ on $\cZ$ leading to the proof of the
existence of a utility or loss function can be found in several
texts \citep[see][]{Savage1954,Fishburn1964,Kreps1988} or see
\citet{Robert2007} for a modern treatment. The purpose of the game
is therefore to minimise one's loss, by selecting the optimal decision,
$\delta\as$, which is defined as follows,
\begin{equation}
   \delta\as := \argmin_{\delta\in\cD} \E[L(\theta,\delta)],
\end{equation}
where $:=$ indicates that the LHS is defined as the RHS.
Here, the two schools of thought in statistics differ in their
handling of the expectation operator. Frequentist and Bayesian
statisticians have traditionally chosen different types of
expectations, as we describe in the next section. 

\subsection{Frequentist and Bayesian Decision Theories}
From a statistical perspective, the specification of the space of the states of the world
$\Theta$ and the decision space $\cD$ are complemented by the
definition of a sample of observations, which we will denote by 
$\by:=\lb y_{1},\ldots,y_{n}\rb$, where $\by\in\cY\subseteq\R^{n}$. We assume that this sample 
was generated from a population distribution
$p(\by|\theta)$, with $\theta\in \Theta$. Our decision $\delta$ is
therefore computed on the basis of this sample, and we will denote
this dependency by $\delta(\by)$. Following \citet{Robert2007}, we
will use the term \ti{estimator} to refer to the
function $\delta(\cdot)$. That is, an estimator
is the following mapping, 
\begin{equation}
      \delta:\cY \mapsto \cD,
\end{equation}
where $\cD$ is taken to be equivalent to the space of states of
the world, $\Theta$. An \ti{estimate}, by contrast, will be a single value in $\cD=\Theta$,
given some observation $\by$.

From a frequentist perspective, the experiment that has generated the
finite sample of observations $\by$ is assumed to be infinitely
repeatable \citep{Robert2007}. Under this assumption of repeatability, the optimal
decision rule $\delta$ can be defined as the rule that minimises the
expectation of the loss function with respect to the (unknown)
population distribution $p(\by|\theta)$. This gives 
\begin{equation}
    R_{F}[\theta,\delta(\by)]:= 
    \int\limits_{\cY} L(\theta,\delta(\by))p(\by|\theta)d\by.
    \label{eq:freq risk}
\end{equation}
The quantity $R_{F}$ is called the \ti{frequentist risk}, whereas 
$p(\by|\theta)$ can be regarded as the
likelihood function of traditional statistics, evaluated at the true
state of the world, $\theta\in\Th$.

In the context of Bayesian theory, by contrast, we specify a prior distribution on
the space of the states of the world, $\Theta$. This distribution, denoted
$p(\theta)$, reflects our uncertainty about the true state of
the world. Given these assumptions and the specification of a
particular prior, it is then possible to integrate
over the parameter space. This gives the
following Bayesian version of equation (\ref{eq:freq risk}),
\begin{equation}
   R_{B}[p(\theta),\delta(\by)] := \iint\limits_{\Th \cY}
    L(\theta,\delta(\by))p(\by|\theta)p(\theta)d\by\,d\theta,
    \label{eq:bayes risk}
\end{equation}
which is usually termed the \ti{Bayes risk}. 
Note that $R_{B}$ takes the prior distribution $p(\theta)$ as an
argument, since its value entirely depends on the choice of prior
distribution on $\theta$. The optimal decision $\delta^{\as}$ that minimises
$R_{B}$ in equation (\ref{eq:bayes risk}) can be
shown (using Fubini's theorem and Bayes' rule, see \citet{Robert2007} for a proof) to
be equivalent to the argument that minimises the \ti{posterior expected loss}, which
is defined as follows, 
\begin{equation}
   \rho\lt[p(\theta|\by),\delta(\by)\rt]:= 
       \int\limits_{\Th} L(\theta,\delta(\by))p(\theta|\by) d\theta,
       \label{eq:posterior expected loss}
\end{equation}
where $p(\theta|\by)$ is the posterior distribution of $\theta$,
obtained via Bayes' rule, 
\begin{equation}
        p(\theta|\by) = \frac{p(\by|\theta)p(\theta)}{\int_{\Theta}
          p(\by|\theta)p(\theta)d\theta}.
\end{equation}
Since minimisation of $\rho$ for all $y$ is equivalent to the minimisation of
$R_{B}$, we will generally use the terms Bayes risk and posterior
(expected) loss interchangeably. The optimal decision under both the
Bayes risk and the posterior loss is termed the Bayes choice,
action or decision. 

For notational convenience, we will use the following shorthand for
the posterior loss,
\begin{equation}
  \rho\lt[\theta,\delta(\by)\rt] := \rho\lt[p(\theta|\by),\delta(\by)\rt],
\end{equation}
where the dependence of $\rho$ on the posterior distribution
$p(\theta|\by)$, and the dependence of the decision $\delta(\by)$ on the
data are made implicit. In addition, since in this thesis, we will
adopt a Bayesian approach to statistical inference, all expectation
operator will be defined with respect to the appropriate posterior
distribution of the parameter of interest, generally denoted
$\theta$, except when otherwise specified. 

The defining condition for the Bayes choice to exist is that 
the Bayes risk is finite. In the preceding discussion, we have assumed
that the prior distribution on $\theta$ is proper. When specifying improper priors,
the Bayes risk will, in general, not be well defined. 
In some cases, however --when an improper prior yields a
proper posterior-- the posterior expected loss will be well defined. The
decision that minimises $\rho$ in that context is then referred to as
the \ti{generalised Bayes} estimator. Several commonly used models in
spatial epidemiology make use of improper prior distributions, which
nonetheless yield proper posteriors. In such cases, we will be using
the generalised Bayes estimator in the place of the Bayes choice. 

We thus readily see why decision theory and Bayesian statistics form
such a successful combination. The use of a prior distribution in Bayesian models
completes the assumptions about the space of the states of the world
in decision theory. By contrast, the frequentist perspective runs into difficulties 
by conditioning on the true parameter $\theta$. As a result, the
decisions that minimise $R_{F}$ are conditional on $\theta$, which is
unknown. In a Bayesian setting, by contrast, the expected loss is evaluated
conditional on $\by$, which does not constitute a problem since such observations
are known \citep[see also][for a discussion]{Robert2007}.

Decision theory puts no constraints on the nature of the loss function
that one may utilise. It should be clear from the foregoing
discussion that the specification of the loss function will completely
determine which decision in $\cD$ is considered to be optimal,
everything else being equal. In practice, the choice of a particular
loss function is dependent on the needs of the decision maker, and which
aspects of the decision problem are of interest. However, to facilitate comparisons between
different estimation problems, there exists a set of loss functions which
are widely accepted and used throughout the statistical sciences. These
classical loss functions are described in the following section. 

\subsection{Classical Loss Functions}\label{sec:classical loss}
There exist three classical loss functions, which constitute the building blocks of 
many other loss functions, and are therefore of key importance to our
development. They are the following: (i) the squared error loss, (ii)
the absolute value error loss and (iii) the $0/1$-loss. We review them
in turn, alongside their corresponding
minimisers. These three loss functions are especially useful for
estimation problems, and will therefore be described with respect to
some known sample of observations, $\by$. 

\sub{Squared Error Loss}
The quadratic loss or squared error loss (SEL) is the most commonly used loss function in
the statistical literature. It is defined as the Euclidean distance
between the true state of the world $\theta$ and the candidate
estimator $\delta(\by)$. Thus, we have
\begin{equation}
    \op{SEL}(\theta,\delta(\by)) := (\theta - \delta(\by))^{2}.
\end{equation}
In a Bayesian context, this loss function can be minimised by
integrating $\op{SEL}(\theta,\delta(\by))$ with respect to the
posterior distribution of $\theta$, and minimising the posterior expected
loss with respect to $\delta(\by)$. Since the SEL is strictly convex,
there exists a unique minimiser to SEL, which is 
the posterior mean: $\E[\theta|\by]$. We will usually denote $\E[\theta|\by]$ by 
$\theta^{\op{SEL}}$, when comparing that particular estimator with
other estimators based on different loss functions.

\sub{Absolute Value Error Loss}
Another classical loss function is the absolute error loss (AVL), which
makes use of the modulus to quantify one's losses. It is
defined as follows, 
\begin{equation}
    \op{AVL}(\theta,\delta(\by)) := |\theta - \delta(\by)|,
\end{equation}
and its minimiser in the context of Bayesian statistics,
is the posterior median, which will be denoted 
$\theta^{\op{AVL}}$. It can be shown that the posterior median is
not the unique minimiser of the posterior expected AVL
\citep[see][]{Berger1980}. In addition, since the posterior median will be
found to be the optimal Bayes choice under other loss functions, it
will be useful to denote this quantity without any reference to a
specific loss function. We will therefore use $\theta^{\op{MED}}$ to
denote the posterior median in this context. This quantity will be
discussed in greater details in chapter \ref{chap:clas}, when considering
the classification of the elements of a parameter ensemble. 

\sub{$0/1$-Loss}
For completion, we also introduce the $0/1$-loss function. It is defined as follows,
\begin{equation}
     L_{0/1}(\theta,\delta(\by)) = 
    \begin{cases}
        0 & \te{if }\delta(\by) = \theta; \\
        1 & \te{if }\delta(\by) \neq \theta.
    \end{cases}
\end{equation}
The optimal minimiser of the $0/1$-loss under the posterior
distribution of $\theta$ is the posterior mode.
In the sequel, we will drop any reference to the data $\by$, when
referring to the estimator $\delta(\by)$, and simply use the notation
$\delta$. We now turn to a more specific aspect of loss function
optimisation, which will be especially relevant to the research
questions addressed in this thesis. 

\subsection{Functions of Parameters}\label{sec:function of parameters}
It will be of interest to consider the estimation of
functions of parameters, say $h(\theta)$, where $h$ is some mapping 
from $\Theta$ to itself, or possibly to a subset of 
the true states of the world. For convenience, we will assume in this section that
$\theta$ is univariate, and study the multivariate case in section
\ref{sec:parameter ensemble}. In such cases, the Bayesian decision problem can be formulated as
follows. Choose the $\delta^{\as}$, which is defined as,
\begin{equation}
     \delta^{\as} := \argmin_{\delta}\rho\lt(h(\theta),\delta\rt),
\end{equation}
where the minimisation is conducted over the decision space, here
defined as $\cD\subseteq\Th$.
When the loss function of interest is the SEL, we then have the
following equality, for any arbitrary function $h(\cdot)$ of the
parameter $\theta$, 
\begin{equation}
     \E\lt[ h(\theta)|\by\rt] =
     \argmin_{\delta}\E\lt[\lt.\lt(h(\theta)-\delta\rt)^{2}\rt|\by\rt]. 
\end{equation}
That is, the optimal estimator $\delta^{\as}\in\cD$ of $h(\theta)$ is the posterior expectation
of $h(\theta)$. Here, the decision problem is fully specified by
defining both the parameter space $\Theta$ and the decision space
$\cD$, as the codomain of $h(\cdot)$. 

If, in addition, $h(\cdot)$ is a linear function, it then follows from the linearity of the
expectation that 
\begin{equation}
   \E\lt[ h(\theta)|\by\rt] = h(\E\lt[\theta|\by\rt]). 
\end{equation}
However, for non-linear $h(\cdot)$, this relationship
does not hold. Let $\rho$ denotes an arbitrary Bayesian expected loss
for some loss function $L$, we then have the following non-equality, 
\begin{equation}
    \delta^{\as} := 
    \argmin_{\delta} \rho\lt(h(\theta),\delta\rt)
    \neq
    h\lt(\argmin_{\delta}\rho\lt(\theta,\delta\rt)\rt) 
    =: h(\theta^{\as}).
    \label{eq:argmin}
\end{equation}
Much of this thesis will be concerned with the differences between
$\delta^{\as}$ and $h(\theta^{\as})$, for particular $\rho$'s. An estimator of
$h(\theta)$ based on an estimator of $\theta$ will be referred to
as a \ti{plug-in estimator}. Thus, in equation (\ref{eq:argmin}),
$h(\theta^{\as})$ is the plug-in estimator of $h(\theta)$. 
Our use of the term plug-in, here, should be
distinguished from its utilisation in reference to the plug-in
principle introduced by \citet{Efron1993} in the context of the
bootstrap. For \citet{Efron1993}, plug-in estimators have desirable asymptotic
properties in the sense that such estimators cannot be asymptotically
improved upon. In the present thesis, however, estimators such as
$h(\theta^{\as})$ are not deemed to be necessarily optimal, except in the sense
that $\theta^{\as}$ is the Bayes action for some $\rho\lt(\theta,\delta\rt)$. 

Specifically, it will be of interest to evaluate
whether commonly encountered optimal Bayes actions, such as the
posterior means and medians can be used to construct quasi-optimal
plug-in estimators. This problem especially arises when the true
parameter of interest is an ensemble of
parameters, as described in the next section. 

\section{Parameter Ensembles and Hierarchical Shrinkage}\label{sec:shrinkage}
The notion of a parameter ensemble will be the object of most of our
discussion in the ensuing two chapters. In this section, we define
this concept for general Bayesian models as well as for 
specific hierarchical models that will be considered throughout 
the thesis. In addition, we present the Ghosh-Louis theorem, which originally
motivated research on the specific inferential problems associated
with parameter ensembles.
\begin{figure}[t]
\centering
\tikzstyle{background rectangle}=[draw=gray!30,fill=gray!10,rounded corners=1ex]
\begin{tikzpicture}[font=\footnotesize,scale=0.75,show background
  rectangle]

    \draw (-6,0) node[draw,circle, minimum size=1.1cm](Y1){$Y_1$};
    \draw (-4,0) node{$\ldots$};
    \draw (-2,0) node[draw,circle, minimum size=1.1cm](Y+k){$Y_{k-1}$};
    \draw (0,0) node[draw,circle, minimum size=1.1cm](Yk){$Y_k$};
    \draw (2,0) node[draw,circle, minimum size=1.1cm](Y-k){$Y_{k+1}$};
    \draw (4,0) node{$\ldots$};
    \draw (6,0) node[draw,circle, minimum size=1.1cm](Yn){$Y_n$};
    \draw (0,6) node[draw,circle, minimum size=1.1cm](xi){$\bxi$};
    \draw (-6,2.5) node[draw,circle, minimum size=1.1cm](th1){$\theta_1$};
    \draw (-4,2.5) node{$\ldots$};
    \draw (-2,2.5) node[draw,circle, minimum size=1.1cm](th+k){$\theta_{k+1}$};
    \draw (0,2.5) node[draw,circle, minimum size=1.1cm](thk){$\theta_k$};
    \draw (2,2.5) node[draw,circle, minimum size=1.1cm](th-k){$\theta_{k-1}$};
    \draw (4,2.5) node{$\ldots$};
    \draw (6,2.5) node[draw,circle, minimum size=1.1cm](thn){$\theta_n$};
    \draw[->] (th1) -- (Y1);
    \draw[->] (th-k) -- (Y-k);
    \draw[->] (thk) -- (Yk);
    \draw[->] (th+k) -- (Y+k);
    \draw[->] (thn) -- (Yn);

    \draw[->] (xi) -- (th1);
    \draw[->] (xi) -- (th-k);
    \draw[->] (xi) -- (thk);
    \draw[->] (xi) -- (th+k);
    \draw[->] (xi) -- (thn);
\end{tikzpicture}
\caption{Directed Acyclic Graph (DAG) for a general hierarchical
  model, with $\by=\lt\lb y_{1},\ldots,y_{n}\rt\rb$ denoting $n$
  observations and $\bth=\lt\lb \theta_{1},\ldots,\theta_{n}\rt\rb$ denoting
  the parameter ensemble of interest. The prior distribution of each
  $\theta_{i}$ is controlled by a vector of hyperparameter $\bm\xi$, which is given
  a hyperprior. Following standard convention, arrows here indicate direct
  dependencies between random variables.}
\label{fig:bhm}
\end{figure}
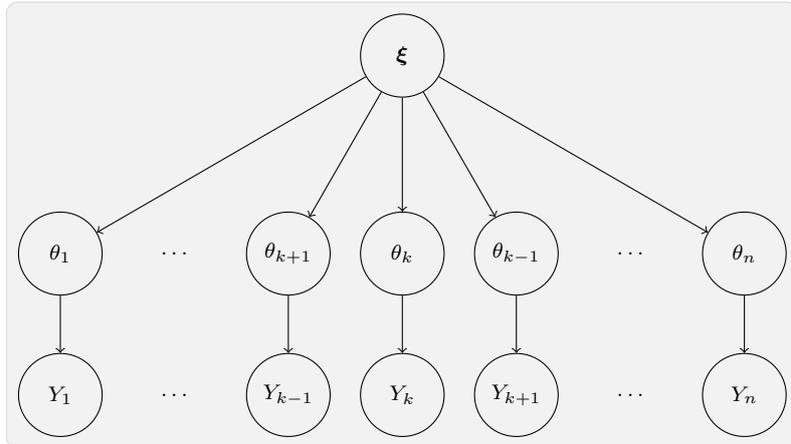

\subsection{Bayesian Hierarchical Models}\label{sec:bhm}
Bayesian hierarchical models (BHMs) can be conceived as Bayesian
extensions of traditional mixed models, sometimes called
multilevel models, where both fixed and random effects are included in a
generalized linear model \citep[see][]{Demidenko2007}. 
In its most basic formulation, a BHM is composed of the following two
layers of random variables,
\begin{equation}
      y_{i}\stack{\ind}{\sim} p(y_{i}|\theta_{i},\bsig_{i}), 
      \qq 
      g(\bth) \sim p(\bth|\bxi), 
     \label{eq:bhm}
\end{equation}
for $i=1,\ldots,n$ and where $g(\cdot)$ is a transformation of $\bth$,
which may be defined as a link function as commonly used
in generalised linear models \citep[see][]{McCullagh1989}. The
probability density function at the first level of the BHM,
$p(y_{i}|\theta_{i},\bsig_{i})$, is the likelihood function. The
joint distribution on the $g(\bth)$, here denoted $p(\bth|\bxi)$, is a 
multivariate prior, which is directly specified as a distribution on
the transformed $\bth$. Different choices of transformations,
$g(\cdot)$, will hence induce different joint prior distributions. 
The specification of a BHM is complete when one
fixes values for the vector of hyperparameters, $\bxi$, or specifies a
hyperprior distribution for that quantity. When the $\theta_{i}$'s are independent and
identically distributed (iid) given $\bxi$, we obtain the
hierarchical structure described in figure \ref{fig:bhm} on page
\pageref{fig:bhm}, which is represented using a directed acyclic graph
(DAG). By extension, we will refer to the prior on such $\theta_{i}$'s
as an iid prior.

In the sequel, we will assume that the $n$ vectors of nuisance parameters
$\bsig_{i}$ are known, albeit in practice, this may not be the case.
Typically, such $\bsig_{i}$'s will include the sampling
variance of each $y_{i}$. In its simplest form, the model in equation
(\ref{eq:bhm}) will be assumed to
be composed of conjugate and proper probability density
functions and each $\theta_{i}$ will be an iid draw from a hierarchical prior. 
These modelling assumptions, however, will be sometimes relaxed
in the present thesis. In particular, we will be interested in
the following three variants.
\begin{description}
   \item[i.] \ti{Conjugate proper iid} priors on the $\theta_{i}$'s, where 
     the link function $g(\cdot)$ is the identity function.
     This simple case will encompass both the Normal-Normal model
     --sometimes referred to as compound Gaussian model-- and the 
     compound Gamma or Gamma-Inverse Gamma model. For the former, we
     have the following hierarchical structure, 
     \begin{equation}
       y_{i} \stack{\ind}{\sim}N(\theta_{i},\sig^{2}_{i}), \qq
       \theta_{i}\stack{\iid}{\sim} N(\mu,\tau^{2}),  
       \label{eq:normal-normal}
     \end{equation}
     with $i=1,\ldots,n$, and where the $\sig^{2}_{i}$'s are known
     variance components. Secondly, for the Gamma-Inverse Gamma model, we
     specify the following likelihood and prior, 
     \begin{equation}
        y_{i} \stack{\ind}{\sim}\op{Gam}(a_{i},\theta_{i}), \qq
       \theta_{i}\stack{\iid}{\sim} \op{Inv-Gam}(\alpha,\beta),        
        \label{eq:gamma-inverse gamma} 
     \end{equation}
     where the Gamma and Inverse-Gamma distributions will be specifically
     described in section \ref{sec:mrrr non-spatial design}.
  \item[ii.] \ti{Non-conjugate proper iid} priors on the $\theta_{i}$'s.
     In standard epidemiological settings, the specification of a
     Poisson likelihood for the observations naturally leads to 
     the modelling of the logarithm of the relative risks at the
     second level of the hierarchy. Such a model is sometimes referred
     to as a log-linear model. In such cases, however, the conjugacy of
     the likelihood with the prior on the $\theta_{i}$'s 
     does not hold anymore. As an example of such non-conjugacy,
     we will study the following model,
     \begin{equation}
           y_i\stack{\ind}{\sim} \poi(\theta_iE_{i}), 
           \qq
           \log\theta_i\stack{\iid}{\sim} N(\al,\tau^2),
     \end{equation}
     for $i=1,\ldots,n$, where the link function $g(\cdot):=\log(\cdot)$.
     Here, the conjugacy of the prior with the Poisson likelihood does not
     hold, and specific sampling schemes need to be adopted in order
     to evaluate the posterior distributions of the $\theta_{i}$'s
     \citep[see][]{Robert2004}. 
  \item[iii.] \ti{Non-conjugate improper non-iid} priors on the $\theta_{i}$'s.
     In this final situation, all assumptions on the prior
     distributions of the $\theta_{i}$'s will be relaxed. This type of
     model will be illustrated in the context of spatial epidemiology,
     where Poisson-based generalized linear models 
     are commonly used to model counts of disease cases in each of a
     set of geographical areas, and the joint prior distribution on the
     $\theta_{i}$'s models the spatial dependence between the regions
     of interest. A popular choice of prior distribution reflecting
     inter-regional spatial dependence is the intrinsic Gaussian conditional
     autoregressive (CAR) prior --an instance of a Markov
     random field. The intrinsic CAR does not constitute a
     proper distribution \citep{Besag1974}. However, \citet{Besag1991}
     have shown that the resulting marginal posterior distributions of the
     $\theta_{i}$'s are proper \citep[see also][]{Besag1995}. 
     It then follows that the posterior expected loss for the
     parameter ensemble is finite, and that we can therefore derive
     the generalised Bayes
     decision, which will be optimal for that decision problem. 
     We describe this particular family of spatial models in more detail in the
     following section.
\end{description}
In this thesis, when studying the properties of such epidemiological
models, we will employ the term \ti{prevalence} to refer to the
rate of a particular condition in the population at risk. That is, the
term prevalence here refers to the number of affected cases divided by the
total number of individuals at risk for that condition. 

\subsection{Spatial Models}\label{sec:spatial models}
Here, we briefly present some conventional modelling assumptions made
in spatial epidemiology, which will be used throughout the thesis
\citep{Wakefield2000}. The
starting point for modelling a non-infectious disease with known
at-risk populations is the Binomial model,
\begin{equation}
  y_{\hi{ij}}|p_{\hi{ij}} \sim \op{Bin}(N_{\hi{ij}}, p_{\hi{ij}}), \qq
  i=1,\ldots,n; \q j=1,\ldots,J,
\end{equation}
where $p_{\hi{ij}}$ and $N_{\hi{ij}}$ represent, respectively, the risk of disease
and the population size in the $i\tth$ area for the $j\tth$ age strata. 
In this thesis, we are especially interested in modelling the
prevalence of rare non-infectious diseases, such as specific types of cancers.
For rare conditions, we approximate this model with a Poisson model
\begin{equation}
  y_{\hi{ij}}|p_{\hi{ij}} \sim \op{Pois}(N_{\hi{ij}}
  p_{\hi{ij}}). 
\end{equation}
Furthermore, we generally make a proportionality assumption, which
states that $ p_{\hi{ij}} = \theta_{i} \times p_{j}$,
where $\theta_{i}$ is the relative risk (RR) for the $i\tth$ area, and $p_j$ is the reference rate
for age strata $j$, which is assumed to be known. Each $\theta_{i}$ is here the ratio
of the age-standardised rate of the disease in area $i$ compared to
the age-standardised reference rate. Using the proportionality
assumption, we may then sum over the risk in each strata in order to obtain 
\begin{equation}
    y_i \stack{\ind}{\sim} \poi(\theta_iE_{i}),
    \label{eq:car likelihood}
\end{equation}
where $y_i=\sum_{j=1}^{J} y_{\hi{ij}}$ and
$E_{\hi{i}}=\sum_{j=1}^{J}N_{\hi{ij}}p_{j}$ are the observed and
expected counts, respectively. 
Equation (\ref{eq:car likelihood}) is the likelihood of the
model. Maximum likelihood, in this context, produces the following point estimate
ensemble, 
\begin{equation}
      \hat\theta^{\op{MLE}}_{i}:=\frac{y_{i}}{E_{i}}, 
      \label{eq:smr}
\end{equation}
for every $i=1,\ldots,n$, which are 
generally referred to as the standardised mortality or morbidity ratios (SMRs). The
$\hat\theta_{i}^{\op{MLE}}$'s, however, tend to be over-dispersed in
comparison to the true RRs. 

In order to provide such a model with more
flexibility, different types of hierarchical priors are commonly specified on the
$\theta_{i}$'s \citep[see][for a review]{Wakefield2000}.
Two spatial BHMs will be implemented in this
thesis. A popular hierarchical prior in spatial
epidemiology is the convolution prior \citep{Besag1991}, which is
formulated as follows,
 \begin{equation}
    \log\theta_i = v_i + u_i, 
      \label{eq:linear theta}
\end{equation}
for every region $i=1,\ldots,n$. Note, however, that this model will be
implemented within the WinBUGS software, which uses a different
representation of the spatial random effects, based on the joint
specification of an intercept with an improper flat prior and a
sum-to-zero constraint on the $u_{i}$ (see Appendix \ref{app:winbugs},
and section \ref{sec:mrrr spatial models}).

Here, $\bv$ is a vector of unstructured random effects with the
following specification,
\begin{equation}
    v_i \stack{\iid}{\sim} N(0,\tau_v^2),
    \label{eq:unstructured}
\end{equation}
and the vector $\bu$ captures the spatial
auto-correlation between neighbouring areas. Priors on each element
of $\bu$ are specified conditionally, such that 
\begin{equation}
       u_i|u_j, \forall \, j\neq i \sim N\lt( \frac{\sum_{j \in \partial_i} u_j}{m_i},
       \frac{\tau_u^2}{m_i}\rt),
       \label{eq:car}
\end{equation}
with $\partial_{i}$ is defined as the set
of indices of the neighbours of the $i\tth$ area. Formally,
$\partial_{i}:=\lb j\sim i: j=1,\ldots,n\rb$, where $i\sim j$ implies
that regions $i$ and $j$ are neighbours, and by convention,
$i\notin \partial_{i}$. Moreover, in equation (\ref{eq:car}), we have
also used $m_{i}:=|\partial_{i}|$ --that is, $m_{i}$ is the total
number of neighbours of the $i\tth$ area. Therefore, each of the
$u_{i}$'s is normally distributed around the mean level of risk
of the neighbouring areas, and its variability is inversely
proportional to its number of neighbours.  

A different version of this model can be formulated using the
Laplace distribution \citep{Besag1991}. As this
density function has heavier tails, this specification is expected
to produce less smoothing of abrupt changes in risk between adjacent
areas. For the CAR Laplace prior, we therefore have
$u_i|u_j, \forall \, j\neq i \sim L(u_{i}|\sum_{j \in \partial_i}
u_j/m_i,\tau_u^2/m_i)$, for every $i=1,\ldots,n$, using the following
definition of the Laplace distribution,
\begin{equation}
     L(x|m_{0},s_{0}) := \frac{1}{2s_{0}}\exp \lt\lb \frac{1}{s_{0}}|m_{0}-x|\rt\rb.
\end{equation}
We will refer to this model as the CAR Laplace or L1 model.
The rest of the specification of this BHM is identical to the one chosen
for the CAR Normal. In these two models, following common
practice in spatial epidemiology, two regions are considered to be neighbours if
they share a common boundary \citep{Clayton1987,Besag1991,Waller1997}.

\subsection{Estimation of Parameter Ensembles}\label{sec:parameter ensemble}
The set of parameters, $\theta_{i}$'s, in the hierarchical model described in
equation (\ref{eq:bhm}) is generally referred to as
a vector of random effects \citep{Demidenko2007}. In this thesis, we will refer to such a vector
as an ensemble of parameters. That is, in a BHM following the general
structure presented in equation (\ref{eq:bhm}), the vector of
parameters,
\begin{equation}
     \bth:=\lb\theta_{1},\ldots,\theta_{n}\rb,
\end{equation}
will be referred to as a \ti{parameter ensemble}.
Several properties of a parameter ensemble may be of
interest. One may, for instance, wish to optimise the estimation of
each of the individual elements in the ensemble. A natural choice in
this context is the sum of quadratic losses for each parameter. This
particular loss function is the summed squared error loss (SSEL)
function that takes the following form, 
\begin{equation}
  \ssel\lt( \bth,\bth^{\op{est}} \rt) =
  \sum_{i=1}^{n}\lt(\theta_i-\theta^{\op{est}}_i\rt)^2,
  \label{eq:ssel}
\end{equation}
 In this context, using the notation that we have adopted in section
\ref{sec:premises}, the decision problem for the estimation of a
parameter ensemble using the SSEL function results in a parameter
space $\bTh$ and a decision space $\cD$ which are both assumed to be subsets of 
$\R^{n}$. The posterior expected loss associated with
the loss function in equation \ref{eq:ssel} can be minimised in a
straightforward manner. Its optimum is attained when selecting the vector of posterior
means as Bayes choice, which will be denoted by 
\begin{equation}
     \hat\bth^{\op{SSEL}} := 
     \lb \hat\theta^{\op{SSEL}}_{1},\ldots,\hat\theta^{\op{SSEL}}_{n}\rb = 
     \lb \E[\theta_{1}|\by],\ldots,\E[\theta_{n}|\by] \rb.
\end{equation}
This type of decision problems, characterised by the estimation of a
set of parameters are sometimes referred to as compound estimation
problems or \ti{compound loss functions} \citep{Ghosh1992}. We will
use of the term \ti{estimator} to refer
to an entire set of point estimates, such as $\hat\bth^{\op{SSEL}}$
with respect to a particular loss function, here SSEL. This indeed
follows from the fact that such a multivariate estimator is optimal
under that posterior loss. Such an estimator, however, also
constitutes an ensemble of single \ti{point estimates}.

Another property of a parameter ensemble, which may be of interest is
the empirical distribution function (EDF) of that ensemble, which will
generally be referred to as the ensemble distribution. The EDF of
$\bth$ is defined as follows, 
\begin{equation}
     F_{n}(t) := \frac{1}{n}\sum_{i=1}^{n} \cI\lb \theta_{i}\leq t\rb, 
     \label{eq:edf}
\end{equation}
for every $t\in\R$, and where $\cI$ is the usual indicator function. 
The term EDF is used here in analogy with its usual application in the
context of iid observations. Note, however, that neither the elements
of a true parameter ensemble nor the realisations of random effects
can (in general) be assumed to be realisations of iid variables. 
A range of different loss functions may be
considered in order to optimise the estimation of the EDF in equation
(\ref{eq:edf}). Previous authors have formalised this decision problem
by using the integrated squared error loss function (ISEL), which
takes the following form \citep[see][]{Shen1998}, 
\begin{equation}
    \op{ISEL}(F_{n},F_{n}^{\op{est}}):= \int (F_{n}(t)-F^{\op{est}}_{n}(t))^{2}dt.
    \label{eq:isel}  
\end{equation}
The posterior expectation of the ISEL can be easily minimised using
Fubini's theorem to invert the ordering of the two integrals (i.e.
the one with respect to $t$, and the one with respect to $\bth$, see
\citet{Shen1998}, for a formal proof). It
then follows that the optimal estimator of
$\E[\op{ISEL}(F_{n},F_{n}^{\op{est}})|\by]$ is the posterior EDF, 
\begin{equation}
      \widehat{F}_{n}(t) := \E[F_{n}(t)|\by] = 
          \frac{1}{n}\sum_{i=1}^{n} \p[\theta_{i}\leq t |\by].
          \label{eq:isel minimiser}
\end{equation}
When there may be ambiguity as to which parameter ensemble the
posterior EDF is based on, we will emphasise the dependence on the
vector $\bth$, by using $\widehat{F}_{\bth}(t)$.

Finally, as discussed in section \ref{sec:function of parameters}, one
is sometimes interested in specific functions of a parameter. For the case of
parameter ensembles, summary functions are often used to quantify
particular properties of the ensemble distribution, such as the variance of the
ensemble, for instance. These functions are generally real-valued, and
the decision problem, in this context, can therefore be formalised as the
standard quadruple: $(\R,\R,[0,+\infty),L)$, for some loss function
$L$. When using the quadratic loss, we may have
\begin{equation}
     \op{SEL}(h(\bth),\delta) = (h(\bth)-\delta)^{2},
\end{equation}
for a function of interest, $h:\bTh \mapsto \Theta$,
with $\bTh\subseteq\R^{n}$ and $\Theta\subseteq\R$. One may, for
instance, wish to estimate the empirical variance
of the parameter ensemble. This gives the following $h(\cdot)$, 
\begin{equation}
      h(\bth) := \frac{1}{n}\sum_{i=1}^{n}(\theta_{i}-\bar\theta)^{2},
\end{equation}
where $\bar{\theta}:=n^{-1}\sum_{i=1}^{n}\theta_{i}$.
Naturally, in this particular case, the optimal Bayes estimator would be
$\E[h(\bth)|\by]$. However, as argued in section \ref{sec:function of parameters},
since $h(\cdot)$ is a non-linear function of $\bth$, the posterior
empirical variance is different from the empirical variance
of the posterior means. It turns out that
this particular observation has led to a key result in the study of
BHMs, as we describe in the next section. 

\subsection{Hierarchical Shrinkage}
Although the vector of posterior means is optimal under SSEL,
their empirical variance is biased in
comparison to the empirical variance of the true parameters of interest. 
The theorem that shows this systematic bias 
was first proved by \citet{Louis1984} for the Gaussian compound case
and further generalised by \citet{Ghosh1992}, who relaxed the
distributional assumptions, but retained the modelling assumptions. In
particular, whereas \citet{Louis1984} proved this result for the case
of conjugate models composed of Normal distributions,
\citet{Ghosh1992} showed that this relationship also holds
for non-conjugate models based on arbitrary probability densities.
\begin{thm}[\tb{Ghosh-Louis Theorem}]\label{thm:gl}
  Let a parameter ensemble $\bth$ controlling the distribution of a
  vector of observations $\by$, in a general two-stage hierarchical
  model as described in equation (\ref{eq:bhm}). If $n\geq 2$, then  
  \begin{equation}
      \E\lt[\lt. \frac{1}{n}\sum_{i=1}^{n} (\theta_i - \bar\theta)^2\rt|\by\rt] \geq
      \frac{1}{n}\sum_{i=1}^{n}\lt[\E[\theta_i|\by] - \frac{1}{n}\sum_{i=1}^{n}\E[\theta_i|\by]\rt]^2,
      \label{gl1}
  \end{equation}
  where $\bar\theta := n^{-1}\sum_{i=1}^{n} \theta_i$;
  with equality holding if and only if 
  all $\lb(\theta_1-\bar\theta), \ldots, (\theta_n-\bar\theta)\rb$ have
  degenerate posteriors.
\end{thm}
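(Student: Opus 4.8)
The plan is to rewrite both sides of (\ref{gl1}) as quadratic forms in $\bth$ and reduce the whole claim to the non-negativity of a single trace. First I would set $\hat\bth := (\E[\theta_1\mid\by],\ldots,\E[\theta_n\mid\by])$ and introduce the centring matrix $\bH := \bI - n^{-1}\uv\uv^{\top}$, which is symmetric and idempotent. Since the $i$-th entry of $\bH\bth$ is $\theta_i - \bar\theta$, the empirical variance on the left-hand side of (\ref{gl1}) is exactly $n^{-1}\bth^{\top}\bH\bth$, while the quantity on the right-hand side is $n^{-1}\hat\bth^{\top}\bH\hat\bth$.

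Next, writing $\bSig$ for the (assumed finite) posterior covariance matrix of $\bth$, I would apply the standard identity for the posterior expectation of a quadratic form,
\[
   \E[\bth^{\top}\bH\bth \mid \by] = \tr(\bH\bSig) + \hat\bth^{\top}\bH\hat\bth .
\]
Subtracting the right-hand side of (\ref{gl1}) from the left then collapses the entire inequality to the single statement $\tr(\bH\bSig) \geq 0$.

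For this final step I would use idempotence together with the cyclic property of the trace to write $\tr(\bH\bSig) = \tr(\bH\bSig\bH)$, and observe that $\bH\bSig\bH$ is precisely the posterior covariance matrix of the centred ensemble $\bH\bth$. Its diagonal entries are therefore the posterior variances $\var(\theta_i - \bar\theta \mid \by)$, so that
\[
   \tr(\bH\bSig) = \sum_{i=1}^{n}\var(\theta_i - \bar\theta \mid \by) \geq 0 ,
\]
which is the desired inequality. Equality forces every summand to vanish, i.e. each $\theta_i - \bar\theta$ has zero posterior variance and hence a degenerate posterior -- exactly the stated condition; the hypothesis $n\geq 2$ serves only to exclude the trivial case $n=1$, where both sides are identically zero.

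The step demanding the most care is this reduction to a trace: the bookkeeping in the quadratic-form expectation and the recognition of the centring-matrix structure. A matrix-free alternative is to expand the left-hand side as $n^{-1}\sum_i \var(\theta_i\mid\by) - \var(\bar\theta\mid\by)$ plus the empirical variance of the $\E[\theta_i\mid\by]$, and then establish $n^{-1}\sum_i \var(\theta_i\mid\by) \geq \var(\bar\theta\mid\by)$ directly by Cauchy--Schwarz; this is equally short but makes the equality condition more awkward to track. A final point to flag is integrability: the argument presupposes that the posterior second moments are finite, which must be assumed (and is guaranteed in the models of interest by propriety of the posterior).
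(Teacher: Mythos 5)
Your proof is correct; the paper itself does not supply a proof but defers to Ghosh (1992), whose argument is the componentwise conditional variance decomposition $\E[(\theta_i-\bar\theta)^2\mid\by]=\var(\theta_i-\bar\theta\mid\by)+\lt(\E[\theta_i\mid\by]-\E[\bar\theta\mid\by]\rt)^2$ summed over $i$ --- exactly what your identity $\E[\bth^{\top}\bH\bth\mid\by]=\tr(\bH\bSig\bH)+\hat\bth^{\top}\bH\hat\bth$ expresses in matrix form, so the two routes are essentially the same. Your treatment of the equality condition and of the $n\geq 2$ hypothesis is right, and the caveat about finiteness of posterior second moments is a worthwhile addition.
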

A proof of this result is provided in \citet{Ghosh1992}, and a
weighted version is presented in \citet{Frey2003}. The Ghosh-Louis
theorem bears a lot of similarity with the \ti{law of total variance}, 
which posits that $\var(X) = \E\lt[\var\lt(X|Y\rt)\rt] +
\var\lt(\E\lt[X|Y\rt]\rt)$. One
may, however, note that there exists one substantial difference
between this standard law of probability and  the result at hand. 
The Ghosh-Louis theorem differs from the law of total
probability in the sense that the former is conditioning on the data
on both sides of the equation.

The Ghosh-Louis theorem states a general property of
BHMs. Hierarchical shrinkage is the under-dispersion of the empirical
distribution of the posterior means in comparison to the posterior
mean of the empirical variance of the true parameter ensemble. This
should be contrasted to the commonly encountered issue of shrinkage in Bayesian model, where
a single posterior mean is shrank towards its prior mean. 
Although hierarchical shrinkage is here presented as a problem, it is
often seen as a desirable property of BHMs;
most especially when little information is available for each data
point. This theorem has been used both to justify such a modelling
decision and to highlight the limitations of this choice. 
In spatial epidemiological settings, Gelman and Price
\citeyearpar{Gelman1999} have shown that such shrinkage especially affects areas with
low expected counts. Before reviewing the different decision-theoretic
solutions that have been proposed to produce better estimates of the
empirical properties of parameter ensembles, we briefly introduce
several statistical concepts, which will be useful in the sequel.

\section{Ranks, Quantile Functions and Order Statistics}\label{sec:preliminaries}
\subsection{Ranks and Percentile Ranks}\label{sec:ranks}
We here adopt the nomenclature introduced by \citet{Laird1989} on order
statistics and rank percentiles as a guidance for our choice of
notation. Of particular importance to our development is the definition
of a rank. The rank of an element in a parameter ensemble is
defined as follows,
\begin{equation}\label{eq:rank}
    R_{i}(\bth) := \op{rank}(\theta_{i}|\bth) =
   \sum_{j=1}^{n} \cI\lt\lb \theta_{i}\geq \theta_{j}\rt\rb,  
\end{equation}
where the smallest $\theta_{i}$ in $\bth$ is given rank 1 and the largest $\theta_{i}$
is given rank $n$. In equation (\ref{eq:rank}), we have made explicit the fact that the rank of each
$\theta_{i}$ depends on the entire parameter ensemble, $\bth$. In particular, this
notation emphasises the fact that the function $R_{i}(\cdot)$ is
non-linear in its argument. 

In the sequel, we will also make extensive use of percentile ranks
(PRs). These are formally defined as follows, 
\begin{equation}
   \label{eq:percentile}
   P_{i}(\bth) := \frac{R_{i}(\bth)}{n+1},
\end{equation}
Quite confusingly, percentile ranks are sometimes referred to as
``percentiles'', which should not be confused with the notion of percentiles
discussed in section \ref{sec:quantile function} in reference to quantiles.
In general terms, the percentile rank of an element in a parameter ensemble is the
percentage of elements in the corresponding EDF, which are lower or
equal to the value of that rank. Rank percentiles are especially useful
in statistics when communicating ranking statistics to
practitioners. Percentile ranks are empirical quantities, in the sense
that they depend on the size of the parameter ensemble, $n$. 
However, it can be shown that percentile ranks rapidly converge to
asymptotic quantities, which are independent of $n$ \citep{Lockwood2004}. 

\subsection{Quantile Function}\label{sec:quantile function}
The quantile function, denoted $Q(p)$, of a continuous random variable $X$ is
formally defined as the inverse of the cumulative distribution
function (CDF), $F(x)$. Formally, since the function
$F:\R\mapsto[0,1]$ is continuous and strictly monotonic, we can define 
\begin{equation}
   Q(p) := F^{-1}(p), 
\end{equation}
for some real number $p\in [0,1]$. In general, the inverse function of the CDF
of most random variables does not exist in closed-form. Among the rare 
exceptions are the uniform distribution, $\op{unif}(x|a,b)$, and the exponential
distribution, $\op{exp}(x|\lambda)$, for which the quantile functions
are $Q(p|a,b)=(1-p)a + pb$ and $Q(p|\lambda)=-\log(1-p)/\lambda$, respectively \citep[see][]{Gilchrist2000}. 
For discrete random variables, whose CDF may only be weakly monotonic,
the quantile distribution function (QDF) is defined as 
\begin{equation}
    Q(p) := \inf\lt\lb x \in \R: F(x)\geq p \rt\rb,
    \label{eq:qdf}
\end{equation}
for every $p\in[0,1]$. Note that the latter definition holds for any
arbitrary CDF, whether continuous or discrete. Like the CDF, the QDF is monotone
non-decreasing in its argument. It is continuous for continuous random variables, and
discrete for discrete random variables. However, whereas the $F(x)$ is
right-continuous, its inverse is, by convention, continuous from the
left. This last property is a consequence of the use of the infimum in equation
\ref{eq:qdf}. When it is deemed necessary for clarity, we will specify the
random variable for which a quantile is computed by a subscript, such
as in $Q_{X}(p)$, for the random variable $X$. More generally, we will 
distinguish between the \ti{theoretical} QDF and the \ti{empirical}
QDF, by denoting the former by $Q(p)$ and the latter by $Q_{n}(p)$.
For some parameter ensemble $\bth$ of size $n$, we define the
empirical QDF as follows,
\begin{equation}
    Q_{n}(p) := \min\lt\lb \theta_{1},\ldots,\theta_{n}:
                F_{n}(\theta_{i})\geq p\rt\rb,
\end{equation}
where $F_{n}$ is the EDF of the parameter ensemble as defined in
equation (\ref{eq:edf}). Note that our utilisation of the EDF, in this
context, corresponds to a slight abuse of the concept. As aforementioned,
the EDF of a particular random variable,
assumes that several iid realisations of that random variable are
available. In our case, given the hierarchical nature of the models
under scrutiny, and the possible spatial structure linking the different
geographical units of interest, such an iid assumption is not
satisfied. Our use of the term empirical distribution
should therefore be understood solely in reference to our
chosen mode of construction for the discrete distribution of an
ensemble of realisations.

When we wish to emphasise the vector of
parameters with respect to which the QDF and EDF are defined, we will
use the notation, $Q_{\bX}$ and $F_{\bX}$, respectively. Sometimes, it
will be useful to allow the function $Q(\cdot)$ to accept multivariate
arguments, such as $\bp:=\lb p_{1},\ldots,p_{k}\rb$. In such cases,
the QDF will be vector-valued, such that 
\begin{equation}
    Q(\bp) = Q(\lb p_{1},\ldots,p_{k}\rb) = \lb
    Q(p_{1}),\ldots,Q(p_{k})\rb.
   \label{eq:qdf vector}
\end{equation}

In contrast to the sample mean, for which the relationship
$\E[g(X)]=g(\E[X])$ only holds for linear function $g(\cdot)$, the
quantile function satisfies 
\begin{equation}
     Q_{h(X)}(p) = h[Q(p)], 
\end{equation}
for every monotonic non-decreasing function $h(\cdot)$. 
In particular, we have the following useful transformation 
\begin{equation}
    Q_{\log(X)}(p) = \log\lt[ Q_{X}(p)\rt],
    \label{eq:log quantile}
\end{equation}
which follows from the monotonicity of the logarithm. 
Moreover, we can recover the mean of $X$ from its quantile function, by
integrating the quantile function over its domain. That is, 
\begin{equation}
     \int_{[0,1]}Q_{X}(p) dp = \E[X],
\end{equation}
which is a standard property of the quantile function
\citep[see][]{Gilchrist2000}.
When considering the quantile function of the standard cumulative
normal distribution, denoted $\Phi(x)$, the quantile function
specialises to the \ti{probit}, defined as $\op{probit}(p):=
\Phi^{-1}(p)$. The probit is widely used in econometrics as a link
function for the generalised linear model when modelling binary
response variables. 

\subsection{Quantiles, Quartiles and Percentiles}\label{sec:quantiles}
In this thesis, we will be mainly concerned with the
quantiles of parameter ensembles, these quantities will therefore be defined
with respect to the EDF of a vector $\bth$. In such cases, the
$p\tth$ empirical \ti{quantile} of the ensemble $\bth$ is formally defined as
\begin{equation}
     \theta_{(p)} := Q_{\bth}(p),
     \label{eq:quantile}
\end{equation}
where the $.50\tth$ quantile is the empirical median. In the sequel, we will also make use
of the $.25\tth$, $.50\tth$ and $.75\tth$ quantiles, which are referred to as the
first, second and third empirical \ti{quartiles}, respectively. Of particular interest is the
difference between the third and first quartiles, which produces the
empirical interquartile range (IQR). For some parameter ensemble
$\bth$, we have
\begin{equation}
    \op{IQR}(\bth) := \theta_{(.75)}-\theta_{(.25)}. 
    \label{eq:iqr}
\end{equation}
In some texts, the term quantile is used to classify different types
of divisions of the domain of a probability distribution. More
precisely, the $k\tth$ $q$-quantile is defined as
$Q_{\bth}(k/q)$. In that nomenclature, the \ti{percentiles} therefore correspond
to the 100-quantiles, which would imply that the values taken by the quantile
function are, in fact, percentiles. Other authors, however, have used
the term quantile in a more straightforward manner, where the $p\tth$
quantile is simply defined as $Q_{\bth}(p)$, as in equation
(\ref{eq:quantile}) \citep[see, for instance,][]{Koenker1978}. In the
sequel, we adopt this particular definition of quantiles and only
use the term percentiles to refer to percentile ranks, as introduced
in section \ref{sec:ranks}. 

There exist different techniques to derive the quantile function and
quantiles of a particular CDF \citep[see][for recent
advances]{Steinbrecher2008}. One of the most popular methods has been
the algorithm AS 241 \citep{Wichura1988}, which permits to compute the
empirical $p\tth$ quantile of any finite parameter ensemble very
efficiently. We will make use of this standard computational technique in chapter
\ref{chap:mrrr}. 

\section{Loss Functions for Parameter Ensembles}\label{sec:loss for
  ensemble}
In this section, we review three important approaches, which have been
proposed to address the issue of hierarchical shrinkage that we have 
highlighted in the previous section. These three decision-theoretic
perspectives will be specifically considered in chapters \ref{chap:mrrr} and
\ref{chap:clas}, and compared with other methods through the
construction of plug-in estimators. Here, the parameter
space satisfies $\bTh\subseteq \R^{n}$, and $\cD=\bTh$, as before. 

\subsection{Constrained Bayes}
In order to address the problem associated with hierarchical shrinkage,
described in section \ref{sec:shrinkage}, 
\citet{Louis1984} developed a particular set of point estimates that
automatically correct for hierarchical shrinkage. 
This approach is a constrained minimisation problem, where 
$\ssel(\bth,\bth^{\op{est}}) = \sum_{i=1}^{n}( \theta_i - \theta^{\op{est}}_i)^2$
is minimised subject to the following two constraints: (i) the mean of the
ensemble of point estimates is equal to the mean of the true ensemble, 
\begin{equation}
     \bar\theta^{\op{est}} :=
     \frac{1}{n}\sum_{i=1}^{n}\theta^{\op{est}}_{i} =
     \frac{1}{n}\sum_{i=1}^{n}\theta_{i}=:\bar\theta,   
     \label{eq:constrained loss eq1}
\end{equation}
and (ii) the variance of the ensemble of point estimates is equal to the
variance of the true ensemble, 
\begin{equation}
      \frac{1}{n}\sum_{i=1}^{n}\lt(\theta^{\op{est}}_{i} -
      \bar\theta^{\op{est}}\rt)^{2} = 
      \frac{1}{n}\sum_{i=1}^{n}\lt(\theta_{i} -
      \bar\theta\rt)^{2}.
     \label{eq:constrained loss eq2}
\end{equation}
Based on these constraints, \citet{Louis1984} derived the optimal
Bayes estimator minimising the corresponding constrained SSEL
function. Since this method is developed within a Bayesian framework,
it is generally referred to as the constrained Bayes (CB) or constrained
empirical Bayes (EB) estimator, albeit its use is not restricted to models
estimated using EB techniques \citep[see][]{Rao2003}. 
This result was further generalised by \citet{Ghosh1992} for any
distribution belonging to the exponential family. 
\begin{thm}[Ghosh-Louis Estimator]
  The minimiser of the SSEL under the constraints in equation
  (\ref{eq:constrained loss eq1}) and (\ref{eq:constrained loss eq2}) 
  is
  \begin{equation}
    \hat\theta^{\op{CB}}_i :=  \ome \hat\theta^{\op{SSEL}}_i +
    (1-\ome)\hat{\bar\theta}^{\op{SSEL}}, \label{eq:cb}
  \end{equation}
  where $\hat{\theta}^{\op{SSEL}}_{i}:=\E[\theta_i|\by]$, for every $i=1,\ldots,n$,
  and $\hat{\bar\theta}^{\op{SSEL}}:= 1/n\sum_{i=1}^{n}\hat{\theta}_{i}^{\op{SSEL}}$ 
  is the mean of the empirical distribution of posterior means. The
  weight $\ome$ is defined as
  \begin{equation}
    \ome = \lt[ 1+ \frac{n^{-1}\sum_{i=1}^{n}\var\lt[\theta_i|\by\rt]
    }{n^{-1}\sum_{i=1}^n\lt(\hat\theta^{\op{SSEL}}_i -
      \hat{\bar\theta}^{\op{SSEL}}\rt)^2} \rt]^{1/2},
  \end{equation}
  where $\var[\theta_{i}|\by]$ is the posterior variance of the
  $i\tth$ parameter in the ensemble. 
\end{thm}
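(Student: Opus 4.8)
The plan is to recast the constrained Bayes problem as a deterministic optimisation over the decision vector $\bth^{\op{est}}\in\R^{n}$ and to resolve it with a short geometric argument, then pin down $\ome$ from the variance constraint. First I would pass to the posterior expected loss and expand about the posterior means. Writing $\hat\theta^{\op{SSEL}}_i=\E[\theta_i|\by]$, the cross term vanishes because $\E[\theta_i-\hat\theta^{\op{SSEL}}_i|\by]=0$, leaving
\begin{equation}
 \E\lt[\lt.\ssel(\bth,\bth^{\op{est}})\rt|\by\rt]
 = \sum_{i=1}^{n}\var[\theta_i|\by] + \sum_{i=1}^{n}\lt(\theta^{\op{est}}_i-\hat\theta^{\op{SSEL}}_i\rt)^{2}.
\end{equation}
The first sum does not involve $\bth^{\op{est}}$, so minimising the posterior SSEL subject to the constraints is equivalent to minimising the squared Euclidean distance $\sum_i(\theta^{\op{est}}_i-\hat\theta^{\op{SSEL}}_i)^{2}$ over the feasible set.

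Next I would make the constraints explicit. Because the estimates are fixed given $\by$, constraint (\ref{eq:constrained loss eq1}) reads $\bar\theta^{\op{est}}=\hat{\bar\theta}^{\op{SSEL}}$, and the posterior version of constraint (\ref{eq:constrained loss eq2}) reads $\tfrac1n\sum_i(\theta^{\op{est}}_i-\bar\theta^{\op{est}})^{2}=V$, where $V$ is the constant posterior expectation of the empirical variance of $\bth$. Decomposing each vector into its mean and its deviation part, the mean component of $\bth^{\op{est}}$ is pinned to $\hat{\bar\theta}^{\op{SSEL}}$ by the first constraint, so the objective collapses to the squared distance between the two deviation vectors. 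On the zero-sum subspace the second constraint fixes the norm of the deviation vector to $\sqrt{nV}$, so the problem becomes: among all vectors of a given norm, find the one closest to the deviation vector $(\hat\theta^{\op{SSEL}}_i-\hat{\bar\theta}^{\op{SSEL}})_i$. By Cauchy--Schwarz the minimiser is the positive scalar multiple, which immediately yields the linear shrinkage form $\hat\theta^{\op{CB}}_i=\ome\hat\theta^{\op{SSEL}}_i+(1-\ome)\hat{\bar\theta}^{\op{SSEL}}$ and automatically keeps the deviation vector in the zero-sum subspace, so the mean constraint is preserved for free. This geometric argument also certifies global optimality, the feasible set being a compact sphere inside the zero-sum subspace.

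It then remains to determine $\ome$. Substituting the shrinkage form into the variance constraint gives $\ome^{2}\cdot\tfrac1n\sum_i(\hat\theta^{\op{SSEL}}_i-\hat{\bar\theta}^{\op{SSEL}})^{2}=V$, so $\ome$ is the positive root of $\ome^{2}=V/[\tfrac1n\sum_i(\hat\theta^{\op{SSEL}}_i-\hat{\bar\theta}^{\op{SSEL}})^{2}]$; choosing the positive root is precisely what expands, rather than reflects, the posterior means, consistent with the under-dispersion established in Theorem~\ref{thm:gl} (indeed $\ome\geq 1$). The main work -- and the step I expect to be most delicate -- is evaluating $V$ and reconciling it with the stated formula. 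Using $\E[\theta_i^{2}|\by]=\var[\theta_i|\by]+(\hat\theta^{\op{SSEL}}_i)^{2}$ and the analogous identity for $\bar\theta$ one obtains $V=\tfrac1n\sum_i\var[\theta_i|\by]-\var[\bar\theta|\by]+\tfrac1n\sum_i(\hat\theta^{\op{SSEL}}_i-\hat{\bar\theta}^{\op{SSEL}})^{2}$, and dividing through reproduces the factor $1+\big(n^{-1}\sum_i\var[\theta_i|\by]\big)/\big(n^{-1}\sum_i(\hat\theta^{\op{SSEL}}_i-\hat{\bar\theta}^{\op{SSEL}})^{2}\big)$ exactly once the empirical variance of the true ensemble is centred at the posterior mean $\hat{\bar\theta}^{\op{SSEL}}$, which is what removes the extra $\var[\bar\theta|\by]$ term. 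This centring convention is the one subtlety I would be careful to state explicitly, since it is what makes the bookkeeping match the published $\ome$.
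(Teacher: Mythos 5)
Your proof is correct, but it takes a genuinely different route from the one in the thesis: there, the proof is a one--line appeal to Lagrange multipliers with a citation to Rao (2003, p.~221), whereas you give a self-contained argument --- bias--variance decomposition of the posterior SSEL, reduction to a nearest-point problem on a sphere in the zero-sum subspace, and Cauchy--Schwarz to force the minimiser to be a positive scalar multiple of the deviation vector. Your route buys more: it certifies global (not merely stationary) optimality, makes it transparent why the solution must have the linear shrinkage form with a \emph{common} weight $\ome$ and why the mean constraint is preserved automatically, and ties $\ome\geq 1$ directly to the Ghosh--Louis inequality. You have also correctly isolated the one genuine subtlety, which the thesis glosses over: an exact evaluation of $V=\E[\tfrac1n\sum_i(\theta_i-\bar\theta)^2\,|\,\by]$ yields $\tfrac1n\sum_i\var[\theta_i|\by]-\var[\bar\theta|\by]+\tfrac1n\sum_i(\hat\theta_i^{\op{SSEL}}-\hat{\bar\theta}^{\op{SSEL}})^2$, so the stated $\ome$ (which omits $-\var[\bar\theta|\by]$) corresponds either to your re-centring convention or, in the form actually given by Ghosh (1992) and reproduced in Rao, to dropping the covariance term $\var[\bar\theta|\by]=n^{-2}\bone'\var[\bth|\by]\bone$, which is asymptotically negligible relative to $n^{-1}\sum_i\var[\theta_i|\by]$. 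It would be worth stating explicitly that the exact constrained-Bayes weight carries this extra term and that the displayed $\ome$ is the standard simplification; with that caveat your argument is complete.
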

\begin{proof}
  The minimisation of the posterior expected loss is a constrained
  minimisation problem, which can be solved using Lagrange
  multipliers \citep[see][page 221]{Rao2003}. 
\end{proof}
The role played by the weight $\omega$ is more explicitly demonstrated by
transforming equation (\ref{eq:cb}), in order to obtain the following
\begin{equation}
    \hat\theta^{\op{CB}}_i =  
    \hat{\bar\theta}^{\op{SSEL}} + 
    \ome\lt(\hat\theta^{\op{SSEL}}_i - \hat{\bar\theta}^{\op{SSEL}}\rt). 
\end{equation}
In addition, note that the expression in equation
(\ref{eq:cb}) resembles a convex combination of the corresponding SSEL
estimator of the $i\tth$ element with respect to the mean of the
ensemble. This is not the case, however, since the weight $\ome$ may
take values greater than unity.  Note also that the weights do not
depend on the index $i$. This is a constant quantity, which is identical for all
elements in the parameter ensemble. In particular, for the Gaussian compound case,
described in equation (\ref{eq:compound gaussian}), the CB estimates have
a direct interpretation in terms of hierarchical shrinkage. Let
\begin{equation}
     y_i \stack{\ind}{\sim} N(\theta_i, \sig^2),\qq \theta_i
     \stack{\iid}{\sim} N(\mu, \tau^2),
\end{equation}
where we assume that $\sig^2$, $\tau^2$ and $\mu$ are known. Note that
here, for convenience, we have assumed $\sig^{2}$ to be constant over
all elements in the parameter ensemble, which differs from the model specification described in
equation (\ref{eq:compound gaussian}). The use of the Bayes' rule under quadratic
loss yields the conventional posterior mean, $\hat\theta^{\op{SSEL}}_i = \ga y_i + (1-\ga) \mu$, 
where $\ga$ is the countershrinkage parameter, defined as 
\begin{equation}
    \ga := \frac{\tau^2}{\sig^2 + \tau^2}. 
\end{equation}
In this context, it can be shown that the Ghosh-Louis constrained point
estimates bears some important structural similarities with the formulae of the
posterior means for this model. We have
\begin{equation}
   \hat\theta^{\op{CB}}_i \doteq \ga^{1/2} y_i + (1-\ga^{1/2})\mu,
\end{equation}
where $0\leq \ga \leq 1$ \citep[see][p.212]{Rao2003}, for every
$i=1,\ldots,n$, and where $\doteq$ means that the equality is an approximation.
This is a particularly illuminating result, which intuitively
illustrates how this estimator controls for the under-dispersion of the
posterior means. The coutershrinkage parameter $\ga$ is here given more weight by
taking its square root, as opposed to the use of $\ga$ in computing
the $\hat\theta_{i}^{\op{SSEL}}$'s. This produces a set of point estimates which are
therefore more dispersed than the posterior means.

This set of point estimates has very desirable asymptotic properties. As an ensemble, the mean and
the variance of the CB point estimates converge almost surely to
the mean and variance, respectively, of the true ensemble
distribution \citep{Ghosh1994}. Furthermore, the CB estimator has a readily
interpretable formulation as a shrinkage estimator, as we have seen for
the compound Gaussian case. However, the CB approach also suffers from
an important limitation: Its performance will be greatly influenced by
the functional form of the true ensemble distribution. In particular, 
since the CB estimator only match the first two moments of the true ensemble,
the empirical distribution of the CB point estimates may provide a poor
approximation of the ensemble distribution, when the distribution of interest is substantially
skewed. The next approach attempts to address this limitation by
directly optimising the estimation of the EDF of the parameter
ensemble. 

\subsection{Triple-Goal}\label{sec:gr}
The triple-goal estimator of a parameter ensemble was
introduced by \citet{Shen1998}. It constitutes a natural extension of
the CB approach. Here, however, instead of solely
constraining the minimisation exercise with respect to the first two
moments of the ensemble distribution, we consider three successive
goals, which are optimised in turn \citep[see also][]{Shen2000}. 
The set of point estimates resulting from these successive minimisations are
generally referred to as the GR point estimates, where $G$ denotes the EDF and
$R$ refers to the ranks. For consistency, we will therefore adhere to
this acronym in the sequel \citep{Shen1998}. 
Note, however, that the GR point estimates are not optimal for these three
goals, but will produce very good performance under each of these
distinct objectives. These three consecutive steps can be described as
follows. 
\begin{description}
\item[i.] Firstly, minimise the ISEL function, as introduced in equation
  (\ref{eq:isel}), in order to obtain an estimate 
  $\widehat{F}_{n}(t)$ of the ensemble EDF, $F_{n}(t)$, for some
  ensemble of interest, $\bth=\lb\theta_{1},\ldots,\theta_{n}\rb$. As we have seen
  in equation (\ref{eq:isel minimiser}), the optimal estimate is the
  posterior EDF defined as follows,
 \begin{equation}
      \widehat{F}_{n}(t) := \E[F_{n}(t)|\by] 
      = \frac{1}{n}\sum_{i=1}^{n} \p[\theta_{i}\leq t |\by]. 
  \end{equation}
\item[ii.] Secondly, minimise the ranks squared error loss (RSEL)
  in order to optimise the estimation of the ranks of the parameter ensemble $\bth$. Let
  $\bR^{\op{est}}:=\bR^{\op{est}}(\bth)$ and $\bR:=\bR(\bth)$ denote
  the vector of candidate ranks and the vector of true ranks, respectively. We wish
  to minimise the following loss function, 
  \begin{equation}
    \op{RSEL}(\bR,\bR^{\op{est}}) = \frac{1}{n} \sum_{i=1}^{n}\lt( R_{i}- R^{\op{est}}_{i}\rt)^{2}. 
  \end{equation}
  The vector of optimisers $\bar{\bR}$ of the posterior expected
  RSEL is composed of the following elements,
  \begin{equation}        
       \bar{R}_{i}:= \E[R_{i}|\by] = \sum_{j=1}^{n}\p(\theta_{i}\geq\theta_{j}|\by).
  \end{equation}
  The $\bar{R}_{i}$'s are not generally integers. However, one can
  easily transform the $\bar{R}_{i}$'s into integers by ranking
  them such that 
  \begin{equation}
       \widehat{R}_{i} := \op{rank}(\bar{R}_{i}|\bar{\bR}),
  \end{equation}
  for every $i=1,\ldots,n$. The $\widehat{R}_{i}$'s are then used as
  optimal estimates of the true ranks, under RSEL. 
\item[iii.] Finally, we generate an ensemble of points estimates,
  conditional on the optimal estimate of the ensemble
  EDF, $\widehat{F}_{n}$, and the optimal estimates of the true
  ranks, $\widehat{R}_{i}$'s. This is done by setting 
  \begin{equation}
    \hat\theta^{\op{GR}}_{i}:=
    \widehat{F}_{n}^{-1}\lt(\frac{2\widehat{R}_{i} -1}{2n}\rt),
    \label{eq:gr estimates}
  \end{equation}
  for every $i=1,\ldots,n$. The $-1$ in the numerator of
  equation (\ref{eq:gr estimates}) arises from the minimisation of the
  posterior expected ISEL \citep[see][for this derivation]{Shen1998}.
\end{description}
Despite the seeming complexity of the successive minimisations
involved in producing the triple-goal estimators, the computation is
relatively straightforward \citep[more details can be found in][]{Shen1998}. 
We note, however, that one of the limitations of the
triple-goal technique is that it heavily relies on the quality of the
prior distribution --that is, the joint prior distribution for the
$\theta_{i}$'s, which we denoted by $p(\bth|\bxi)$ in equation
(\ref{eq:bhm}). Specific non-parametric methods have been proposed
to obtain an EB estimate of the prior distribution, which
permits to attenuate these limitations. The smoothing by roughening (SBR)
algorithm introduced by \citet{Shen1999} is an example of such a
technique. In such cases, the joint prior distribution, $p(\bth|\bxi)$,
is estimated using the SBR algorithm. Estimation of the parameter
ensemble is then conducted using standard Bayesian inference based on 
this SBR prior. \citet{Shen1999} demonstrated good performance of this
method using simulated data and rapid convergence of the SBR
algorithm. 

\subsection{Weighted and Weighted Ranks Loss Functions}\label{sec:wrsel}
The standard SSEL framework can also be extended by the 
inclusion of a vector of weights within the quadratic loss. 
These weights, denoted $\phi(\theta_i)$, may be specified as a function of the
unknown parameters of interest, with $\phi:\R\mapsto\R$. The weights may
therefore be made to vary with each of the elements in $\bth$. In
addition, since loss functions are assumed to be strictly positive,
all weights are also constrained to be positive: they satisfy
$\phi_i\geq 0,\,\forall\,i=1,\ldots,n$.
The resulting weighted squared error loss (WSEL) is then
\begin{equation}
    \op{WSEL}(\bphi,\bth,\bth^{\op{est}}) =
    \sum_{i=1}^n\phi(\theta_i)(\theta_i-\theta^{\op{est}}_i)^2,
    \label{eq:wsel}
\end{equation}
with $\phi(\theta_i)\geq 0$ for all $i=1,\ldots,n$.
The set of optimal point estimates, denoted $\theta_i^{\op{WSEL}}$'s, that
minimise the posterior expected WSEL take the following form
\begin{equation}
        \hat\theta_i^{\op{WSEL}}
        :=\E[\phi(\theta_i)\theta_i|\by]/\E[\phi(\theta_i)|\by],
\end{equation}
for every $i=1,\ldots,n$.
Naturally, if the weights do not depend on the parameters of interest,
then the optimal estimators reduce to the posterior means. That is, when the
weights are independent of $\bth$, they can be extracted from the
expectations and cancel out. In the ensuing discussion, we will
therefore implicitly assume that the weights are functions of the $\bth$, and
they will be simply denoted by $\phi_{i}$'s. 

The weighted ranks squared error loss (WRSEL) is a component-wise
loss function that spans the entire set of order statistics of a
given function \citep{Wright2003}. The vector of weights
$\bphi$ is here dependent on the ranks of each of the
$\theta_i$'s. Taking $\hat\theta^{\op{WRSEL}}_i$
to be the Bayes decision under WRSEL, we therefore have a compound
loss function of the form,
\begin{equation}
    \op{WRSEL}(\bphi,\bth,\bth^{\op{est}})
        =  \sum_{i=1}^{n}\sum^{n}_{j=1} \phi_j
        \cI\lb R_{j}=i\rb(\theta_i-\theta^{\op{est}}_i)^2,
\end{equation}
where as before $R_{i}:=R_{i}(\bth)$ is the rank of the $i\tth$
element in the parameter ensemble. For notational convenience, we write
\begin{equation}
    \op{WRSEL}(\bphi,\bth,\bth^{\op{est}})=
          \sum_{i=1}^{n}\phi_{R_{i}}(\theta_i-\theta^{\op{est}}_i)^2,
    \label{eq wrsel}
\end{equation}
where $\phi_{R_i}:=\sum_{j=1}^{n} \phi_{j}\cI\lb R_{j}=i\rb$ 
identifies the weight that is assigned to the $i\tth$
rank. The Bayes action minimising the posterior expected WRSEL is an
ensemble of point estimates $\hat\bth^{\op{WRSEL}}$, whose elements are
defined as 
\begin{equation}
    \hat\theta_i^{\op{WRSEL}} := \frac{\sum^{n}_{j=1} \phi_j 
     \E[\theta_i|R_{j}=i,\by]\p(R_{j}=i|\by)}{\sum^{n}_{j=1} \phi_j
    \p(R_{j}=i|\by)}, 
\end{equation}
where $\p(R_{j}=i|\by)$ is the posterior probability
that the $i\tth$ element has rank $j$. Each estimator $\hat\theta_i^{\op{WRSEL}}$ can therefore
be seen as a weighted average of conditional posterior means of
$\theta_i$, given that this element has rank $j$.
Note that we can further simplify the formulae provided by
\citet{Wright2003} by using the law of total expectation,
$\int\E(x|y)p(y)dy=\E(x)$, such that 
\begin{equation}
   \hat\theta^{\op{WRSEL}}_i = \frac{\E\big[\theta_i\phi_{R_{i}}|\by\big]}
                     {\E\big[\phi_{R_{i}}|\by\big]},
\end{equation}
for every $i=1,\ldots,n$, which is equivalent to the 
minimiser of the standard WSEL function presented in equation (\ref{eq:wsel}).

In addition, \citet{Wright2003} noted that the WRSEL and the SSEL are equivalent up to a multiplicative
constant, which is the vector of weights $\bphi$. That is,
\begin{equation}
    \op{WRSEL}=\bphi\op{SSEL}.
\end{equation}
Moreover, the WRSEL trivially reduces to the SSEL when $\bphi=\bone_n$. It therefore
follows that the WRSEL is a generalization of the SSEL. This family of loss
functions allows the targeting of a range of different inferential goals by
adjusting the shape of the vector of weights,
$\bphi$. \citet{Wright2003} and \citet{Craigmile2006} proposed a 
vector of weights consisting of a bowl-shaped function of the rank of each
of the $\theta_{i}$'s, which emphasises estimation of the extreme
quantiles of the ensemble distribution. \citet{Wright2003} makes use
of the following specification for the $\phi_{i}$'s, 
\begin{equation}
  \phi_i := \exp\lt\lb a_{1}\lt(i - \frac{n+1}{2}\rt)\rt\rb
            + \exp\lt\lb - a_{2}\lt(i - \frac{n+1}{2}\rt)\rt\rb,
  \label{eq:wrsel weights}
\end{equation}
for every $i=1,\ldots,n$, where both $a_{1}$ and $a_{2}$ are real
numbers greater than 0. Different choices of $a_{1}$ and $a_{2}$ permit to adjust the
amount of emphasis put on the extremes of
the EDF. In the sequel, when considering spatial models, these two parameters will be
fixed to $a_{1}=a_{2}=0.5$, which is a symmetric version of the
specification used by \citet{Wright2003}. When considering non-spatial
models, however, we 
will see that the shape of the ensemble of WRSEL point estimates is
highly sensitive to the choice of $a_{1}$ and $a_{2}$, as will be
described in section \ref{sec:plug-in q-sel}. For such non-spatial
models, we will choose the following `softer' specification: $a_{1}=a_{2}=0.05$.
Such symmetric specifications of the $\phi_{i}$'s emphasises the estimation
of the extrema over the estimation of other elements occupying the
middle ranks of the parameter ensemble. Therefore, the ensemble
distribution of WRSEL point estimators resulting from such a
specification of the $\phi_{i}$'s will be expected to be more
dispersed than the empirical of the posterior means. 

\section{Research Questions}\label{sec:research question}
The main focus of this thesis is the estimation of
\ti{functions} of parameter ensembles. Given a particular BHM, we wish to
estimate a function $h(\cdot)$ of the parameter ensemble $\bth$. As we have
seen in section \ref{sec:function of parameters}, the optimal estimator of $h(\bth)$
for some loss function $L$ is the following, 
\begin{equation}
     \delta^{\as} := \argmin_{\delta}
     \E\lt[\lt. L(h(\bth),\delta)\rt|\by\rt].
     \label{eq:optimal}
\end{equation}
Depending on the nature of the function $h(\cdot)$, the computation of
the optimiser $\delta^{\as}$ could be relatively expensive. 
By contrast, computation of the ensemble of optimal point estimates,
$\hat\bth^{L\pr}$, for a variety of commonly used loss functions $L\pr$,
is generally straightforward. Moreover, in the interest of
consistency, one generally wishes to report a single set of point
estimates that may simultaneously satisfy several inferential
desiderata. Thus, we are interested in evaluating the performance of
$h(\hat\bth^{L\pr})$ as an alternative to the optimal
estimator, $\delta^{\as}$. For instance, one may evaluate the performance of a
plug-in estimator based on the ensemble of posterior means, when this
particular estimator --i.e. $h(\hat\bth^{\op{SSEL}})$-- is used in the
place of $\delta^{\as}$.

A natural way of comparing the performance of different sets of
estimators, is to utilise the posterior regret \citep{Shen1998}. The
posterior regret expresses the penalty paid for
using a sub-optimal estimator. That is, it is defined as the
difference between the posterior loss associated with using the Bayes
choice and the posterior loss incurred for using another
estimator. Formally, the posterior regret for using $\delta\pr$ under
$L$ is 
\begin{equation}
   \op{regret}(L,\delta\pr) := \E[L(\theta,\delta\pr)|\by]
      - \min_{\delta}\E[L(\theta,\delta)|\by].
      \label{eq:regret}
\end{equation}
As for loss functions, $\op{regret}(L,\delta\pr)\geq 0$, with equality
holding if and only if $\delta\pr=\argmin\E[L(\theta,\delta)|\by]$.
One of the first utilisations of this concept was \citet{Savage1954},
although not expressed in terms of posterior
expectations. \citet{Shen1998} specifically used the posterior regret
when comparing ensembles of point estimators. 
This concept has also been used in more applied settings such
as in computing credibility premiums in mathematical finance
\citep{Gomez-Deniz2006}.

For our purpose, we can specialise the definition in equation
(\ref{eq:regret}) to the estimation of functions of parameter ensembles in BHMs. 
We will be interested in considering the posterior regret associated
with using a plug-in estimator, $h(\hat\bth^{L\pr})$, for some loss
functions $L\pr$. We will therefore compute the following posterior
regret, 
\begin{equation}
      \op{regret}(L,h(\hat\bth^{L\pr})) = \E[L(\bth,h(\hat\bth^{L\pr}))|\by]
      - \min_{\bm\delta}\E[L(\bth,\bm\delta)|\by],
      \label{eq:regret research}  
\end{equation}
where the optimal estimator $\bm\delta^{\as}$ under $L$ may be
vector-valued. In general, we will refer to the quantity in equation
(\ref{eq:regret research}) as the posterior regret based on $L(\bth,h(\hat\bth^{L\pr}))$.
When comparing different plug-in estimators, the classical loss function, denoted
$L\pr$ in equation (\ref{eq:regret research}), will be taken to be 
one of the loss functions reviewed in section \ref{sec:loss for
  ensemble}. In particular, following the research conducted in estimating parameter
ensembles in BHMs, the $L\pr$ function of interest will be taken to be
the SSEL, WRSEL, constrained Bayes or triple-goal loss. Moreover, we will also
evaluate the posterior regret for the ensemble of MLEs, as these
estimators represent a useful benchmark with respect to which the
performance of other plug-in estimators can be compared. For
clarity, since optimal estimators under $L\pr$ are vectors of
point estimates, we will denote them as $\hat{\bth}^{L\pr}$.
In order to facilitate comparison between experimental conditions
where the overall posterior expected loss may vary, it will be useful
to express the posterior regret in equation (\ref{eq:regret research})
as a percentage of the minimal loss that can be achieved using the
optimal minimiser. That is, we will generally also report the
following quantity, 
\begin{equation}
     \op{PercRegret}(L,h(\hat\bth^{L\pr}) := 
     \frac{100 \times \op{regret}(L,h(\hat\bth^{L\pr}))
     }{\min_{\bm\delta}\E[L(\bth,\bm\delta)|\by]},  
    \label{eq:PercRegret}
\end{equation}
which we term the \ti{percentage regret}. Both the posterior 
and percentage regrets will be used extensively in the ensuing
chapters. In order to emphasise the distinction between these two quantities, 
the posterior regret in equation (\ref{eq:regret research})
will sometimes be referred to as the absolute posterior regret. 

Specifically, in chapters \ref{chap:mrrr} and \ref{chap:clas}, we will study two aspects of parameter
ensembles in BHMs. Firstly, motivated by current practice in both
epidemiology and spatial epidemiology, we will consider the optimal determination of the
heterogeneity or dispersion of a parameter ensemble. In this case, the
function $h(\cdot)$ will either be the quantile function, as described
in section \ref{sec:quantiles} or a ratio of quartiles. 
Secondly, following an increased demand for the classification of
epidemiological units in low-risk and high-risk groups --for
surveillance purposes, for instance-- we will concentrate our attention
on optimising such classification. Here, the function $h(\cdot)$ will
be defined as an indicator function with respect to some cut-off point
of interest, denoted by $C$. Specifically, we will use two variants of the posterior
regret introduced in equation (\ref{eq:regret research}) for the
following choices of $L$ and $h(\cdot)$:
\begin{description}
  \item[i.] In chapter \ref{chap:mrrr}, we will optimise the
      estimation of the heterogeneity of a parameter ensemble. Our main focus will be on 
      defining $h(\cdot)$ as a ratio of some of the quartiles of the parameter ensemble
      $\bth$, with $L$ being the quadratic loss.
  \item[ii.] In chapter \ref{chap:clas}, we will treat the problem of
       classifying the elements of a parameter ensemble below and
       above a particular threshold. Here, the function $h(\cdot)$
       will be defined as the indicator function with respect to some
       cut-off point $C$, and the loss function of interest $L$
       will penalise misclassifications.
\end{description}


%
%
\chapter{Empirical Quantiles and Quartile Ratio Losses}\label{chap:mrrr}
\hspace{2cm}
\begin{minipage}[c]{11.5cm}
\small 
\begin{center}\tb{Summary}\end{center}\vspace{-.3cm}
The amount of heterogeneity present in a parameter ensemble is crucial in
both epidemiology and spatial epidemiology. A highly heterogeneous
ensemble of RRs, for instance, may indicate the presence of a hidden
risk factor not included in the model of interest. In modern BHMs,
however, the quantification of the heterogeneity of a parameter
ensemble is rendered difficult by the utilisation of link functions and
the fact that the joint posterior distribution of the parameter
ensemble is generally not available in closed-form. In this chapter,
we consider the estimation of such heterogeneity through the
computation of the empirical quantiles and empirical quartile ratio (QR)
of a parameter ensemble. We construct a simple loss
function to optimise the estimation of these quantities, which we term
respectively the quantile squared error loss (Q-SEL) and the QR squared
error loss (QR-SEL) functions. We compare the performance of the
optimal estimators under these two losses with the use of plug-in
estimators based on ensembles of point estimates obtained under
standard estimation procedures using the posterior regret. These
candidate plug-in estimators include the set of MLEs and the ensembles
of point estimates under the SSEL, WRSEL, CB and triple-goal
functions. In addition, we also consider the ratio of posterior
quartiles (RoPQ), as a possible candidate for measuring the
heterogeneity of a parameter ensemble. We evaluate the performance of
these plug-in estimators using non-spatial and spatially structured
simulated data. In these two experimental studies, we found that the
RoPQ and GR plug-in estimators tend to outperform other plug-in
estimators, under a variety of simulation scenarios. It was 
observed that the performance of the WRSEL function is highly
sensitive to the size of the parameter ensemble. We also noted that the CAR Normal
model tends to yield smaller posterior losses when using the optimal
estimator under both the Q-SEL and QR-SEL. In addition, we computed these
optimal and plug-in quantities for a real data set, describing the prevalence of cases of
schizophrenia in an urban population. On this basis, we draw some
recommendations on the type of QR summary statistics to use in practice. 
\end{minipage}

\section{Introduction}\label{sec:mrrr intro}
In mixed effects models, which can be regarded as the frequentist version of BHMs,
the main focus of the analysis has historically been on estimating
fixed effects --that is, the regression coefficients of
particular covariates. However, since the values of the fixed effect
parameters are highly dependent on the values taken by the random
effects, statisticians' attention has progressively turned to
the systematic study of random effects \citep{Demidenko2007}. Such a
vector of random effects can be described as a parameter ensemble as
introduced in section \ref{sec:bhm}. A standard way of quantifying
the amount of heterogeneity present in a parameter ensemble is to
use the intraclass coefficient (ICC). The ICC can be computed whenever 
quantitative measurements are made on units organised into
groups. This statistic constitute a useful complement
to the analysis of variance (ANOVA), for instance, where individual
units are grouped into $N$ groups, with $j=1,\ldots,N$. The nested
observations are then modelled as
\begin{equation}
    y_{ij} = \mu + \alpha_{j} + \epsilon_{ij},
\end{equation}
where $\alpha_{j}\stack{\iid}{\sim} N(0,\sig^{2}_{\alpha})$ and
$\epsilon_{ij}\stack{\iid}{\sim}N(0,\sig^{2}_{\epsilon})$. Here, the
$\alpha_{j}$'s are independent of the $\epsilon_{ij}$'s,
and we assume that there is an identical number of units in each
group, i.e. $i=1,\ldots,n$. For this model, the ICC is defined as follows, 
\begin{equation}
    \op{ICC} := \frac{\sig^{2}_{\alpha}}{\sig^{2}_{\alpha}+\sig^{2}_{\epsilon}}. 
\end{equation}
The ICC is a useful measure of parameter ensemble heterogeneity for
general linear models that contains random effects. However, there does
not exist an equivalent measure for generalised linear models.
Indeed, in generalised linear models, the random effects of interest 
affect the response through a link function, which transforms the
distribution of the original random effects. 
Thus, the variance parameter controlling the variability of these
random effects only provides a distorted image of the
dispersion of the parameter ensemble of interest.

Generalised linear models are commonly used in epidemiology and
spatial epidemiology. In this context, the
heterogeneity of a set of random effects may be indicative
of the effect of hidden covariates that have not been included in the
model. An accurate estimation of the dispersion of the random effects
is therefore crucial for the detection of unexplained variability,
which may then lead to the inclusion of new candidate risk factors. 
Most Bayesian models used in modern epidemiological practice, however,
are hierarchical. In such models, the set of parameters of interest
will be a non-linear function of one or more random effects as well as 
unit-specific covariates. The units may here be administrative
areas within a region of interest such as hospitals, schools or other
aggregates. For general BHMs, therefore, there does not exist a 
unique parameter, which controls for the variability of the
unit-specific levels of risk. The variance parameters of several random
effects can be added together, but this only constitutes a heuristic solution for
quantifying the heterogeneity of the resulting empirical distribution
of the parameters of interest. For such generalized linear models
commonly used in epidemiology, there is therefore a need for
a statistically principled way of estimating the heterogeneity of 
parameter ensembles.

One of the first formal attempts at estimating the amount of heterogeneity
present in a set of random effects was conducted by
\citet{Larsen2000}. These authors studied a mixed effects logistic regression model,
and considered the variance of the ensemble distribution of random effects, as a
potential candidate for a measure of dispersion. Formally, they
considered $N$ groups of individuals with $n$ subjects in each
group. The probability of the $i\tth$ individual in the $j\tth$
cluster to develop the disease of interest was modelled as 
\begin{equation}
   \p\lt[Y_{ij}=1| \alpha_{j},\bx_{i}\rt] = \frac{
     \exp\lt( \alpha_{j} + \sum_{k=1}^{K}\beta_{k}x_{ik}\rt)}{
     1+\exp\lt( \alpha_{j} + \sum_{k=1}^{K}\beta_{k}x_{ik}\rt)},
\end{equation}
where $i=1,\ldots,n$ and $j=1,\ldots,N$, and where the random effects
are assumed to be independent and identically distributed such that 
$\alpha_{j}\stack{\iid}{\sim}N(0,\sig^{2}_{\alpha})$, for every $j$. The
$\beta_{k}$'s constitute a set of fixed effects associated with
individual-specific $K$-dimensional vectors of risk factors denoted by
$\bx_{i}$, for every $i$. Although the variance parameter, $\sig^{2}_{\alpha}$,
controls the distribution of the random effects, it only affects the
probability of developing the disease of interest through the logistic map and
the exact relationship between the variances at the first and
second-level of the model hierarchy is therefore difficult to infer
from the sole consideration of $\sig^{2}_{\alpha}$. 

\citet{Larsen2000} addressed this problem by assessing the amount of
variability in the $\alpha_{j}$'s by computing the median odds ratio
(MOR). For two randomly chosen
$\alpha_{j}$'s drawn from the empirical distribution of the parameter
ensemble, \citet{Larsen2000} considered the absolute
difference between these two parameters. Since the $\alpha_{j}$'s 
are iid normal variates centered at zero with 
variance $\sig_{\alpha}^{2}$, as described above, it can be shown that the difference
between two such random variates is $\alpha_{j}-\alpha_{k}\sim N(0,2\sig_{\alpha}^{2})$ for any pair of
indices satisfying $j\neq k$ \citep{Spiegelhalter2004}.
Moreover, the absolute difference between two randomly chosen random
effects is a truncated Gaussian, sometimes called a half-Gaussian,
with mean zero and variance $2\sig_{\alpha}^{2}$. \citet{Larsen2000} suggested
taking the median of that distribution. For the aforementioned specification,
the median of such a truncated Gaussian is $\Phi^{-1}(0.75)\times
\sqrt{2}\sig_{\alpha}^{2}\doteq 1.09\sig_{\alpha}^{2}$. When the $\alpha_{j}$'s are
ORs on the logarithmic scale, the dispersion of the parameter ensemble becomes
$\exp(1.09\sig_{\alpha}^{2})$ \citep[see also][]{Larsen2005}. The MOR has been met with
singular success in the literature, and has been adopted by several
authors in the medical and epidemiological sciences. In particular, it has been used in cancer
epidemiology \citep[e.g.][]{Elmore2002} and in the study of
cardiovascular diseases \citep[e.g.][]{Sundquist2004}, as
well as in the monitoring of respiratory health
\citep[e.g.][]{Basagana2004}. 

In a Bayesian setting, however, the MOR will only be available in closed-form if 
the posterior distribution of the random effect ensemble is
also available in closed-form. This will rarely be the case in
practice. For more realistic models, the computation of
the MOR will typically require a substantial amount of computational power in order to
be estimated. Specifically, the calculation of the MOR is based
on the distribution of the differences between two random
variates. The exhaustive evaluation of such a distribution, however, will require
the computation of $\binom{n}{2}$ differences for a parameter ensemble 
of size $n$. If one wishes to estimate this particular quantity
within a Bayesian context, one could specify a quadratic loss on the
MOR. The optimal estimator under that loss function is the 
posterior mean. It then follows that we would need to compute the
median of the distribution of the absolute differences of any two
randomly chosen parameters at each
iteration of the MCMC algorithm used for fitting the model. Such an 
estimation scheme becomes rapidly unwieldy as $n$ grows (for
$n=2000$, about $2\times 10^{6}$ differences, $|\alpha_{j}-\alpha_{k}|$,
need to be computed at \ti{each} MCMC iteration). The use of the MOR for Bayesian models
thus appears to be impractical. 

A less computationally intensive method for quantifying the dispersion
of a parameter ensemble is the use of the quartile ratio (QR). This
quantity is defined as the ratio of the third to the first quartile of
a parameter ensemble. The use of quartiles for measuring heterogeneity
comes here with no surprise as these summary statistics have been shown to be especially
robust to deviation from normality \citep{Huber2009}. That is, the utilisation of quartiles as
summary statistics of the data is less sensitive to small departures from 
general modelling assumptions. This robustness of the quartiles is therefore
especially significant when the field of application is the 
generalised linear model, as in the present case. 
We will see that, for models with no covariates, the logratio of the
third to the first quartile of the empirical distribution of the RRs
is equivalent to the IQR of the random effects on their natural
scale. Like the IQR, the ratio of the third to first quartiles of the RRs
can thus be regarded as a non-parametric measure of dispersion. 

As any function of a parameter ensemble, the QR is amenable to
standard optimisation procedures using a decision-theoretic
approach. In this chapter, we will construct a specific 
loss function to quantify the estimation of this quantity. In
addition, it will be of interest to optimise the estimation of
the quantiles of the ensemble distribution. Specifically,
we will study the computation of the optimal first and
third quartiles of the ensemble distribution of the parameters of
interest. As described in section \ref{sec:research question}, one of our goals in this
thesis is to compare the use of optimal estimators with the
performance of plug-in estimators, based on commonly used ensembles of
point estimates. We will conduct these comparisons using the posterior
regret, as introduced in chapter \ref{chap:review}. The
performance of the various plug-in estimators will be
assessed by constructing two sets of data simulations. Different
experimental factors deemed to influence the estimators' performance
will be manipulated in order to precisely evaluate their effects. In
particular, the synthetic data sets will be constructed using both a
non-spatial and a spatial
structure. The non-spatial simulations used in the sequel 
reproduce in part some of the simulation
experiments conducted by \citet{Shen1998}, whereas the spatially
structured risk surfaces will be based on the simulation study
implemented by \citet{Richardson2004}.

In addition, we will also be evaluating the impact of using different
types of plug-in estimators for the estimation of the dispersion of parameter ensembles
in a real data set. We will re-analyse a published data set describing 
schizophrenia prevalence in an urban setting. Delineating the geographical distribution of psychoses plays an
important role in generating hypotheses about the aetiology of mental
disorders, including the role of socioeconomic status \citep{Giggs1987, Hare1956,
Hollingshead1958}, urbanicity \citep{Lewis1992}, migration 
\citep{Cantor-Graae2003}, ethnicity \citep{Harrison1997}
and, more recently, of family history interactions that may betray
effect modification by genes \citep{Krabbendam2005,Os2003}.
A raised prevalence of schizophrenia in urban areas was first observed
over 75 years ago \citep{Faris1939}, and has been consistently
observed since \citep{Giggs1973,Giggs1987,Hafner1969,Hare1956,
Lewis1992,Loffler1999}. \citet{Faris1939} demonstrated that the highest rates of
schizophrenia occurred in inner-city tracts of Chicago with a
centripetal gradient. More recently, a dose-response relationship
between urban birth and the prevalence of schizophrenia has been
reported in Sweden \citep{Lewis1992}, the Netherlands
\citep{Marcelis1998} and Denmark \citep{Pedersen2001}, suggesting that
a component of the aetiology of schizophrenia may be associated with the
urban environment. This specific data set is especially
relevant to the task at hand, since it is characterised by low
expected counts, which tend to lead to high levels of shrinkage in
commonly used ensembles of point estimates \citep{Gelman1999}; thereby providing a good
testing ground for comparing the use of optimal dispersion estimators
with the construction of plug-in estimators.

This chapter is organised as follows. We will first describe our decision-theoretic approach to the
estimation of both empirical quantiles and empirical QR in section 
\ref{sec:dispersion}. The construction of the non-spatial simulations
as well as the comparison of the optimal estimators with some plug-in
estimators of interest will then be reported in 
section \ref{sec:mrrr non-spatial sim}. The specification of the
spatial simulation scenarios and the effects of the different
experimental factors considered will be described in section \ref{sec:mrrr spatial sim}.
Finally, the analysis of the schizophrenia prevalence data set will be given in
section \ref{sec:real data}, and some conclusions on the respective
performance of the different studied estimators will be provided in
section \ref{sec:mrrr conclusion}.

\section{Estimation of Empirical Quantiles and Quartile Ratio}\label{sec:dispersion}
We here introduce two specific loss functions, which formalise our
approach to the estimation of parameter ensembles' quantiles and
QR. Both of these loss functions are straightforward adaptations of the
quadratic loss, which provides easily derivable optimal
minimisers. In addition, we discuss the computation of the respective posterior
regrets for these loss functions, as this will be used for
evaluating the performance of the plug-in estimators of the quantities of
interest. 

\subsection{Empirical Quantiles}\label{sec:edf quantiles}
The empirical quantiles of a parameter ensemble are especially relevant when
one wishes to quantify the dispersion of the ensemble distribution of a parameter ensemble.
As defined in section \ref{sec:quantiles}, the
$p\tth$ empirical quantile of an $n$-dimensional parameter ensemble
$\bth$ is 
\begin{equation}
       \theta_{(p)}:=Q_{\bth}(p),
\end{equation}
where $Q_{\bth}(\cdot)$ is the empirical QDF of $\bth$, and
$p\in[0,1]$. The quantity $\theta_{(p)}$ is a non-linear function of
$\bth$. Optimal estimation of this quantity can be formalised by
quantifying our loss using an SEL function on $\theta_{(p)}$, 
which takes the following form, 
\begin{equation}
     \op{SEL}\lt(Q_{\bth}(p),\delta\rt) = \lt(\theta_{(p)} - \delta\rt)^{2}.
\end{equation}
We saw in section \ref{sec:function of parameters} that the optimal
estimator for such a posterior expected SEL is the posterior
mean of the QDF of $\bth$ evaluated at $p$. Note that because
$\theta_{(p)}$ is a function of the entire parameter ensemble $\bth$,
it follows that the posterior expected loss depends on an $n$-dimensional
joint posterior distribution. When more than one
quantile is of interest, one may use a quantiles squared error loss
(Q-SEL) defined for some $k$-dimensional vector $\bp\in[0,1]^{k}$ as 
\begin{equation}
     \op{Q-SEL}_{\bm{p}}\lt(\bth,\bm{\delta}\rt) :=
     \sum_{j=1}^{k}\op{SEL}\lt(Q_{\bth}(p_{j}),\delta_{j}\rt) = 
     \sum_{j=1}^{k}\lt(\theta_{(p_{j})} - \delta_{j}\rt)^{2},
     \label{eq:qsel}
\end{equation}
where we have emphasised the dependence of the Q-SEL on the full
parameter ensemble $\bth$. Note, however, that $\bth$ and
$\bm\delta$ will generally not have the same dimension. Here,
$\bth$ is $n$-dimensional whereas $\bm\delta$ is $k$-dimensional.
Thus, the posterior expected Q-SEL is minimised by a $k$-dimensional vector
$\bth^{\op{Q-SEL}}$, which has the following elements, 
\begin{equation}
   \theta_{(p_{j})}^{\op{Q-SEL}}:= \E\lt[\lt.Q_{\bth}(p_{j})\rt|\by\rt] =
   \E\lt[\lt.\theta_{(p_{j})}\rt|\by\rt]. 
   \label{eq:qsel minimiser}
\end{equation}
for every $j=1,\ldots,k$, where the expectation is taken with respect
to the joint posterior distribution, $p(\theta_{1},\ldots,\theta_{n}|\by)$. 

Parameter ensemble quantiles can also be estimated using plug-in
estimators, which are based on empirical distributions of point
estimates, as described in section \ref{sec:research question}.
For any loss function $L\pr$, we define the empirical quantiles of an
ensemble of point estimates denoted $\hat\bth^{L\pr}$, as follows, 
\begin{equation}
      \hat\theta^{L\pr}_{(p)} :=: Q_{\hat\bth^{L\pr}}(p):= \min\lt\lb
      \hat\theta_{1}^{L\pr},\ldots,\hat\theta_{n}^{L\pr}:
       F_{\hat\bth^{L\pr}}(\hat\theta_{i}^{L\pr})\geq p\rt\rb,
       \label{eq:emp quantiles}
\end{equation}
for any $p\in[0,1]$, where the EDF of the ensemble $\hat\bth^{L\pr}$ is
defined as 
\begin{equation}
       F_{\hat\bth^{L\pr}}(t) := \frac{1}{n}\sum_{i=1}^{n} \cI\lb \hat\theta^{L\pr}_{i}\leq t\rb.
\end{equation}
Note that equation (\ref{eq:emp quantiles}) is solely the empirical
version of the general definition of a quantile reported in equation
(\ref{eq:qdf}) on page \pageref{eq:qdf}.
The performance of such plug-in estimators of the posterior quantiles
will be evaluated and compared to the optimal Bayes choice under
Q-SEL. In particular, we will consider ensembles of point estimates
based on the loss functions described in section 
\ref{sec:loss for ensemble}. This will include the SSEL, WRSEL, CB and
GR ensembles of point estimates, as well as the MLEs. 

\subsection{Empirical Quartile Ratio}\label{sec:qr}
A natural candidate for the quantification of the dispersion of a parameter
ensemble is the quartile ratio (QR). This quantity
is defined as the ratio of the third to the first empirical quartile of
the vector of parameters of interest, $\bth$. For a given BHM, of the type
described in equation (\ref{eq:bhm}), the QR is defined as follows,
\begin{equation}
    \op{QR}(\bth) := \frac{Q_{\bth}(.75)}{Q_{\bth}(.25)} =
    \frac{\theta_{(.75)}}{\theta_{(.25)}},
\end{equation}
where $\theta_{(.25)}$ and $\theta_{(.75)}$ denote the first and third
empirical quartiles, respectively. For generalized linear models, we will compute
this quantity on the scale of the data --that is, as a ratio of ORs for
a binomial likelihood, or as a ratio of RRs for a Poisson likelihood. When considering a
log-linear model (e.g. Poisson likelihood combined with a normal
prior on the log-intensities), the QR is related to the
IQR introduced in section \ref{sec:quantiles}. If we take the
logarithm of the QR, we obtain the IQR of the parameter ensemble on
the prior scale, i.e. on the scale of the normal prior in the case of
a log-linear model. We have
\begin{equation}
   \log\lt(\frac{\theta_{(.75)}}{\theta_{(.25)}}\rt) = 
   \log\theta_{(.75)} - \log\theta_{(.25)} =
   \op{IQR}(g(\bth)), 
\end{equation}
where $g(\cdot)$ is the log link function. 

The decision problem associated with the estimation of the QR can be
formulated using a quadratic loss taking the QR as an argument. For
clarity, we will refer to that particular quadratic loss as the QR
squared error loss (QR-SEL). This loss function is defined as follows,
\begin{equation}
     \op{QR-SEL}(\bth,\delta) := 
     \op{SEL}(\op{QR}(\bth),\delta) =
     \lt(\frac{\theta_{(.75)}}{\theta_{(.25)}} - \delta \rt)^{2}. 
    \label{eq:qrsel}
\end{equation}
The $\op{QR}(\cdot)$ function is a non-linear mapping of the
parameter ensemble $\bth$. Equation (\ref{eq:qrsel}) is of the form
$L(h(\bth),\delta)$, where $L$ is the
SEL and $h(\cdot)$ is the QR. The minimiser of the corresponding
posterior expected loss is therefore the following posterior quantity, 
\begin{equation}
     \op{QR}(\bth|\by) :=
     \E\lt[\lt.\frac{\theta_{(.75)}}{\theta_{(.25)}}\rt|\by\rt],
    \label{eq:qrsel minimiser}
\end{equation}
which will be referred to as the posterior empirical QR, where recall
that $\theta_{(.25)}$ and $\theta_{(.75)}$ denotes the \ti{empirical}
first and third quartiles of $\bth=\lb \theta_{1},\ldots,\theta_{n}\rb$.

It is of interest to compare the performance of this optimal estimator
with some plug-in estimators, $\op{QR}(\hat\bth^{L\pr})$. As before, the
ensemble of point estimates $\hat\bth^{L\pr}$ has been obtained as
the optimiser of $L\pr$, where $L\pr$ represents a commonly
used compound loss function. This therefore gives the following plug-in
QR estimator, 
\begin{equation}
      \op{QR}(\hat\bth^{L\pr}) :=  \frac{\hat\theta_{(.75)}^{L\pr}}{\hat\theta_{(.25)}^{L\pr}},
\end{equation}
where both numerator and denominator are here defined as in equation
(\ref{eq:emp quantiles}). Note that $\op{QR}(\hat\bth^{L\pr})$ only
depends on the data through the vector of point estimates,
$\hat\bth^{L\pr}$. An alternative solution to the problem of estimating the QR of a
parameter ensemble is the direct utilisation of the Bayes choice for
each quartiles. These quantities may have been estimated using the Q-SEL function described in
equation (\ref{eq:qsel}). In such a case, once
the minimiser of the posterior expected Q-SEL has been obtained, we
can estimate the QR as the ratio of posterior empirical quartiles (RoPQ). That
is, we have
\begin{equation}
      \op{RoPQ}(\bth|\by) := 
      \frac{\E[Q_{\bth}(.75)|\by]}{\E[Q_{\bth}(.25)|\by]}
      = \frac{\theta_{(.75)}^{\op{Q-SEL}}}{\theta_{(.25)}^{\op{Q-SEL}}},
      \label{eq:ropq}
\end{equation}
where the numerator and denominator are components of the
optimal estimator under Q-SEL, as described in equation (\ref{eq:qsel
minimiser}). Note that we have here drawn an explicit distinction
between posterior empirical quartiles and quartiles of an ensemble of
point estimates, which are respectively denoted by
$\theta^{\op{Q-SEL}}_{(p)}$ and $\hat\theta_{(p)}^{L\pr}$, by using a
hat to qualify the latter.

The QR is a useful indicator of the dispersion of a parameter
ensemble, when the elements of that ensemble are constrained to take
positive values. For parameter ensembles 
with Normally distributed elements, however, one needs to use a different
measure of dispersion. In such contexts, we will optimise the
estimation of the IQR, as introduced in section
\ref{sec:quantiles}. Such optimisation can be conducted using a
quadratic loss on the IQR such that 
\begin{equation}
     \op{IQR-SEL}(\bth,\delta) := 
     \op{SEL}(\op{IQR}(\bth),\delta) =
     \lt( (\theta_{(.75)}- \theta_{(.25)}) - \delta \rt)^{2}. 
     \label{eq:iqrsel}
\end{equation}
The optimal estimator of the posterior expected IQR-SEL is 
the posterior mean IQR, 
\begin{equation}
         \op{IQR}(\bth|\by):=\E[\theta_{(.75)}- \theta_{(.25)}|\by]. 
\end{equation}
As for the posterior QR, plug-in estimators for this quantity can be
defined by simply using the quartiles of the empirical distribution of
the corresponding point estimates. Alternatively, one can also use two
posterior empirical quartiles to
estimate the IQR, as was done for the QR-SEL in equation
(\ref{eq:ropq}). The latter estimator will be referred to as 
the difference of posterior empirical quartiles (DoPQ), defined as follows, 
\begin{equation}
       \op{DoPQ}(\bth|\by):=\E[\theta_{(.75)}|\by] -
       \E[\theta_{(.25)}|\by].      
       \label{eq:dopq}
\end{equation}
In the sequel, when dealing with ensembles of real-valued
parameters taking both negative and positive values, we will
automatically replace the QR-SEL with the IQR-SEL.

\subsection{Performance Evaluation}\label{sec:mrrr evaluation}
The performance of these different quantities will be evaluated using
simulated data, as well as compared in the context of real data
examples. In the sequel, the posterior expected loss associated with the use of the
optimal estimators under the Q-SEL and the QR-SEL 
functions will be compared to the posterior penalties incurred when
using various plug-in estimators under these loss functions. In particular, we will be interested in
assessing Q-SEL for $\bp:=\lb .25,.75\rb$, as this leads to a direct
comparison with the optimisation of the QR-SEL, which is related to
the estimation of these specific quartiles. 
As described in section \ref{sec:research question}, we will therefore
compute the following posterior regrets. 

In the notation of section \ref{sec:research question}, the function $h(\cdot)$, which takes an
ensemble of point estimates as an argument, becomes the empirical quantile
function or the empirical quartile ratio depending on whether we wish to evaluate
the posterior regret of the Q-SEL or the one of the QR-SEL function. 
Specifically, when the empirical quantiles are of interest, then
equation (\ref{eq:regret research}) becomes 
\begin{equation}
    \op{regret}\lt(\op{Q-SEL}_{\bp},Q_{\hat\bth^{L\pr}}(\bp)\rt) = 
    \E\lt[\lt.\op{Q-SEL}_{\bp}\lt(\bth,Q_{\hat\bth^{L\pr}}(\bp)\rt)
    \rt|\by\rt] - \min_{\bm\delta}
    \E\lt[\lt.\op{Q-SEL}_{\bp}(\bth,\bm\delta)\rt|\by\rt],  
    \notag
\end{equation}
where the optimal estimator, $\bm\delta$, minimising Q-SEL is the corresponding
vector of posterior empirical quartiles, as described in equation
(\ref{eq:qsel minimiser}). In the sequel, $\bp$ will be taken to
be $\lb .25,.75\rb$. In addition, we will use 
\begin{equation}
    \op{regret}\lt(\op{QR-SEL},\op{QR}(\hat\bth^{L\pr})\rt) = 
    \E\lt[\lt.\op{QR-SEL}\lt(\bth,\op{QR}(\hat\bth^{L\pr})\rt)
    \rt|\by\rt] - \min_{\delta}
    \E\lt[\lt.\op{QR-SEL}(\bth,\delta)\rt|\by\rt],
    \notag
\end{equation}
when the focus is on the empirical quartile ratio, with the optimal estimator
minimising QR-SEL being here the posterior empirical QR, as described in
equation (\ref{eq:qrsel minimiser}). A similar posterior regret
can be defined for the IQR-SEL function introduced in equation
(\ref{eq:iqrsel}). For every pair of $L$ and $L\pr$ functions, where
$L$ will be either the Q-SEL or QR-SEL functions and $L\pr$ will be either
the SSEL, WRSEL, CB or GR loss functions. In addition, we will also
consider the use of the RoPQ and the DoPQ, as
introduced in equation (\ref{eq:ropq}) and (\ref{eq:dopq}), when evaluating the QR-SEL
and IQR-SEL functions, respectively.

\section{Non-spatial Simulations}\label{sec:mrrr non-spatial sim}
\subsection{Design}
The proposed quantile and QR estimators were evaluated using synthetic datasets. 
The main objective of these simulations was to assess the influence of
various factors on quantile estimation, and more specifically on
the estimation of the QR of the posterior ensemble. Three factors were of particular
interest. Firstly, (i) the size of the
parameter ensemble was hypothesised to have a significant impact on the quality
of the estimation, since as the number of variables in the ensemble
goes to infinity, the quantiles of that ensemble distribution become
better identified. In addition, (ii) we varied the level of heterogeneity of the sampling
variances of the elements in the parameter ensemble. Finally, (iii) we were also concerned
with assessing the sensitivity of empirical quantile and QR estimation on 
distributional assumptions. Two different hierarchical models were
therefore compared. In this set of simulations, the models used to
generate the data was also used to estimate the different posterior quantities
of interest. That is, the generative and fitted models were
identical. 

\subsection{Generative Models}\label{sec:mrrr non-spatial design}
The data were simulated using two different BHMs, which were partially
described in section \ref{sec:bhm}. We here give a full description of
these models and our specific choices of hyperparameters.
We tested for the effect of skewness on the estimation of the
parameter ensemble quantiles and QR by comparing a compound Gaussian
and a compound Gamma model. In the first case, the prior distribution
on the $\theta_{i}$'s is symmetric, whereas in the latter case, the
prior on the $\theta_{i}$'s is skewed. For convenience, conjugacy
between the likelihood and the prior was respected for these two
models \citep{Bernardo1994}. Non-conjugate models will be studied in
section \ref{sec:mrrr spatial sim}, for the case of spatial priors. 
These two simulation models and the specific values given to 
the hyperparameters can be regarded as a replication the first part of
the simulation design conducted by \citet{Shen1998}. The models of
interest in this section can be described as follows. 
\begin{description}
   \item[\tb{i.}] For the compound Gaussian or Normal-Normal (N-N) model, we had 
     the following likelihood and prior,
     \begin{equation}
       y_{i} \stack{\ind}{\sim}N(\theta_{i},\sig^{2}_{i}), \qq
       \theta_{i}\stack{\iid}{\sim} N(\mu_{0},\tau_{0}^{2}),  
       \label{eq:compound gaussian}
     \end{equation}
      respectively, for every $i=1,\ldots,n$. Standard application of Bayes' rule gives
      $\theta_{i}|y_{i}\stack{\ind}{\sim}
      N(\hat\theta^{\op{SSEL}}_{i},\tau_{0}^{2}\gamma_{i})$, with
      $\hat\theta^{\op{SSEL}}_{i}:=\gamma_{i}\mu_{0} + (1-\gamma_{i})y_{i}$, 
      and where $\gamma_{i}:=\sig^{2}_{i}/(\sig^{2}_{i}+\tau_{0}^{2})$ are the
      shrinkage parameters. The MLEs for this model are given by
      $\hat\theta_{i}^{\op{MLE}}=y_{i}$ for every $i=1,\ldots,n$. In every 
      simulation, the hyperparameters controlling the prior distribution of the
      $\theta_{i}$'s were given the following specification:
      $\mu_{0}=0$, and $\tau_{0}^{2}=1$, as in \citet{Shen1998}. Note
      that this simulation setup is somewhat artificial since $\tau^{2}_{0}$ is
      fixed to a particular value, instead of being estimated from the 
      data. However, this choice of specification has
      the advantage of directly reproducing the simulation study of
      \citet{Shen1998}. 
   \item[\tb{ii.}] For the compound Gamma or Gamma-Inverse Gamma
     (G-IG) model, the likelihood function and prior take the following form, 
     \begin{equation}
        y_{i} \stack{\ind}{\sim}\op{Gam}(a_{i},\theta_{i}), \qq
       \theta_{i}\stack{\iid}{\sim} \op{Inv-Gam}(\alpha_{0},\beta_{0}),        
        \label{eq:compound gamma}
     \end{equation}
     for every $i=1,\ldots,n$. The Gamma and Inverse-Gamma
     distributions were respectively given the following specifications: 
     \begin{equation}
          \op{Gam}(a_{i},\theta_{i}) = \frac{1}{\theta_{i}^{a_{i}}\Ga(a_{i})}y_{i}^{a_{i}-1}e^{-y_{i}/\theta_{i}},
     \end{equation}
     with $a_{i},\theta_{i}>0$ for every $i=1,\ldots,n$, and
     \begin{equation}
          \op{Inv-Gam}(\alpha_{0},\beta_{0}) =
          \frac{\beta_{0}^{\alpha_{0}}}{\Ga(\alpha_{0})}\theta_{i}^{-\alpha_{0}-1}e^{-\beta_{0}/\theta_{i}},
     \end{equation}
     with $\alpha_{0},\beta_{0}>0$. From the conjugacy of the prior, we obtain
     the following posterior distribution, $\theta_{i}|y_{i}\stack{\ind}{\sim}
     \op{Inv-Gam}(a_{i}+\alpha_{0},y_{i}+\beta_{0})$, and the MLEs are
     $\hat\theta_{i}^{\op{MLE}}=y_{i}/a_{i}$ for every $i=1,\ldots,n$.
     All simulations were based on the following choices of the
     hyperparameters: $\alpha_{0}=4$, and $\beta_{0}=3$, as in \citet{Shen1998}.
\end{description}
Detailed descriptive statistics of the synthetic data generated by these models 
are reported in tables \ref{tab Y_table} and \ref{tab Sig_table} in
appendix \ref{app:non-spatial}, for the simulated observations, and
variance parameters, respectively. 

\subsection{Simulation Scenarios}\label{sec:app non-spatial scenario}
In addition to the different models tested, two other factors were
manipulated when producing these synthetic data sets.
Firstly, in order to evaluate the effect of the size of
the ensemble distribution on the quality of the classification
estimates, we chose three different sizes of parameter ensemble.
Specifically, in the BHMs of interest, the vector of observations and
the vector of parameters are both of size $n$. In these simulations, we chose
$n$ to take the following values:
\begin{equation}
          n\in\lt\lb 100,200,1000 \rt\rb. 
\end{equation}
The second experimental factor under consideration was the amount of variability
associated with each data point, i.e. the sampling variances of the
$y_{i}$'s. This aspect of the synthetic data sets was made to vary by
selecting different ratios of the largest to the smallest (RLS, in the
sequel) sampling variances. Formally,
\begin{equation}
        \op{RLS}(\bsig):= \frac{\sig^{2}_{(n)}}{\sig^{2}_{(1)}},
        \qq\te{and}\qq \op{RLS}(\ba):= \frac{a_{(n)}}{a_{(1)}},   
\end{equation}
for the compound Gaussian and compound Gamma models,
respectively. Thus, different choices of the vectors
$\bsig=\lt\lb\sig_{1},\ldots,\sig_{n}\rt\rb$ 
and $\ba=\lt\lb a_{1} \ldots,a_{n}\rt\rb$ produced different levels
of that ratio. We generated the scaling parameters similarly in the two models
in order to obtain three comparable levels of shrinkage. The sampling
variances in both models were produced using 
\begin{equation}
  \log(\sig_{i}^{2}) \stack{\iid}{\sim} \unif(-C_{l},C_{l}),
  \qq\te{and}\qq
  \log(a_{i}) \stack{\iid}{\sim} \unif(-C_{l},C_{l}),
\end{equation}
for the compound Gaussian and compound Gamma models, respectively,
and where $l=1,\ldots,3$. Different choices of the parameters
$C_{l}\in\lt\lb .01,1.5,2.3\rt\rb$
approximately generated three levels of RLS, such that 
both $\op{RLS}(\bsig)$ and $\op{RLS}(\ba)$ took the
following values $\lt\lb 1,20,100\rt\rb$, where small values of RLS
represented scenarios with small variability in sampling variances.

  In spite of these modifications of $n$ and RLS, the empirical mean of
 the ensemble distributions of the true $\theta_{i}$'s was kept
 constant throughout all simulations, in order to ensure that the
 simulation results were comparable. 
 For the compound Gaussian model, the empirical ensemble mean was fixed to 0, and for
 the compound Gamma model, the empirical ensemble mean was fixed to 1. 
 These values were kept constant through adequate choices of the
 hyperparameters of the prior distribution in each model. 
 The combination of all these factors, taking into account the
2 different models, the 3 different sizes of the parameter ensemble
and the 3 different levels of RLS resulted in a total of 1,800 different synthetic
 data sets with 100 replicates for each combination of the experimental
factors. 

\subsection{Fitted Models}
As aforementioned, the fitted models were identical to the generative
models. For both the compound Gaussian and compound Gamma models, 
no burn-in was necessary since each of the $\theta_{i}$'s
are conditionally independent given the fixed hyperparameters. 
The posterior distributions were directly available in closed-form
for the two hierarchical models as both models were given conjugate
priors. Nonetheless, in order to compute the different quantities
of interest, we generated 2,000 draws from the joint posterior
distribution of the $\theta_{i}$'s in each model for each simulation
scenario. The various plug-in estimators were computed on the basis of
these joint posterior distributions. For this set of non-spatial
simulations, the WRSEL function was specified using $a_{1}=a_{2}=0.05$
(see section \ref{sec:wrsel} on page \pageref{sec:wrsel}).

\subsection{Plug-in Estimators under Q-SEL}\label{sec:plug-in q-sel}
\begin{table}[t]
\footnotesize
\caption{
Posterior regrets based on $\op{Q-SEL}_{\bp}(\bth,Q_{\hat{\bth}^{L\pr}}(\bp))$ with $\bp:=\lb.25,.75\rb$,
for five plug-in estimators and with the posterior expected loss of the optimal estimator in the first column.
Results are presented for the compound Gaussian model in equation (\ref{eq:normal-normal})
and the compound Gamma model in equation (\ref{eq:gamma-inverse gamma}),
for 3 different levels of RLS, and 3 different values of $n$, averaged over 100 replicate data 
sets. Entries were scaled by a factor of $10^3$. In parentheses,
the posterior regrets are expressed as percentage of the posterior loss
under the optimal estimator.
\label{tab:qsel_table}} 
\centering
\begin{threeparttable}
\begin{tabular}{>{\RaggedRight}p{60pt}>{\RaggedLeft}p{25pt}|>{\RaggedLeft}p{25pt}@{}>{\RaggedLeft}p{27pt}>{\RaggedLeft}p{25pt}@{}>{\RaggedLeft}p{27pt}>{\RaggedLeft}p{25pt}@{}>{\RaggedLeft}p{35pt}>{\RaggedLeft}p{25pt}@{}>{\RaggedLeft}p{23pt}>{\RaggedLeft}p{20pt}@{}>{\RaggedLeft}p{10pt}}\hline
\multicolumn{1}{c}{\itshape Scenarios}&
\multicolumn{1}{c}{}&
\multicolumn{10}{c}{\itshape Posterior regrets\tnote{a}}
\tabularnewline \cline{3-12}
\multicolumn{1}{>{\RaggedRight}p{60pt}}{}&\multicolumn{1}{c}{Q-SEL}&
\multicolumn{2}{c}{MLE}&\multicolumn{2}{c}{SSEL}&\multicolumn{2}{c}{WRSEL}&\multicolumn{2}{c}{CB}&
\multicolumn{2}{c}{GR}\tabularnewline
\hline
{\itshape $\op{RLS}\doteq1$}&&&&&&&&&&&\tabularnewline
\normalfont   N-N, $n=100$ &    $24.9$ &    $182.8$ &    ($  734$) &    $ 79.3$ &    ($ 318$) &    $  22.6$ &    ($    91$) &    $ 8.5$ &    ($  34$) &    $0.3$ &    ($1$)\tabularnewline
\normalfont   N-N, $n=200$ &    $12.7$ &    $176.4$ &    ($ 1389$) &    $ 81.7$ &    ($ 643$) &    $ 355.8$ &    ($  2802$) &    $ 5.3$ &    ($  42$) &    $0.0$ &    ($0$)\tabularnewline
\normalfont   N-N, $n=1000$ &    $ 2.6$ &    $158.4$ &    ($ 6177$) &    $ 80.4$ &    ($3134$) &    $3126.8$ &    ($121905$) &    $ 1.3$ &    ($  51$) &    $0.0$ &    ($1$)\tabularnewline
\normalfont   G-IG, $n=100$ &    $ 8.8$ &    $170.6$ &    ($ 1938$) &    $ 59.7$ &    ($ 678$) &    $  57.6$ &    ($   654$) &    $ 6.0$ &    ($  69$) &    $0.2$ &    ($2$)\tabularnewline
\normalfont   G-IG, $n=200$ &    $ 4.4$ &    $147.7$ &    ($ 3365$) &    $ 62.9$ &    ($1432$) &    $ 182.7$ &    ($  4164$) &    $ 6.2$ &    ($ 141$) &    $0.1$ &    ($2$)\tabularnewline
\normalfont   G-IG, $n=1000$ &    $ 0.9$ &    $137.2$ &    ($15705$) &    $ 62.7$ &    ($7179$) &    $2193.1$ &    ($251039$) &    $ 5.9$ &    ($ 680$) &    $0.0$ &    ($4$)\tabularnewline
\hline
{\itshape $\op{RLS}\doteq20$}&&&&&&&&&&&\tabularnewline
\normalfont   N-N, $n=100$ &    $24.9$ &    $251.0$ &    ($ 1009$) &    $ 95.7$ &    ($ 385$) &    $  12.7$ &    ($    51$) &    $ 8.8$ &    ($  35$) &    $0.6$ &    ($3$)\tabularnewline
\normalfont   N-N, $n=200$ &    $12.4$ &    $233.0$ &    ($ 1886$) &    $106.4$ &    ($ 861$) &    $ 241.6$ &    ($  1955$) &    $ 7.2$ &    ($  58$) &    $0.1$ &    ($1$)\tabularnewline
\normalfont   N-N, $n=1000$ &    $ 2.5$ &    $210.7$ &    ($ 8327$) &    $ 97.8$ &    ($3865$) &    $2506.6$ &    ($ 99044$) &    $ 3.3$ &    ($ 131$) &    $0.0$ &    ($1$)\tabularnewline
\normalfont   G-IG, $n=100$ &    $ 8.5$ &    $186.9$ &    ($ 2201$) &    $ 79.0$ &    ($ 930$) &    $  73.3$ &    ($   864$) &    $13.5$ &    ($ 159$) &    $0.2$ &    ($3$)\tabularnewline
\normalfont   G-IG, $n=200$ &    $ 4.3$ &    $163.9$ &    ($ 3812$) &    $ 76.5$ &    ($1779$) &    $ 185.9$ &    ($  4324$) &    $17.1$ &    ($ 398$) &    $0.1$ &    ($2$)\tabularnewline
\normalfont   G-IG, $n=1000$ &    $ 0.8$ &    $156.2$ &    ($18600$) &    $ 77.0$ &    ($9167$) &    $1979.9$ &    ($235817$) &    $18.2$ &    ($2166$) &    $0.1$ &    ($6$)\tabularnewline
\hline
{\itshape $\op{RLS}\doteq100$}&&&&&&&&&&&\tabularnewline
\normalfont   N-N, $n=100$ &    $23.7$ &    $302.9$ &    ($ 1277$) &    $111.9$ &    ($ 472$) &    $   6.2$ &    ($    26$) &    $14.6$ &    ($  62$) &    $0.0$ &    ($0$)\tabularnewline
\normalfont   N-N, $n=200$ &    $12.3$ &    $254.9$ &    ($ 2077$) &    $137.3$ &    ($1118$) &    $ 131.4$ &    ($  1070$) &    $18.0$ &    ($ 147$) &    $0.0$ &    ($0$)\tabularnewline
\normalfont   N-N, $n=1000$ &    $ 2.5$ &    $253.8$ &    ($10161$) &    $135.2$ &    ($5413$) &    $1903.4$ &    ($ 76214$) &    $16.1$ &    ($ 643$) &    $0.0$ &    ($1$)\tabularnewline
\normalfont   G-IG, $n=100$ &    $ 8.0$ &    $208.4$ &    ($ 2592$) &    $ 76.4$ &    ($ 950$) &    $  66.3$ &    ($   825$) &    $16.5$ &    ($ 205$) &    $0.5$ &    ($6$)\tabularnewline
\normalfont   G-IG, $n=200$ &    $ 4.0$ &    $190.2$ &    ($ 4698$) &    $ 79.7$ &    ($1968$) &    $ 169.0$ &    ($  4174$) &    $17.1$ &    ($ 422$) &    $0.1$ &    ($2$)\tabularnewline
\normalfont   G-IG, $n=1000$ &    $ 0.8$ &    $190.4$ &    ($23096$) &    $ 80.3$ &    ($9744$) &    $1622.9$ &    ($196836$) &    $25.5$ &    ($3090$) &    $0.1$ &    ($8$)\tabularnewline
\hline
\end{tabular}
\begin{tablenotes}
   \item[a] Entries for the posterior regrets have been truncated to the closest first
     digit after the decimal point, and entries for the percentage
     regrets have been truncated to the closest integer. 
     For some entries, percentage regrets are smaller than 1 percentage point. 
\end{tablenotes}
\end{threeparttable}
\end{table}

The results of these simulations for the Q-SEL function are presented in table
\ref{tab:qsel_table} on page \pageref{tab:qsel_table} for both the
compound Gaussian (N-N) and the compound Gamma (G-IG) models. For the
posterior regret of each plug-in estimator, we have reported in 
parentheses the posterior regret as a percentage of the posterior loss under the
optimal estimator. This quantity is the \ti{percentage regret}, as
described in section \ref{sec:research question} on page
\pageref{sec:research question}. Specifically, for any ensemble of point estimates,
$\hat\bth^{L\pr}$, we have computed the following percentage regret,
\begin{equation}
     \frac{100 \times \op{regret}\lt(\op{Q-SEL}_{\bp},Q_{\hat\bth^{L\pr}}(\bp)\rt)
     }{\min_{\bm\delta}\E\lt[\lt.\op{Q-SEL}_{\bp}(\bth,\bm\delta)\rt|\by\rt]},
     \label{eq:percentage regret}
\end{equation}
where the optimal estimator is the vector of posterior empirical quartiles
for $\bp:=\lb .25,.75\rb$. In the sequel, most simulation results will be presented in
tabulated format, where the first column will correspond to the
denominator in formulae (\ref{eq:percentage regret}), which is here
posterior expected $\op{Q-SEL}_{\bp}$, and the remaining columns will
provide both the posterior regret and the percentage regret in
parentheses expressed with respect to $Q_{\hat\bth^{L\pr}}(\bp)$, for
different choices of $L\pr$. 

Overall, the lowest posterior regret for estimating the first and
third quartiles was achieved by using the GR plug-in
estimator. That is, estimates of the posterior empirical quantiles
using the ensemble of GR point estimates was almost optimal under both
models and all simulation scenarios. By comparing the columns of table 
\ref{tab:qsel_table} corresponding to the posterior regrets of each
plug-in estimator, we can observe that the performance of the GR was
followed by the ones of the CB, SSEL and MLE ensembles in order of increasing percentage 
regret for every condition where $n>100$. 
The quartiles derived from the WRSEL ensemble outperformed the ones
based on the SSEL and MLE sets of point estimates when $n=100$, under both types of models and
under all levels of the RLS. The WRSEL quartile estimator also surpassed
the performance of the CB ensemble of point estimates when
$\op{RLS}\doteq 100$ and $n=100$. However, the performance of the WRSEL estimator
was very sensitive to the size of the parameter ensemble, and
deteriorated rapidly as $n$ increased, under both types of
models. Figure \ref{fig:hist wrsel} on page \pageref{fig:hist wrsel}
illustrates the impact of an
increase of the size of the parameter ensemble on the behaviour of the
WRSEL ensemble of point estimates. In panel (b), one can observe that
the WRSEL ensemble distribution becomes increasingly bimodal
as $n$ increases. This particular behaviour can be explained in
terms of the specification of the $\phi_{i}$'s in the WRSEL estimation
procedure. Each of the $\phi_{i}$'s is a function of both the
parameters $a_{1}$ and $a_{2}$ and the size of the ensemble
distribution, $n$. We here recall the definition of these WRSEL weights, 
\begin{equation}
   \phi_i := \exp\lt\lb a_{1}\lt(i - \frac{n+1}{2}\rt)\rt\rb
            + \exp\lt\lb - a_{2}\lt(i - \frac{n+1}{2}\rt)\rt\rb,
  \label{eq:phi annoying}
\end{equation}
for $i=1,\ldots,n$, and where for the present set of non-spatial
simulations, we chose $a_{1}=a_{2}=0.05$. It can therefore be seen
that as $n$ increases, the range of the
$\phi_{i}$'s tends to increase at an exponential rate. Since 
these weights control the counterbalancing of
hierarchical shrinkage in the ensemble
distribution, it follows that an increase in the size of the ensemble
yields an accentuation of such countershrinkage. As a result, we
obtain the phenomenon visible in panel (b) of figure \ref{fig:hist
  wrsel}, where the units in the tails of the WRSEL
ensemble distribution are pushed away from the center of the
distribution, thus creating a bimodal distribution. This effect
explains why the performance of the WRSEL tends to rapidly deteriorate as $n$
increases.

\begin{figure}[t]
 \centering
 \includegraphics[width=14cm]{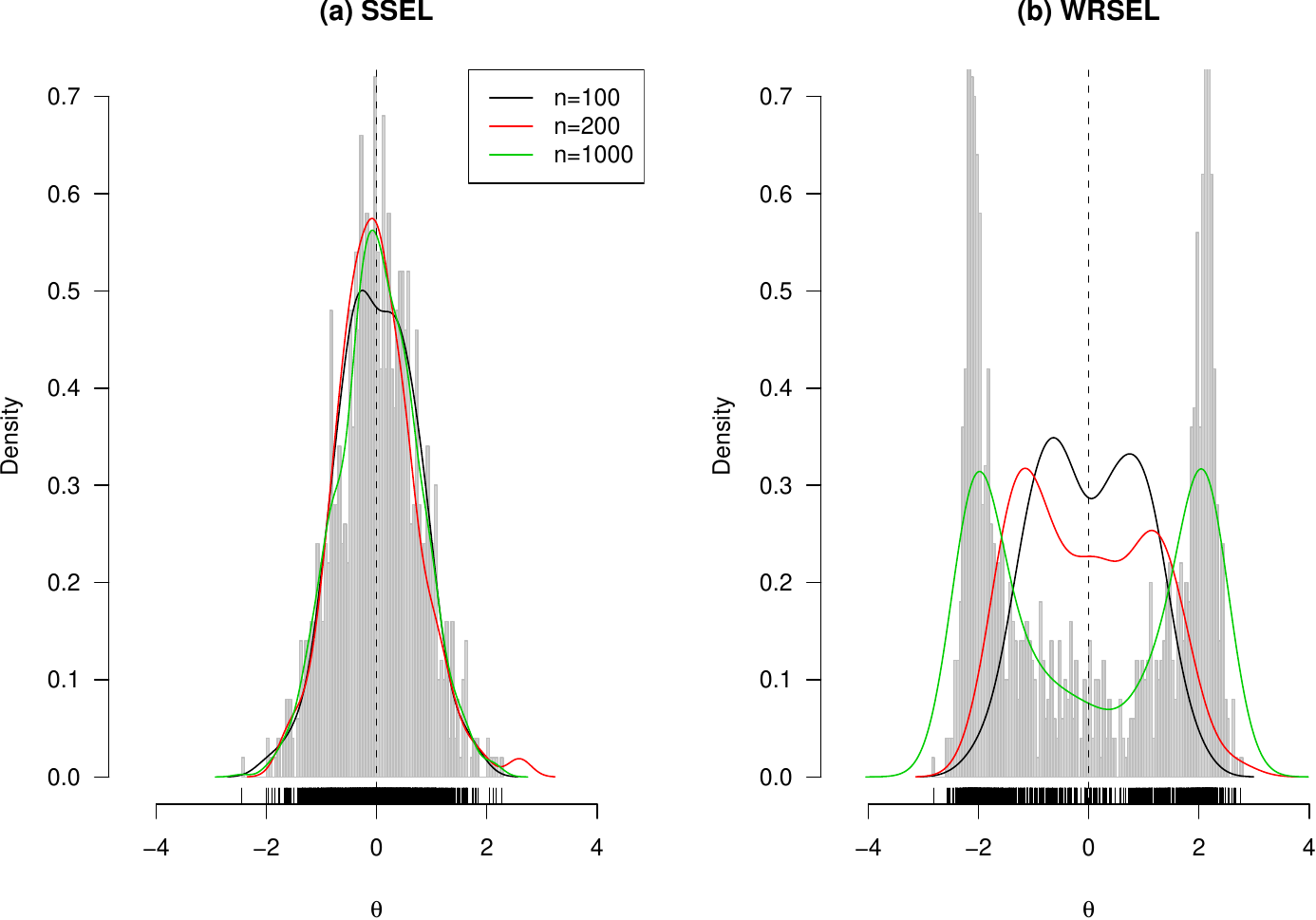}\\
 \caption{Effect of the change in the size of the parameter ensemble
   on the performance of the WRSEL point estimates. Panel (b) shows
   a histogram of the ensemble of WRSEL point estimates for $n=1000$
   with superimposed lines representing the same set of point estimates
   under $n=100$, $200$ and $1000$. We have also provided the SSEL
   parameter ensemble in panel (a) for comparison. Results are here
   shown for a randomly chosen data set among the 100 replicates. The
   superimposed curves were produced using a Gaussian kernel over
   512 equal-sized bins.}
   \label{fig:hist wrsel}
\end{figure}

The relative performance --that is, in terms of percentage regrets--
of all plug-in estimators, except perhaps for the GR
quartile estimator, appeared to worsen under the
compound Gamma model, in comparison to the compound Normal model. The
percentage regrets of most plug-in estimators, indicated in 
parentheses in table \ref{tab:qsel_table}, can be observed to be
higher under the compound Gamma model. The sole exception to that
trend was the performance of the GR plug-in
quartiles for which no systematic trend could be identified. Overall,
however, that estimator also tended to do slightly worse under the
G-IG model, although its performance was still very close to optimal. 
A comparison of the shape of the ensemble distributions of the
different point estimates studied in this simulation experiment is
reported in figure \ref{fig:hist norm-gam} on page \pageref{fig:hist
  norm-gam}. It can be observed that the empirical distribution of point estimates
remains centered at zero under the N-N model, thereby matching the
central tendency of the true ensemble distribution. Under the G-IG
model, however, the skewness of the true ensemble distribution made it
more difficult for plug-in estimators to match the values of the
quantiles of the true parameter ensemble.

Increasing the magnitude of the RLS was found to detrimentally affect the
performance of the MLE, SSEL and CB plug-in estimators. The percentage
regrets for these three families of plug-in estimators was found to increase
systematically with the magnitude of the RLS. This effect may be explained in terms of the
added variability associated with a high RLS. For the MLEs, this
difference in percentage regrets is directly attributable to
differences in sampling variance. For the SSEL and CB plug-in
estimators, one may explain the detrimental effect of
the RLS experimental factor in terms of hierarchical shrinkage. That
is, units with very high 
sampling variance are subject to a larger degree of hierarchical
shrinkage than ensemble units with low sampling variance. These changes
may then modify the ordering of the units and the overall shape of the
ensemble of SSEL and CB point estimates, thereby leading to poorer
performance than under low RLS. No similar trend could be observed for the
GR and WRSEL quartile estimators. The GR quartile estimators seemed to
be robust to changes in RLS levels, although the scale of the 
differences in performance was too small to reach any definite conclusions.
For the WRSEL plug-in estimator, an increase in RLS was found to
decrease percentage regret when considering the N-N model, but not
necessarily under the G-IG model. 

Finally, increasing the size of the parameter ensemble resulted in a
systematic increase of percentage regret for all plug-in
estimators, except for the quartiles based on the triple-goal
function. As noted earlier, this relative deterioration of the performance of
the plug-in estimators was particularly acute for the WRSEL
estimators. The absolute value of the posterior loss under the optimal
Q-SEL estimator, however, tended to decrease as $n$ increased. Thus,
while the posterior empirical quartiles are better estimated when the size
of the parameter ensemble of interest is large, the relative
performance of the plug-in estimators tends to decrease when their
performance is quantified in terms of percentage regret. 
One should note, however, that for ensembles of point estimates, which
optimised some aspects of the ensemble empirical distribution, the
absolute posterior regrets (cf. first column of table
\ref{tab:qsel_table}) tended to diminish with $n$, even if this
was not true for the corresponding percentage posterior regrets 
(cf. values in parentheses in the remaining columns \ref{tab:qsel_table}). This
trend was especially noticeable for the CB and GR plug-in estimators,
whose absolute posterior regrets can be seen to be lower under 
larger parameter ensembles. 
\begin{figure}[t]
 \centering
 \includegraphics[width=14cm]{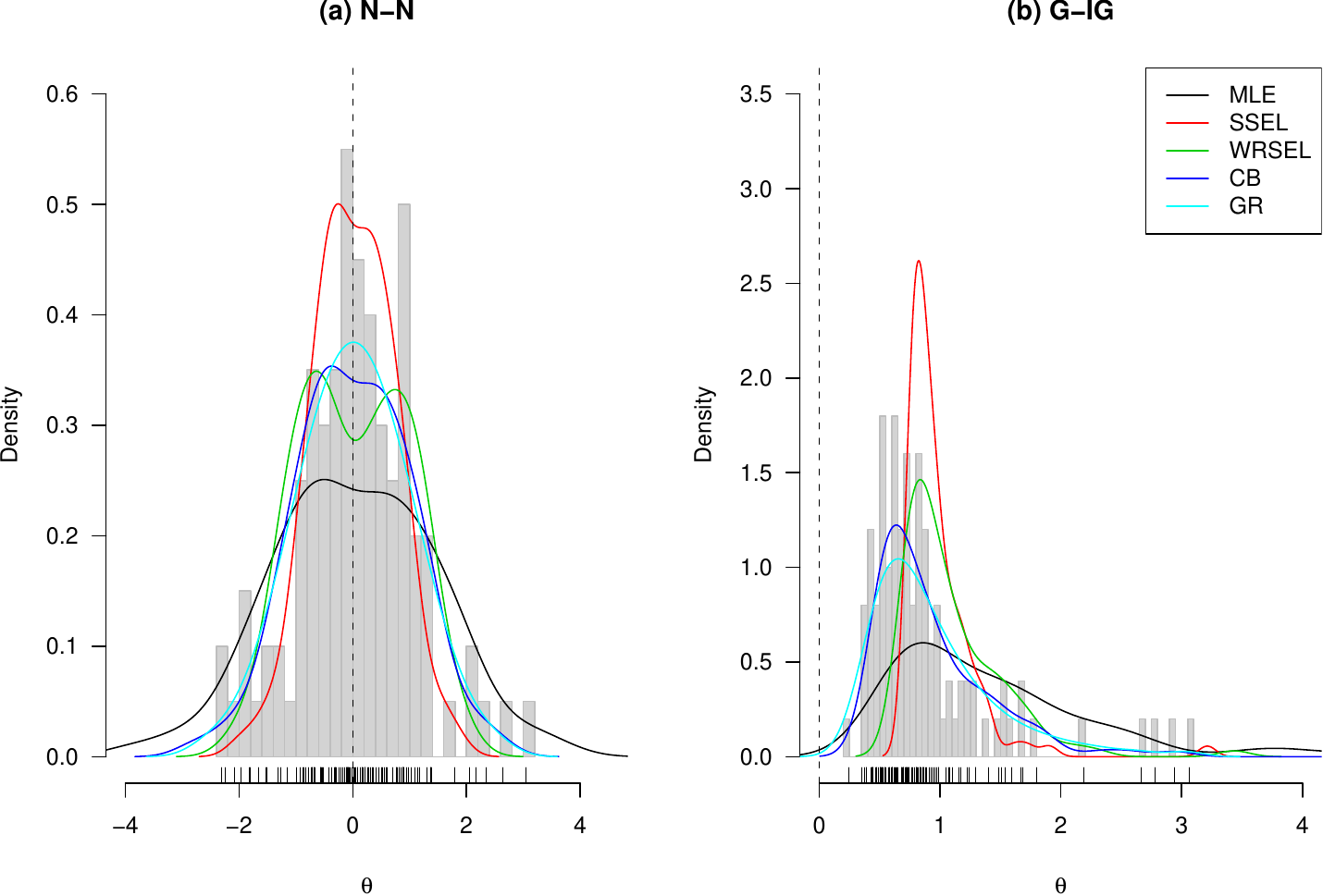}\\
 \caption{Histograms of a randomly chosen simulated parameter ensemble
  under the compound Gaussian and compound Gamma models in panel (a)
  and (b), respectively, for $n=100$ and $\op{RLS}\doteq 1$. For each model, 
  ensemble distributions of point estimates have been superimposed for five
  different estimation schemes.}
 \label{fig:hist norm-gam}
\end{figure}

\subsection{Plug-in Estimators under QR-SEL}\label{sec:non-spatial qrsel}
\begin{table}[t]
 \footnotesize
 \caption{
Posterior regrets based on $\op{QR-SEL}(\bth,\op{QR}(\bth^{L\pr}))$,
for five plug-in estimators,
for the compound Gaussian model in equation (\ref{eq:normal-normal})
and the compound Gamma model in equation (\ref{eq:gamma-inverse gamma}),
for 3 different levels of RLS,
and 3 different values of $n$, averaged over 100 replicate data
sets. The posterior expected loss of the optimal estimator is given in the
first column. 
Entries were scaled by a factor of $10^3$. In parentheses,
the posterior regrets are expressed as percentage of the posterior loss
under the optimal estimator.
\label{tab:qrsel_table}}
 \centering
 \begin{threeparttable}
 \begin{tabular}{>{\RaggedRight}p{58pt}>{\RaggedLeft}p{17pt}|>{\RaggedLeft}p{22pt}@{}>{\RaggedLeft}p{35pt}>{\RaggedLeft}p{18pt}@{}>{\RaggedLeft}p{29pt}>{\RaggedLeft}p{18pt}@{}>{\RaggedLeft}p{33pt}>{\RaggedLeft}p{15pt}@{}>{\RaggedLeft}p{25pt}>{\RaggedLeft}p{8pt}@{}>{\RaggedLeft}p{15pt}>{\RaggedLeft}p{8pt}@{}>{\RaggedLeft}p{10pt}}\hline
\multicolumn{1}{c}{\itshape Scenarios}&
\multicolumn{1}{c}{}&
\multicolumn{12}{c}{\itshape Posterior regrets\tnote{a}}
\tabularnewline \cline{3-14}
\multicolumn{1}{>{\RaggedRight}p{58pt}}{}&\multicolumn{1}{c}{QR-SEL}&
\multicolumn{2}{c}{MLE}&\multicolumn{2}{c}{SSEL}&\multicolumn{2}{c}{WRSEL}&\multicolumn{2}{c}{CB}&
\multicolumn{2}{c}{GR}&\multicolumn{2}{c}{RoPQ}\tabularnewline
\hline
{\itshape $\op{RLS}\doteq1$}&&&&&&&&&&&&&\tabularnewline
\normalfont   N-N, $n=100$ &    $21$ &    $  329$ &    ($   1565$) &    $151$ &    ($  717$) &    $  39$ &    ($   184$) &    $  9$ &    ($  45$) &    $0$ &    ($ 1$) &    $0$ &    ($2$)\tabularnewline
\normalfont   N-N, $n=200$ &    $11$ &    $  325$ &    ($   3001$) &    $158$ &    ($ 1455$) &    $ 705$ &    ($  6503$) &    $  4$ &    ($  38$) &    $0$ &    ($ 0$) &    $0$ &    ($1$)\tabularnewline
\normalfont   N-N, $n=1000$ &    $ 2$ &    $  313$ &    ($  14063$) &    $160$ &    ($ 7198$) &    $6258$ &    ($281565$) &    $  2$ &    ($  76$) &    $0$ &    ($ 1$) &    $0$ &    ($0$)\tabularnewline
\normalfont   G-IG, $n=100$ &    $27$ &    $18641$ &    ($  69283$) &    $467$ &    ($ 1736$) &    $ 160$ &    ($   593$) &    $ 61$ &    ($ 226$) &    $0$ &    ($ 2$) &    $0$ &    ($0$)\tabularnewline
\normalfont   G-IG, $n=200$ &    $14$ &    $12501$ &    ($  92484$) &    $488$ &    ($ 3608$) &    $  16$ &    ($   116$) &    $ 44$ &    ($ 328$) &    $0$ &    ($ 2$) &    $0$ &    ($0$)\tabularnewline
\normalfont   G-IG, $n=1000$ &    $ 3$ &    $12692$ &    ($ 471628$) &    $486$ &    ($18078$) &    $2098$ &    ($ 77969$) &    $ 50$ &    ($1871$) &    $0$ &    ($ 4$) &    $0$ &    ($0$)\tabularnewline
\hline
{\itshape $\op{RLS}\doteq20$}&&&&&&&&&&&&&\tabularnewline
\normalfont   N-N, $n=100$ &    $21$ &    $  448$ &    ($   2139$) &    $181$ &    ($  866$) &    $  20$ &    ($    97$) &    $ 10$ &    ($  48$) &    $0$ &    ($ 1$) &    $0$ &    ($2$)\tabularnewline
\normalfont   N-N, $n=200$ &    $10$ &    $  436$ &    ($   4237$) &    $206$ &    ($ 1998$) &    $ 478$ &    ($  4643$) &    $  9$ &    ($  91$) &    $0$ &    ($ 0$) &    $0$ &    ($1$)\tabularnewline
\normalfont   N-N, $n=1000$ &    $ 2$ &    $  412$ &    ($  19081$) &    $195$ &    ($ 9012$) &    $5016$ &    ($232022$) &    $  6$ &    ($ 278$) &    $0$ &    ($ 1$) &    $0$ &    ($0$)\tabularnewline
\normalfont   G-IG, $n=100$ &    $27$ &    $22062$ &    ($  83039$) &    $572$ &    ($ 2154$) &    $ 296$ &    ($  1113$) &    $105$ &    ($ 397$) &    $0$ &    ($ 1$) &    $0$ &    ($0$)\tabularnewline
\normalfont   G-IG, $n=200$ &    $13$ &    $19019$ &    ($ 142416$) &    $565$ &    ($ 4228$) &    $  63$ &    ($   475$) &    $153$ &    ($1148$) &    $0$ &    ($ 1$) &    $0$ &    ($0$)\tabularnewline
\normalfont   G-IG, $n=1000$ &    $ 3$ &    $17738$ &    ($ 675792$) &    $566$ &    ($21555$) &    $ 850$ &    ($ 32365$) &    $176$ &    ($6721$) &    $0$ &    ($11$) &    $0$ &    ($0$)\tabularnewline
\hline
{\itshape $\op{RLS}\doteq100$}&&&&&&&&&&&&&\tabularnewline
\normalfont   N-N, $n=100$ &    $20$ &    $  522$ &    ($   2660$) &    $213$ &    ($ 1087$) &    $   7$ &    ($    34$) &    $ 18$ &    ($  93$) &    $0$ &    ($ 0$) &    $0$ &    ($2$)\tabularnewline
\normalfont   N-N, $n=200$ &    $10$ &    $  463$ &    ($   4578$) &    $270$ &    ($ 2664$) &    $ 259$ &    ($  2555$) &    $ 32$ &    ($ 320$) &    $0$ &    ($ 0$) &    $0$ &    ($1$)\tabularnewline
\normalfont   N-N, $n=1000$ &    $ 2$ &    $  500$ &    ($  24301$) &    $270$ &    ($13108$) &    $3804$ &    ($184974$) &    $ 32$ &    ($1542$) &    $0$ &    ($ 1$) &    $0$ &    ($0$)\tabularnewline
\normalfont   G-IG, $n=100$ &    $26$ &    $65739$ &    ($ 256477$) &    $563$ &    ($ 2197$) &    $ 301$ &    ($  1176$) &    $148$ &    ($ 579$) &    $1$ &    ($ 3$) &    $0$ &    ($0$)\tabularnewline
\normalfont   G-IG, $n=200$ &    $13$ &    $36096$ &    ($ 284722$) &    $572$ &    ($ 4510$) &    $  97$ &    ($   763$) &    $147$ &    ($1160$) &    $0$ &    ($ 1$) &    $0$ &    ($0$)\tabularnewline
\normalfont   G-IG, $n=1000$ &    $ 3$ &    $29789$ &    ($1140841$) &    $586$ &    ($22424$) &    $ 569$ &    ($ 21800$) &    $230$ &    ($8806$) &    $1$ &    ($24$) &    $0$ &    ($0$)\tabularnewline
\hline
\end{tabular}
\begin{tablenotes}
   \item[a] Entries for both the posterior and percentage regrets have been truncated to
     the closest integer. For some entries, percentage regrets are smaller than 1 percentage point. 
\end{tablenotes}
\end{threeparttable}
\end{table}

The simulation results for the QR-SEL function are reported in table
\ref{tab:qrsel_table}, on page \pageref{tab:qrsel_table}. For the
compound Gaussian model, we computed the plug-in estimators under the
IQR-SEL function. For this loss, the optimal estimator is the
posterior IQR, and the RoPQ is replaced by the DoPQ, as
described in equation (\ref{eq:dopq}). We considered the effect of the different 
experimental factors on the performance of the plug-in estimators, in turn.

Overall, the ordering of the different plug-in estimators in terms of
percentage regret under QR-SEL was found to be identical to the
ordering observed under the Q-SEL function. The empirical QR of the
MLEs tended to exhibit the largest percentage regret across all conditions and
scenarios, except under the N-N model with $n=1000$, for which the
WRSEL plug-in estimator performed worse. As was previously noticed
under the Q-SEL, the WRSEL plug-in estimator of the empirical QR was found to be
very sensitive to the size of the parameter ensemble. That is, the performance
of the WRSEL plug-in estimator severely deteriorated as $n$ grew
large. We can explain this effect in terms of the modification of the
$\phi_{i}$'s in the WRSEL function following an increase in $n$. 
Ignoring the WRSEL-based estimator, the SSEL plug-in estimator
exhibited the second worst percentage regret after the MLE-based
QR. This was followed by the CB plug-in estimator. The triple-goal and
RoPQ were found to almost match the performance of the optimal
estimator under the QR-SEL function over all simulated scenarios. No
systematic difference in percentage regret could be identified between
the GR and the RoPQ plug-in estimators. 

As for the Q-SEL function, increasing the level of heterogeneity of
the sampling variances had a detrimental effect on the percentage
regret exhibited by the MLE, SSEL and CB plug-in estimators. The
performance of these three families of QR estimators systematically
decreased as RLS increased (cf. table \ref{tab:qrsel_table} on page
\pageref{tab:qrsel_table}). The plug-in estimator under the WRSEL
function, by contrast, tended to behave in the opposite direction,
whereby an increase in RLS yielded an increase in performance, albeit
this increase was restricted to the N-N model. Both the GR and
DoPQ/RoPQ plug-in estimators appeared to be robust to changes in RLS
levels, although the GR estimator of the QR exhibited a substantial
increase in percentage regret for very large parameter ensembles
--i.e. for $\op{RLS}\doteq 100$ and $n=1000$, where the percentage
regret for using the GR reached $24\%$ of the posterior loss under the
optimal estimator. 

The choice of statistical model had a strong effect on the performance
of the MLE, SSEL and CB plug-in estimators. Consider the values of the
percentage regrets in parentheses in columns three, five and nine of
table \ref{tab:qrsel_table} on page \pageref{tab:qrsel_table}. For these three families
of ensembles of point estimates, the use of a compound Gamma model
yielded a larger percentage regret. For the WRSEL
estimator of the QR, no systematic trend could be identified, as the
performance of that estimator under the different statistical models seemed to
be also dependent on the choice of RLS levels and the number of units in the
parameter ensemble. In addition, the behaviour of the triple-goal
plug-in estimator appeared to deteriorate under the G-IG model. The
DoPQ/RoPQ plug-in estimators, by
contrast, exhibited slightly lower percentage regret under the compound Gamma
model. 

Increasing the size of the parameter ensemble tended to produce
worse plug-in estimators with higher posterior percentage
regrets. This trend can be observed for the MLE, SSEL, WRSEL and CB
plug-in estimators. The triple-goal and DoPQ/RoPQ-based QR estimators,
by contrast, appeared to be robust to an increase in the size of the
parameter ensemble. No systematic effect of the size of $n$ could be
noticed for these two families of estimators. 
In summary, the GR and DoPQ/RoPQ plug-in estimators exhibited the best
performance under both the Q-SEL and QR-SEL functions, over
all the scenarios considered. We now present a set of spatial
simulations to evaluate the behaviour of the different families of
plug-in estimators of interest under more realistic modelling assumptions.

\section{Spatial Simulations}\label{sec:mrrr spatial sim}
In this simulation study, four experimental factors were of
interest. Firstly, we manipulated the spatial structure of the
true parameter ensemble. Secondly, we considered the effect of different
modelling assumptions. Thirdly, we modified the level of heterogeneity
of the parameter ensemble, and finally, we assessed the
influence of changing the overall level of the expected counts, in
each scenario.

\subsection{Design}
The general framework for data generation followed the one
implemented by \citet{Richardson2004}. 
The data were generated using a subset of expected counts from the 
Thames Cancer Registry (TCR) for lung cancer in West
Sussex, which contains $n=166$ wards. The expected
counts, denoted $E_{i}$'s for $i=1,\ldots,n$,
were adjusted for age only, and correspond to the number of
cases of lung cancer occurring among males during the period 
1989--2003. The overall level of expected counts for lung cancer over
the entire region varied from $\min(E)=7.97$ to 
$\max(E)=114.35$ with a mean of $42.44$. 
The level of the expected counts was also made to vary across the
simulations, in order to assess its influence on the performances of
the different plug-in estimators. Specifically, we will
study the effect of three different levels of expected counts over the
performance of the plug-in estimators of interest. 

Four different types of spatial patterns were simulated. The main
objective of these simulations was to create a set of varying levels
of difficulty for a smoothing technique, such as the CAR prior. 
The scenarios were thus constructed in order to include a scenario with
only a single isolated area of elevated risk (simulation SC1, the
hardest scenario for a smoothing procedure) and a situation
containing a relatively smooth risk surface with clear spatial
structure (simulation SC3). The spatial simulations were constructed
either by randomly selecting a subset of the areas in the region and labelling these
areas as elevated-risk areas (SC1 and SC2), or by 
directly modelling the generation of the RRs ascribed to
each area (SC3 and SC4). The construction of the spatial scenarios
occurred only once, and the overall level of the
expected counts was kept constant throughout the four spatial scenarios.

\begin{figure}[t]
\centering
\includegraphics[height=11cm]{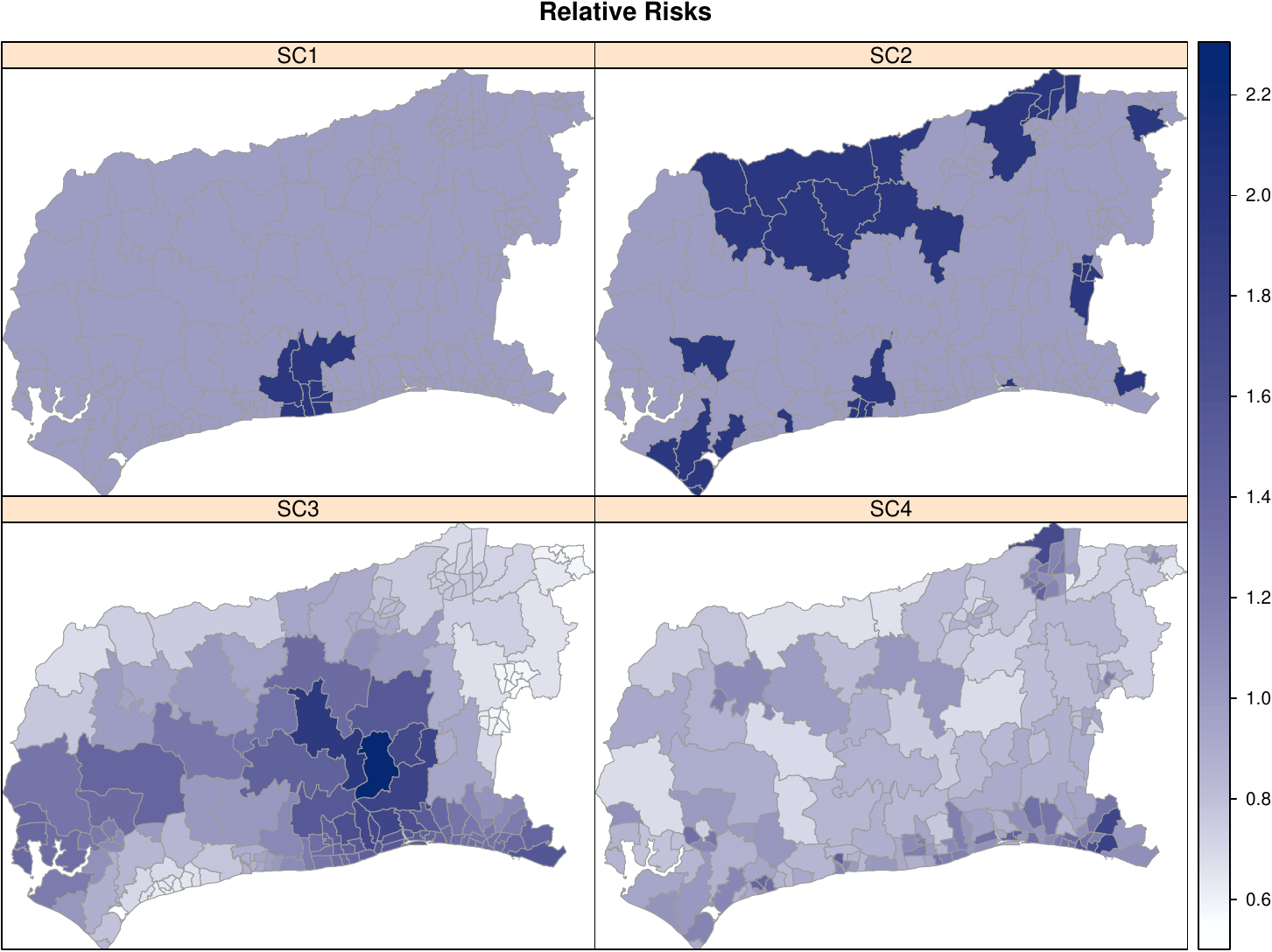}
\caption{True RRs based on expected counts ($\op{SF}=1.0$) 
     for lung cancer among males between 1989 and 2003,
     for 166 wards located in West Sussex, UK. The values of
     the RRs were generated using the protocols described in section
     \ref{sec:spatial scenarios}, and are respectively denoted by 
     SC1 (one isolated cluster), SC2 (five isolated clusters and five isolated areas),
     SC3 (spatial pattern generated using the Mat\`{e}rn function),
     SC4 (spatial pattern using a hidden covariate). RRs in all four
     scenarios were here produced with a medium  level of variability.}
     \label{fig:RR}
\end{figure}
\begin{figure}[t]
\centering
\includegraphics[height=11cm]{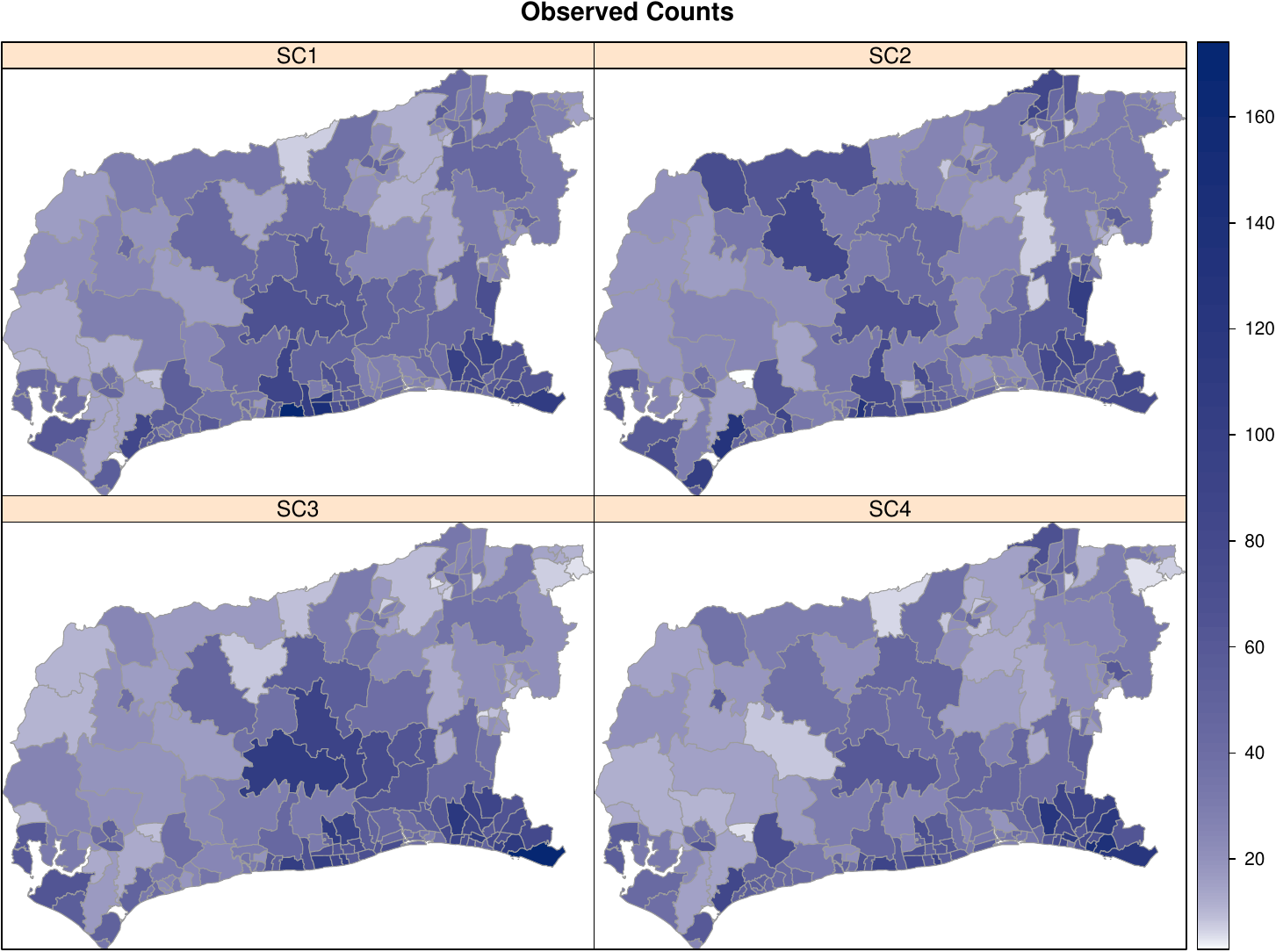}
\caption{
     Simulated observed counts based on expected
     counts ($\op{SF}=1.0$) for lung cancer among males between 1989 and 2003,
     for 166 wards located in West Sussex, UK. The values of
     the $y_{i}$'s were generated following the protocols described in section
     \ref{sec:spatial scenarios}, with a medium level of variability in the RRs.}
     \label{fig:OBS}
\end{figure}
\begin{figure}[t]
\centering
\includegraphics[height=11cm]{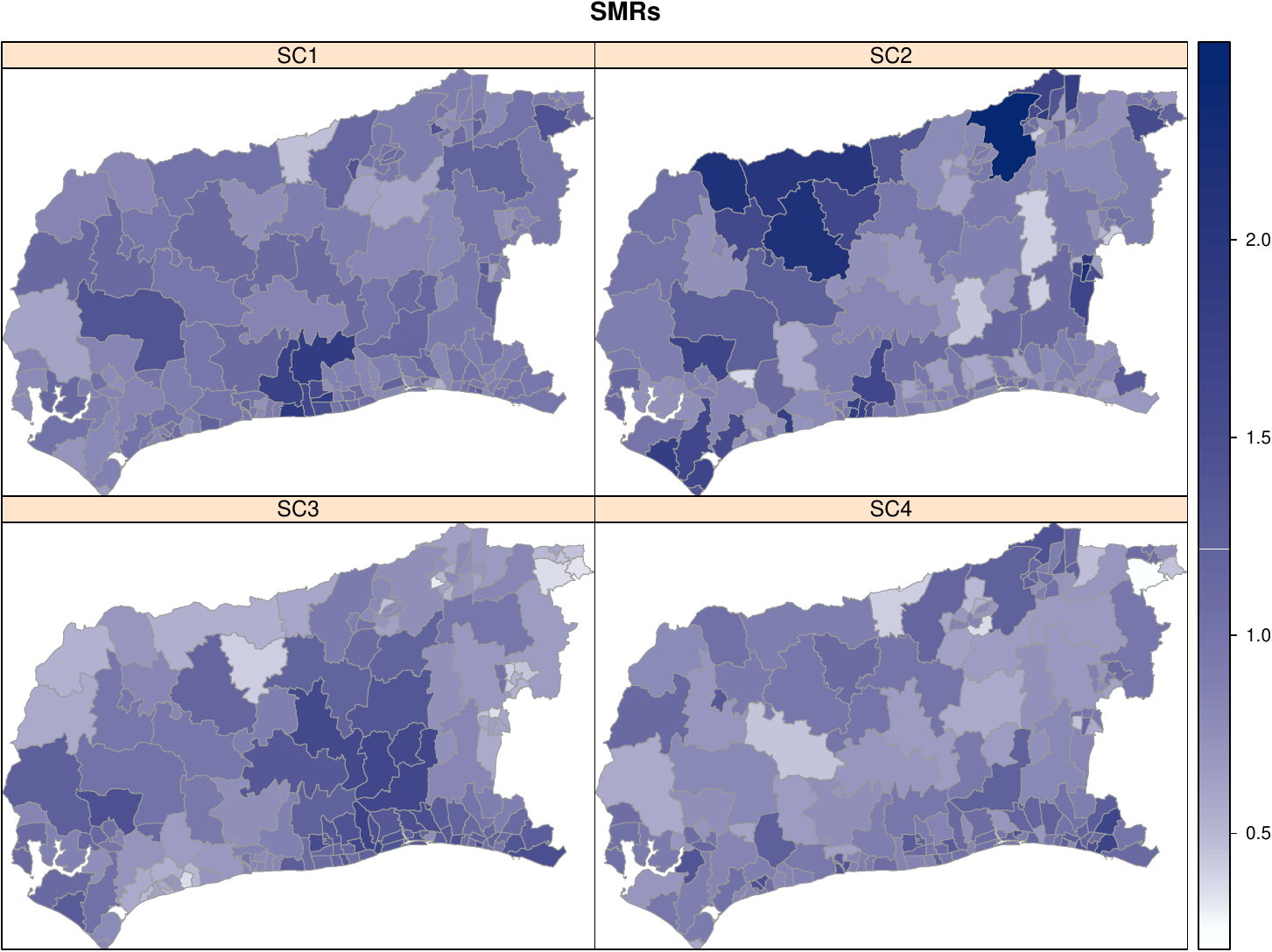}
\caption{
     Simulated SMRs based on expected
     counts ($\op{SF}=1.0$) for lung cancer among males between 1989 and 2003,
     for 166 wards located in West Sussex, UK. The values of
     the SMRs were generated following the protocols described in section
     \ref{sec:spatial scenarios}, with a medium level of variability
     in the RRs.}
     \label{fig:SMR}
\end{figure}

\subsection{Spatial Structure Scenarios}\label{sec:spatial scenarios}
Four different spatial structures were constructed, which respectively
correspond to a situation with one isolated foyer of elevated risk
(SC1), five isolated foyers and five isolated areas of elevated risk (SC2), a
spatially distributed risk pattern (SC3), and the effect of a spatially
structured risk factor (SC4). For each scenario, we describe the
generation of the true RRs. In each case, three different levels of
risk variability across the region was generated by manipulating
scenario-specific parameters. Scenarios SC1 and SC2 replicate the
first two spatial simulations considered by \citet{Richardson2004}.
\begin{description}
\item[Simulation 1 (SC1).] In the first scenario, a single isolated cluster of areas
    with elevated risk was generated by choosing an area randomly, and
    selecting its first-degree neighbours. The cluster was chosen such
    that the total sum of the expected counts of the areas
    contained in the cluster was approximately equal to $5\%$ of
    the sum of the expected counts over the entire region. The remaining areas in the
    region will be referred to as background areas. The set of 
    indices of the wards in the cluster will be denoted by $\cE$,
    whereas the background areas will be denoted by $\cB:=\lb i=1,\dots,n: i\notin
    \cE\rb$. The level of risk in each individual ward was then
    fixed using the following rule, 
    \begin{equation}
       \theta_{i} :=
        \begin{cases}
         1       & \te{if } i \in \cB, \\
         \op{LR}  & \te{if } i \in \cE;
        \end{cases}
        \label{eq:elevated risk rule}
    \end{equation}
    where LR stands for level of risk. Three different levels of
    elevated risk were chosen with $\op{LR}=\lb 1.5, 2.0, 3.0\rb$, thereby
    creating different levels of RR variability in the simulated data sets. 
\item[Simulation 2 (SC2).] In the second scenario, we created a situation of
    mixed heterogeneity by combining both the simulation of clustered foyers of risks 
    with the presence of isolated single areas with elevated
    risks. Firstly, five single areas were categorised as elevated risk
    areas. These areas corresponded to the 
    $10\tth$, $20\tth$, $50\tth$, $75\tth$ and $90\tth$
    percentiles of the empirical distribution of the expected
    counts. The indices of these five wards
    will be denoted by $I$. In addition, we chose five non-contiguous
    clusters, which did not comprise the individual areas of elevated
    risk. These foyers, denoted $C_{j}$'s with $j=1,\ldots,5$, were
    constructed using the method described for the first scenario,
    ensuring that the cumulated expected counts in each cluster did not
    exceed $5\%$ of the total sum of expected counts in the region. 
    Variability in levels of risk was here controlled by varying LR.
    As in the first scenario, for every $i$, we defined the
    $\theta_{i}$'s according to the rule described in equation
    (\ref{eq:elevated risk rule}), except that the set of indices of
    the elevated risk areas was here defined as 
    \begin{equation}
        \cE := I \cup \bigcup_{j=1}^{5}C_{j}.
    \end{equation}
    Note that some buffering was required in simulations SC1 and SC2
    in order to ensure
    that the randomly selected areas and clusters, respectively, were not
    adjacent or overlapping with each other. This was done by selecting a
    buffer of background areas around each of the cluster in
    SC1, and around each cluster and each of the individual areas 
    in SC2. Note that in the latter case, the neighbours of the neighbours
    of the regions included in any one cluster were also excluded from further random
    selection, in order to ensure non-adjacent clusters. 
\item[Simulation 3 (SC3).] In a third scenario, a spatially structured
    risk surface was generated using the Mat\`{e}rn function. 
    Spatial structure was specified through the variance/covariance
    matrix, $\bSig$, of the $\log\theta_{i}$'s. 
    We computed the following symmetric matrix of intercentroidal
    distances, denoted $\bD$, whose entries take the following values,
    \begin{equation}
          D_\hi{ij}=||\bx_i-\bx_j||_{2},
    \end{equation}
for every pair of indices $(i,j)$, with 
$||\cdot||_{2}$ denoting the L2-norm and where each $\bx_i$ represents a set of
2-dimensional coordinates locating the centroid of the $i\tth$ region in $\R^{2}$. 
By convention, $D_\hi{ii}:= 0$ for every $i$. We then computed the mean
intercentroidal distance between all pairs of neighbours, defined as follows,
\begin{equation}
    \bar{w}=\frac{1}{N_{\partial}}\sum_{i<j}^n D_{ij}\cI\lb i\sim j\rb,
\end{equation}
where $N_{\partial}$ is the total number of neighbouring relationships
in the entire region. We used $\bar{w}$ to select a particular level
of risk decay, as a function of the intercentroidal distance. Our
objective was here to obtain high to moderate correlations when the
intercentroidal distance approached $\bar{w}$. Spatial
autocorrelations were then made to drop off to
a low level once the distances were equal to twice the
value of the mean intercentroidal distance. We used the
Mat\`{e}rn function to specify this form of spatial structure in the 
 entries of the variance/covariance matrix, such that
\begin{equation}
    \Sig_\hi{ij}=
    \frac{1}{2^{\nu-1}\Ga(\nu)}(2\sqrt{\nu}D_\hi{ij}\phi)^\nu
    K_\nu(2\sqrt{\nu}D_\hi{ij}\phi),
\end{equation}
choosing $\phi=3000$ and $\nu=40$, and where $K_\nu(\cdot)$ denotes
    the modified Bessel function of the third kind of order $\nu$. The
    resulting matrix $\bSig$ was then used to draw sets of realisations
    from a multivariate normal distribution,
    \begin{equation}
         \log(\bth)\sim \mvn(\bm{0},\sig_{\op{SC3}}^2\bSig),
    \end{equation}
    with parameter $\sig^2_{\op{SC3}}$ controlling for the overall marginal
    variance of the spatial structure. Three different values of
    $\sig_{\op{SC3}}$ among $\lb 0.1,0.2,0.3\rb$ were
    selected in order to vary the amount of variability of the RRs.
\item[Simulation 4 (SC4).] The last scenario 
    produced a risk surface characterised by a high
    heterogeneity, which was generated using a hidden covariate. 
    We used the Carstairs' deprivation index (CDI) for each area in the
    region of interest in order to create spatial variation in levels
    of risk \citep{Carstairs1989,Carstairs1989a,Carstairs2000}.
    The logRRs were here generated using a linear combination of the
    ward-specific CDIs. Formally, we had
    \begin{equation}
         \log(\theta_{i}) = \alpha + \beta C_{i} + v_{i},
    \end{equation}
    where $v_{i}\sim N(0,\sig_{\op{SC4}}^{2})$, and $C_{i}$ indicates the level
    of social deprivation in the $i\tth$ ward. The
    intercept $\alpha$ was assumed to be null throughout the
    simulations. The regression coefficient, $\beta$, took values
    in the set $\lb 0.2,0.3,0.4\rb$ in order to produce different levels of
    variability in the parameter ensemble, while the standard
    deviation, $\sig_{\op{SC4}}$, was fixed to $0.1$ in this scenario.
    The set of values for $\beta$  allowed the generation of scenarios
    with low, medium and high RR variability.
\end{description}

The simulated observed counts in each scenario were simulated under
the constraint $\sum_{i=1}^{n} y_{i}=\sum_{i=1}^{n}E_{i}$
\citep[see][for an example of synthetic simulations under this
constraint]{Wright2003}. 
This condition was respected by generating the full vector of observed
counts, $\by$, from a multinomial distribution parametrised such
that the number of trials corresponded to the total number of expected
counts $\sum_{i=1}^{n}E_{i}$, denoted $N_{E}$, and with the following vector of
probability masses 
\begin{equation}
  \bm{\pi} :=  \lt\lb\frac{E_{1}\theta_{1}}{\sum_{i=1}^{n} E_{i}\theta_{i}}, \ldots,
          \frac{E_{n}\theta_{n}}{\sum_{i=1}^{n} E_{i}\theta_{i}}, \rt\rb,
\end{equation}
and using 
\begin{equation}
  \by \sim \op{Multinomial}\lt(N_{E},\bm{\pi}\rt),
\end{equation}
for each of the scenarios and under the three levels of variability. 
For every combination of scenario and variability, we produced 100
replicates, thereby totalling 1,200 different data sets. 
Since each of these data sets was fitted with two different
models, the complete spatial simulations generated 2,400 sets of
joint posterior distributions of parameter ensembles. In addition, we
also produced further simulations in order to evaluate the effect of
different scaling of the expected counts.  We employed three
different values of the scaling factor (SF), such that
\begin{equation}
     \op{SF}\in \lb0.1,1.0,2.0\rb. 
\end{equation}
That is, for each combination of the aforementioned experimental factors,
we additionally varied the level of the expected counts 
\begin{equation}
      y_i\stack{\ind}{\sim} \poi(\theta_iE_{i}\op{SF}), 
\end{equation}
for every $i=1,\ldots,n$, for a given vector of true $\theta_{i}$'s,
whose generation depended on the specific spatial scenario considered. 
The main simulation results in this section will be
described for $\op{SF}=1.0$. In section \ref{sec:mrrr sf}, however, we
will briefly consider the consequences of scaling up or scaling down
the expected counts for a smaller subset of simulated data, choosing
SF to be either 0.1 or 2.0. 

Typical examples of the different maps produced by this set of
simulated scenarios are
illustrated in Figures \ref{fig:RR}, \ref{fig:OBS}, and \ref{fig:SMR},
representing the true RRs, observed counts and standard mortality ratios,
respectively. These maps were produced for a medium level of
variability with $\op{LR}=2$ for SC1 and SC2,
$\sig_{\op{SC3}}^{2}=\sig_{\op{SC4}}^{2}=0.2$ and $\beta=0.3$, for
SC3 and SC4. The full details of the simulated data sets are given in tables 
\ref{tab:E_table}, \ref{tab:RR_table} and \ref{tab:Y_table}, on pages 
\pageref{tab:E_table}, \pageref{tab:RR_table} and
\pageref{tab:Y_table}, respectively, in appendix \ref{app:spatial}.
These tables also include descriptive statistics for simulated data based
on different scaling of the expected counts.

\subsection{Fitted Models}\label{sec:mrrr spatial models}
The synthetic data sets were modelled using the two spatial BHMs
described in section \ref{sec:spatial models}. These models are
characterised by the combined use of spatially
structured and unstructured random effects. This particular
combination of random effects was originally introduced by \citet{Besag1991}, and is
therefore sometimes referred to as the BYM model \citep[see
also][]{Best2005}. The CAR Normal and CAR Laplace models were fitted using WinBUGS
1.4 \citep{Lunn2000} with 4,000 iterations including 2,000
burn-in. In this software, the $u_{i}$'s in the CAR priors are
constrained to sum to zero \citep{Spiegelhalter2000}. The
non-stationarity of the model, however, is induced by specifying a
flat prior on $\alpha$, which is the intercept for the $\log\theta_{i}$'s in
equation (\ref{eq:linear theta}). That is, we have 
\begin{equation}
     \alpha \sim \op{Unif}(-\infty,+\infty).
\end{equation}
This particular formulation of the CAR distribution was suggested by
\citet{Besag1995}. The CAR Normal and CAR Laplace models are fully
specified once the hyperparameters $\tau_{v}^{2}$ and $\tau^{2}_{u}$
in equations (\ref{eq:unstructured}) and (\ref{eq:car}), respectively,
are given hyperpriors. In our case, these two parameters were given
the following `vague' specification, 
\begin{equation}
    \tau^{-2}_{v} \sim \op{Gam}(a_{0},b_{0}), \qq
    \tau^{-2}_{u} \sim \op{Gam}(a_{0},b_{0}), 
\end{equation}
where we chose $a_{0}:=.5$ and $b_{0}:=.0005$, which constitutes a common
specification for BHMs \citep{Gelman2006}. Note, however, that
different choices of hyperparameters may yield different posterior
distributions of the parameters of interest.
The WinBUGS codes used for these two models is reported in appendix
\ref{app:winbugs}. We now consider in greater detail the various aspects of a parameter
ensemble in a BHM that may be of interest, and how estimation of
these aspects may be optimised.

For each simulated scenario, we computed the posterior regrets under
the Q-SEL and QR-SEL functions, as described in section \ref{sec:mrrr
  evaluation}. In both cases, we estimated these loss functions with
respect to the vector of RRs, $\bth$. For the Q-SEL function, this
gave
\begin{equation}
    \op{regret}\lt(\op{Q-SEL}_{\bp},Q_{\hat{\bth}^{L\pr}}(\bp)\rt).
\end{equation}
for different plug-in estimators, $\hat\bth^{L\pr}$, and where the
optimal estimator under this loss function is the vector of posterior empirical
$\bp$-quantiles as reported in equation (\ref{eq:qsel minimiser}).
Similarly, the following posterior regret was calculated to quantify
the estimation of the QR using ensembles of sub-optimal point
estimates,
\begin{equation}
   \op{regret}\lt(\op{QR-SEL},\op{QR}(\hat\bth^{L\pr})\rt), 
 \end{equation}
defined with respect to $\op{QR-SEL}(\bth,\delta)$, as described in
section \ref{sec:mrrr evaluation}. Note that we have here chosen to
compute both the empirical quantiles and the QR on the scale of the
RR. However, we could have chosen to evaluate these quantities with
respect to the logRRs. In section \ref{sec:mrrr conclusion}, we
discuss the implications of such a change of parametrisation on the
decision-theoretic problems of interest. 

All the results presented in the following sections are expressed as posterior
expected losses, which are functions of the joint posterior
distribution of the parameters under scrutiny. Since these evaluative
criteria are highly dependent on the type of hierarchical models used
for the evaluation, we have also computed the mean squared error of
the QR with respect to the true QR, based on the simulated RRs. 
For the BYM model, we found that the optimal QR estimate (i.e. the
posterior mean of the empirical QR) was on average at a $0.082$ square
distance from the true QR. That discrepancy was slightly higher for
the Laplace model, at $0.087$, when averaged over all simulated scenarios.

\subsection{Plug-in Estimators under Q-SEL}
\begin{table}[t]
 \footnotesize
 \caption{
Posterior regrets based on $\op{Q-SEL}_{\bp}(\bth,Q_{\hat{\bth}^{L\pr}}(\bp))$ with $\bp:=\lb.25,.75\rb$,
for five plug-in estimators, and with the posterior expected loss of the optimal estimator in the first column.
Results are presented for three different levels of variability, with $\op{SF}=1.0$ 
and for four spatial scenarios: an isolated cluster (SC1), a set of isolated clusters and isolated areas (SC2),
highly structured spatial heterogeneity (SC3), and a risk surface generated by a hidden covariate (SC4). 
Entries, averaged over 100 replicate data sets in each condition, are scaled by a factor of $10^3$
with posterior regrets expressed as percentage of the posterior loss
under the optimal estimator in parentheses.
\label{tab:spatial_qsel_table}} 
 \centering
 \begin{threeparttable}
 \begin{tabular}{>{\RaggedRight}p{50pt}>{\RaggedLeft}p{25pt}|>{\RaggedLeft}p{25pt}@{}>{\RaggedLeft}p{25pt}>{\RaggedLeft}p{25pt}@{}>{\RaggedLeft}p{25pt}>{\RaggedLeft}p{25pt}@{}>{\RaggedLeft}p{30pt}>{\RaggedLeft}p{25pt}@{}>{\RaggedLeft}p{20pt}>{\RaggedLeft}p{25pt}@{}>{\RaggedLeft}p{15pt}}\hline
\multicolumn{1}{c}{\itshape Scenarios}&
\multicolumn{1}{c}{}&
\multicolumn{10}{c}{\itshape Posterior regrets\tnote{a}}
\tabularnewline \cline{3-12}
\multicolumn{1}{>{\RaggedRight}p{50pt}}{}&\multicolumn{1}{c}{Q-SEL}&
\multicolumn{2}{c}{MLE}&\multicolumn{2}{c}{SSEL}&\multicolumn{2}{c}{WRSEL}&\multicolumn{2}{c}{CB}&\multicolumn{2}{c}{GR}\tabularnewline
\hline
{\itshape  Low Variab.~}&&&&&&&&&&&\tabularnewline
\normalfont   BYM-SC1 &    $0.6$ &    $ 6.9$ &    ($1091$) &    $ 2.0$ &    ($315$) &    $ 22.0$ &    ($ 3477$) &    $ 0.3$ &    ($ 55$) &    $0.1$ &    ($11$)\tabularnewline
\normalfont   BYM-SC2 &    $0.8$ &    $ 4.1$ &    ($ 511$) &    $ 3.3$ &    ($416$) &    $ 93.8$ &    ($11651$) &    $ 1.1$ &    ($142$) &    $0.1$ &    ($ 7$)\tabularnewline
\normalfont   BYM-SC3 &    $1.1$ &    $ 3.4$ &    ($ 300$) &    $ 0.7$ &    ($ 63$) &    $151.4$ &    ($13427$) &    $ 0.7$ &    ($ 58$) &    $0.1$ &    ($ 5$)\tabularnewline
\normalfont   BYM-SC4 &    $0.9$ &    $ 5.5$ &    ($ 606$) &    $ 1.7$ &    ($183$) &    $140.6$ &    ($15476$) &    $ 0.4$ &    ($ 48$) &    $0.1$ &    ($ 7$)\tabularnewline
\normalfont   L1-SC1 &    $0.7$ &    $ 7.2$ &    ($1058$) &    $ 2.1$ &    ($316$) &    $ 21.1$ &    ($ 3104$) &    $ 0.3$ &    ($ 41$) &    $0.1$ &    ($10$)\tabularnewline
\normalfont   L1-SC2 &    $0.8$ &    $ 4.1$ &    ($ 504$) &    $ 3.7$ &    ($458$) &    $ 99.3$ &    ($12136$) &    $ 1.0$ &    ($124$) &    $0.1$ &    ($ 8$)\tabularnewline
\normalfont   L1-SC3 &    $1.2$ &    $ 4.5$ &    ($ 375$) &    $ 1.3$ &    ($108$) &    $206.2$ &    ($17237$) &    $ 0.6$ &    ($ 54$) &    $0.1$ &    ($ 5$)\tabularnewline
\normalfont   L1-SC4 &    $1.0$ &    $ 5.6$ &    ($ 578$) &    $ 2.2$ &    ($232$) &    $156.9$ &    ($16341$) &    $ 0.4$ &    ($ 42$) &    $0.1$ &    ($ 7$)\tabularnewline
\hline
{\itshape  Med.~ Variab.~}&&&&&&&&&&&\tabularnewline
\normalfont   BYM-SC1 &    $0.6$ &    $ 1.6$ &    ($ 246$) &    $ 3.3$ &    ($510$) &    $ 65.9$ &    ($10179$) &    $ 1.3$ &    ($195$) &    $0.1$ &    ($10$)\tabularnewline
\normalfont   BYM-SC2 &    $1.3$ &    $ 2.7$ &    ($ 200$) &    $ 9.9$ &    ($738$) &    $235.5$ &    ($17548$) &    $ 7.3$ &    ($542$) &    $0.1$ &    ($ 5$)\tabularnewline
\normalfont   BYM-SC3 &    $1.5$ &    $ 2.8$ &    ($ 181$) &    $ 1.0$ &    ($ 65$) &    $215.0$ &    ($13906$) &    $ 0.8$ &    ($ 52$) &    $0.1$ &    ($ 4$)\tabularnewline
\normalfont   BYM-SC4 &    $1.2$ &    $ 4.0$ &    ($ 325$) &    $ 1.2$ &    ($ 98$) &    $230.9$ &    ($18609$) &    $ 0.5$ &    ($ 43$) &    $0.1$ &    ($ 5$)\tabularnewline
\normalfont   L1-SC1 &    $0.7$ &    $ 1.5$ &    ($ 223$) &    $ 3.7$ &    ($531$) &    $ 71.1$ &    ($10299$) &    $ 1.3$ &    ($188$) &    $0.1$ &    ($10$)\tabularnewline
\normalfont   L1-SC2 &    $1.4$ &    $ 3.9$ &    ($ 270$) &    $10.6$ &    ($738$) &    $210.7$ &    ($14650$) &    $ 7.4$ &    ($517$) &    $0.1$ &    ($ 4$)\tabularnewline
\normalfont   L1-SC3 &    $1.7$ &    $ 3.5$ &    ($ 208$) &    $ 1.5$ &    ($ 88$) &    $321.7$ &    ($19389$) &    $ 1.0$ &    ($ 59$) &    $0.1$ &    ($ 4$)\tabularnewline
\normalfont   L1-SC4 &    $1.3$ &    $ 4.4$ &    ($ 338$) &    $ 1.7$ &    ($129$) &    $262.5$ &    ($20067$) &    $ 0.7$ &    ($ 51$) &    $0.1$ &    ($ 5$)\tabularnewline
\hline
{\itshape  High Variab.~}&&&&&&&&&&&\tabularnewline
\normalfont   BYM-SC1 &    $0.8$ &    $ 0.6$ &    ($  85$) &    $ 4.3$ &    ($566$) &    $115.2$ &    ($15209$) &    $ 2.6$ &    ($342$) &    $0.1$ &    ($10$)\tabularnewline
\normalfont   BYM-SC2 &    $2.2$ &    $ 7.7$ &    ($ 353$) &    $18.4$ &    ($840$) &    $578.4$ &    ($26420$) &    $16.8$ &    ($767$) &    $0.1$ &    ($ 3$)\tabularnewline
\normalfont   BYM-SC3 &    $1.8$ &    $ 2.8$ &    ($ 158$) &    $ 1.0$ &    ($ 57$) &    $253.8$ &    ($14156$) &    $ 1.1$ &    ($ 62$) &    $0.1$ &    ($ 4$)\tabularnewline
\normalfont   BYM-SC4 &    $1.6$ &    $ 3.4$ &    ($ 214$) &    $ 1.0$ &    ($ 61$) &    $300.6$ &    ($18872$) &    $ 0.8$ &    ($ 48$) &    $0.1$ &    ($ 4$)\tabularnewline
\normalfont   L1-SC1 &    $0.8$ &    $ 1.0$ &    ($ 120$) &    $ 4.4$ &    ($537$) &    $124.0$ &    ($15255$) &    $ 2.6$ &    ($316$) &    $0.1$ &    ($ 9$)\tabularnewline
\normalfont   L1-SC2 &    $2.4$ &    $10.8$ &    ($ 447$) &    $19.1$ &    ($794$) &    $619.5$ &    ($25727$) &    $17.1$ &    ($711$) &    $0.1$ &    ($ 3$)\tabularnewline
\normalfont   L1-SC3 &    $2.0$ &    $ 3.9$ &    ($ 196$) &    $ 1.3$ &    ($ 64$) &    $357.2$ &    ($18064$) &    $ 1.2$ &    ($ 61$) &    $0.1$ &    ($ 5$)\tabularnewline
\normalfont   L1-SC4 &    $1.7$ &    $ 3.7$ &    ($ 224$) &    $ 1.4$ &    ($ 82$) &    $346.5$ &    ($20771$) &    $ 1.0$ &    ($ 57$) &    $0.1$ &    ($ 4$)\tabularnewline
\hline
\end{tabular}
\begin{tablenotes}
   \item[a] Entries for the posterior regrets have been here truncated
     to the closest first digit after the decimal point, whereas
     entries for the percentage regrets have been truncated to the
     closest integer. 
\end{tablenotes}
\end{threeparttable}
\end{table}

The results of the spatial data simulations for the estimation of
the parameter ensemble's quantiles are presented in table
\ref{tab:spatial_qsel_table}, on page \pageref{tab:spatial_qsel_table}, for the CAR normal (denoted BYM
in the table) and the CAR Laplace (denoted L1 in the table). As for the
non-spatial simulations, we reported the percentage posterior regrets
in parentheses for each plug-in estimator. Note that for these
spatial simulations, the parameters controlling the size of the
$\phi_{i}$'s in the WRSEL function were given the following
specification, $a_{1}=a_{2}=0.5$, which is a symmetric version of
the original specification used by \citet{Wright2003} for similar
spatial models. 

Overall, the GR plug-in estimator of the first and third quartiles
 outperformed the four other plug-in
estimators across all spatial scenarios and levels of the different
experimental parameters. The quartiles derived from the ensemble of CB
point estimates tended to exhibit the second best performance on most
scenarios, followed by the quartiles of the SSEL and WRSEL ensemble
distributions, with the WRSEL displaying the worse performance, as can
be observed in column six of table \ref{tab:spatial_qsel_table}, on page
\pageref{tab:spatial_qsel_table}. The MLE-based
empirical quartiles were poor when we considered SC3 and SC4 as
spatial scenarios. However, the MLE plug-in estimates outperformed
the SSEL, WRSEL and CB ensemble quartiles under scenarios SC1 and
SC2, for simulations with medium to high heterogeneity of the true RRs.

The most important experimental factor was the use of different
scenarios, as can be observed in table \ref{tab:spatial_qsel_table} 
by comparing the SC1/SC2 lines with the SC3/SC4 lines of each
section of the table. In particular, SC1 and SC2 tended to produce
very different outcomes from the ones obtained for the SC3 and SC4
synthetic data sets. One should note that the SC1 and SC2 spatially
structured scenarios are somewhat artificial
because the true distributions of the level of risks in these
scenarios is discrete: each RR can only take
one of two values. This produced some particularly
counter-intuitive results when comparing the performances of different
plug-in estimators. 
\begin{figure}[t]
 \centering
  \includegraphics[width=14cm]{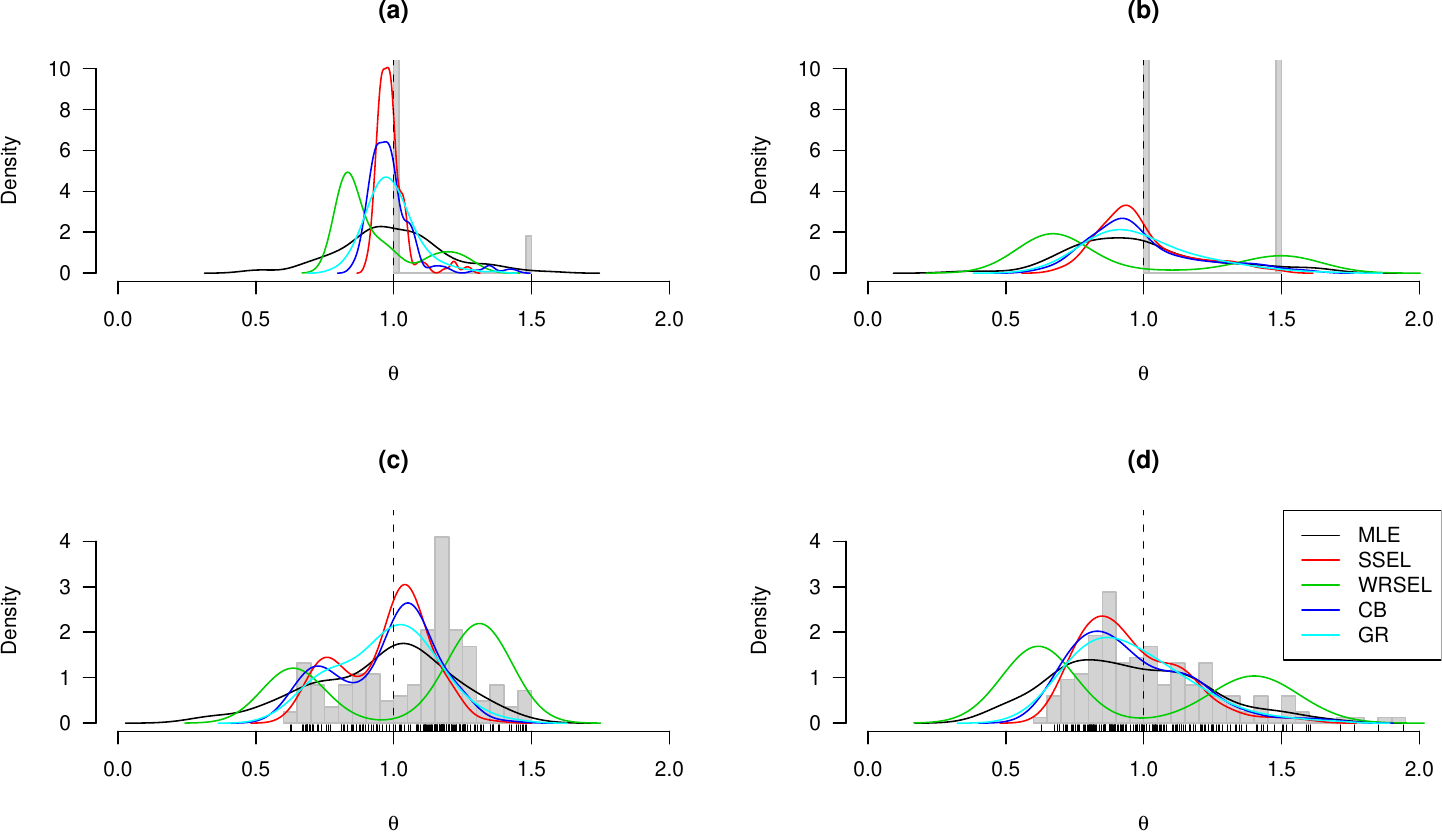}\\
  \caption{Histograms of simulated parameter ensembles,
  under scenarios SC1, SC2, SC3 and SC4 in panels (a),
  (b), (c) and (d), respectively for a low variability specification. 
  Distributions of point estimate ensembles have been superimposed using
  different coloured lines, and are based on posterior distributions
  produced under the CAR Normal model.
  For SC1 and SC2, in panels (a) and (b), the true
  parameter ensemble of the RRs is a discrete distribution on solely
  two values. 
  \label{fig:hist spatial LR1}} 
\end{figure}

In figure \ref{fig:hist spatial LR1} 
on page \pageref{fig:hist spatial LR1}, the typical
ensemble distributions for different choices of point estimates are
reported under the four spatial scenarios. 
Despite the discrete nature of the true distributions in scenarios SC1 and SC2, 
all ensembles of point estimators behaved similarly in terms of level
of shrinkage. That is, the WRSEL and SEL functions exerted a
moderate to high level of shrinkage towards the prior mean, whereas
the CB and GR point estimators resulted in an ensemble distribution
closer to the true ensemble of the RRs. The MLEs, by contrast, tended to be
over-dispersed. In effect, the ordering of the different families of
point estimators in terms of level of shrinkage was
maintained. Therefore, the estimators retained
their typical properties. However, it is these properties that have
changed in their level of desirability. While the over-dispersion of
the MLEs is generally a disadvantage in terms of the estimation of the
quantiles of the true ensemble distribution; for SC1 and SC2, it has
now become one of the desirable properties of a point estimator
because the true RRs  are artificially over-dispersed in these two
scenarios. These observations account for the superiority of the
quartiles based on the ensemble distribution of the MLEs in comparison to 
the SSEL, WRSEL and CB plug-in estimators for these scenarios. 

The level of variability of the true ensemble of RRs was also found to
have a substantial effect on the ordering of the different plug-in
estimators in terms of percentage regret, although this effect was
mediated by the choice of spatial scenarios. 
For the discrete two-category spatial scenarios --SC1 and SC2-- the quartiles of
the ensemble distributions of the SSEL, WRSEL and CB estimators were
detrimentally affected by an increase in the heterogeneity of the true
parameter ensemble. The MLE-based empirical quartiles, by contrast,
appeared to improve their performance as the true RRs became more dispersed,
although this trend was mainly restricted to the SC1 scenario.
For SC3 and SC4, however, the effect of increasing the variability
of the RRs tended to result in a decrease in percentage regret for most
plug-in estimators, although systematic trends were difficult to
isolate. Under SC4, the MLE, SSEL and WRSEL plug-in estimators were
positively affected by an increase in the heterogeneity of the RR
ensemble. 

The choice of the CAR prior --that is, the contrast between the BYM model with its robust
version based on the Laplace distribution-- mainly affected the 
SSEL, WRSEL and CB plug-in estimators, whose performance was worsened
when using the CAR Laplace prior, under the SC3 and SC4 simulation
scenarios. We note, however, that this trend was not completely
verified for the CB quartile estimator for which the use of the BYM model
produced worse percentage regret under low variability. The GR plug-in estimators, by
contrast, appeared to be only marginally affected by the choice of
prior. The use of the Laplace prior slightly increased
the posterior expected loss (cf. first column of table
\ref{tab:spatial_qsel_table}, on page \pageref{tab:spatial_qsel_table}) 
associated with the optimal estimator under the
Q-SEL function, thereby indicating that the use of such a model leads
to estimators of empirical quartiles, which are inferior to the ones obtained when specifying 
a CAR Normal prior. 

\subsection{Plug-in Estimators under QR-SEL}
\begin{table}[t]
 \footnotesize
 \caption{
Posterior regrets based on $\op{QR-SEL}(\bth,\op{QR}(\bth^{L\pr}))$ for six plug-in estimators,
and with the posterior expected loss of the optimal estimator in the first column.
Results are presented for three different levels of variability, with $\op{SF}=1.0$ 
and for four spatial scenarios: an isolated cluster (SC1), a set of isolated clusters and isolated areas (SC2),
highly structured spatial heterogeneity (SC3), and a risk surface
generated by a hidden covariate (SC4). Entries, averaged over 100
replicate data sets in each condition, are scaled by a factor of $10^3$
with posterior regrets expressed as percentage of the posterior loss under the optimal estimator in parentheses.
\label{tab:spatial_qrsel_table}} 
 \centering
 \begin{threeparttable}
 \begin{tabular}{>{\RaggedRight}p{50pt}>{\RaggedLeft}p{25pt}|>{\RaggedLeft}p{25pt}@{}>{\RaggedLeft}p{27pt}>{\RaggedLeft}p{20pt}@{}>{\RaggedLeft}p{27pt}>{\RaggedLeft}p{25pt}@{}>{\RaggedLeft}p{30pt}>{\RaggedLeft}p{15pt}@{}>{\RaggedLeft}p{20pt}>{\RaggedLeft}p{10pt}@{}>{\RaggedLeft}p{15pt}>{\RaggedLeft}p{10pt}@{}>{\RaggedLeft}p{12pt}}\hline
\multicolumn{1}{c}{\itshape Scenarios}&
\multicolumn{1}{c}{}&
\multicolumn{12}{c}{\itshape Posterior regrets\tnote{a}}
\tabularnewline \cline{3-14}
\multicolumn{1}{>{\RaggedRight}p{50pt}}{}&\multicolumn{1}{c}{QR-SEL}&
\multicolumn{2}{c}{MLE}&\multicolumn{2}{c}{SSEL}&\multicolumn{2}{c}{WRSEL}&\multicolumn{2}{c}{CB}&\multicolumn{2}{c}{GR}&\multicolumn{2}{c}{RoPQ}
\tabularnewline
\hline
{\itshape  Low Variab.~}&&&&&&&&&&&&&\tabularnewline
\normalfont   BYM-SC1 &    $1$ &    $18$ &    ($2583$) &    $ 5$ &    ($ 679$) &    $  58$ &    ($ 8187$) &    $ 1$ &    ($ 106$) &    $0$ &    ($1$) &    $0$ &    ($0$)\tabularnewline
\normalfont   BYM-SC2 &    $1$ &    $13$ &    ($1152$) &    $ 9$ &    ($ 849$) &    $ 489$ &    ($44527$) &    $ 2$ &    ($ 195$) &    $0$ &    ($0$) &    $0$ &    ($0$)\tabularnewline
\normalfont   BYM-SC3 &    $3$ &    $14$ &    ($ 521$) &    $ 2$ &    ($  86$) &    $1355$ &    ($50724$) &    $ 2$ &    ($  89$) &    $0$ &    ($1$) &    $0$ &    ($0$)\tabularnewline
\normalfont   BYM-SC4 &    $1$ &    $20$ &    ($1349$) &    $ 5$ &    ($ 371$) &    $ 871$ &    ($58847$) &    $ 1$ &    ($  57$) &    $0$ &    ($0$) &    $0$ &    ($0$)\tabularnewline
\normalfont   L1-SC1 &    $1$ &    $19$ &    ($2295$) &    $ 5$ &    ($ 636$) &    $  61$ &    ($ 7454$) &    $ 1$ &    ($  73$) &    $0$ &    ($2$) &    $0$ &    ($0$)\tabularnewline
\normalfont   L1-SC2 &    $1$ &    $12$ &    ($1016$) &    $11$ &    ($ 924$) &    $ 537$ &    ($46600$) &    $ 2$ &    ($ 167$) &    $0$ &    ($0$) &    $0$ &    ($0$)\tabularnewline
\normalfont   L1-SC3 &    $3$ &    $19$ &    ($ 686$) &    $ 5$ &    ($ 173$) &    $2059$ &    ($72936$) &    $ 2$ &    ($  62$) &    $0$ &    ($1$) &    $0$ &    ($0$)\tabularnewline
\normalfont   L1-SC4 &    $2$ &    $18$ &    ($1150$) &    $ 8$ &    ($ 496$) &    $1021$ &    ($65581$) &    $ 1$ &    ($  51$) &    $0$ &    ($1$) &    $0$ &    ($0$)\tabularnewline
\hline
{\itshape  Med.~ Variab.~}&&&&&&&&&&&&&\tabularnewline
\normalfont   BYM-SC1 &    $1$ &    $ 3$ &    ($ 451$) &    $ 9$ &    ($1327$) &    $  72$ &    ($10596$) &    $ 4$ &    ($ 585$) &    $0$ &    ($1$) &    $0$ &    ($0$)\tabularnewline
\normalfont   BYM-SC2 &    $3$ &    $ 2$ &    ($  86$) &    $30$ &    ($1149$) &    $ 453$ &    ($17301$) &    $19$ &    ($ 711$) &    $0$ &    ($0$) &    $0$ &    ($0$)\tabularnewline
\normalfont   BYM-SC3 &    $6$ &    $16$ &    ($ 284$) &    $ 5$ &    ($  81$) &    $3093$ &    ($54655$) &    $ 3$ &    ($  61$) &    $0$ &    ($3$) &    $0$ &    ($0$)\tabularnewline
\normalfont   BYM-SC4 &    $3$ &    $18$ &    ($ 640$) &    $ 5$ &    ($ 193$) &    $2181$ &    ($77181$) &    $ 1$ &    ($  38$) &    $0$ &    ($1$) &    $0$ &    ($0$)\tabularnewline
\normalfont   L1-SC1 &    $1$ &    $ 3$ &    ($ 344$) &    $10$ &    ($1271$) &    $ 100$ &    ($12706$) &    $ 4$ &    ($ 514$) &    $0$ &    ($1$) &    $0$ &    ($0$)\tabularnewline
\normalfont   L1-SC2 &    $3$ &    $ 3$ &    ($ 103$) &    $32$ &    ($1128$) &    $ 760$ &    ($26507$) &    $19$ &    ($ 650$) &    $0$ &    ($0$) &    $0$ &    ($0$)\tabularnewline
\normalfont   L1-SC3 &    $6$ &    $21$ &    ($ 341$) &    $ 8$ &    ($ 132$) &    $5694$ &    ($93879$) &    $ 4$ &    ($  65$) &    $0$ &    ($2$) &    $0$ &    ($0$)\tabularnewline
\normalfont   L1-SC4 &    $3$ &    $17$ &    ($ 574$) &    $ 8$ &    ($ 252$) &    $2569$ &    ($86403$) &    $ 1$ &    ($  47$) &    $0$ &    ($1$) &    $0$ &    ($0$)\tabularnewline
\hline
{\itshape  High Variab.~}&&&&&&&&&&&&&\tabularnewline
\normalfont   BYM-SC1 &    $1$ &    $ 1$ &    ($  99$) &    $13$ &    ($1409$) &    $ 149$ &    ($16321$) &    $10$ &    ($1043$) &    $0$ &    ($2$) &    $0$ &    ($0$)\tabularnewline
\normalfont   BYM-SC2 &    $6$ &    $16$ &    ($ 288$) &    $63$ &    ($1128$) &    $  76$ &    ($ 1357$) &    $52$ &    ($ 927$) &    $0$ &    ($1$) &    $0$ &    ($0$)\tabularnewline
\normalfont   BYM-SC3 &    $8$ &    $18$ &    ($ 215$) &    $ 5$ &    ($  65$) &    $4858$ &    ($58570$) &    $10$ &    ($ 122$) &    $0$ &    ($1$) &    $0$ &    ($0$)\tabularnewline
\normalfont   BYM-SC4 &    $5$ &    $19$ &    ($ 398$) &    $ 5$ &    ($ 108$) &    $3930$ &    ($83209$) &    $ 2$ &    ($  34$) &    $0$ &    ($1$) &    $0$ &    ($0$)\tabularnewline
\normalfont   L1-SC1 &    $1$ &    $ 2$ &    ($ 172$) &    $13$ &    ($1211$) &    $ 175$ &    ($16350$) &    $ 9$ &    ($ 873$) &    $0$ &    ($1$) &    $0$ &    ($0$)\tabularnewline
\normalfont   L1-SC2 &    $6$ &    $24$ &    ($ 379$) &    $67$ &    ($1077$) &    $ 103$ &    ($ 1658$) &    $54$ &    ($ 870$) &    $0$ &    ($1$) &    $0$ &    ($0$)\tabularnewline
\normalfont   L1-SC3 &    $9$ &    $26$ &    ($ 280$) &    $ 7$ &    ($  77$) &    $7609$ &    ($82689$) &    $ 6$ &    ($  70$) &    $0$ &    ($1$) &    $0$ &    ($0$)\tabularnewline
\normalfont   L1-SC4 &    $5$ &    $17$ &    ($ 354$) &    $ 8$ &    ($ 155$) &    $4772$ &    ($97158$) &    $ 2$ &    ($  40$) &    $0$ &    ($1$) &    $0$ &    ($0$)\tabularnewline
\hline
\end{tabular}
\begin{tablenotes}
   \item[a] Entries for both posterior and percentage regrets have
     been here truncated to the closest integer.
     For some entries, percentage regrets are smaller than 1 percentage point. 
\end{tablenotes}
\end{threeparttable}
\end{table}

The posterior regrets associated with the QR-SEL function for
different plug-in estimators and different levels of experimental
factors are presented in table \ref{tab:spatial_qrsel_table}, on page
\pageref{tab:spatial_qrsel_table}.
The results for the QR-SEL function were found to follow the pattern
described for the Q-SEL function. As in the preceding section, the
most important experimental factor was the choice of simulation
scenario. Although the over-dispersion of the MLEs was found to be
disadvantageous when estimating the QR under SC1 and SC2 with a
low level of variability in the true RRs, this property became
advantageous as the variability of the RRs increased. The converse was
true for the QRs of the SSEL, WRSEL and CB ensemble
distributions. That is, these three families of plug-in estimators
produced QRs associated with greater percentage regret under SC1 and
SC2, as the level of variability increased. 
As for the Q-SEL, the triple-goal plug-in estimator was found to
outperform all the other ensembles of point estimates across all
scenarios and experimental conditions. For the QR-SEL, we also evaluated the posterior
regret associated with the use of the RoPQ. This particular choice of
estimator yielded a quasi-optimal performance with percentage regrets
lower than one percentage point under all scenarios considered. 

As with quantile estimation, the use of the CAR Laplace prior resulted in higher
posterior expected losses when considering the
optimal estimator of the QR, and also tended to produce substantially
larger percentage posterior regrets for the SSEL, WRSEL and CB plug-in estimators. 
In summary, this spatial simulation study has shown that the different plug-in
estimators of interest tend to behave similarly under both the Q-SEL
and QR-SEL functions. As for the non-spatial study, the triple-goal
estimators were found to exhibit the lowest amount of percentage
regret across all the simulation scenarios considered. 

\subsection{Consequences of Scaling the Expected Counts}\label{sec:mrrr sf}
%
\begin{table}[htbp]
 \footnotesize
 \caption{
Posterior regrets based on $\op{Q-SEL}_{\bp}(\bth,Q_{\hat{\bth}^{L\pr}}(\bp))$ with $\bp:=\lb.25,.75\rb$,
for five plug-in estimators, and with the posterior expected loss of
the optimal estimator reported in the first column.
Results are presented for three different levels of variability,
and for four spatial scenarios: an isolated cluster (SC1), a set of isolated clusters and isolated areas (SC2),
highly structured spatial heterogeneity (SC3), and a risk surface generated by a hidden covariate (SC4).
Entries, averaged over 100 replicate data sets in each condition, are scaled by a factor of $10^3$
with posterior regrets expressed as percentage of the posterior loss
under the optimal estimator in parentheses.
\label{tab:spatial_qsel_table_SF}} 
 \centering
 \begin{threeparttable}
 \begin{tabular}{>{\RaggedRight}p{50pt}>{\RaggedLeft}p{25pt}|>{\RaggedLeft}p{25pt}@{}>{\RaggedLeft}p{27pt}>{\RaggedLeft}p{25pt}@{}>{\RaggedLeft}p{25pt}>{\RaggedLeft}p{25pt}@{}>{\RaggedLeft}p{25pt}>{\RaggedLeft}p{25pt}@{}>{\RaggedLeft}p{20pt}>{\RaggedLeft}p{25pt}@{}>{\RaggedLeft}p{15pt}}\hline
\multicolumn{1}{c}{\itshape Scenarios}&
\multicolumn{1}{c}{}&
\multicolumn{10}{c}{\itshape Posterior regrets\tnote{a}}
\tabularnewline \cline{3-12}
\multicolumn{1}{>{\RaggedRight}p{50pt}}{}&\multicolumn{1}{c}{Q-SEL}&
\multicolumn{2}{c}{MLE}&\multicolumn{2}{c}{SSEL}&\multicolumn{2}{c}{WRSEL}&\multicolumn{2}{c}{CB}&\multicolumn{2}{c}{GR}\tabularnewline
\hline
\multicolumn{5}{l}{\itshape $\op{SF}=0.1$ Low Variab.~}&\tabularnewline
\normalfont   BYM-SC1 &    $ 4.2$ &    $216.6$ &    ($5163$) &    $ 2.8$ &    ($ 66$) &    $ 1.9$ &    ($ 46$) &    $ 0.3$ &    ($  7$) &    $0.2$ &    ($ 4$)\tabularnewline
\normalfont   BYM-SC2 &    $ 4.1$ &    $231.5$ &    ($5614$) &    $ 3.2$ &    ($ 78$) &    $ 2.5$ &    ($ 60$) &    $ 0.3$ &    ($  8$) &    $0.1$ &    ($ 1$)\tabularnewline
\normalfont   BYM-SC3 &    $ 6.3$ &    $138.8$ &    ($2197$) &    $ 3.0$ &    ($ 47$) &    $ 3.0$ &    ($ 47$) &    $ 1.7$ &    ($ 27$) &    $0.1$ &    ($ 1$)\tabularnewline
\normalfont   BYM-SC4 &    $ 5.5$ &    $211.6$ &    ($3827$) &    $ 6.0$ &    ($109$) &    $ 4.9$ &    ($ 88$) &    $ 1.2$ &    ($ 21$) &    $0.1$ &    ($ 2$)\tabularnewline
\normalfont   L1-SC1 &    $ 4.3$ &    $222.3$ &    ($5136$) &    $ 2.7$ &    ($ 61$) &    $ 2.0$ &    ($ 45$) &    $ 0.3$ &    ($  6$) &    $0.2$ &    ($ 4$)\tabularnewline
\normalfont   L1-SC2 &    $ 4.1$ &    $243.9$ &    ($5912$) &    $ 2.9$ &    ($ 70$) &    $ 2.3$ &    ($ 55$) &    $ 0.3$ &    ($  7$) &    $0.2$ &    ($ 5$)\tabularnewline
\normalfont   L1-SC3 &    $ 8.6$ &    $200.1$ &    ($2335$) &    $ 7.6$ &    ($ 89$) &    $ 6.7$ &    ($ 78$) &    $ 0.9$ &    ($ 11$) &    $0.3$ &    ($ 4$)\tabularnewline
\normalfont   L1-SC4 &    $ 6.0$ &    $235.7$ &    ($3900$) &    $ 7.3$ &    ($121$) &    $ 6.1$ &    ($101$) &    $ 0.5$ &    ($  8$) &    $0.2$ &    ($ 3$)\tabularnewline
\hline
\multicolumn{5}{l}{\itshape $\op{SF}=0.1$ Med.~ Variab.~}&\tabularnewline
\normalfont   BYM-SC1 &    $ 5.7$ &    $189.2$ &    ($3332$) &    $ 6.4$ &    ($114$) &    $ 4.7$ &    ($ 82$) &    $ 1.7$ &    ($ 30$) &    $0.1$ &    ($ 1$)\tabularnewline
\normalfont   BYM-SC2 &    $ 8.6$ &    $166.6$ &    ($1941$) &    $13.9$ &    ($162$) &    $12.3$ &    ($144$) &    $ 1.5$ &    ($ 18$) &    $0.3$ &    ($ 3$)\tabularnewline
\normalfont   BYM-SC3 &    $ 7.4$ &    $ 92.9$ &    ($1261$) &    $ 4.5$ &    ($ 61$) &    $ 3.5$ &    ($ 48$) &    $ 3.2$ &    ($ 43$) &    $0.1$ &    ($ 1$)\tabularnewline
\normalfont   BYM-SC4 &    $ 7.4$ &    $155.8$ &    ($2115$) &    $14.0$ &    ($190$) &    $10.1$ &    ($137$) &    $ 2.8$ &    ($ 38$) &    $0.1$ &    ($ 2$)\tabularnewline
\normalfont   L1-SC1 &    $ 4.8$ &    $231.2$ &    ($4810$) &    $ 4.9$ &    ($102$) &    $ 3.9$ &    ($ 80$) &    $ 0.5$ &    ($  9$) &    $0.2$ &    ($ 4$)\tabularnewline
\normalfont   L1-SC2 &    $ 6.7$ &    $171.0$ &    ($2559$) &    $18.7$ &    ($280$) &    $15.3$ &    ($229$) &    $ 1.1$ &    ($ 17$) &    $0.1$ &    ($ 2$)\tabularnewline
\normalfont   L1-SC3 &    $ 7.3$ &    $154.7$ &    ($2115$) &    $23.0$ &    ($314$) &    $20.1$ &    ($275$) &    $ 3.5$ &    ($ 48$) &    $0.1$ &    ($ 2$)\tabularnewline
\normalfont   L1-SC4 &    $ 8.0$ &    $175.4$ &    ($2184$) &    $17.1$ &    ($213$) &    $14.2$ &    ($177$) &    $ 1.5$ &    ($ 19$) &    $0.3$ &    ($ 4$)\tabularnewline
\hline
\multicolumn{5}{l}{\itshape $\op{SF}=0.1$ High Variab.~}&\tabularnewline
\normalfont   BYM-SC1 &    $ 6.1$ &    $103.9$ &    ($1706$) &    $17.7$ &    ($291$) &    $14.5$ &    ($238$) &    $ 6.4$ &    ($104$) &    $0.1$ &    ($ 1$)\tabularnewline
\normalfont   BYM-SC2 &    $ 8.3$ &    $ 78.2$ &    ($ 939$) &    $26.0$ &    ($312$) &    $24.0$ &    ($288$) &    $ 7.9$ &    ($ 95$) &    $0.1$ &    ($ 1$)\tabularnewline
\normalfont   BYM-SC3 &    $ 7.5$ &    $ 73.6$ &    ($ 986$) &    $ 6.6$ &    ($ 89$) &    $ 7.3$ &    ($ 98$) &    $ 6.7$ &    ($ 90$) &    $0.1$ &    ($ 1$)\tabularnewline
\normalfont   BYM-SC4 &    $ 7.6$ &    $103.9$ &    ($1364$) &    $11.8$ &    ($155$) &    $ 9.2$ &    ($121$) &    $ 3.3$ &    ($ 43$) &    $0.1$ &    ($ 1$)\tabularnewline
\normalfont   L1-SC1 &    $ 6.7$ &    $ 89.3$ &    ($1329$) &    $27.7$ &    ($413$) &    $23.2$ &    ($345$) &    $ 6.0$ &    ($ 89$) &    $0.1$ &    ($ 1$)\tabularnewline
\normalfont   L1-SC2 &    $ 8.4$ &    $ 75.8$ &    ($ 898$) &    $38.9$ &    ($462$) &    $39.9$ &    ($473$) &    $10.2$ &    ($121$) &    $0.1$ &    ($ 1$)\tabularnewline
\normalfont   L1-SC3 &    $10.1$ &    $109.5$ &    ($1079$) &    $17.7$ &    ($175$) &    $18.1$ &    ($178$) &    $ 5.8$ &    ($ 58$) &    $0.3$ &    ($ 3$)\tabularnewline
\normalfont   L1-SC4 &    $ 7.3$ &    $116.2$ &    ($1589$) &    $32.0$ &    ($438$) &    $28.0$ &    ($383$) &    $ 5.0$ &    ($ 68$) &    $0.1$ &    ($ 1$)\tabularnewline
\hline
\multicolumn{5}{l}{\itshape $\op{SF}=2$ Low Variab.~}&\tabularnewline
\normalfont   BYM-SC1 &    $ 0.3$ &    $  1.6$ &    ($ 540$) &    $ 1.3$ &    ($429$) &    $ 1.0$ &    ($344$) &    $ 0.5$ &    ($159$) &    $0.1$ &    ($25$)\tabularnewline
\normalfont   BYM-SC2 &    $ 0.5$ &    $  1.3$ &    ($ 260$) &    $ 2.7$ &    ($516$) &    $ 1.6$ &    ($302$) &    $ 1.4$ &    ($279$) &    $0.1$ &    ($13$)\tabularnewline
\normalfont   BYM-SC3 &    $ 0.7$ &    $  1.3$ &    ($ 191$) &    $ 0.3$ &    ($ 44$) &    $ 0.2$ &    ($ 26$) &    $ 0.3$ &    ($ 44$) &    $0.1$ &    ($ 9$)\tabularnewline
\normalfont   BYM-SC4 &    $ 0.6$ &    $  1.5$ &    ($ 249$) &    $ 0.7$ &    ($112$) &    $ 0.5$ &    ($ 89$) &    $ 0.2$ &    ($ 39$) &    $0.0$ &    ($ 8$)\tabularnewline
\normalfont   L1-SC1 &    $ 0.3$ &    $  1.6$ &    ($ 461$) &    $ 1.4$ &    ($419$) &    $ 1.0$ &    ($307$) &    $ 0.5$ &    ($143$) &    $0.1$ &    ($16$)\tabularnewline
\normalfont   L1-SC2 &    $ 0.5$ &    $  1.3$ &    ($ 231$) &    $ 2.6$ &    ($467$) &    $ 2.0$ &    ($373$) &    $ 1.1$ &    ($195$) &    $0.1$ &    ($ 9$)\tabularnewline
\normalfont   L1-SC3 &    $ 0.7$ &    $  1.7$ &    ($ 250$) &    $ 0.5$ &    ($ 66$) &    $ 0.4$ &    ($ 54$) &    $ 0.4$ &    ($ 56$) &    $0.1$ &    ($ 9$)\tabularnewline
\normalfont   L1-SC4 &    $ 0.6$ &    $  1.5$ &    ($ 237$) &    $ 0.7$ &    ($108$) &    $ 1.0$ &    ($152$) &    $ 0.1$ &    ($ 13$) &    $0.1$ &    ($15$)\tabularnewline
\hline
\multicolumn{5}{l}{\itshape $\op{SF}=2$ Med.~ Variab.~}&\tabularnewline
\normalfont   BYM-SC1 &    $ 0.3$ &    $  0.2$ &    ($  74$) &    $ 1.8$ &    ($552$) &    $ 1.5$ &    ($464$) &    $ 1.1$ &    ($327$) &    $0.1$ &    ($24$)\tabularnewline
\normalfont   BYM-SC2 &    $ 1.2$ &    $  3.1$ &    ($ 270$) &    $ 8.8$ &    ($755$) &    $ 6.3$ &    ($541$) &    $ 7.7$ &    ($664$) &    $0.1$ &    ($ 5$)\tabularnewline
\normalfont   BYM-SC3 &    $ 1.1$ &    $  1.5$ &    ($ 129$) &    $ 0.5$ &    ($ 41$) &    $ 0.3$ &    ($ 25$) &    $ 0.5$ &    ($ 48$) &    $0.1$ &    ($ 5$)\tabularnewline
\normalfont   BYM-SC4 &    $ 0.8$ &    $  1.1$ &    ($ 142$) &    $ 0.3$ &    ($ 38$) &    $ 0.2$ &    ($ 25$) &    $ 0.3$ &    ($ 32$) &    $0.1$ &    ($ 7$)\tabularnewline
\normalfont   L1-SC1 &    $ 0.4$ &    $  0.7$ &    ($ 197$) &    $ 1.9$ &    ($518$) &    $ 1.6$ &    ($445$) &    $ 1.0$ &    ($274$) &    $0.1$ &    ($26$)\tabularnewline
\normalfont   L1-SC2 &    $ 1.2$ &    $  4.3$ &    ($ 346$) &    $ 7.7$ &    ($623$) &    $ 7.1$ &    ($576$) &    $ 6.4$ &    ($521$) &    $0.0$ &    ($ 4$)\tabularnewline
\normalfont   L1-SC3 &    $ 1.2$ &    $  2.1$ &    ($ 175$) &    $ 0.7$ &    ($ 62$) &    $ 0.6$ &    ($ 49$) &    $ 0.8$ &    ($ 69$) &    $0.1$ &    ($ 5$)\tabularnewline
\normalfont   L1-SC4 &    $ 0.9$ &    $  1.2$ &    ($ 136$) &    $ 0.5$ &    ($ 59$) &    $ 0.5$ &    ($ 61$) &    $ 0.3$ &    ($ 32$) &    $0.1$ &    ($ 6$)\tabularnewline
\hline
\multicolumn{5}{l}{\itshape $\op{SF}=2$ High Variab.~}&\tabularnewline
\normalfont   BYM-SC1 &    $ 0.4$ &    $  0.3$ &    ($  73$) &    $ 2.3$ &    ($618$) &    $ 1.9$ &    ($489$) &    $ 1.7$ &    ($449$) &    $0.1$ &    ($19$)\tabularnewline
\normalfont   BYM-SC2 &    $ 1.7$ &    $  8.7$ &    ($ 505$) &    $12.4$ &    ($717$) &    $10.7$ &    ($621$) &    $12.1$ &    ($701$) &    $0.1$ &    ($ 4$)\tabularnewline
\normalfont   BYM-SC3 &    $ 1.3$ &    $  0.8$ &    ($  61$) &    $ 0.5$ &    ($ 40$) &    $ 0.6$ &    ($ 45$) &    $ 0.6$ &    ($ 46$) &    $0.1$ &    ($ 9$)\tabularnewline
\normalfont   BYM-SC4 &    $ 1.1$ &    $  1.6$ &    ($ 150$) &    $ 0.5$ &    ($ 43$) &    $ 0.3$ &    ($ 32$) &    $ 0.4$ &    ($ 33$) &    $0.1$ &    ($ 6$)\tabularnewline
\normalfont   L1-SC1 &    $ 0.7$ &    $  0.7$ &    ($ 100$) &    $ 2.2$ &    ($333$) &    $ 1.5$ &    ($232$) &    $ 1.6$ &    ($241$) &    $0.1$ &    ($12$)\tabularnewline
\normalfont   L1-SC2 &    $ 1.9$ &    $  9.2$ &    ($ 485$) &    $12.8$ &    ($676$) &    $11.5$ &    ($610$) &    $12.5$ &    ($660$) &    $0.1$ &    ($ 5$)\tabularnewline
\normalfont   L1-SC3 &    $ 1.5$ &    $  0.8$ &    ($  56$) &    $ 0.8$ &    ($ 56$) &    $ 0.6$ &    ($ 40$) &    $ 0.7$ &    ($ 47$) &    $0.2$ &    ($12$)\tabularnewline
\normalfont   L1-SC4 &    $ 1.1$ &    $  1.7$ &    ($ 149$) &    $ 0.3$ &    ($ 31$) &    $ 0.5$ &    ($ 44$) &    $ 0.4$ &    ($ 32$) &    $0.1$ &    ($ 5$)\tabularnewline
\hline
\end{tabular}
\begin{tablenotes}
   \item[a] Entries for the posterior regrets have been truncated to
     the closest first
     digit after the decimal point, and entries for the percentage
     regrets have been truncated to the closest integer. 
\end{tablenotes}
\end{threeparttable}
\end{table}

%
\begin{table}[htbp]
 \footnotesize
 \caption{
Posterior regrets based on $\op{QR-SEL}(\bth,\op{QR}(\bth^{L\pr}))$ for six plug-in estimators,
and with the posterior expected loss of the optimal estimator in the first column.
Results are presented for three different levels of variability,
and for four spatial scenarios: an isolated cluster (SC1), a set of isolated clusters and isolated areas (SC2),
highly structured spatial heterogeneity (SC3), and a risk surface generated by a hidden covariate (SC4).
Entries, averaged over 100 replicate data sets in each condition, are scaled by a factor of $10^3$
with posterior regrets expressed as percentage of the posterior loss under the optimal estimator in parentheses.\label{tab:spatial_qrsel_table_SF}} 
 \centering
 \begin{threeparttable}
 \begin{tabular}{>{\RaggedRight}p{50pt}>{\RaggedLeft}p{25pt}|>{\RaggedLeft}p{25pt}@{}>{\RaggedLeft}p{32pt}>{\RaggedLeft}p{20pt}@{}>{\RaggedLeft}p{27pt}>{\RaggedLeft}p{25pt}@{}>{\RaggedLeft}p{27pt}>{\RaggedLeft}p{15pt}@{}>{\RaggedLeft}p{20pt}>{\RaggedLeft}p{10pt}@{}>{\RaggedLeft}p{15pt}>{\RaggedLeft}p{10pt}@{}>{\RaggedLeft}p{12pt}}\hline
\multicolumn{1}{c}{\itshape Scenarios}&
\multicolumn{1}{c}{}&
\multicolumn{12}{c}{\itshape Posterior regrets\tnote{a}}
\tabularnewline \cline{3-14}
\multicolumn{1}{>{\RaggedRight}p{50pt}}{}&\multicolumn{1}{c}{QR-SEL}&
\multicolumn{2}{c}{MLE}&\multicolumn{2}{c}{SSEL}&\multicolumn{2}{c}{WRSEL}&\multicolumn{2}{c}{CB}&\multicolumn{2}{c}{GR}&\multicolumn{2}{c}{RoPQ}
\tabularnewline
\hline
\multicolumn{5}{l}{\itshape $\op{SF}=0.1$ Low Variab.~}\tabularnewline
\normalfont   BYM-SC1 &    $ 3$ &    $ 945$ &    ($28427$) &    $  7$ &    ($ 202$) &    $  5$ &    ($ 138$) &    $ 0$ &    ($  14$) &    $0$ &    ($ 9$) &    $0$ &    ($0$)\tabularnewline
\normalfont   BYM-SC2 &    $ 3$ &    $ 992$ &    ($30596$) &    $  8$ &    ($ 253$) &    $  6$ &    ($ 196$) &    $ 1$ &    ($  23$) &    $0$ &    ($ 4$) &    $0$ &    ($0$)\tabularnewline
\normalfont   BYM-SC3 &    $11$ &    $ 783$ &    ($ 6819$) &    $ 10$ &    ($  83$) &    $  9$ &    ($  74$) &    $ 4$ &    ($  38$) &    $0$ &    ($ 1$) &    $0$ &    ($0$)\tabularnewline
\normalfont   BYM-SC4 &    $ 6$ &    $1005$ &    ($15697$) &    $ 17$ &    ($ 264$) &    $ 14$ &    ($ 216$) &    $ 1$ &    ($  17$) &    $0$ &    ($ 3$) &    $0$ &    ($0$)\tabularnewline
\normalfont   L1-SC1 &    $ 4$ &    $ 964$ &    ($25898$) &    $  6$ &    ($ 175$) &    $  5$ &    ($ 129$) &    $ 0$ &    ($  11$) &    $0$ &    ($10$) &    $0$ &    ($0$)\tabularnewline
\normalfont   L1-SC2 &    $ 3$ &    $1037$ &    ($29816$) &    $  7$ &    ($ 198$) &    $  5$ &    ($ 156$) &    $ 0$ &    ($  13$) &    $0$ &    ($11$) &    $0$ &    ($0$)\tabularnewline
\normalfont   L1-SC3 &    $17$ &    $1047$ &    ($ 6098$) &    $ 25$ &    ($ 143$) &    $ 20$ &    ($ 119$) &    $ 2$ &    ($  11$) &    $1$ &    ($ 8$) &    $0$ &    ($0$)\tabularnewline
\normalfont   L1-SC4 &    $ 9$ &    $1070$ &    ($12516$) &    $ 21$ &    ($ 243$) &    $ 17$ &    ($ 201$) &    $ 1$ &    ($  10$) &    $1$ &    ($ 6$) &    $0$ &    ($0$)\tabularnewline
\hline
\multicolumn{5}{l}{\itshape $\op{SF}=0.1$ Med.~ Variab.~}\tabularnewline
\normalfont   BYM-SC1 &    $ 7$ &    $ 869$ &    ($12769$) &    $ 17$ &    ($ 254$) &    $ 13$ &    ($ 187$) &    $ 3$ &    ($  42$) &    $0$ &    ($ 1$) &    $0$ &    ($0$)\tabularnewline
\normalfont   BYM-SC2 &    $18$ &    $ 957$ &    ($ 5237$) &    $ 51$ &    ($ 279$) &    $ 44$ &    ($ 243$) &    $ 2$ &    ($  13$) &    $1$ &    ($ 7$) &    $0$ &    ($0$)\tabularnewline
\normalfont   BYM-SC3 &    $18$ &    $ 584$ &    ($ 3219$) &    $ 18$ &    ($ 102$) &    $ 15$ &    ($  84$) &    $10$ &    ($  57$) &    $0$ &    ($ 1$) &    $0$ &    ($0$)\tabularnewline
\normalfont   BYM-SC4 &    $13$ &    $ 855$ &    ($ 6678$) &    $ 46$ &    ($ 361$) &    $ 34$ &    ($ 265$) &    $ 5$ &    ($  42$) &    $0$ &    ($ 2$) &    $0$ &    ($0$)\tabularnewline
\normalfont   L1-SC1 &    $ 5$ &    $ 988$ &    ($18505$) &    $ 13$ &    ($ 251$) &    $ 11$ &    ($ 197$) &    $ 1$ &    ($  19$) &    $0$ &    ($ 8$) &    $0$ &    ($0$)\tabularnewline
\normalfont   L1-SC2 &    $13$ &    $ 957$ &    ($ 7170$) &    $ 70$ &    ($ 524$) &    $ 56$ &    ($ 416$) &    $ 3$ &    ($  21$) &    $0$ &    ($ 3$) &    $0$ &    ($0$)\tabularnewline
\normalfont   L1-SC3 &    $16$ &    $ 832$ &    ($ 5207$) &    $ 97$ &    ($ 605$) &    $ 79$ &    ($ 493$) &    $13$ &    ($  83$) &    $0$ &    ($ 3$) &    $0$ &    ($0$)\tabularnewline
\normalfont   L1-SC4 &    $16$ &    $ 894$ &    ($ 5756$) &    $ 58$ &    ($ 375$) &    $ 46$ &    ($ 299$) &    $ 4$ &    ($  23$) &    $1$ &    ($ 8$) &    $0$ &    ($0$)\tabularnewline
\hline
\multicolumn{5}{l}{\itshape $\op{SF}=0.1$ High Variab.~}\tabularnewline
\normalfont   BYM-SC1 &    $ 9$ &    $ 564$ &    ($ 6583$) &    $ 60$ &    ($ 697$) &    $ 49$ &    ($ 575$) &    $18$ &    ($ 209$) &    $0$ &    ($ 2$) &    $0$ &    ($0$)\tabularnewline
\normalfont   BYM-SC2 &    $25$ &    $ 629$ &    ($ 2545$) &    $144$ &    ($ 581$) &    $141$ &    ($ 571$) &    $20$ &    ($  83$) &    $0$ &    ($ 1$) &    $0$ &    ($0$)\tabularnewline
\normalfont   BYM-SC3 &    $23$ &    $ 528$ &    ($ 2276$) &    $ 26$ &    ($ 111$) &    $ 30$ &    ($ 128$) &    $33$ &    ($ 144$) &    $0$ &    ($ 1$) &    $0$ &    ($0$)\tabularnewline
\normalfont   BYM-SC4 &    $17$ &    $ 736$ &    ($ 4381$) &    $ 52$ &    ($ 312$) &    $ 41$ &    ($ 245$) &    $ 2$ &    ($   9$) &    $0$ &    ($ 1$) &    $0$ &    ($0$)\tabularnewline
\normalfont   L1-SC1 &    $12$ &    $ 455$ &    ($ 3806$) &    $101$ &    ($ 844$) &    $ 82$ &    ($ 689$) &    $17$ &    ($ 144$) &    $0$ &    ($ 2$) &    $0$ &    ($0$)\tabularnewline
\normalfont   L1-SC2 &    $27$ &    $ 603$ &    ($ 2203$) &    $220$ &    ($ 802$) &    $221$ &    ($ 808$) &    $34$ &    ($ 125$) &    $1$ &    ($ 2$) &    $0$ &    ($0$)\tabularnewline
\normalfont   L1-SC3 &    $30$ &    $ 748$ &    ($ 2533$) &    $ 86$ &    ($ 291$) &    $ 84$ &    ($ 285$) &    $18$ &    ($  62$) &    $1$ &    ($ 5$) &    $0$ &    ($1$)\tabularnewline
\normalfont   L1-SC4 &    $16$ &    $ 713$ &    ($ 4374$) &    $144$ &    ($ 882$) &    $123$ &    ($ 752$) &    $15$ &    ($  94$) &    $0$ &    ($ 1$) &    $0$ &    ($0$)\tabularnewline
\hline
\multicolumn{5}{l}{\itshape $\op{SF}=2$ Low Variab.~}\tabularnewline
\normalfont   BYM-SC1 &    $ 0$ &    $   4$ &    ($ 1541$) &    $  3$ &    ($1158$) &    $  2$ &    ($ 936$) &    $ 1$ &    ($ 363$) &    $0$ &    ($52$) &    $0$ &    ($0$)\tabularnewline
\normalfont   BYM-SC2 &    $ 1$ &    $   3$ &    ($  373$) &    $  7$ &    ($ 889$) &    $  4$ &    ($ 566$) &    $ 2$ &    ($ 303$) &    $0$ &    ($27$) &    $0$ &    ($0$)\tabularnewline
\normalfont   BYM-SC3 &    $ 1$ &    $   4$ &    ($  313$) &    $  1$ &    ($  50$) &    $  0$ &    ($  32$) &    $ 1$ &    ($  50$) &    $0$ &    ($ 6$) &    $0$ &    ($0$)\tabularnewline
\normalfont   BYM-SC4 &    $ 1$ &    $   6$ &    ($  519$) &    $  2$ &    ($ 180$) &    $  2$ &    ($ 145$) &    $ 0$ &    ($  20$) &    $0$ &    ($10$) &    $0$ &    ($0$)\tabularnewline
\normalfont   L1-SC1 &    $ 0$ &    $   4$ &    ($ 1135$) &    $  3$ &    ($1017$) &    $  3$ &    ($ 751$) &    $ 1$ &    ($ 296$) &    $0$ &    ($24$) &    $0$ &    ($0$)\tabularnewline
\normalfont   L1-SC2 &    $ 1$ &    $   3$ &    ($  318$) &    $  7$ &    ($ 872$) &    $  6$ &    ($ 722$) &    $ 2$ &    ($ 252$) &    $0$ &    ($14$) &    $0$ &    ($0$)\tabularnewline
\normalfont   L1-SC3 &    $ 1$ &    $   6$ &    ($  432$) &    $  2$ &    ($ 103$) &    $  1$ &    ($  81$) &    $ 1$ &    ($  86$) &    $0$ &    ($11$) &    $0$ &    ($0$)\tabularnewline
\normalfont   L1-SC4 &    $ 1$ &    $   5$ &    ($  446$) &    $  2$ &    ($ 212$) &    $  3$ &    ($ 280$) &    $ 0$ &    ($  12$) &    $0$ &    ($20$) &    $0$ &    ($0$)\tabularnewline
\hline
\multicolumn{5}{l}{\itshape $\op{SF}=2$ Med.~ Variab.~}\tabularnewline
\normalfont   BYM-SC1 &    $ 0$ &    $   1$ &    ($  176$) &    $  5$ &    ($1505$) &    $  4$ &    ($1280$) &    $ 3$ &    ($ 832$) &    $0$ &    ($35$) &    $0$ &    ($0$)\tabularnewline
\normalfont   BYM-SC2 &    $ 2$ &    $   4$ &    ($  190$) &    $ 23$ &    ($1005$) &    $ 19$ &    ($ 848$) &    $16$ &    ($ 717$) &    $0$ &    ($ 6$) &    $0$ &    ($0$)\tabularnewline
\normalfont   BYM-SC3 &    $ 4$ &    $   5$ &    ($  127$) &    $  2$ &    ($  44$) &    $  1$ &    ($  23$) &    $ 2$ &    ($  61$) &    $0$ &    ($ 5$) &    $0$ &    ($0$)\tabularnewline
\normalfont   BYM-SC4 &    $ 2$ &    $   5$ &    ($  244$) &    $  1$ &    ($  49$) &    $  1$ &    ($  35$) &    $ 0$ &    ($  26$) &    $0$ &    ($ 6$) &    $0$ &    ($0$)\tabularnewline
\normalfont   L1-SC1 &    $ 0$ &    $   2$ &    ($  389$) &    $  5$ &    ($1260$) &    $  4$ &    ($1096$) &    $ 2$ &    ($ 613$) &    $0$ &    ($51$) &    $0$ &    ($0$)\tabularnewline
\normalfont   L1-SC2 &    $ 2$ &    $   7$ &    ($  268$) &    $ 21$ &    ($ 847$) &    $ 21$ &    ($ 864$) &    $13$ &    ($ 552$) &    $0$ &    ($ 5$) &    $0$ &    ($0$)\tabularnewline
\normalfont   L1-SC3 &    $ 4$ &    $   7$ &    ($  161$) &    $  2$ &    ($  47$) &    $  2$ &    ($  56$) &    $ 3$ &    ($  66$) &    $0$ &    ($ 6$) &    $0$ &    ($0$)\tabularnewline
\normalfont   L1-SC4 &    $ 2$ &    $   4$ &    ($  205$) &    $  2$ &    ($  95$) &    $  2$ &    ($  95$) &    $ 1$ &    ($  30$) &    $0$ &    ($ 5$) &    $0$ &    ($0$)\tabularnewline
\hline
\multicolumn{5}{l}{\itshape $\op{SF}=2$ High Variab.~}\tabularnewline
\normalfont   BYM-SC1 &    $ 0$ &    $   0$ &    ($  118$) &    $  6$ &    ($1527$) &    $  5$ &    ($1229$) &    $ 4$ &    ($1063$) &    $0$ &    ($41$) &    $0$ &    ($0$)\tabularnewline
\normalfont   BYM-SC2 &    $ 4$ &    $  19$ &    ($  495$) &    $ 34$ &    ($ 900$) &    $ 33$ &    ($ 871$) &    $29$ &    ($ 754$) &    $0$ &    ($ 6$) &    $0$ &    ($0$)\tabularnewline
\normalfont   BYM-SC3 &    $ 6$ &    $   3$ &    ($   56$) &    $  2$ &    ($  32$) &    $  2$ &    ($  37$) &    $ 3$ &    ($  49$) &    $1$ &    ($10$) &    $0$ &    ($0$)\tabularnewline
\normalfont   BYM-SC4 &    $ 3$ &    $  10$ &    ($  284$) &    $  2$ &    ($  45$) &    $  1$ &    ($  34$) &    $ 1$ &    ($  20$) &    $0$ &    ($ 8$) &    $0$ &    ($0$)\tabularnewline
\normalfont   L1-SC1 &    $ 1$ &    $   1$ &    ($  123$) &    $  6$ &    ($ 520$) &    $  4$ &    ($ 368$) &    $ 4$ &    ($ 349$) &    $0$ &    ($10$) &    $0$ &    ($0$)\tabularnewline
\normalfont   L1-SC2 &    $ 4$ &    $  20$ &    ($  468$) &    $ 36$ &    ($ 834$) &    $ 36$ &    ($ 835$) &    $30$ &    ($ 700$) &    $0$ &    ($ 6$) &    $0$ &    ($0$)\tabularnewline
\normalfont   L1-SC3 &    $ 7$ &    $   4$ &    ($   65$) &    $  4$ &    ($  60$) &    $  3$ &    ($  43$) &    $ 3$ &    ($  50$) &    $1$ &    ($10$) &    $0$ &    ($0$)\tabularnewline
\normalfont   L1-SC4 &    $ 4$ &    $   9$ &    ($  251$) &    $  1$ &    ($  37$) &    $  1$ &    ($  32$) &    $ 1$ &    ($  36$) &    $0$ &    ($ 4$) &    $0$ &    ($0$)\tabularnewline
\hline
\end{tabular}
\begin{tablenotes}
   \item[a] Entries for both the posterior and percentage regrets have been truncated to the closest integer.
     For some entries, percentage regrets are smaller than 1 percentage point. 
\end{tablenotes}
\end{threeparttable}
\end{table}

Simulation results for two different scalings of the expected counts are reported
in table \ref{tab:spatial_qsel_table_SF} on page
\pageref{tab:spatial_qsel_table_SF} for the Q-SEL function and 
in table \ref{tab:spatial_qrsel_table_SF} on page
\pageref{tab:spatial_qrsel_table_SF} for the QR-SEL function. As in
the previous sections, percentage regrets with respect to the posterior loss
under the optimal estimator is reported in parentheses in the two tables.

For both the Q-SEL and QR-SEL functions, the use of smaller expected
counts resulted in a substantial increase of the posterior losses
associated with the use of the optimal estimators under all simulated
scenarios. Similarly, the absolute posterior regrets associated with the use of the MLE,
SSEL, WRSEL and CB plug-in estimators tended to be larger when
applying a scaling factor of $0.1$, in comparison to 
the use of a scaling factor of $\op{SF}=2.0$.
In terms of percentage regret, however, this trend 
was reversed for the GR plug-in estimators under
both the Q-SEL and QR-SEL functions. For this family of estimators,
the percentage regrets tended to increase as the SF became larger. This
somewhat counter-intuitive result, however, may be explained in terms
of the comparative decrease of the posterior losses based on the optimal
estimators for both the Q-SEL and QR-SEL functions. That is, although the percentage
regrets associated with the GR plug-in estimators at $\op{SF}=2.0$ was
larger than the ones at $\op{SF}=0.1$, these percentages tended to
correspond to smaller fractions of the corresponding posterior losses.

\begin{figure}[t]
\centering
\includegraphics[width=14cm]{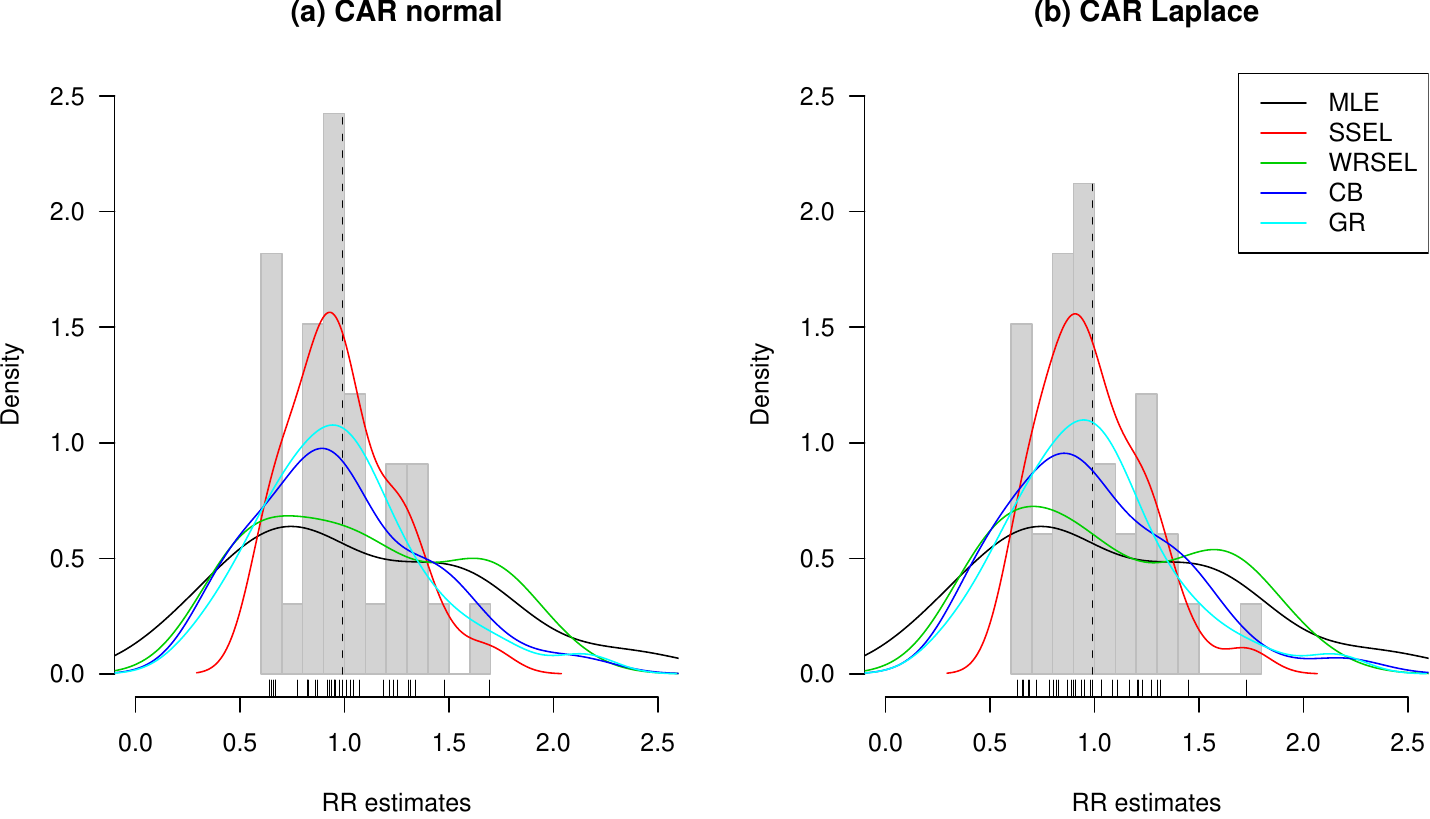} \\
\caption{Histograms of the ensembles of point estimates under SSEL for
  the schizophrenia prevalence data set under the CAR Normal and CAR
  Laplace models in panels (a)
  and (b), respectively. Smoothed empirical distributions of the five
  families of point estimates of interest have been superimposed. 
  \label{fig:schizo histo}}
\end{figure}

\section{Urban Distribution of Schizophrenia Cases}\label{sec:real data}
In this section, we consider the use of different plug-in estimators
for the estimation of the quantiles and QR of a parameter
ensemble for a real data set, which describes the
spatial distribution of cases of schizophrenia in an urban population. This particular data
set was found to be especially relevant to the problem at hand, as it
is characterised by very low expected counts, thereby inducing a high
level of hierarchical shrinkage. 

\subsection{Data Description}
We here re-analysed prevalence data on first-onset cases of schizophrenia collected
from the Southeast London study area of the Aetiology and Ethnicity in
Schizophrenia and Other Psychoses (AESOP) study
\citep{Kirkbride2006}. This is a large,
population-based, epidemiological case-control study of first-episode
psychoses conducted in Southeast London, Nottingham and Bristol. It
was designed to investigate differential rates of psychoses across
different cities and ethnic groups in the UK, based upon a
comprehensive survey of all incident cases of first-episode
psychoses. The data has been shown to be of a very high quality, and
the AESOP study is the largest ever study of first episode psychoses
conducted in the UK to date \citep[see][for a detailed description of
AESOP]{Kirkbride2006}. In the analyses presented here, we solely use
a subset of the AESOP data base, covering prevalence data from the
Southeast London study centre only.

All individuals aged 16-64 years living in the study area having had
contact with mental health services for a first episode
of any probable psychosis, non-psychotic mania or bipolar
disorder were selected. Initial inclusion criteria were broad, based
upon those used in the WHO 10-country study \citep{Jablensky1992}. 
Ascertainment bias was minimised by incorporating a wide variety of
psychiatric services into the study
design. Here, cases were excluded if an address at first presentation
was not obtained. The study took place over 24 months (September 1997-
August 1999). Subjects who passed the screen underwent the Schedules for Clinical
Assessment in Neuropsychiatry (SCAN); a modified Personal and
Psychiatric History Schedule, and a schedule developed to record
sociodemographic data. International Classification of
Diseases (10th Edition) (ICD-10) diagnoses were made by consensus
agreement from a panel of clinicians. 
All cases of broadly defined schizophrenia,
corresponding to ICD-10 F20 to F29 diagnoses were included.
Full details of the methodology used in this study have been provided by
\citet{Kirkbride2006}.
\begin{figure}[t]
\centering
\includegraphics[width=14.5cm]{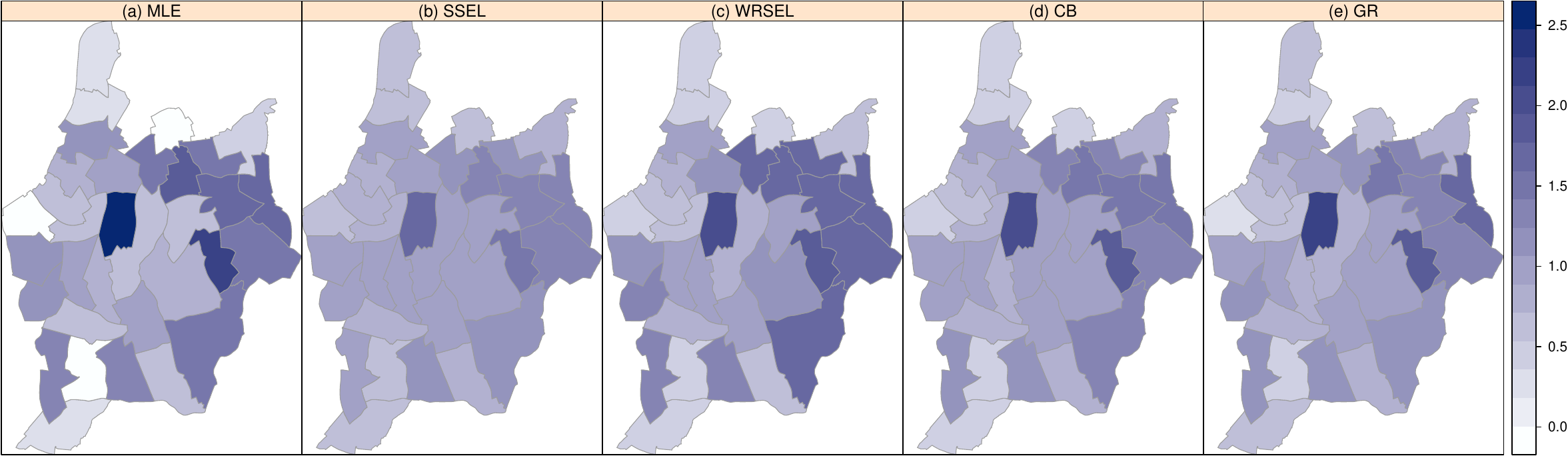}
\caption{Maps of point estimates denoting the RRs for schizophrenia in
  33 boroughs of Southeast London. The ensembles of the MLEs, posterior
  means, WRSEL estimates, CB estimates, and triple-goal estimates are
  plotted in panels (a) to (e), respectively. 
  \label{fig:map normal}}
\end{figure}

\subsection{Models Used and Performance Evaluation}
The expected counts in this data set ranged from 2.68 to 6.86
with a mean of 4.48 and a standard deviation of 0.91. The observed
counts ranged from 0 to 17 with a mean of 
4.48 and a standard deviation of 3.43. For three areas, the observed
counts were null. We fitted the data with the spatial CAR Normal and
Laplace models described in section \ref{sec:mrrr spatial models},
using the same specification for the hyperparameters as in the
spatial simulation study. The BYM model considered here constituted one of the five
different models utilised in the original analysis of this data set by 
\citet{Kirkbride2007}. As before, the models were fitted using
WinBUGS 1.4 \citep{Lunn2000}. Our results are based on 50,000 MCMC samples
collected after discarding 10,000 burn-in realisations from the joint posterior.
The $\phi_{i}$'s used in the computation of the WRSEL plug-in
estimators were here specified using $a_{1}=a_{2}=0.5$, as previously
done for the spatially structured simulations. 

We compared the values taken by the different plug-in
estimators by expressing them as departure from the value of the optimal
estimator. Our strategy, here, is similar in spirit to the one used when
computing the posterior regrets in equation (\ref{eq:regret
  research}), except that we were more specifically interested in
evaluating the direction of the departure from the optimal estimate. 
Thus, for every loss function of interest $L\pr$, we computed
\begin{equation}
     Q_{p}(\hat\bth^{L\pr}) - \E[Q_{p}(\bth)|\by], 
     \label{eq:qsel departure}
\end{equation}
separately for five different empirical quantiles, taking $p\in \lb .05, .25, .50,
.75, .95\rb$. In addition, we derived the departure of the empirical QR plug-in
estimators from the posterior mean empirical QR. Thus, we calculated
\begin{equation}
     \op{QR}(\hat\bth^{L\pr}) - \E[\op{QR}(\bth)|\by],
     \label{eq:qrsel departure}
\end{equation}
for the five ensembles of point estimates under scrutiny. The RoPQ was
also derived for this data set and compared to the posterior mean
empirical QR in a similar fashion. 

\subsection{Results}
\begin{sidewaystable}[htbp]
 \footnotesize
 \caption{Means and credible intervals ($2.5\tth$ and $97.5\tth$
          percentiles of posterior distributions) of the
          posterior empirical quantiles and posterior empirical QR 
         for the schizophrenia prevalence data set. Values of plug-in
          estimators are expressed as departures from the 
          posterior mean empirical quantiles and the posterior mean empirical QR,
          which are reported in the first column, as described in equations (\ref{eq:qsel
            departure}) and (\ref{eq:qrsel departure}). For each
          plug-in estimator, the percentage departures from the value
          of the optimal estimates --i.e. posterior means-- have been
          reported in parentheses.
          \label{tab:schizo}} 
\centering
\begin{threeparttable}
\begin{tabular}{>{\RaggedRight}p{5pt}>{\RaggedRight}p{25pt}|>{\centering}p{80pt}|
>{\RaggedLeft}p{28pt}@{}>{\RaggedLeft}p{20pt}>{\RaggedLeft}p{28pt}@{}>{\RaggedLeft}p{20pt}>{\RaggedLeft}p{28pt}@{}>{\RaggedLeft}p{20pt}
>{\RaggedLeft}p{28pt}@{}>{\RaggedLeft}p{20pt}>{\RaggedLeft}p{28pt}@{}>{\RaggedLeft}p{20pt}>{\RaggedLeft}p{28pt}@{}>{\RaggedLeft}p{20pt}}
\hline
\multicolumn{2}{c}{}& \multicolumn{1}{c}{\ti{Posterior Means}}&
\multicolumn{12}{c}{\itshape Departures from Posterior Mean\tnote{a}}\tabularnewline 
\cmidrule(r){3-3} \cmidrule(l){4-15} 
\multicolumn{2}{>{\RaggedRight}p{30pt}}{\itshape Models} \tabularnewline
 \multicolumn{1}{>{\RaggedRight}p{10pt}}{}
&\multicolumn{1}{>{\RaggedRight}p{15pt}}{}
&\multicolumn{1}{c}{Estimates (CIs)}
&\multicolumn{2}{c}{MLE}
&\multicolumn{2}{c}{SSEL}
&\multicolumn{2}{c}{WRSEL}
&\multicolumn{2}{c}{CB}
&\multicolumn{2}{c}{GR}
&\multicolumn{2}{c}{RoPQ}
\tabularnewline
\hline
\tabularnewline
\multicolumn{4}{l}{CAR Normal}\tabularnewline
& $\theta_{(.05)}$& 0.473 (0.22--0.87) & $-0.47$ & ($100$)&$+0.16$
&($ 35$)& $+0.14$&($ 30$)&$-0.04$&($ 10$)&$-0.07$&($ 17$)& --&
--\tabularnewline
& $\theta_{(.25)}$& 0.714 (0.52--0.96) & $-0.14$ &($20$) & $+0.10$
&($15$) & $+0.10$&($14$) &$+0.01$&($ 1$) &$+0.01$& ($ 1$) &--&
--\tabularnewline
& $\theta_{(.50)}$& 0.932 (0.77--1.10) & $-0.03$ &($4$)  & $+0.02$
&($2$)  & $+0.00$&($0$)  &$-0.00$&($0$)  &$+0.02$& ($2$)
&--&-- \tabularnewline
& $\theta_{(.75)}$& 1.211 (0.99--1.45) & $+0.32$ & ($27$) &$-0.02$ &
($ 1$) &$-0.08$&($ 7$) &$+0.11$&($ 9$) &$-0.02$& ($ 2$) &--&
-- \tabularnewline
& $\theta_{(.95)}$& 1.735 (1.12--2.35) & $+0.51$ & ($30$) & $-0.26$
& ($15$) &$-0.34$& ($20$) & $+0.03$& ($ 2$) & $+0.05$& ($ 3$)
& -- & -- \tabularnewline
& QR             & 1.761 (1.11--2.50) & $+0.92$ &($52$) & $-0.31$ &($18$) &
$-0.38$& ($21$) &$+0.06$& ($ 4$) &$-0.11$& ($ 6$) &$-0.06$ & ($3$)
\tabularnewline
\hline
\tabularnewline
\multicolumn{4}{l}{CAR Laplace}\tabularnewline
& $\theta_{(.05)}$& 0.486 (0.21--0.91)&$-0.48$&($100$)&$+0.16$&($
35$)&  $+0.14$&($ 30$)&  $-0.04$&($  9$)&  $-0.09$& ($ 19$) & -- &
--\tabularnewline
& $\theta_{(.25)}$& 0.719 (0.52--0.97)&$-0.14$&($20$) &$+0.09$& ($13$)
& $+0.08$& ($12$) & $-0.01$&($ 2$) &  $+0.01$& ($ 1$) & -- &
--\tabularnewline
& $\theta_{(.50)}$& 0.932 (0.77--1.10)&$-0.03$&($4$)  &$+0.01$& ($2$)
& $+0.00$& ($0$) & $-0.01$&($1$)  &  $+0.02$& ($2$) &--&--
\tabularnewline
& $\theta_{(.75)}$& 1.204 (0.98--1.45)&$+0.33$&($28$) &$-0.02$&($ 2$)
&  $-0.09$&($ 8$) &  $+0.09$& ($ 8$) & $-0.02$& ($ 2$) & -- &
--\tabularnewline
& $\theta_{(.95)}$& 1.726 (1.09--2.35)&$+0.52$&($30$) &$-0.27$&($16$)
&  $-0.37$&($22$) &  $+0.01$&($ 1$) &  $+0.05$& ($ 3$) & --& --
\tabularnewline
& QR             & 1.744 (1.07--2.49)&$+0.94$&($54$) &$-0.29$&($17$) &
$-0.37$& ($21$) & $+0.11$& ($ 6$) & $-0.11$&($ 7$) & $-0.07$& ($4$)
\tabularnewline
\hline
\end{tabular}
\begin{tablenotes}
   \item[a] Entries for the departures from the optimal estimates
     have been truncated to the closest second digit after the decimal
     point, while entries for the percentage departures have been
     truncated to the closest integer.
\end{tablenotes}
\end{threeparttable}
\end{sidewaystable}

%
\begin{table}[t]
 \footnotesize
 \caption{
Posterior regrets based on $\op{QR-SEL}(\bth,\op{QR}(\bth^{L\pr}))$, 
and $\op{Q-SEL}(\bp,\bth,\bth^{L\pr}_{(\bp)})$ with $\bp:=\lb.25,.75\rb$,
for the schizophrenia prevalence data set. 
The posterior expected loss of the optimal estimator is given in the
first column. In parentheses, posterior regrets are expressed as
percentage of the posterior loss under the optimal estimator.
\label{tab:schizo_regret}}
 \centering
 \begin{threeparttable}
 \begin{tabular}{>{\RaggedRight}p{58pt}>{\RaggedRight}p{17pt}|>{\RaggedLeft}p{22pt}@{}>{\RaggedLeft}p{20pt}>{\RaggedLeft}p{18pt}@{}>{\RaggedLeft}p{18pt}>{\RaggedLeft}p{18pt}@{}>{\RaggedLeft}p{20pt}>{\RaggedLeft}p{18pt}@{}>{\RaggedLeft}p{18pt}>{\RaggedLeft}p{18pt}@{}>{\RaggedLeft}p{8pt}>{\RaggedLeft}p{18pt}@{}>{\RaggedLeft}p{8pt}}\hline
\multicolumn{1}{c}{\itshape Models}&
\multicolumn{1}{c}{}&
\multicolumn{12}{c}{\itshape Posterior regrets\tnote{a}}
\tabularnewline \cline{3-14}
\multicolumn{1}{>{\RaggedRight}p{58pt}}{}&\multicolumn{1}{c}{Post.~ Loss}&
\multicolumn{2}{c}{MLE}&\multicolumn{2}{c}{SSEL}&\multicolumn{2}{c}{WRSEL}&\multicolumn{2}{c}{CB}&\multicolumn{2}{c}{GR}&\multicolumn{2}{c}{RoPQ}
\tabularnewline
\hline
\tabularnewline
{\tb{Q-SEL}}&&&&&&&&&&&&&\tabularnewline
\normalfont CAR Normal &
$0.04$&$0.13$&($342$)&$0.01$&($24$)&$0.15$&($392$)&$0.01$&($25$)&$0.00$&($1$)&--&--\tabularnewline
\normalfont CAR Laplace & 
$0.04$&$0.13$&($341$)&$0.01$&($32$)&$0.16$&($434$)&$0.01$&($32$)&$0.00$&($1$)&--&--\tabularnewline
\hline
\tabularnewline
{\tb{QR-SEL}}&&&&&&&&&&&&&\tabularnewline
\normalfont CAR Normal &
$0.17$&$0.86$&($498$)&$0.10$&($55$)&$0.42$&($245$)&$0.00$&($3$)&$0.01$&($7$)&$0.00$&($3$)\tabularnewline
\normalfont CAR Laplace &
$0.18$&$0.89$&($492$)&$0.09$&($47$)&$0.49$&($273$)&$0.01$&($7$)&$0.01$&($8$)&$0.00$&($3$)\tabularnewline
\hline
\end{tabular}
\begin{tablenotes}
   \item[a] Entries for the posterior regrets have been truncated to
     the closest second digit after the decimal point. Entries for the
     percentage regrets have been truncated to the closest integer.
\end{tablenotes}
\end{threeparttable}
\end{table}

In figure \ref{fig:schizo histo} on page \pageref{fig:schizo histo},
the ensemble distributions of the different families of point
estimates have been plotted. In this figure, the histograms represent
the ensembles of the posterior means under the CAR Normal and CAR
Laplace models. The empirical distributions of
other ensembles of point estimates have been superimposed as densities. 
As expected, the empirical distribution of the MLEs was found to be more dispersed than
the ensemble distributions of the four other sets of point
estimates. In particular, the
ensemble distributions of the SSEL and WRSEL estimators were similar, and were found
to be especially sensitive to hierarchical shrinkage. 
The ensemble distributions of the CB and GR point estimates
struck a balance between the over-dispersion of the MLEs and the
shrinkage of the SSEL and WRSEL ensembles. Overall, the CB and GR
ensemble distributions behaved similarly, albeit the
empirical distribution of the GR point estimates tended to be
smoother. Changes in the modelling assumptions did not
modify the behaviour of the different families of point estimates. As
can be observed from panels (a) and (b) of figure \ref{fig:schizo
  histo}, the ensemble distributions obtained under the 
CAR Normal and CAR Laplace priors did not markedly differ. 
The maps of the ensembles of point estimates evaluated in this section
have also been reported in figure \ref{fig:map normal} on page \pageref{fig:map
  normal}, for the CAR Normal model. 

A summary of the analysis conducted for this data set is provided in
table \ref{tab:schizo} on page \pageref{tab:schizo}, where we report
several summary statistics for the estimation of five different
empirical quantiles and the empirical QR of the parameter ensembles of interest. As expected, the
empirical quantiles of the MLE point estimates
were over-dispersed relative to the optimal estimators under both modelling assumptions:
the MLE plug-in estimates of $\theta_{(.05)}$ and $\theta_{(.25)}$
were found to be smaller than the optimal empirical quantiles, whereas 
the MLE plug-in estimates of $\theta_{(.75)}$ and $\theta_{(.95)}$ were
found to be larger than the optimal empirical quantiles. By contrast, the ensemble of
posterior means followed exactly the opposite pattern: they were
under-dispersed relative to the optimal estimators due to hierarchical shrinkage. 
That is, the SSEL-based estimates of $\theta_{(.05)}$ and $\theta_{(.25)}$
were larger than the corresponding posterior empirical quantiles 
and the SSEL-based estimates of $\theta_{(.75)}$ and
$\theta_{(.95)}$ were smaller than their posterior empirical counterparts. 
The plug-in empirical quantiles obtained under
the WRSEL, CB and GR functions were more difficult to classify, but we
note that the CB plug-in estimators appeared to perform best overall.

Regarding the posterior QR, table \ref{tab:schizo} shows that the
MLE-based plug-in estimator tended to over-estimate the posterior mean 
of the empirical QR, whereas most other plug-in estimators were likely to under-estimate its
value. The RoPQ also tended to be smaller than the posterior mean of
the empirical QR. Overall, the ordering of the different plug-in estimators with
respect to their departure from the optimal QR followed the ordering
reported in our non-spatial and spatial simulation studies. 
The sole exception to this parallel was the CB-based empirical QR, which was
observed to be slightly larger than the posterior mean of the empirical QR. 
These results were found to be robust to a change in modelling
assumptions, albeit the value of the posterior mean of the empirical QR was found to be
slightly smaller when using the CAR Laplace prior. Most importantly, however,
one can observe that the credible intervals for the posterior
empirical quantiles and the empirical QR were found to be larger under the CAR Laplace
model, thereby suggesting that the estimation of these
quantities was noisier for this model than for the CAR Normal. 

In addition, in table \ref{tab:schizo_regret} on page
\pageref{tab:schizo_regret}, we have reported the posterior regrets
for the plug-in estimators of interest under the Q-SEL and QR-SEL. 
For the estimation of the empirical quantiles under Q-SEL, the ordering of the
plug-in estimators in terms of percentage regrets reproduced the one
found in the simulation studies. In particular, the choice of 
the MLEs and WRSEL plug-in estimators yielded the largest
posterior regrets, whereas the triple-goal plug-in empirical quantiles were
found to be quasi-optimal. For this data set, however, the SSEL plug-in empirical
quantiles were found to be almost identical to the ones based on the CB
ensemble. Under the QR-SEL function, the CB plug-in
estimator slightly outperformed the triple-goal plug-in estimator
when estimating the QR. This result should be contrasted with our
synthetic data simulations, which indicated that the triple-goal
plug-in QR was generally better than the CB estimator in terms of posterior
regret. We note, however, that in the case of the schizophrenia
prevalence data set, the small size of the parameter ensemble may have detrimentally
affected the performance of the triple-goal estimator, which
requires large parameter ensembles for a good estimation of the EDF.
Since the data set under scrutiny only had only 33 regions, it
may be speculated that this negatively affected the performance of that
particular plug-in estimator. The behaviour of the remaining QR
plug-in estimators appeared to be in agreement with the conclusions
of our synthetic data simulations. In particular, the RoPQ was found
to outperform all of its counterparts.   

Overall, this re-analysis of Kirkbride et al.'s (2007) schizophrenia prevalence data set
has helped to characterise the heterogeneity of the
empirical distribution of the RRs for schizophrenia in the region of
interest. Optimal estimation under the QR-SEL function has yielded a
QR of approximately 1.7, which implies that the areas in the upper
quartile of risk exhibit $70\%$ higher risk for schizophrenia than the
areas in the lower quartile of risk. This high level of heterogeneity
in the empirical distribution of the RRs in this region suggests the
presence of unmeasured risk factors which may account for the
between-area variability.

\section{Conclusion}\label{sec:mrrr conclusion}
One of the consistent findings of this chapter has been the good performance
of the empirical quantiles and empirical QR based on the ensemble of 
triple-goal point estimates. The GR plug-in estimators behaved
well throughout the set of simulations under both spatial and
non-spatial scenarios. This can be explained in
terms of the optimality criteria satisfied by the triple-goal point
estimates described in section \ref{sec:gr}. The first goal that this
ensemble of point estimates is
optimising is the estimation of the EDF of the parameters of
interest. Since both the empirical quantiles and the empirical QR are
properties of the
EDF, it follows that the ensembles of point estimates optimising the
estimation of the EDF would also be expected to do well when
estimating these empirical properties. These particular findings therefore
highlight another potential benefit of choosing to report the ensemble
of triple-goal point estimates when conducting research in
epidemiology and spatial epidemiology. 
In addition, we have also noted the good performance of the RoPQ and DoPQ under
the QR-SEL and IQR-SEL functions, respectively. These plug-in
estimators were found to consistently outperform all other plug-in
estimators in our simulation studies. It can therefore be concluded
that when the posterior quartiles of a
parameter ensemble have been collected, one can readily obtain a
robust estimation of the dispersion of that ensemble by using either the RoPQ
or the DoPQ, depending on the nature of the models investigated. 

This chapter has also highlighted several limitations associated with
the use of the WRSEL function for the estimation of parameter
ensembles in both spatial and non-spatial models. Through the
manipulation of the size of the parameter ensemble in our non-spatial
simulations, we have first observed that the range of the WRSEL weights
increased exponentially with the size of the parameter
ensemble. Naturally, this is an aspect of the WRSEL function that
would need to be calibrated according to the specificities of the
estimation problem under consideration. Moreover, we
have also noted that the WRSEL plug-in estimators performed poorly under the compound
Gamma model. This is due to the fact that we have specified a vector of symmetric
weights on the ranks of the units in the ensemble of interest. 
Both of these problems can be addressed by 
choosing different values of the parameters $a_{1}$ and $a_{2}$
controlling the range of the WRSEL weights, despite the fact that the
true empirical distribution of the parameters of interest was skewed. 
Asymmetric choices may therefore 
help to deal with skewed parameter ensembles, whereas assigning very
low values to $a_{1}$ and $a_{2}$ would counterbalance the effect of
a large $n$. However, since there does not currently exist an
automated way of estimating there parameters from the properties of
the data, these issues ultimately limit the applicability of the WRSEL
function as an automated estimation procedure, since it necessitates a
substantial amount of preliminary tuning. 

A second important conclusion of this chapter is the apparent
superiority of the CAR Normal over the CAR Laplace model for the
estimation of both the empirical quantiles and the empirical QR of a parameter
ensemble. Our spatial simulation study has shown that the utilisation
of the CAR Laplace model tended to increase the posterior expected
loss associated with the optimal estimators of the ensemble's
quantiles and QR in comparison to the use of the CAR Normal
model. The CAR Laplace also yielded larger posterior regrets when
using plug-in estimators. Finally, our analysis of
the schizophrenia prevalence data set showed that the posterior distributions
of the empirical quantiles and empirical QR were less concentrated
around their means when using the CAR Laplace model. In practice, we
therefore recommend the use of the CAR Normal model when
the dispersion of the RRs is of special interest. However, one should
note that we used identical specifications for the variance
hyperparameters for both the Normal and Laplace CAR priors, which may have
contributed to the comparatively poor performance of the Laplace prior
model. In particular, note that the variance parameter, $\tau^{2}_{u}$, for both the CAR
Normal and CAR Laplace priors was given a Gamma prior, such
that 
\begin{equation}
      \tau^{2}_{u} \sim \op{Gam}(0.5,0.0005).
\end{equation}
However, identical choices of variance parameters in a Laplace and a
Normal distributions do not result in identical amount of
variability. While $X\sim N(0,1)$ has a variance of $1.0$, a Laplace
random variate with distribution $L(0,1)$ has a variance of
$2.0$. Thus, a fair comparison of the CAR Normal and CAR Laplace
performances would require the use of comparative hyperprior
distributions on the variance parameter of the ensemble priors,
$\tau_{u}^{2}$.

In both the spatially-structured simulations and in the analysis of
the schizophrenia prevalence data set, we have computed the Q-SEL on
the scale of the RRs. That is, both the Q-SEL and the QR-SEL were
computed using $\bth$ as the parameter of interest. However, one
should note that, since the true empirical distribution of the RRs tends to
be skewed, the estimation of low and high quantiles will not be
penalised symmetrically. A possible solution to this problem is to compute
the Q-SEL posterior regret on the logscale. In our notation, this may be
formulated with respect to the  following ensemble of plug-in estimators,
\begin{equation}
    \widehat{\log\bth}^{L\pr} := \lt\lb \widehat{\log\theta}^{L\pr}_{1},\ldots,
    \widehat{\log\theta}^{L\pr}_{n}\rt\rb.
\end{equation}
That is, for some loss function $L\pr$, each point estimate could be
computed on the basis of the joint posterior distribution of the RRs
on the logscale, i.e. $p(\log\bth|\by)$. 
It then follows that the corresponding optimal estimator under the
Q-SEL function is the set of posterior empirical $\bp$-quantiles of the ensemble 
of the $\log\theta_{i}$'s. This gives the following posterior regret, 
\begin{equation}
    \op{regret}\lt(\op{Q-SEL}_{\bp},Q_{\widehat{\log\bth}^{L\pr}}(\bp)\rt), 
\end{equation}
which is defined with respect to $\op{Q-SEL}_{\bp}(\log\bth,\bm\delta)$
with $\bp:=\lb .25,.75\rb$. Note that the optimal estimator,
$\bm\delta=\E[Q_{\log\bth}(\bp)|\by]$ will here differ from the
optimal estimator of the Q-SEL on the RR scale, which we considered in
the rest of this chapter. Thus, the estimation of the Q-SEL function
is not invariant under reparametrisation and such a change of scale
produces two different decision-theoretic problems, which cannot be
directly compared. In practice, preference for one approach over another will
be determined by the specific needs of the modeller.

The good performance of the triple-goal plug-in estimators under both
the Q-SEL and QR-SEL functions indicate that this ensemble of point
estimates could be useful in epidemiological practice. As highlighted
in chapter \ref{chap:introduction}, the choice of specific point estimates in
epidemiology and most particularly in spatial epidemiology is made
arduous by the wide set of desiderata that such an ensemble of point
estimates should fulfil. Here, we have seen that, in addition to
providing good estimation of the EDF, the ranks and the set of 
levels of risk; the GR point estimates also provide a good
approximation of the amount of
heterogeneity in the ensemble. Therefore, the triple-goal point
estimates constitute a good candidate for both the mapping of
area-specific levels of risks and the reporting of summary statistics
about the overall risk heterogeneity in the region of interest. 
There are other inferential objectives, however, for which the GR
point estimates are far from optimal. These limitations will be
highlighted in the next chapter where we study different classification
loss functions.

%
%
\chapter{Threshold and Rank Classification Losses}\label{chap:clas}

\hspace{2cm}
\begin{minipage}[c]{11.5cm}
\small 
\begin{center}\tb{Summary}\end{center}\vspace{-.3cm}
In this chapter, we study the problem of the classification of a
subset of parameters in an ensemble given a particular threshold. This
problem can be envisaged from two different perspectives as either (i) a
rank-based classification problem or (ii) a
threshold-based classification problem. In the first case, the total amount of
data points that one wishes to classify is known. We are answering the
question: Which are the parameters situated 
in the top 10\% of the ensemble? By contrast,
in the threshold-based classification, we are only given a particular
threshold of risk, and therefore need to determine both the total
number of elements above the pre-specified threshold, and the identity of these elements. 
We review some previous research by \citet{Lin2006}, who have
investigated this problem from the ranking perspective. We adopt a
similar approach when one is concerned with threshold-based
classification and derive the minimiser in that case. We then evaluate the
corresponding optimal estimators of these two families of loss functions 
using spatial and non-spatial synthetic data for both weighted and
unweighted classification losses. Of special interest, we find that
a decision rule originally proposed by \citet{Richardson2004}, in the
context of spatial epidemiology, can be shown to be equivalent to the
specification of a weighted threshold-based
classification loss. Overall, our experimental simulations support the
use of the set of posterior means as a plug-in estimator under
balanced, but not necessarily under weighted, threshold-based classification. 
Finally, we illustrate the use of our decision-theoretic 
frameworks for a surveillance data set describing prevalence of
MRSA in UK hospitals. The good performance of the SSEL point estimates
on both types of classification paradigms indicate that this may be a
good choice of classifiers in practice when one wishes to deprecate
the occurrence of false alarms. 
\end{minipage}

\section{Introduction}\label{sec:clas intro}
The problem of the optimal classification of a
set of data points into several clusters has 
occupied statisticians and applied mathematicians for several decades
\citep[see][for a review]{Gordon1999}. As is true for all statistical methods, a
classification is, above all, a summary of the data at hand. When
clustering, the statistician is searching for an optimal
partition of the parameter space into a --generally, known or pre-specified-- number of
classes. The essential ingredient underlying all classifications 
is the minimisation of some distance
function, which generally takes the form of a similarity or dissimilarity metric
\citep{Gordon1999}. Optimal classification will then result from a
trade-off between the level of similarity of the within-cluster elements and
the level of dissimilarity of the between-cluster elements. 
In a decision-theoretic framework, such distance functions naturally arise
through the specification of a loss function for the problem at hand. 
The task of computing the optimal partition of the parameter space then
becomes a matter of minimising the chosen loss function. 

In spatial epidemiology, the issue of classifying areas according to
their levels of risk has been previously investigated by
\citet{Richardson2004}. These authors have shown that areas can be
classified according to the joint posterior distribution of the parameter
ensemble of interest. In particular, a taxonomy can be created by
selecting a decision rule $D(\alpha,C_{\al})$ for that purpose, where
$C_{\al}$ is a particular threshold, above and below which we wish to
classify the areas in the region of interest. The parameter $\alpha$,
in this decision rule, is the cut-off point associated with $C_{\al}$,
which determines the amount of probability mass necessary for an area
to be allocated to the above-threshold category. Thus, an area $i$
with level of risk denoted by $\theta_{i}$ 
will be assigned above the threshold $C_{\al}$ if $\p[\theta_{i}>
C_{\al}|\by]>\al$. \citet{Richardson2004} have therefore
provided a general framework for the classification of areas,
according to their levels of risk. This research has attracted a lot of
interest in the spatial
epidemiological literature, and has been applied to a wide range of
epidemiological settings, including healthcare use in France
\citep{Chaix2005}, disparities in the prevalence of psychotic disorders
in London \citep{Kirkbride2007}, as well as the study of the prevalence of bladder
cancer \citep{Lopez-Abente2006} and breast cancer \citep{Pollan2007},
in Spain. 
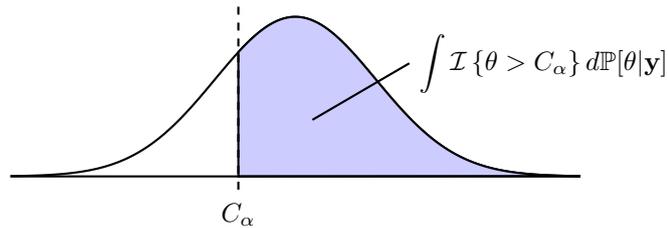
\begin{figure}[t]
  \centering
  \begin{tikzpicture}[scale=.75]
  \draw[white] (-8,0) -- (8,0); 
  \draw[thick,black] plot[smooth] file{gaussian_table};
  \filldraw[thick,fill=blue!20] plot file{gaussian_table-1_5} --
         (-1,0) -- (-1,2.196956) -- cycle;
  \draw[thick,black] (-5,0)--(5,0);   
  \draw[thick,black,dashed](-1,3)--(-1,-.3)node[anchor=north]{$C_{\al}$}; 
  \draw[thick,black](0.3,1) -- (2,2) 
   node[anchor=west]{$\displaystyle
      \int \cI\lt\lb \theta> C_{\al}\rt\rb d\p[\theta|\by] $}; 
  \end{tikzpicture}
  \caption{Posterior probability that parameter $\theta$ is larger than a
    given  threshold $C_{\al}$, which is here $\p[\theta >C_{\al}|\by]$.
    In the framework proposed by \citet{Richardson2004}, the
    element in the ensemble corresponding to parameter
    $\theta$ is considered to be greater than $C_{\al}$ if
    $\p[\theta>C_{\al}|\by]>\al$, for some $\alpha\in[0,1]$.
   \label{fig:intro classification}}   
\end{figure}

Central to the decision framework proposed by \citet{Richardson2004}
is the choice of $C_{\al}$ and $\al$. We have illustrated this general
classification problem in figure \ref{fig:intro classification} on
page \pageref{fig:intro classification}. From this figure, 
it is clear that the choice of the risk threshold $C_{\al}$, apart
from its dependence on $\alpha$, is otherwise arbitrary. Therefore, $C_{\alpha}$ may be
chosen on the basis of previous research, and will ultimately depend on the
subjective evaluation of the spatial epidemiologists or applied
statisticians working on that particular classification problem.
This decision will generally be informed by previous epidemiological
studies of identical or comparable risk surfaces. 
The choice of $\al$, however, is statistically
problematic in the sense that its value will directly determine the
sensitivity and the specificity of the classification procedure. In
selecting a value for $\al$, one is faced with 
issues which are reminiscent of the ones encountered by a statistician
of a frequentist persuasion when deciding which level of significance to adopt. 

Moreover, it should be noted that, in order to obtain good
discrimination using their classification procedure,
\citet{Richardson2004} made use of a value of $C_{\al}$ substantially different from the desired
threshold. (That is, different statistical models were given different decision
rules with different associated $C_{\al}$.)
This is another undesirable consequence of the use of a
particular cut-off point $\al$ for optimal classification: it
requires the choice of a threshold, which usually differs from the true
threshold of interest. We may, for instance, have a target threshold,
which we will denote by $C$ and a decision rule
$D(C_{\al},\al)$ with $C_{\al}\neq C$. This particular dependence
on $\al$ justifies our choice of notation, as we have indexed the
threshold $C_{\al}$ in order to emphasise that its value may vary for
different choices of $\al$. \citet{Richardson2004} therefore 
adopted different decision rules for different models,
depending on the spread of the ensemble distribution estimated by these
different models. They studied the BYM model and the
spatial mixture (MIX) model, introduced by \citet{Green2002}, and chose
$D_{\op{BYM}}(1.0,0.8)$ and $D_{\op{MIX}}(1.5,0.05)$, respectively, as decision rules
for these two models, in order to produce a classification of areas into two
clusters: non-elevated and elevated risk areas. 
In this chapter, we will address the two main quandaries associated
with the classification method proposed by \citet{Richardson2004},
which are (i) the dependence of $C_{\al}$ in $D(\cdot,\cdot)$ on the
choice of $\al$, and (ii) the dependence of the entire decision rule
on the choice of model. It should be noted that these limitations were
discussed by \citet{Richardson2004}, who emphasised the difficulties
associated with the ``\ti{calibration}'' of these decision rules, when
considering different models, and in particular with the use of the MIX model. 

Our approach to this classification question in spatial epidemiology
will draw extensively from the work of \citet{Lin2006}, who
investigated rank-based classification problems in BHMs.
Although this perspective on the classification of
a subset of elements in an ensemble substantially differs from the
problem at hand, we will see that one can easily introduce similar
loss families in the context of \ti{threshold-based classification} --that
is, classification based on a particular cut-off point expressed on
the scale of the parameters of interest, as opposed to Lin et al.'s
(2006) \ti{rank-based classification} problems, where we are given a
proportion (\eg top $20\%$) of elements in the 
ensemble, which we need to identify. Despite the inherent similarities
between rank-based and threshold-based classifications, we will see
that the optimal minimisers of these losses are substantially
different. Moreover, experimentation with simulated data will show
that different plug-in estimators are quasi-optimal under these two
decision-theoretic paradigms. One of the clear advantages of our
proposed classification framework is that it 
does not require any calibration, as the resulting estimates are
optimal for a specific choice of threshold $C$, and thus do not
necessitate any subsequent tuning from the epidemiologist
or applied statistician conducting the classification. 


The chapter is organised as follows. In section
\ref{sec:classification loss}, we introduce both the threshold-based
and rank-based classification frameworks, and describe their
respective optimal estimators. In section \ref{sec:clas data
  non-spatial}, we illustrate the use of these techniques with a 
non-spatial set of synthetic simulations. In section 
\ref{sec:clas data spatial}, the performance of the plug-in estimators
of interest is evaluated within the context of a spatially structured
set of data simulations. In section \ref{sec:mrsa}, these methods are
illustrated with a real data set describing MRSA prevalence in UK
hospitals. Finally, we close the chapter by discussing the broader
implications of these results
for epidemiologists and spatial epidemiologists under the light of
Bayesian decision theory, especially with respect to the choice of a
particular set of point estimates for a given set of data. 

\section{Classification Losses}\label{sec:classification loss}
In this section, we present our general decision-theoretic
framework for the classification of elements of a parameter ensemble
above or below a given threshold. We also introduce the rank-based
classification framework introduced by \citet{Lin2006}. For both types
of classification schemes, we will consider possible plug-in
estimators that can be used in the place of the optimal estimators
for these particular loss functions. Some links with the concepts of
posterior sensitivity and specificity will also be described for both families
of loss functions. 

\subsection{Threshold Classification Loss}
We first describe our proposed family of loss functions, following
the same structure advanced by \citet{Lin2006}. Here, we are given a
particular cut-off point $C$. The loss associated with
misclassification either above or below the threshold of interest will
be formulated as follows. Following standard statistical terminology, we
will express such misclassifications in terms of false positives (FPs)
and false negatives (FNs). These concepts are formally described as
\begin{equation}\label{eq:abba1}
    \op{FP}(C,\theta,\delta) := \cI\lt\lb \theta \leq C, \delta > C \rt\rb,
\end{equation}
and
\begin{equation}\label{eq:abba2}
    \op{FN}(C,\theta,\delta) :=  \cI\lt\lb  \theta > C, \delta \leq C \rt\rb,
\end{equation}
which correspond to the occurrence of a false positive
misclassification (type I error) and a false negative
misclassification (type II error), respectively. In equation
(\ref{eq:abba1}), our decision --denoted $\delta$-- is above threshold, whereas the true
value of the parameter --denoted $\theta$-- is below threshold, and the reverse situation
can be observed in equation (\ref{eq:abba2}).

For the decision problem to be fully specified, we need to choose
a loss function based on the sets of unit-specific FPs and FNs.
We here assume that $C\in\R$ if the parameters of interest are real numbers, or
$C\in\R^{+}$ if the parameters of interest are strictly positive such as
when considering RRs in the context of spatial epidemiology. Following
the decision framework introduced by \citet{Lin2006}, we therefore
formalise this problem using the threshold classification loss (TCL),
defined as follows. We first introduce the weighted version of the TCL
function, and will then specialise our definition to the case of
unweighted threshold classification. Let $0\leq p\leq 1$. The
$p$-weighted threshold classification loss ($\op{TCL}_{p}$) function
is then defined as
\begin{equation}
     \op{TCL}_{p}(C,\bth,\bm\delta) := \frac{1}{n} \sum_{i=1}^{n}
     p\op{FP}(C,\theta_{i},\delta_{i}) + (1-p)\op{FN}(C,\theta_{i},\delta_{i}).     
     \label{eq:ptcl}     
\end{equation}
One of the advantages of the choice of $\op{TCL}_{p}$ for quantifying
the misclassifications of the elements of a parameter ensemble is that
it is normalised, in the sense that $\op{TCL}_{p}(C,\bth,\bm\delta)\in [0,1]$
for any choice of $C$ and $p$. Moreover, $\op{TCL}_{p}$ attains zero
if the classification of each element is correct and $1.0$ if none of
the elements are correctly classified. As was previously done for the loss
families described in the preceding 
chapters, we will drop references to the arguments
$(C,\bth,\bm\delta)$ controlling $\op{TCL}_{p}(C,\bth,\bm\delta)$, 
when their specification is obvious from the context. 
Our main result, in this section, is the following minimisation. 
\begin{pro}\label{pro:tcl}
  For some parameter ensemble $\bth$, and given a real-valued
  threshold $C\in\R$ and $p\in [0,1]$, we have the following optimal estimator under
  weighted TCL, 
  \begin{equation}
    \bth^{\op{TCL}}_{(1-p)} = \argmin_{\bm\delta}
    \E\lt[\op{TCL}_{p}(C,\bth,\bm\delta)|\by\rt],
    \label{eq:tcl minimiser}
  \end{equation}
  where $\bth^{\op{TCL}}_{(1-p)}$ is the vector of posterior $(1-p)$-quantiles defined
  as 
  \begin{equation}
      \bth^{\op{TCL}}_{(1-p)}:=\lt\lb Q_{\theta_{1}|\by}(1-p),\ldots,Q_{\theta_{n}|\by}(1-p)\rt\rb,
  \end{equation}
  where $Q_{\theta_{i}|\by}(1-p)$ denotes the posterior $(1-p)$-quantile of
  the $i\tth$ element, $\theta_{i}$, in the parameter ensemble. Moreover,
  $\bth^{\op{TCL}}_{(1-p)}$ is not unique. That is, there exists more
  than one minimiser in equation (\ref{eq:tcl minimiser}). 
\end{pro}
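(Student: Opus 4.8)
The plan is to exploit the fact that $\op{TCL}_p$ is a compound, additively separable loss, so that minimising the posterior expected loss reduces to $n$ independent scalar problems, one per ensemble element. Writing $F_i(\cdot):=\p[\theta_i\le\,\cdot\,|\by]$ for the marginal posterior CDF of $\theta_i$, I would first observe that the decision $\delta_i$ affects only the $i$-th summand, and that both $\op{FP}(C,\theta_i,\delta_i)$ and $\op{FN}(C,\theta_i,\delta_i)$ depend on $\delta_i$ solely through the indicator $\cI\{\delta_i>C\}$. Hence each per-element posterior expected loss takes one of only two values: classifying below threshold ($\delta_i\le C$) costs $(1-p)\,\p[\theta_i>C|\by]=(1-p)(1-F_i(C))$, whereas classifying above threshold ($\delta_i>C$) costs $p\,\p[\theta_i\le C|\by]=p\,F_i(C)$.

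Comparing these two quantities is the crux of the argument. A short rearrangement shows that classifying above threshold is weakly optimal precisely when $p\,F_i(C)\le(1-p)(1-F_i(C))$, i.e. when $F_i(C)\le 1-p$, the inequality being strict away from the indifference point $F_i(C)=1-p$. The next step is to verify that substituting the posterior $(1-p)$-quantile $q_i:=Q_{\theta_i|\by}(1-p)=\inf\{x:F_i(x)\ge 1-p\}$ for $\delta_i$ realises exactly this optimal classification. Using the monotonicity and right-continuity of the posterior CDF, I would argue that $F_i(C)<1-p$ forces $q_i>C$: were $q_i\le C$, monotonicity would give $F_i(q_i)\le F_i(C)<1-p$, contradicting $F_i(q_i)\ge 1-p$, which holds because right-continuity of $F_i$ makes the defining set contain its infimum. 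Conversely, $F_i(C)\ge 1-p$ places $C$ in the defining set, so $q_i\le C$. Thus $q_i>C$ iff $F_i(C)<1-p$, so $\bth^{\op{TCL}}_{(1-p)}$ classifies each element on the cost-minimising side of $C$ in every case, establishing that it attains $\min_{\bm\delta}\E[\op{TCL}_p|\by]$.

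The subtlety I expect to demand the most care is precisely this boundary analysis: ensuring that the left-continuity convention for the quantile function, combined with the right-continuity of $F_i$, makes the $(1-p)$-quantile land on the correct side of $C$, including the edge case $F_i(C)=1-p$ where the two decisions incur equal loss and $q_i$ falls (harmlessly) on the below-threshold side. Finally, non-uniqueness follows essentially for free from the same observation that drove the reduction: since each per-element loss depends on $\delta_i$ only through $\cI\{\delta_i>C\}$, every decision vector whose components lie on the optimal side of $C$ attains the minimum. The optimal half-line, $(C,\infty)$ or $(-\infty,C]$, therefore contributes a continuum of minimising values at each element, and at any indifference point $F_i(C)=1-p$ both half-lines are optimal; hence the minimiser in equation (\ref{eq:tcl minimiser}) is far from unique, and $\bth^{\op{TCL}}_{(1-p)}$ is merely one canonical representative.
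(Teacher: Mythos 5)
Your proposal is correct and follows essentially the same route as the paper's proof: reduce to per-element scalar problems, observe that each summand depends on $\delta_i$ only through $\cI\{\delta_i>C\}$, compare the two resulting costs via the equivalence $F_{\theta_i|\by}(C)\gtreqless 1-p \iff C \gtreqless Q_{\theta_i|\by}(1-p)$, and deduce non-uniqueness from the fact that only the side of $C$ matters. Your treatment of the boundary case and the right-continuity/infimum argument is somewhat more explicit than the paper's case-by-exhaustion presentation, but the underlying argument is the same.
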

In our notation, we have emphasised the distinction between the
\ti{posterior empirical quantile} of the ensemble and the individual
\ti{posterior quantiles}, as follows. In chapter \ref{chap:mrrr}, the
posterior quantile of the empirical distribution of the $\theta_{i}$'s was denoted
$\E[Q_{\bth}(p)|\by]$. This quantity
minimises the posterior expected Q-SEL function for
some given $p$. By contrast, in this chapter, we are interested in 
the \ti{posterior quantile} of a single $\theta_{i}$, which we have
denoted by $Q_{\theta_{i}|\by}(p)$ in proposition \ref{pro:tcl}, and
which is formally defined as 
\begin{equation}
     Q_{\theta_{i}|\by}(p) := 
     \inf\lt\lb x \in \Theta_{i}: F_{\theta_{i}|\by}(x)\geq p \rt\rb,
\end{equation}
where $\Theta_{i}$ is the domain of $\theta_{i}$.
The proof of proposition \ref{pro:tcl} makes use of a strategy similar
to the one reported by \citet{Berger1980}, who showed that the
posterior median is the optimal 
estimator under AVL, as reported in section \ref{sec:classical loss}. 
We prove the result by exhaustion in three cases.
The full proof is reported in section \ref{sec:clas proof}, at the end
of this chapter. Note that the fact that $\op{TCL}_{p}$ is minimised
by $\bth^{\op{TCL}}_{(1-p)}$ and not $\bth^{\op{TCL}}_{(p)}$ is solely
a consequence of our choice of definition for the TCL function. If the
weighting of the FPs and FNs had been $(1-p)$ and $p$, respectively,
then the optimal minimiser of that function would indeed be a vector
of posterior $p$-quantiles. 

We now specialise this result to the unweighted TCL family, which is
defined analogously to equation (\ref{eq:ptcl}), as follows,
\begin{equation}
     \op{TCL}(C,\bth,\bm\delta) := \frac{1}{n} \sum_{i=1}^{n}
     \op{FP}(C,\theta_{i},\delta_{i}) + \op{FN}(C,\theta_{i},\delta_{i}).
     \label{eq:tcl}
\end{equation}
The minimiser of this loss function can be shown to be trivially
equivalent to the minimiser of $\op{TCL}_{0.5}$. The following
corollary of proposition \ref{pro:tcl} formalises this relationship
between the weighted and unweighted TCL. 
\begin{cor}\label{cor:tcl}
  For some parameter ensemble $\bth$ and $C\in\R$, we have
  \begin{equation}
      \bth^{\op{MED}} = 
      \argmin_{\bm\delta} \E\lt[\op{TCL}(C,\bth,\bm\delta)|\by\rt],
  \end{equation} 
  where 
  \begin{equation}
       \bth^{\op{MED}} := \bth^{\op{TCL}}_{(0.5)} =\lt\lb
        Q_{\theta_{1}|\by}(0.5),\ldots,Q_{\theta_{n}|\by}(0.5)\rt\rb, 
  \end{equation}
  is the vector of posterior medians, and this optimal estimator is not unique. 
\end{cor}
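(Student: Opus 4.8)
The plan is to derive Corollary \ref{cor:tcl} directly from Proposition \ref{pro:tcl} by specialisation, since the unweighted $\op{TCL}$ is manifestly the case $p=1/2$ of the weighted family. First I would observe that comparing equation (\ref{eq:ptcl}) with equation (\ref{eq:tcl}), setting $p=1/2$ in $\op{TCL}_{p}$ yields
\begin{equation}
     \op{TCL}_{1/2}(C,\bth,\bm\delta) = \frac{1}{n}\sum_{i=1}^{n}
     \tfrac{1}{2}\op{FP}(C,\theta_{i},\delta_{i}) + \tfrac{1}{2}\op{FN}(C,\theta_{i},\delta_{i})
     = \tfrac{1}{2}\op{TCL}(C,\bth,\bm\delta).
\end{equation}
Taking posterior expectations preserves this identity, so $\E[\op{TCL}_{1/2}(C,\bth,\bm\delta)|\by] = \tfrac{1}{2}\E[\op{TCL}(C,\bth,\bm\delta)|\by]$. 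Because the two posterior expected losses differ only by the strictly positive multiplicative constant $\tfrac{1}{2}$, they share exactly the same set of minimisers over $\bm\delta$; scaling an objective function by a positive constant does not alter its $\argmin$.

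Next I would invoke Proposition \ref{pro:tcl} with the particular value $p=1/2$. The proposition states that the minimiser of $\E[\op{TCL}_{p}(C,\bth,\bm\delta)|\by]$ is the vector of posterior $(1-p)$-quantiles $\bth^{\op{TCL}}_{(1-p)}$. Substituting $p=1/2$ gives $1-p = 1/2$, so the optimal estimator is $\bth^{\op{TCL}}_{(0.5)} = \lb Q_{\theta_{1}|\by}(0.5),\ldots,Q_{\theta_{n}|\by}(0.5)\rb$, which is precisely the vector of posterior medians. Combining this with the scaling observation from the previous paragraph, the minimiser of the unweighted $\E[\op{TCL}(C,\bth,\bm\delta)|\by]$ coincides with the minimiser of $\E[\op{TCL}_{1/2}(C,\bth,\bm\delta)|\by]$, which is $\bth^{\op{MED}}$. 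This establishes the displayed equality in the corollary.

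Finally, for the non-uniqueness claim, I would note that it is inherited verbatim from Proposition \ref{pro:tcl}, which already asserts that $\bth^{\op{TCL}}_{(1-p)}$ is not unique; non-uniqueness at $p=1/2$ is simply a special case. This mirrors the familiar fact, referenced in the discussion of the AVL function in section \ref{sec:classical loss}, that the posterior median need not be the unique minimiser of the posterior expected absolute error. I do not anticipate a genuine obstacle here: the entire corollary is a one-line specialisation together with the trivial invariance of $\argmin$ under positive scaling. The only point requiring a modicum of care is making explicit that multiplying by $\tfrac{1}{2}$ does not change the minimising set, so that the optimal estimator for $\op{TCL}$ really is identical to (not merely proportional in loss to) that for $\op{TCL}_{1/2}$; everything else follows immediately from the already-proven proposition.
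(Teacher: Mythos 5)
Your proposal is correct and follows exactly the route the paper takes: the paper likewise observes that $\argmin_{\bm\delta}\E[\op{TCL}(C,\bth,\bm\delta)|\by]=\argmin_{\bm\delta}\E[\op{TCL}_{0.5}(C,\bth,\bm\delta)|\by]$ and then invokes Proposition \ref{pro:tcl} at $p=1/2$, with non-uniqueness inherited from the proposition. Your only addition is spelling out that $\op{TCL}_{1/2}=\tfrac{1}{2}\op{TCL}$ so the $\argmin$ is unchanged under positive scaling, which the paper leaves implicit.
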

As noted earlier, this corollary immediately follows from
proposition \ref{pro:tcl} by noting that 
\begin{equation}
      \argmin_{\bm\delta} \E[\op{TCL}(C,\bth,\bm\delta)|\by] =
      \argmin_{\bm\delta} \E[\op{TCL}_{0.5}(C,\bth,\bm\delta)|\by],
\end{equation}
for every $C$. Note that although both the classical AVL loss
described in section
\ref{sec:classical loss} and the unweighted TCL presented here have
identical minimisers, the relationship between these two estimation
schemes is not trivial. Indeed, the former is a problem of
estimation, whereas the latter is a problem of
classification. However, it can be noted that for both the AVL and the
TCL functions, these estimators are not unique. 

Our main focus in this chapter will be on the unweighted TCL, except
in section \ref{sec:clas weighted tcl}, where we consider the
relationship between the decision rule originally proposed by
\citet{Richardson2004} and the weighted TCL. We also supplement this
discussion with a set of simulations evaluating the performance of
plug-in estimators under weighted TCL. The rest of this chapter, however, will
focus on the unweighted TCL, and except otherwise specified, the TCL
function will therefore refer to the unweighted version presented in
equation (\ref{eq:tcl}). 

A graphical interpretation of the posterior TCL is illustrated 
in figure \ref{fig:FN-FP} on page \pageref{fig:FN-FP}. The posterior
expected loss under this function takes the following form,
\begin{equation}
    \E\lt[\op{TCL}(C,\bth,\bm\delta)|\by\rt]
    = \frac{1}{n}\sum_{i=1}^{n} \int\limits_{-\infty}^{C} d\p[\theta_{i}|\by]
    \cI\lt\lb \delta_{i} > C\rt\rb
    + \int\limits_{C}^{+\infty}d\p[\theta_{i}|\by]
    \cI\lt\lb \delta_{i} \leq C\rt\rb,
    \label{eq:posterior tcl}
\end{equation}
whose formulae is derived using $\cI\lt\lb \theta \leq C,\delta > C\rt\rb = \cI\lt\lb \theta \leq
C\rt\rb\cI\lt\lb\delta > C\rt\rb$. It is of special importance to note
that when using the posterior TCL, any classification --correct or
incorrect-- will incur a penalty. The \ti{size} of that penalty,
however, varies substantially depending on whether or not the
classification is correct. A true positive, as in diagram (a) in
figure \ref{fig:FN-FP}, can be distinguished from a false positive, as in diagram (c), 
by the fact that the former will only incur a small penalty
proportional to the posterior probability of the parameter to be
below the chosen cut-off point $C$. By contrast, a false positive, as
in panel (c), will necessarily incur a larger penalty, because more mass is located
below the threshold than above the threshold. We can make similar observations when comparing
diagrams (b) and (d). In addition, note that although the TCL attains zero
when the classification of each element in the ensemble is correct, this
it not the case for the posterior expected TCL, which is necessarily
greater than zero. Moreover, this
representation also clarifies the arguments used in the proof of
proposition \ref{pro:tcl}, and provides an intuitive
justification for the use of the median as an optimal estimator under 
unweighted TCL.
\begin{figure}[htbp]
  \centering
  \tikzstyle{background rectangle}=[draw=gray!30,fill=gray!20,rounded
  corners=1ex]
  \begin{tikzpicture}[scale=.75]
  \draw[white] (-8,0) -- (8,0); 
  \draw (0,4.0) node[anchor=north]{\tb{(a)}};   
  \draw[thick,black] plot[smooth] file{gaussian_table};
  \filldraw[thick,fill=blue!20] plot file{gaussian_table-5_-1} -- 
                     (-1,2.196956)--(-1,0) -- cycle;
  \draw[thick,black] (-5,0)--(5,0);   
  \draw[thick,black,dashed] (-.1,3)--(-.1,-.3)node[anchor=north]{$\delta_{i}$}; 
  \draw[thick,black,dashed](-1,3)--(-1,-.3)node[anchor=north]{$C$};  
  \draw[thick,black](-1.3,1) -- (-2,2) 
   node[anchor=east]{$\displaystyle
      \cI\lt\lb\delta_{i} > C\rt\rb 
      \int\limits_{-\infty}^{C}d\p[\theta_{i}|\by] $}; 
  \end{tikzpicture}\\
  \vspace{.3cm}
  \begin{tikzpicture}[scale=.75]
  \draw[white] (-8,0) -- (8,0); 
  \draw (0,4.0) node[anchor=north]{\tb{(b)}};   
  \draw[thick,black] plot[smooth] file{gaussian_table};
  \filldraw[thick,fill=blue!20] plot file{gaussian_table1_5} --
         (1,0) -- (1,2.196956) -- cycle;
  \draw[thick,black] (-5,0)--(5,0);   
  \draw[thick,black,dashed] (.1,3)--(.1,-.3) 
        node[anchor=north]{$\delta_{i}$}; 
  \draw[thick,black,dashed](1,3)--(1,-.3)node[anchor=north]{$C$}; 
  \draw[thick,black](1.3,1) -- (2,2) 
   node[anchor=west]{$\displaystyle
      \cI\lt\lb\delta_{i} \leq C\rt\rb 
      \int\limits_{C}^{+\infty}d\p[\theta_{i}|\by] $}; 
  \end{tikzpicture}\\
  \vspace{.3cm}
  \begin{tikzpicture}[scale=.75]
  \draw[white] (-8,0) -- (8,0); 
  \draw (0,4.0) node[anchor=north]{\tb{(c)}};   
  \draw[thick,black] plot[smooth] file{gaussian_table};
  \filldraw[thick,fill=blue!20] plot file{gaussian_table-5_1} --
         (1,2.196956) -- (1,0) -- cycle;
  \draw[thick,black] (-5,0)--(5,0);   
  \draw[thick,black,dashed] (1.7,3)--(1.7,-.3) 
        node[anchor=north]{$\delta_{i}$}; 
  \draw[thick,black,dashed](1,3)--(1,-.3)node[anchor=north]{$C$}; 
  \draw[thick,black](-0.3,1) -- (-2,2) 
   node[anchor=east]{$\displaystyle
      \cI\lt\lb\delta_{i} > C\rt\rb 
      \int\limits_{-\infty}^{C}d\p[\theta_{i}|\by] $}; 
  \end{tikzpicture}\\
  \vspace{.3cm}
  \begin{tikzpicture}[scale=.75]
  \draw[white] (-8,0) -- (8,0); 
  \draw (0,4.0) node[anchor=north]{\tb{(d)}};   
  \draw[thick,black] plot[smooth] file{gaussian_table};
  \filldraw[thick,fill=blue!20] plot file{gaussian_table-1_5} --
         (-1,0) -- (-1,2.196956) -- cycle;
  \draw[thick,black] (-5,0)--(5,0);   
  \draw[thick,black,dashed] (-1.7,3)--(-1.7,-.3) 
        node[anchor=north]{$\delta_{i}$}; 
  \draw[thick,black,dashed](-1,3)--(-1,-.3)node[anchor=north]{$C$}; 
  \draw[thick,black](0.3,1) -- (2,2) 
   node[anchor=west]{$\displaystyle
      \cI\lt\lb\delta_{i} \leq C\rt\rb 
      \int\limits_{C}^{+\infty}d\p[\theta_{i}|\by] $}; 
  \end{tikzpicture}
  \caption{Illustration of the components of the posterior expected
    TCL based on the posterior distribution of $\theta_{i}$. The
    choice of a point estimate $\delta_{i}$ for the quantity
    $\theta_{i}$ results in either a correct classification (a-b)
    or a misclassification (c-d), with (a) and (c) representing
    $\E[\op{FP}(C,\theta_{i},\delta_{i})|\by]$
    and (b) and (d) denoting $\E[\op{FN}(C,\theta_{i},\delta_{i})|\by]$.}
    \label{fig:FN-FP}
\end{figure}
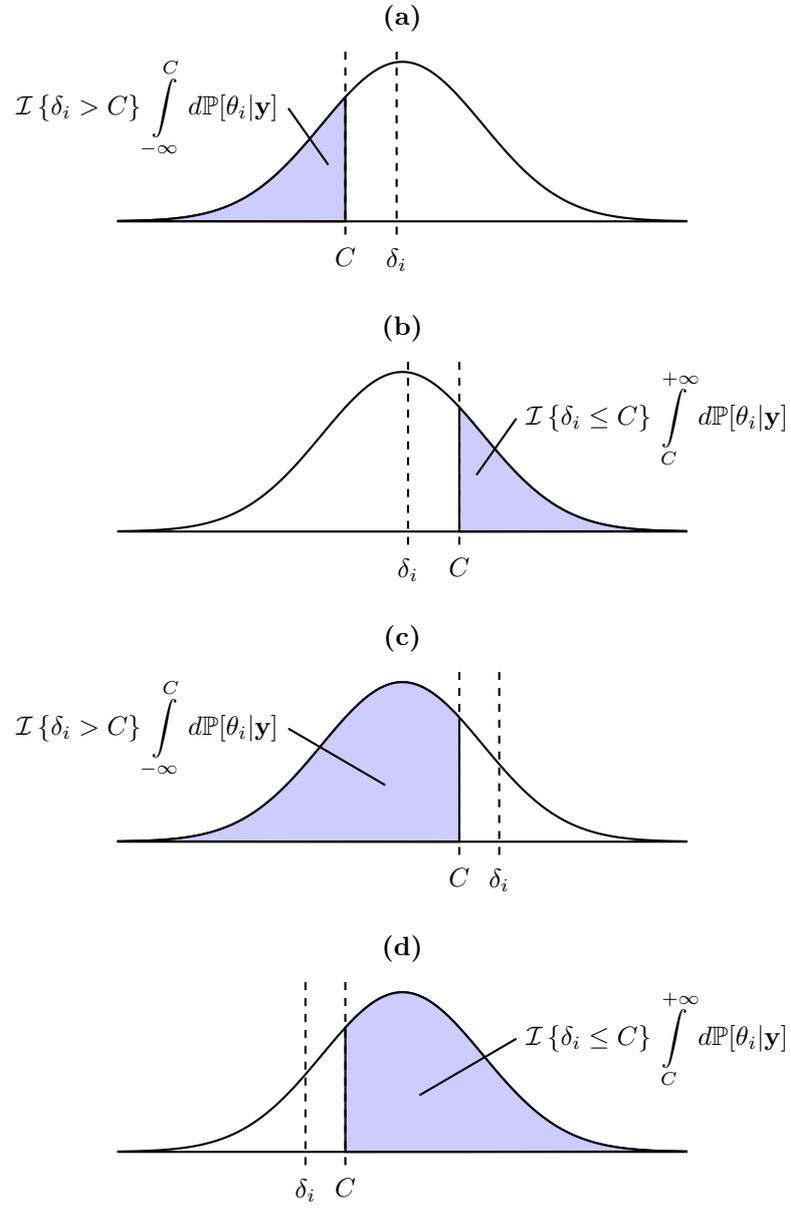

\subsection{Rank Classification Loss}\label{sec:rcl}
We here briefly present a different family of classification loss
functions based on ranks, which was originally introduced by \citet{Lin2006}. In this
chapter, we will be interested in evaluating the performance of
various plug-in estimators for rank-based classification.

\citet{Lin2006} were concerned with the identification 
of the elements of a parameter ensemble that represent the proportion of
elements with the highest level of risk. In an epidemiological setting, one
may, for instance, wish to estimate the ten percent of hospitals that
have the highest RRs for a particular condition. 
Such a classification is therefore based on a particular rank percentile
cut-off denoted $\ga\in [0,1]$, which determines a group of areas of
high-risk. That is, we wish to identify
the areas whose percentile rank is above the cut-off
point $\ga$. Specific false positive and false negative functions
dependent on the percentile ranks of the parameter ensemble can
be defined following the convention introduced in equations
(\ref{eq:abba1}) and (\ref{eq:abba2}). This gives
\begin{equation}
    \op{FP}(\gamma,P,\delta) := \cI\lt\lb P \leq \gamma, \delta > \gamma \rt\rb,
\end{equation}
and
\begin{equation}
    \op{FN}(\gamma,P,\delta) :=  \cI\lt\lb  P > \gamma, \delta \leq \gamma \rt\rb,
\end{equation}
where the percentile rank, $P$, is a function of the ranks of $\bth$, as formally defined in equation
\ref{eq:percentile}. In this chapter, the word percentile will refer
to rank percentile, except specified otherwise. 
Note that we have not made an explicit notational distinction
between threshold-based FP and FN functions and their equivalent
in a percentile rank-based setting. However, which one we
are referring to should be obvious from the context. 
Of particular interest to our development and related to these
families of loss functions is the following unweighted rank
classification loss (RCL) function, 
\begin{equation}\label{eq:rcl}
  \op{RCL}(\gamma,\bth,\bm\delta) := 
        \frac{1}{n} \sum_{i=1}^{n} \op{FP}(\ga,P_{i}(\bth),\delta_{i})
        + \op{FN}(\ga,P_{i}(\bth),\delta_{i}),
\end{equation}
which is the rank-based analog of the unweighted TCL described in
equation (\ref{eq:tcl}). \citet{Lin2006} showed that the RCL function
is minimised by the
following estimator. We report this result here and refer the reader to the
original paper for a proof. 
\begin{pro}[\tb{Lin et al., 2006}]\label{pro:rcl}
   For any $0\leq\ga\leq 1$, the set of percentile ranks
   $\widehat{\bP}=\lb\widehat{P}_{1},\ldots,\widehat{P}_{n}\rb$, 
   whose elements are defined as 
   \begin{equation}
        \widehat{P}_{i} := \frac{1}{(n+1)}
        R_{i}\Big(\p[P_{1}(\bth)\geq \ga|\by],\ldots,
        \p[P_{n}(\bth)\geq \ga|\by]\Big),    
         \label{eq:rank minimiser}
   \end{equation}
   satisfies
   \begin{equation}
        \widehat{\bP} =  \argmin_{\bm\delta}
           \E\lt[\op{RCL}(\gamma,\bth,\bm\delta)|\by\rt].
   \end{equation}    
   Moreover, the minimiser $\widehat{\bP}$ is not unique.
\end{pro}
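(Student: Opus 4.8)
The plan is to reduce the posterior expected RCL to a combinatorial selection problem and then show that ranking the posterior exceedance probabilities solves it. First I would exploit the fact that an admissible decision $\bm\delta$ is itself a vector of percentile ranks, so its components take the values $\{1/(n+1),\ldots,n/(n+1)\}$ in some order. Consequently the number of components strictly exceeding $\gamma$ is a fixed integer,
\[
   k := \#\lt\{ m\in\{1,\ldots,n\} : \tfrac{m}{n+1} > \gamma \rt\},
\]
independent of which permutation $\bm\delta$ realises. Writing $A:=\{i:\delta_i>\gamma\}$ for the ``high-risk'' set induced by $\bm\delta$, the admissible decisions are therefore exactly those for which $|A|=k$. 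This cardinality constraint is the feature that drives the whole argument.

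Next I would compute the posterior expected loss. Since each $\delta_i$ is a deterministic (data-dependent) decision, the indicators $\cI\{\delta_i>\gamma\}$ and $\cI\{\delta_i\leq\gamma\}$ factor out of the posterior expectation in equation (\ref{eq:rcl}), giving
\[
   n\,\E\lt[\op{RCL}(\gamma,\bth,\bm\delta)|\by\rt]
   = \sum_{i=1}^{n} \cI\{\delta_i>\gamma\}\,\p[P_i(\bth)\leq\gamma|\by]
     + \cI\{\delta_i\leq\gamma\}\,\p[P_i(\bth)>\gamma|\by].
\]
Setting $\pi_i := \p[P_i(\bth)\geq\gamma|\by]$, where the distinction between $\geq$ and $>$ is immaterial except when $\gamma$ equals one of the lattice points $m/(n+1)$ (a boundary case I would dispatch by fixing a convention), the objective collapses to $\sum_{i\in A}(1-\pi_i)+\sum_{i\notin A}\pi_i$.

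The key algebraic step is then to rewrite this as
\[
   \sum_{i\in A}(1-\pi_i)+\sum_{i\notin A}\pi_i
   = |A| + \sum_{i=1}^{n}\pi_i - 2\sum_{i\in A}\pi_i
   = k + \sum_{i=1}^{n}\pi_i - 2\sum_{i\in A}\pi_i.
\]
Because $k$ and $\sum_i \pi_i$ are constant across admissible decisions, minimising the posterior expected RCL is equivalent to maximising $\sum_{i\in A}\pi_i$ subject to $|A|=k$. This is solved by taking $A$ to be the $k$ indices carrying the largest values of $\pi_i$, which is precisely the set retained when one ranks $(\pi_1,\ldots,\pi_n)$ and keeps those indices whose rank exceeds $\gamma(n+1)$. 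Converting these ranks to percentile ranks by dividing by $n+1$, as in equation (\ref{eq:percentile}), reproduces the estimator $\widehat{\bP}$ of equation (\ref{eq:rank minimiser}), and one verifies directly that $\widehat{P}_i>\gamma$ holds iff $i$ belongs to this top-$k$ set. Non-uniqueness follows on two counts: the loss sees $\bm\delta$ only through the classification $A$ and not through the actual percentile values assigned within the high and low groups, so any admissible vector inducing the optimal $A$ is a minimiser; and ties among the $\pi_i$ admit several optimal choices of $A$ itself.

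I expect the main obstacle to be conceptual rather than computational, namely recognising and justifying the cardinality constraint $|A|=k$. Without it one would be tempted to minimise the loss componentwise and threshold each $\pi_i$ at $1/2$, which in general produces a different and incorrect estimator; it is the fixed-size admissibility of percentile-rank vectors that turns a per-element thresholding problem into a global top-$k$ selection problem, thereby forcing the ranking solution. Making this point cleanly, together with the $\geq$ versus $>$ boundary convention, is the crux of the proof.
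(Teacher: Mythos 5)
Your proof is correct. There is, however, no in-paper proof to compare it against: the paper explicitly defers to Lin et al.\ (2006) for the proof of this proposition, so your argument supplies a derivation the thesis omits. That said, your route — restricting to admissible percentile-rank vectors so that the number of components above $\gamma$ is a fixed integer $k$, reducing the posterior expected loss to $k+\sum_i\pi_i-2\sum_{i\in A}\pi_i$, and hence to a top-$k$ selection on the exceedance probabilities $\pi_i$ — is precisely the reasoning the surrounding text gestures at (the remark that the ``size'' of $\bm\delta$ above $\gamma$ is known a priori, the identity equating the numbers of false positives and false negatives, and the non-uniqueness arising from permutations within the two groups), and it matches the standard argument of the cited source. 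Your identification of the cardinality constraint as the crux is exactly right: without it one would wrongly threshold each $\pi_i$ at $1/2$, and the paper's own contrast between RCL and TCL minimisation turns on this very point. The only items to make fully explicit in a polished version are the admissibility restriction on the decision space (which you state) and the $\geq$ versus $>$ convention at lattice points (which you flag).
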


Associated with the optimal classifier $\widehat{\bP}$, one can
also derive the optimal rank $\widehat{\bR}$. In order to gain a
greater understanding of the mechanisms underlying the computation of
this optimal minimiser, one can expand the formulae in equation
(\ref{eq:rank minimiser}), using the definition of rank in equation
(\ref{eq:rank}) on page \pageref{eq:rank}, in order to obtain the following,
\begin{equation} 
    \widehat{P}_{i} = \frac{1}{(n+1)}\sum_{j=1}^{n}
    \cI\Big\lb \p\lt[P_{i}(\bth) \geq \ga|\by\rt] \geq
    \p\lt[P_{j}(\bth) \geq \ga|\by\rt] \Big\rb,
    \label{eq:minimiser2}
\end{equation}
for every $i=1,\ldots,n$. It can be seen from this expansion that the
percentiles, which will
optimally classify the areas above a certain threshold, are
the percentiles of the posterior probabilities of the true percentiles
being above the $\gamma$ cut-off. We have already encountered this
method of \ti{double ranking} when introducing the triple-goal
estimator of \citet{Shen1998}. 
Proposition \ref{pro:rcl} also states that the minimiser
$\widehat{P}_{i}$ is not unique. This follows from the fact that the
percentile ranks on each side of $\ga$ form two subsets and the
elements in these subsets can be permuted while maintaining the
optimality of the resulting percentile ranks. This non-uniqueness is
especially illustrated by the asymptotic equivalence between the
minimiser $\widehat{\bP}(\ga|\by)$ and another popular classifier 
in the literature: the Normand's estimator \citep[see][]{normand1997}. 

Albeit the TCL and RCL functions have a similar structure, they nonetheless
differ in two important ways. Firstly, the RCL can be simplified due
to the symmetry between the FPs and FNs, while no
such symmetry can be employed to simplify the formulation of the TCL function.
From equation (\ref{eq:rcl}), the RCL can be simplified as follows,
\begin{equation}
\frac{1}{n} \sum_{i=1}^{n} \op{FP}(\ga,P_{i},\delta_{i}) +
  \op{FN}(\ga,P_{i},\delta_{i}) = \frac{2}{n} \sum_{i=1}^{n} 
  \op{FP}(\ga,P_{i},\delta_{i}), 
  \label{eq:rank 01}
\end{equation}
where we have used the relation,
\begin{equation}
     \sum_{i=1}^{n}\op{FP}(\ga,P_{i},\delta_{i}) = 
     \sum_{i=1}^{n}\op{FN}(\ga,P_{i},\delta_{i}),
\end{equation}
which follows from the fact that any number of FPs in the context of rank
classification is necessarily compensated by an identical number of FNs.
By contrast, the equivalent relationship does not hold for TCL. That is, the quantities
\begin{equation}
     \sum_{i=1}^{n}\op{FP}(C,\theta_{i},\delta_{i}) \qq\te{and}\qq
     \sum_{i=1}^{n}\op{FN}(C,\theta_{i},\delta_{i}),
\end{equation}
need not be equal. This follows from the fact that the total number of data points that
should be classified above a particular threshold $C$ is unknown
when evaluating the TCL function. However, this is
not the case for the RCL function, where we know \ti{a
priori} the total number of percentiles, which should be classified
above the target percentile of interest, $\ga$. That is, 
the RCL function is optimised with respect to a vector
$\bm\delta$, whose `size' is a known quantity --where, by
size, we mean the total number of $\delta_{i}$'s above $\gamma$.
By contrast, for TCL, one cannot derive 
the number of elements in the latent categories. Thus, both the
allocation vector $\bm\delta$, and the total number of
elements above $C$ are unknown. In this sense, the minimisation procedure
for the TCL function is therefore more arduous than the optimisation
of the RCL function. 

\subsection{Posterior Sensitivity and Specificity}\label{sec:sense and sensitivity}
Our chosen decision-theoretic framework for classification has the added benefit of being
readily comparable to conventional measures of
classification errors widely used in the context of test theory.
For our purpose, we will define the Bayesian sensitivity 
of a classification estimator $\bm\delta$, also
referred to as the posterior true positive rate (TPR), as follows
\begin{equation}
    \op{TPR}(C,\bth,\bm\delta)
    := \frac{\sum_{i=1}^{n} \E[\op{TP}(C,\theta_{i},\delta_{i})|\by]}
    {\sum_{i=1}^{n} \p[\theta_{i} > C |\by]},
    \label{eq:tpr}
\end{equation}
where the expectations are taken with respect to the joint posterior
distribution of $\bth$. Similarly, the Bayesian specificity, or
posterior true negative rate (TNR), will be defined as 
\begin{equation}
    \op{TNR}(C,\bth,\bm\delta)
    := \frac{\sum_{i=1}^{n}\E[\op{TN}(C,\theta_{i},\delta_{i})|\by]}
    {\sum_{i=1}^{n} \p[\theta_{i} \leq C |\by]},
    \label{eq:tnr}
\end{equation}
where in both definitions, we have used $\op{TP}(C,\theta_{i},\delta_{i}) := 
\cI\lt\lb \theta_{i} > C , \delta_{i} > C \rt\rb$
and $\op{TN}(C,\theta_{i},\delta_{i}) := \cI\lt\lb  \theta_{i} \leq C,
\delta_{i} \leq C\rt\rb$. It then follows that we can formulate the
relationship between the posterior expected TCL and the Bayesian
sensitivity and specificity as
\begin{equation}
  \E[\op{TCL}(C,\bth,\bm\delta)|\by]     
    = \frac{1}{n}\op{FPR}(C,\bth,\bm\delta)\sum_{i=1}^{n} \p[\theta_{i} \leq C |\by]
    + \frac{1}{n}\op{FNR}(C,\bth,\bm\delta)\sum_{i=1}^{n} \p[\theta_{i} > C |\by]. 
    \notag
\end{equation}
where $\op{FPR}(C,\bth,\bm\delta) := 1- \op{TNR}(C,\bth,\bm\delta)$
and $\op{FNR}(C,\bth,\bm\delta) := 1- \op{TPR}(C,\bth,\bm\delta)$.

In the RCL framework, the definitions of the TPR and TNR in equations (\ref{eq:tpr}) and
(\ref{eq:tnr}) can be adapted to the context of percentile rank-based
classification in the following manner:
\begin{equation}
    \op{TPR}(\ga,\bth,\bm\delta)
    := \frac{\sum_{i=1}^{n} \E[\op{TP}(\ga,P_{i}(\bth),\delta_{i})|\by]}
    {\sum_{i=1}^{n} \p[P_{i}(\bth) > \ga |\by]},
    \label{eq:rank tpr}  
\end{equation}
and analogously for $\op{TNR}(\ga,\bth,\bm\delta)$. One can easily
derive the relationship between the posterior RCL and the
percentile rank-based definitions of Bayesian sensitivity and
specificity. These are
\begin{equation}
  \E[\op{RCL}(\ga,\bth,\bm\delta)|\by] = 
     \frac{2}{n}\op{FNR}(\ga,\bth,\bm\delta)\sum_{i=1}^{n}\p[P_{i}(\bth)
     > \ga|\by],
\end{equation}
where $\op{FNR}(\ga,\bth,\bm\delta):=1-\op{TPR}(\ga,\bth,\bm\delta)$,  
and equivalently, 
\begin{equation}
  \E[\op{RCL}(\ga,\bth,\bm\delta)|\by] = 
     \frac{2}{n}\op{FPR}(\ga,\bth,\bm\delta)\sum_{i=1}^{n}\p[P_{i}(\bth) \leq \ga|\by],
\end{equation}
with $\op{FPR}(\ga,\bth,\bm\delta):=1-\op{TNR}(\ga,\bth,\bm\delta)$,  
which follows from equation (\ref{eq:rank 01}) on page \pageref{eq:rank 01}. 

\subsection{Performance Evaluation}\label{sec:clas evaluation}

The posterior regret will be used to compare the optimal classifiers
under the TCL and RCL functions with different plug-in classifiers, denoted
$h(\hat\bth^{L\pr})$, for some loss function of interest
$L\pr$. The posterior regret was introduced in section
\ref{sec:research question}. For the unweighted TCL, this quantity takes the
following form,
\begin{equation}
    \op{regret}(\op{TCL},h(\hat\bth^{L\pr})) = \E[\op{TCL}(C,\bth,h(\hat\bth^{L\pr}))|\by]
    - \min_{\bm\delta}\E[\op{TCL}(C,\bth,\bm\delta)|\by],
\end{equation}
where here $h(\cdot)$ is simply the identity function returning the
original vector of point estimates, and the optimal estimator under
TCL is the vector of posterior medians of the parameter ensemble of
interest. Similarly, the posterior regret under RCL takes the following form, 
\begin{equation}
    \op{regret}(\op{RCL},h(\hat\bth^{L\pr})) = \E[\op{RCL}(\gamma,\bth,\bP(\hat\bth^{L\pr}))|\by]
    - \min_{\bm\delta}\E[\op{RCL}(\gamma,\bth,\bm\delta)|\by],
\end{equation}
where $\bP(\cdot)$ is a multivariate percentile rank function
returning the vector of percentile rank corresponding to the ensemble
of point estimates $\hat\bth^{L\pr}$, and the optimal estimator under
the RCL is the minimiser reported in proposition \ref{pro:rcl}. In
this chapter, we will be interested in comparing the optimal
estimators under the TCL and RCL functions with different plug-in estimators
based on various ensembles of point estimates including the set of MLEs, the
posterior means and the WRSEL, CB and GR ensembles, as described in
chapter \ref{chap:review}. 

Computationally, the posterior regrets of the TCL and RCL functions
for several plug-in estimators can be
calculated at a lesser cost by noting that both functions simply require
the computation of the vector of penalties for misclassification
once. Consider the posterior TCL first. Let us denote the allocation vector
for some classification estimator $\hat{\bth}^{L\pr}$ above and below
the chosen threshold $C$ by
\begin{equation}
  \bz_{C}^{>}(\hat{\bth}^{L\pr}) := \lt\lb \cI\lb \hat\theta^{L\pr}_{1}>
  C\rb ,\ldots, \cI\lb \hat\theta^{L\pr}_{n}>  C\rb \rt\rb,
  \label{eq:bz}
\end{equation}
and $\bz_{C}^{\leq}$, respectively. In addition, we define the following vector of
posterior probabilities, 
\begin{equation}
  \bp_{C}^{>}(\bth) := \lt\lb \p[\theta_{1}>
  C|\by] ,\ldots, \p[\theta_{n}>  C|\by] \rt\rb,  
  \label{eq:bp}
\end{equation}
with $\bp^{\leq}_{C}$ being defined similarly for the $n$-dimensional vector of posterior
probabilities of each $\theta_{i}$ being below $C$. Then, the
posterior TCL used as a criterion for evaluation in this section can
simply be computed as 
\begin{equation}
  \E[\op{TCL}(C,\bth,\bm\delta)|\by] = \frac{1}{n}\lt\langle \bz^{\leq}_{C},\bp^{>}_{C} \rt\rangle + 
  \frac{1}{n}\lt\langle \bz^{>}_{C},\bp^{\leq}_{C} \rt\rangle,
  \label{eq:computational threshold}
\end{equation}
where $\lt\langle\cdot,\cdot\rt\rangle$ stands for the dot product. It
is therefore sufficient to compute the vectors of penalties
--$\bp^{>}_{C}$ and $\bp^{\geq}_{C}$-- once and then apply it to any
vector of plug-in classifiers, $\hat\bth^{L\pr}$. This formulae has also the added
advantage of providing some insights into
the generating process of the threshold-based loss
quantities. Equation (\ref{eq:computational threshold}) shows that 
the posterior TCL is the sum of two dot products,
where the allocations of the elements and posterior probabilities of
these elements being above or below a threshold have been inverted. 
An equivalent computational formulae can be derived for the posterior
expected RCL. For the percentile rank-based performance, we evaluated the
percentile-based classifiers, denoted $\bP(\hat\bth^{L\pr}):=\lb
P_{1}(\hat\bth^{L\pr}),\ldots,P_{n}(\hat\bth^{L\pr})\rb$, using the
following simplified Bayesian expected loss, 
\begin{equation}
    \E[\op{RCL}(\gamma,\bth,\bP(\hat\bth^{L\pr}))|\by]
    = \frac{2}{n}\Big\langle \bz^{\leq}_{\ga},\bp^{>}_{\ga} \Big\rangle
    = \frac{2}{n}\Big\langle \bz^{>}_{\ga},\bp^{\leq}_{\ga} \Big\rangle,
\end{equation}
which can be derived from an application of equation (\ref{eq:rank 01}), 
and where we have defined $\bz^{\leq}_{\ga}$, $\bz^{>}_{\ga}$,
$\bp^{\leq}_{\ga}$ and $\bp^{>}_{\ga}$, 
following the convention introduced in equations (\ref{eq:bz}) and
(\ref{eq:bp}), respectively.

For all simulated scenarios and replicate data sets in the sequel,
when considering the performance of plug-in estimators under TCL, we fixed 
the threshold $C$ to the following value,
\begin{equation}
      C:=\bar{\theta} +
      \lt(\frac{1}{n}\sum_{i=1}^{n}\lt(\theta_{i}-\bar{\theta}\rt)^{2}\rt)^{1/2}. 
      \label{eq:classification threshold}
\end{equation}
with $\bar{\theta}:=n^{-1}\sum_{i=}^{n}\theta_{i}$ being the true
ensemble mean, and $\bth$ denotes the true ensemble
distribution. That is, we evaluated the ability of the target
plug-in estimators to determine which elements in the parameter
ensemble should be classified as farther than one standard deviation
from the true empirical mean. For the RCL, we fixed $\ga=.80$ for all scenarios
and replicates, which corresponded to the identification of the
top $20\%$ of ensemble's elements taking the largest values. These
conventions for the TCL and RCL functions were applied to both the 
non-spatial and spatial simulation experiments. 

\section{Non-spatial Data Simulations}\label{sec:clas data non-spatial}
The full description of the synthetic data sets used in this
simulation experiment have been fully described in chapter
\ref{chap:mrrr}. We are here concerned with two BHMs: (i) the compound
Gaussian and (ii) compound Gamma models. In the next section, we
briefly describe which parameters will be the object of the
classification scheme in each model family and recall the two main
factors, which were manipulated in these simulations. 

\subsection{Parameter Ensembles  and Experimental Factors}
Our choice of simulation models and the specific values given to 
the hyperparameters parallel the first part of the simulation design
conducted by \citet{Shen1998}. As in chapter \ref{chap:mrrr}, 
the same models were used to generate and to fit the data. The parameter
ensembles, denoted $\bth$, in both the compound Gaussian and compound Gamma
models were classified according to the threshold described in
equation (\ref{eq:classification threshold}).
This choice of classification threshold resulted in 
approximately $16\%$ and $10\%$ of the ensemble distribution being
above $C$ for the Normal-Normal and Gamma-Inverse Gamma models,
respectively. This choice of $C$ therefore produced a proportion of
above-threshold ensemble elements of the same order of magnitude as
the one used for the evaluation of the posterior expected RCL, which
was $\ga=.80$. As in chapter \ref{chap:mrrr}, we manipulated the
ratio of the largest to the smallest (RLS) sampling variances in both
models. In addition, we also considered the effect of an increase of the
size of the parameter ensemble on the posterior expected TCL and RCL
functions. The plug-in estimators of interest were the classifiers
constructed on the basis of the ensemble of MLEs, posterior means,
as well as the WRSEL, CB and GR plug-in classifiers. 


\subsection{Plug-in Estimators under TCL}
%
\begin{table}[t] 
 \footnotesize
 \caption{
Posterior regrets based on $\op{TCL}(C,\bth,\bm\delta)$,
with $C=\E[\bth] + \op{sd}[\bth]$, for five plug-in estimators
for the conjugate Gaussian model in equation (\ref{eq:normal-normal})
and the conjugate Gamma model in equation (\ref{eq:gamma-inverse gamma}),
and for 3 different levels of RLS,
and 3 different values of $n$, averaged over 100 replicate data sets.
The posterior expected loss of the optimal estimator is given in the
first column. Entries are scaled by a factor of $10^3$. 
The posterior regrets expressed as percentage of the posterior loss
under the optimal estimator is indicated in parentheses.
\label{tab:non-spatial cmse_table}} 
 \centering
\begin{threeparttable}
 \begin{tabular}{>{\RaggedRight}p{60pt}>{\RaggedLeft}p{30pt}|>{\RaggedLeft}p{25pt}@{}>{\RaggedLeft}p{17pt}>{\RaggedLeft}p{25pt}@{}>{\RaggedLeft}p{10pt}>{\RaggedLeft}p{25pt}@{}>{\RaggedLeft}p{20pt}>{\RaggedLeft}p{25pt}@{}>{\RaggedLeft}p{17pt}>{\RaggedLeft}p{25pt}@{}>{\RaggedLeft}p{17pt}}\hline
\multicolumn{1}{c}{\itshape Scenarios}&
\multicolumn{11}{c}{\itshape Posterior regrets\tnote{a}}
\tabularnewline \cline{3-12}
\multicolumn{1}{>{\RaggedRight}p{60pt}}{}&\multicolumn{1}{c}{TCL}&\multicolumn{2}{c}{MLE}&
\multicolumn{2}{c}{SSEL}&\multicolumn{2}{c}{WRSEL}&\multicolumn{2}{c}{CB}&\multicolumn{2}{c}{GR}\tabularnewline
\hline
{\itshape $\op{RLS}\doteq1$}&&&&&&&&&&&\tabularnewline
\normalfont   N-N, $n=100$ &    $133.6$ &    $49.9$ &    ($37$) &    $0.0$ &    ($0$) &    $ 16.6$ &    ($ 12$) &    $13.3$ &    ($10$) &    $13.3$ &    ($10$)\tabularnewline
\normalfont   N-N, $n=200$ &    $133.1$ &    $49.1$ &    ($37$) &    $0.0$ &    ($0$) &    $ 69.7$ &    ($ 52$) &    $13.2$ &    ($10$) &    $13.8$ &    ($10$)\tabularnewline
\normalfont   N-N, $n=1000$ &    $133.2$ &    $48.8$ &    ($37$) &    $0.0$ &    ($0$) &    $167.3$ &    ($126$) &    $13.9$ &    ($10$) &    $14.1$ &    ($11$)\tabularnewline
\normalfont   G-IG, $n=100$ &    $109.7$ &    $74.0$ &    ($67$) &    $3.5$ &    ($3$) &    $ 35.9$ &    ($ 33$) &    $37.3$ &    ($34$) &    $31.2$ &    ($28$)\tabularnewline
\normalfont   G-IG, $n=200$ &    $102.4$ &    $72.1$ &    ($70$) &    $2.9$ &    ($3$) &    $100.8$ &    ($ 98$) &    $33.7$ &    ($33$) &    $32.1$ &    ($31$)\tabularnewline
\normalfont   G-IG, $n=1000$ &    $ 99.1$ &    $72.4$ &    ($73$) &    $2.6$ &    ($3$) &    $268.0$ &    ($270$) &    $29.4$ &    ($30$) &    $33.3$ &    ($34$)\tabularnewline
\hline
{\itshape $\op{RLS}\doteq20$}&&&&&&&&&&&\tabularnewline
\normalfont   N-N, $n=100$ &    $127.9$ &    $61.3$ &    ($48$) &    $0.0$ &    ($0$) &    $ 15.0$ &    ($ 12$) &    $15.2$ &    ($12$) &    $17.2$ &    ($13$)\tabularnewline
\normalfont   N-N, $n=200$ &    $131.4$ &    $59.5$ &    ($45$) &    $0.0$ &    ($0$) &    $ 63.4$ &    ($ 48$) &    $14.3$ &    ($11$) &    $18.8$ &    ($14$)\tabularnewline
\normalfont   N-N, $n=1000$ &    $128.5$ &    $59.5$ &    ($46$) &    $0.0$ &    ($0$) &    $173.0$ &    ($135$) &    $14.1$ &    ($11$) &    $18.7$ &    ($15$)\tabularnewline
\normalfont   G-IG, $n=100$ &    $102.9$ &    $68.2$ &    ($66$) &    $2.3$ &    ($2$) &    $ 28.6$ &    ($ 28$) &    $30.1$ &    ($29$) &    $30.1$ &    ($29$)\tabularnewline
\normalfont   G-IG, $n=200$ &    $100.4$ &    $67.7$ &    ($67$) &    $2.5$ &    ($3$) &    $ 94.6$ &    ($ 94$) &    $28.1$ &    ($28$) &    $31.5$ &    ($31$)\tabularnewline
\normalfont   G-IG, $n=1000$ &    $ 93.9$ &    $69.0$ &    ($73$) &    $2.2$ &    ($2$) &    $276.2$ &    ($294$) &    $25.0$ &    ($27$) &    $31.8$ &    ($34$)\tabularnewline
\hline
{\itshape $\op{RLS}\doteq100$}&&&&&&&&&&&\tabularnewline
\normalfont   N-N, $n=100$ &    $122.8$ &    $70.5$ &    ($57$) &    $0.0$ &    ($0$) &    $ 12.6$ &    ($ 10$) &    $14.9$ &    ($12$) &    $23.4$ &    ($19$)\tabularnewline
\normalfont   N-N, $n=200$ &    $123.3$ &    $68.4$ &    ($56$) &    $0.0$ &    ($0$) &    $ 53.6$ &    ($ 43$) &    $15.2$ &    ($12$) &    $23.3$ &    ($19$)\tabularnewline
\normalfont   N-N, $n=1000$ &    $123.4$ &    $70.6$ &    ($57$) &    $0.0$ &    ($0$) &    $174.1$ &    ($141$) &    $14.6$ &    ($12$) &    $24.3$ &    ($20$)\tabularnewline
\normalfont   G-IG, $n=100$ &    $ 99.2$ &    $62.8$ &    ($63$) &    $2.5$ &    ($3$) &    $ 24.1$ &    ($ 24$) &    $25.4$ &    ($26$) &    $29.9$ &    ($30$)\tabularnewline
\normalfont   G-IG, $n=200$ &    $ 93.5$ &    $64.4$ &    ($69$) &    $1.9$ &    ($2$) &    $ 78.8$ &    ($ 84$) &    $23.5$ &    ($25$) &    $30.9$ &    ($33$)\tabularnewline
\normalfont   G-IG, $n=1000$ &    $ 93.2$ &    $63.3$ &    ($68$) &    $2.0$ &    ($2$) &    $314.9$ &    ($338$) &    $21.7$ &    ($23$) &    $32.1$ &    ($34$)\tabularnewline
\hline
\end{tabular}
\begin{tablenotes}
   \item[a] Entries for the posterior regrets have been truncated to the closest first
     digit after the decimal point, and entries for the percentage
     regrets have been truncated to the closest integer. 
     For some entries, percentage regrets are smaller than 1 percentage point. 
\end{tablenotes}
\end{threeparttable}
\end{table}

The results of these simulations for the TCL function are summarised
in table \ref{tab:non-spatial cmse_table} on page \pageref{tab:non-spatial cmse_table}. As in
chapter \ref{chap:mrrr}, we have reported these results in terms of
both absolute and percentage posterior regrets. The percentage
regret expresses the posterior regret of a particular plug-in
estimator as a proportion of the posterior expected TCL under the
optimal estimator, $\bth^{\op{MED}}$.

Overall, the ensemble of posterior means exhibited the best
performance over all simulation scenarios considered. The SSEL plug-in
classifiers was found to be substantially better
than all the other plug-in estimators in terms of posterior
regrets. Moreover, since the set of optimal classifiers under the TCL
function is the ensemble of posterior medians, the performance of the
ensemble of posterior means was found to be no different from the one of the
optimal classifiers under the compound Gaussian model. The performance
of the SSEL plug-in estimators also approached optimality under the
compound Gamma BHM, with its percentage posterior regret not exceeding $3\%$.
Of the remaining four plug-in classifiers, the CB and GR ensembles of
point estimates performed best. The performance of these two ensembles
of point estimates was approximately equivalent over the range of
scenarios studied. However, some differences are nonetheless notable.
The CB plug-in classifiers tended to exhibit a better performance than
the triple-goal classifiers under the compound Gaussian model. This
superiority of the CB plug-in estimator over the GR one was
accentuated by an increase in RLS. This
trend was also true under the compound Gamma model, for which the CB
plug-in classifiers modestly outperformed the triple-goal estimator. 

The WRSEL and MLE plug-in classifiers exhibited the worst
performance. Although the WRSEL plug-in classifier tended to
outperform the MLE-based one for small parameter ensembles, the posterior
regret of the WRSEL rapidly increased with $n$. The poor performance
of the WRSEL plug-in estimator can here be explained using the same
arguments that we have discussed in section \ref{sec:plug-in q-sel}
on page \pageref{sec:plug-in q-sel}. The effect of the vector of
weights in the WRSEL function is indeed highly sensitive to the size
of the parameter ensemble, as was described in equation 
(\ref{eq:phi annoying}) and can be observed from figure \ref{fig:hist wrsel}.
In addition, increasing the size of the parameter ensemble resulted in a
modest reduction of the posterior expected TCL under the optimal
estimator for the compound Gamma model, but not for the compound
Gaussian model. Among the plug-in estimators, only the CB classifier
benefited from an increase in $n$, and this improvement was
restricted to the compound Gamma model. For all other plug-in
estimators, no systematic effect of the size of the parameter ensemble
could be identified. We now turn to the RCL simulation results, which
are characterised by a different ordering of the plug-in classifiers.

\subsection{Plug-in Estimators under RCL}\label{sec:rcl non-spatial}
%
\begin{table}[t]
 \footnotesize
 \caption{
Posterior regrets based on $\op{RCL}(\ga,\bth,\bm\delta)$, with $\ga:=.80$, for five plug-in estimators,
for the conjugate Gaussian model in equation (\ref{eq:normal-normal})
and the conjugate Gamma model in equation (\ref{eq:gamma-inverse gamma}),
and for 3 different levels of RLS,
and 3 different values of $n$, averaged over 100 replicate data sets.
The posterior expected loss of the optimal estimator is given in the
first column. Entries are scaled by a factor of $10^3$. 
The posterior regrets expressed as percentage of the posterior loss
under the optimal estimator is indicated in parentheses.
\label{tab:non-spatial rmse_table}} 
 \centering
 \begin{threeparttable}
 \begin{tabular}{>{\RaggedRight}p{70pt}>{\RaggedLeft}p{30pt}|>{\RaggedLeft}p{25pt}@{}>{\RaggedLeft}p{15pt}>{\RaggedLeft}p{25pt}@{}>{\RaggedLeft}p{15pt}>{\RaggedLeft}p{25pt}@{}>{\RaggedLeft}p{15pt}>{\RaggedLeft}p{25pt}@{}>{\RaggedLeft}p{15pt}>{\RaggedLeft}p{25pt}@{}>{\RaggedLeft}p{15pt}}\hline
\multicolumn{1}{c}{\itshape Scenarios}&
\multicolumn{11}{c}{\itshape Posterior regrets\tnote{a}}
\tabularnewline \cline{3-12}
\multicolumn{1}{>{\RaggedRight}p{70pt}}{}&\multicolumn{1}{c}{RCL}&\multicolumn{2}{c}{MLE}&\multicolumn{2}{c}{SSEL}&\multicolumn{2}{c}{WRSEL}&\multicolumn{2}{c}{CB}&\multicolumn{2}{c}{GR}\tabularnewline
\hline
{\itshape $\op{RLS}\doteq1$}&&&&&&&&&&&\tabularnewline
\normalfont   N-N, $n=100$ &    $173.01$ &    $ 0.09$ &    ($ 0$) &    $0.03$ &    ($0$) &    $ 0.06$ &    ($ 0$) &    $0.03$ &    ($0$) &    $0.05$ &    ($0$)\tabularnewline
\normalfont   N-N, $n=200$ &    $172.53$ &    $ 0.10$ &    ($ 0$) &    $0.04$ &    ($0$) &    $ 0.15$ &    ($ 0$) &    $0.04$ &    ($0$) &    $0.04$ &    ($0$)\tabularnewline
\normalfont   N-N, $n=1000$ &    $172.19$ &    $ 0.08$ &    ($ 0$) &    $0.03$ &    ($0$) &    $ 1.41$ &    ($ 1$) &    $0.03$ &    ($0$) &    $0.04$ &    ($0$)\tabularnewline
\normalfont   G-IG, $n=100$ &    $221.72$ &    $ 0.11$ &    ($ 0$) &    $0.06$ &    ($0$) &    $ 0.11$ &    ($ 0$) &    $0.06$ &    ($0$) &    $0.04$ &    ($0$)\tabularnewline
\normalfont   G-IG, $n=200$ &    $221.60$ &    $ 0.11$ &    ($ 0$) &    $0.05$ &    ($0$) &    $ 0.16$ &    ($ 0$) &    $0.05$ &    ($0$) &    $0.04$ &    ($0$)\tabularnewline
\normalfont   G-IG, $n=1000$ &    $221.21$ &    $ 0.12$ &    ($ 0$) &    $0.06$ &    ($0$) &    $ 0.89$ &    ($ 0$) &    $0.06$ &    ($0$) &    $0.05$ &    ($0$)\tabularnewline
\hline
{\itshape $\op{RLS}\doteq20$}&&&&&&&&&&&\tabularnewline
\normalfont   N-N, $n=100$ &    $169.13$ &    $ 9.64$ &    ($ 6$) &    $0.91$ &    ($1$) &    $ 0.18$ &    ($ 0$) &    $0.91$ &    ($1$) &    $2.10$ &    ($1$)\tabularnewline
\normalfont   N-N, $n=200$ &    $172.33$ &    $ 9.07$ &    ($ 5$) &    $0.76$ &    ($0$) &    $ 0.77$ &    ($ 0$) &    $0.76$ &    ($0$) &    $2.05$ &    ($1$)\tabularnewline
\normalfont   N-N, $n=1000$ &    $169.80$ &    $ 8.91$ &    ($ 5$) &    $0.79$ &    ($0$) &    $ 9.88$ &    ($ 6$) &    $0.79$ &    ($0$) &    $2.04$ &    ($1$)\tabularnewline
\normalfont   G-IG, $n=100$ &    $216.42$ &    $ 2.60$ &    ($ 1$) &    $0.08$ &    ($0$) &    $ 0.15$ &    ($ 0$) &    $0.08$ &    ($0$) &    $0.20$ &    ($0$)\tabularnewline
\normalfont   G-IG, $n=200$ &    $215.70$ &    $ 2.03$ &    ($ 1$) &    $0.06$ &    ($0$) &    $ 0.26$ &    ($ 0$) &    $0.06$ &    ($0$) &    $0.22$ &    ($0$)\tabularnewline
\normalfont   G-IG, $n=1000$ &    $215.53$ &    $ 2.08$ &    ($ 1$) &    $0.05$ &    ($0$) &    $ 2.04$ &    ($ 1$) &    $0.05$ &    ($0$) &    $0.22$ &    ($0$)\tabularnewline
\hline
{\itshape $\op{RLS}\doteq100$}&&&&&&&&&&&\tabularnewline
\normalfont   N-N, $n=100$ &    $169.24$ &    $20.68$ &    ($12$) &    $1.95$ &    ($1$) &    $ 0.39$ &    ($ 0$) &    $1.95$ &    ($1$) &    $4.24$ &    ($3$)\tabularnewline
\normalfont   N-N, $n=200$ &    $167.74$ &    $19.83$ &    ($12$) &    $1.93$ &    ($1$) &    $ 1.77$ &    ($ 1$) &    $1.93$ &    ($1$) &    $3.96$ &    ($2$)\tabularnewline
\normalfont   N-N, $n=1000$ &    $167.94$ &    $19.53$ &    ($12$) &    $2.20$ &    ($1$) &    $16.47$ &    ($10$) &    $2.20$ &    ($1$) &    $4.76$ &    ($3$)\tabularnewline
\normalfont   G-IG, $n=100$ &    $209.71$ &    $ 4.07$ &    ($ 2$) &    $0.08$ &    ($0$) &    $ 0.22$ &    ($ 0$) &    $0.08$ &    ($0$) &    $0.58$ &    ($0$)\tabularnewline
\normalfont   G-IG, $n=200$ &    $212.23$ &    $ 3.39$ &    ($ 2$) &    $0.11$ &    ($0$) &    $ 0.68$ &    ($ 0$) &    $0.11$ &    ($0$) &    $0.71$ &    ($0$)\tabularnewline
\normalfont   G-IG, $n=1000$ &    $210.66$ &    $ 3.08$ &    ($ 1$) &    $0.11$ &    ($0$) &    $ 6.81$ &    ($ 3$) &    $0.11$ &    ($0$) &    $0.69$ &    ($0$)\tabularnewline
\hline
\end{tabular}
\begin{tablenotes}
   \item[a] Entries for the posterior regrets have been truncated to the closest second
     digit after the decimal point, and entries for the percentage
     regrets have been truncated to the closest integer. 
     For some entries, percentage regrets are smaller than 1 percentage point. 
\end{tablenotes}
\end{threeparttable}
\end{table}

Table \ref{tab:non-spatial rmse_table} on page \pageref{tab:non-spatial rmse_table}
documents the performance of the different classifiers under posterior
expected RCL. 
Overall, all estimators performed well with posterior regrets
typically within a few percentage points of the optimal estimator.
The ensemble of posterior means and the CB plug-in classifier
performed slightly better than the other plug-in estimators
considered. In fact, these two families of classifiers exhibited
identical performances on all the simulation scenarios. (This can be 
observed by comparing columns 2 and 8 of table \ref{tab:non-spatial
  rmse_table} on page \pageref{tab:non-spatial rmse_table}). 
This is due to the fact that the CB plug-in
estimator is simply a monotonic function of the posterior means, which
preserves the ranking of this ensemble of point estimates. That is,
from equation (\ref{eq:cb}) in chapter \ref{chap:review}, the CB point
estimates are defined as 
\begin{equation}
    \hat\theta^{\op{CB}}_i :=  \ome \hat\theta^{\op{SSEL}}_i +
    (1-\ome)\bar\theta^{\op{SSEL}}, 
\end{equation}
and therefore it follows that 
\begin{equation}
      R_{i}(\hat\bth^{\op{SSEL}}) 
      =\sum_{j=1}^{n} \cI\lt\lb \hat\theta^{\op{SSEL}}_{i}\geq \hat\theta^{\op{SSEL}}_{j}\rt\rb 
      = \sum_{j=1}^{n} \cI\lt\lb \hat\theta^{\op{CB}}_{i}\geq \hat\theta^{\op{CB}}_{j}\rt\rb       
      = R_{i}(\hat\bth^{\op{CB}}),
\end{equation}
which accounts for the identical posterior regrets of these two
plug-in estimators in table \ref{tab:non-spatial rmse_table}.
The GR plug-in classifiers exhibited the third best percentage regret
performance after the SSEL and CB estimators. This is somewhat
surprising since the ranks of the GR point estimates are explicitly
optimised as part of the triple-goal procedure. However, this
optimisation of the ranks is conditional upon the optimisation of the
EDF of the parameter ensemble, and therefore the GR point estimates are 
only expected to produce quasi-optimal ranks. 
The WRSEL plug-in classifier tended to outperform the MLE
one, although the latter appeared to do better for $\op{RLS}\doteq 1$.

Manipulation of the RLS experimental factor resulted in a substantial
increase of the posterior regrets for all plug-in estimators under both
the compound Gaussian and compound Gamma models. This increase in
posterior regret due to higher RLS was especially severe for the
compound Gaussian model. Under this model, all plug-in estimators performed worse as the
RLS increased. This may be explained in terms of the relationship
between the level of the RLS factor and the ranking of the different
elements in a parameter ensemble. That is, the larger the RLS factor,
the greater was the influence of hierarchical shrinkage on the
ordering of the ensemble's elements, since the $\theta_{i}$'s with
larger sampling variance tended to be more shrunk towards the prior
mean.
It was difficult to pinpoint the exact effect of changes in the size
of the parameter ensemble on the performance of the various plug-in
estimators. As was noted before, the posterior regret associated with
the use of the WRSEL plug-in estimator substantially deteriorated as
$n$ increased (see section \ref{sec:plug-in q-sel}). However, no
systematic trend related to an increase of the size of the parameter ensembles
seemed to emerge for the other plug-in estimators. 

In contrast to threshold-based classification, most of the different
plug-in estimators under the RCL function exhibited performance very
close to the one of the optimal estimator.
In particular, one can observe that all plug-in classifiers produced
a percentage posterior regret of 0 in at least one of the simulation
scenarios. This starkly contrasts with the behaviour of the same ensembles
of point estimates under the TCL function. This set of non-spatial simulations therefore
highlights a pressing need for carefully choosing plug-in estimators when
considering threshold-based classification. However, such a choice
appears to be somewhat less consequential when considering rank-based
classification. 

\section{Spatial Simulations} \label{sec:clas data spatial}
We now turn to the set of spatial simulations, which have been
conducted in order to evaluate the performance of the different
plug-in classifiers under both the TCL and RCL functions for spatially
structured parameter ensembles. In addition, we also consider the 
weighted TCL function, as this can be shown to be equivalent to the
decision rule proposed by \citet{Richardson2004}.
The complete description of these synthetic data sets was provided in
section \ref{sec:mrrr spatial sim}. In this section, we solely highlight
the specific aspects of the simulations, which are directly relevant
to the classification problem at hand. 

\subsection{Parameter Ensembles and Experimental Factors}
We investigated four different spatially structured scenarios: SC1,
characterised by one isolated cluster of elevated risk; SC2, composed
of a mixture of isolated areas and isolated clusters of elevated risk;
SC3, where the spatial structure of the level of risk was generated 
by the Mat\`{e}rn function; and SC4, where the risk surface was indexed
by a hidden covariate. In addition, we varied the heterogeneity of the
risk surface in all of these scenarios by modifying the data generating
process, in order to produce a greater dispersion of the true RRs.
We recovered these simulated levels of risk by using two different
models: BYM and the robustified Laplace version of the BYM model,
which is denoted by L1. These two models were fully described in
section \ref{sec:mrrr spatial sim}. Finally, we also varied the level
of the expected counts in order to asses the
sensitivity of the performance of the classification estimators to
modifications in the amount of information available from the data. 
All these analyses were conducted within the WinBUGS 1.4 software \citep{Lunn2000}.
As for the non-spatial data simulations, we evaluated the different
classifiers using two loss criteria: the posterior expected TCL and
the posterior expected RCL functions. Moreover, we considered the
performance of five different plug-in classifiers based on the MLE, SSEL,
WRSEL, CB and triple-goal ensembles of point estimates. These
different plug-in estimators were compared using the posterior
regrets of the TCL and RCL functions with respect to the vector of
RRs, denoted $\bth$ in the models of interest. We discuss the
performance of the different plug-in classifiers under the (i) unweighted
TCL, (ii) weighted TCL and (iii) RCL functions, in turn. Finally, we
evaluate the consequences of scaling the expected counts on the
performance of the various plug-in estimators of interest. 

All the results presented in this section are expressed as posterior
expected losses, which are functions of the joint posterior
distribution of the parameter ensemble of interest. Naturally, this is
highly dependent on our choice of model. For completeness, we therefore
also compared the optimal classifiers with respect to the \ti{true} values
of the simulated data. For the BYM model, the use of the optimal classifier
(i.e. the vector of posterior medians) yielded $8.1\%$ of 
misclassifications, on average over all simulation scenarios. This
misclassifcation rate was slightly higher for the Laplace model at
$8.6\%$.

\subsection{Plug-in Estimators under Unweighted TCL}\label{sec:clas spatial tcl}
%
\begin{table}[t]
 \footnotesize
 \caption{
Posterior regrets based on $\op{TCL}(C,\bth,\bm\delta)$ with
$C:=\E[\bth] + \op{sd}[\bth]$ for five plug-in estimators
and with the posterior expected loss of the optimal estimator in the first column.
Results are presented for 3 different levels of variability and for 4 spatial scenarios:
an isolated cluster (SC1), a set of isolated clusters and isolated areas (SC2),
highly structured spatial heterogeneity (SC3), and spatial structure generated by a hidden covariate (SC4).
Entries were scaled by a factor of $10^3$, with posterior regrets expressed as percentage of the posterior loss
under the optimal estimator indicated in parentheses.\label{tab:spatial_cmse_table}} 
 \centering
 \begin{threeparttable}
 \begin{tabular}{>{\RaggedRight}p{50pt}>{\RaggedLeft}p{25pt}|>{\RaggedLeft}p{25pt}@{}>{\RaggedLeft}p{25pt}>{\RaggedLeft}p{25pt}@{}>{\RaggedLeft}p{15pt}>{\RaggedLeft}p{25pt}@{}>{\RaggedLeft}p{25pt}>{\RaggedLeft}p{25pt}@{}>{\RaggedLeft}p{15pt}>{\RaggedLeft}p{25pt}@{}>{\RaggedLeft}p{15pt}}\hline
\multicolumn{1}{c}{\itshape Scenarios}&
\multicolumn{1}{c}{}&
\multicolumn{10}{c}{\itshape Posterior regrets\tnote{a}}
\tabularnewline \cline{3-12}
\multicolumn{1}{>{\RaggedRight}p{50pt}}{}&\multicolumn{1}{c}{TCL}&
\multicolumn{2}{c}{MLE}&\multicolumn{2}{c}{SSEL}&\multicolumn{2}{c}{WRSEL}&\multicolumn{2}{c}{CB}&\multicolumn{2}{c}{GR}\tabularnewline
\hline
{\itshape  Low Variab.~}&&&&&&&&&&&\tabularnewline
\normalfont   BYM-SC1 &    $71.6$ &    $110.3$ &    ($154$) &    $0.1$ &    ($0$) &    $117.9$ &    ($165$) &    $12.9$ &    ($18$) &    $20.1$ &    ($28$)\tabularnewline
\normalfont   BYM-SC2 &    $49.8$ &    $ 39.2$ &    ($ 79$) &    $0.1$ &    ($0$) &    $151.3$ &    ($304$) &    $10.6$ &    ($21$) &    $ 4.0$ &    ($ 8$)\tabularnewline
\normalfont   BYM-SC3 &    $67.9$ &    $ 26.8$ &    ($ 39$) &    $0.1$ &    ($0$) &    $156.2$ &    ($230$) &    $ 1.7$ &    ($ 3$) &    $ 2.4$ &    ($ 4$)\tabularnewline
\normalfont   BYM-SC4 &    $51.4$ &    $ 33.1$ &    ($ 64$) &    $0.1$ &    ($0$) &    $191.8$ &    ($373$) &    $ 4.4$ &    ($ 9$) &    $ 4.3$ &    ($ 8$)\tabularnewline
\normalfont   L1-SC1 &    $76.1$ &    $107.4$ &    ($141$) &    $0.1$ &    ($0$) &    $115.7$ &    ($152$) &    $14.4$ &    ($19$) &    $23.1$ &    ($30$)\tabularnewline
\normalfont   L1-SC2 &    $52.0$ &    $ 37.8$ &    ($ 73$) &    $0.1$ &    ($0$) &    $159.9$ &    ($308$) &    $12.7$ &    ($24$) &    $ 5.6$ &    ($11$)\tabularnewline
\normalfont   L1-SC3 &    $78.2$ &    $ 20.7$ &    ($ 26$) &    $0.2$ &    ($0$) &    $159.0$ &    ($203$) &    $ 3.1$ &    ($ 4$) &    $ 3.6$ &    ($ 5$)\tabularnewline
\normalfont   L1-SC4 &    $60.4$ &    $ 25.9$ &    ($ 43$) &    $0.1$ &    ($0$) &    $197.0$ &    ($326$) &    $ 5.5$ &    ($ 9$) &    $ 6.5$ &    ($11$)\tabularnewline
\hline
{\itshape  Med.~ Variab.~}&&&&&&&&&&&\tabularnewline
\normalfont   BYM-SC1 &    $26.6$ &    $ 25.2$ &    ($ 95$) &    $0.0$ &    ($0$) &    $ 78.6$ &    ($296$) &    $ 1.1$ &    ($ 4$) &    $ 7.9$ &    ($30$)\tabularnewline
\normalfont   BYM-SC2 &    $44.1$ &    $ 15.2$ &    ($ 34$) &    $0.1$ &    ($0$) &    $104.9$ &    ($238$) &    $ 4.9$ &    ($11$) &    $ 1.8$ &    ($ 4$)\tabularnewline
\normalfont   BYM-SC3 &    $42.6$ &    $ 10.9$ &    ($ 25$) &    $0.1$ &    ($0$) &    $110.9$ &    ($260$) &    $ 0.8$ &    ($ 2$) &    $ 1.6$ &    ($ 4$)\tabularnewline
\normalfont   BYM-SC4 &    $36.8$ &    $ 12.7$ &    ($ 35$) &    $0.0$ &    ($0$) &    $178.2$ &    ($484$) &    $ 1.7$ &    ($ 5$) &    $ 2.2$ &    ($ 6$)\tabularnewline
\normalfont   L1-SC1 &    $31.3$ &    $ 23.3$ &    ($ 74$) &    $0.0$ &    ($0$) &    $ 83.8$ &    ($267$) &    $ 1.4$ &    ($ 4$) &    $10.5$ &    ($33$)\tabularnewline
\normalfont   L1-SC2 &    $44.1$ &    $ 13.8$ &    ($ 31$) &    $0.1$ &    ($0$) &    $113.2$ &    ($257$) &    $ 5.3$ &    ($12$) &    $ 2.0$ &    ($ 5$)\tabularnewline
\normalfont   L1-SC3 &    $48.3$ &    $  7.9$ &    ($ 16$) &    $0.1$ &    ($0$) &    $119.9$ &    ($248$) &    $ 1.6$ &    ($ 3$) &    $ 2.1$ &    ($ 4$)\tabularnewline
\normalfont   L1-SC4 &    $40.3$ &    $  9.7$ &    ($ 24$) &    $0.1$ &    ($0$) &    $191.3$ &    ($475$) &    $ 2.4$ &    ($ 6$) &    $ 2.6$ &    ($ 6$)\tabularnewline
\hline
{\itshape  High Variab.~}&&&&&&&&&&&\tabularnewline
\normalfont   BYM-SC1 &    $ 2.9$ &    $  1.5$ &    ($ 52$) &    $0.0$ &    ($0$) &    $  0.7$ &    ($ 25$) &    $ 0.0$ &    ($ 0$) &    $ 0.2$ &    ($ 6$)\tabularnewline
\normalfont   BYM-SC2 &    $25.5$ &    $  3.9$ &    ($ 15$) &    $0.0$ &    ($0$) &    $106.1$ &    ($416$) &    $ 1.2$ &    ($ 5$) &    $ 3.7$ &    ($15$)\tabularnewline
\normalfont   BYM-SC3 &    $35.9$ &    $  8.6$ &    ($ 24$) &    $0.1$ &    ($0$) &    $107.8$ &    ($300$) &    $ 0.6$ &    ($ 2$) &    $ 1.3$ &    ($ 4$)\tabularnewline
\normalfont   BYM-SC4 &    $27.3$ &    $  6.0$ &    ($ 22$) &    $0.0$ &    ($0$) &    $ 98.7$ &    ($361$) &    $ 0.9$ &    ($ 3$) &    $ 1.2$ &    ($ 5$)\tabularnewline
\normalfont   L1-SC1 &    $ 4.1$ &    $  1.4$ &    ($ 34$) &    $0.0$ &    ($0$) &    $  1.6$ &    ($ 38$) &    $ 0.1$ &    ($ 1$) &    $ 1.0$ &    ($25$)\tabularnewline
\normalfont   L1-SC2 &    $25.0$ &    $  3.7$ &    ($ 15$) &    $0.0$ &    ($0$) &    $107.7$ &    ($430$) &    $ 1.0$ &    ($ 4$) &    $ 4.1$ &    ($16$)\tabularnewline
\normalfont   L1-SC3 &    $40.8$ &    $  5.1$ &    ($ 12$) &    $0.1$ &    ($0$) &    $116.7$ &    ($286$) &    $ 0.9$ &    ($ 2$) &    $ 1.6$ &    ($ 4$)\tabularnewline
\normalfont   L1-SC4 &    $28.7$ &    $  4.5$ &    ($ 16$) &    $0.0$ &    ($0$) &    $115.1$ &    ($401$) &    $ 1.2$ &    ($ 4$) &    $ 1.6$ &    ($ 6$)\tabularnewline
\hline
\end{tabular}
\begin{tablenotes}
   \item[a] Entries for the posterior regrets have been truncated to the closest first
     digit after the decimal point, and entries for the percentage
     regrets have been truncated to the closest integer. 
     For some entries, percentage regrets are smaller than 1 percentage point. 
\end{tablenotes}
\end{threeparttable}
\end{table}

The results of the spatial simulations under the posterior expected TCL
function are reported in tables \ref{tab:spatial_cmse_table}
on page \pageref{tab:spatial_cmse_table}. As before, the percentage regrets
associated with each plug-in estimator is reported in parentheses.
Overall, the SSEL plug-in classifiers were found to be quasi-optimal
for all simulation scenarios. This may be explained by the fact that
in both models of interest, the full conditional distribution of each $\theta_{i}$
was a symmetric distribution. 

The ordering of the remaining plug-in classifiers in terms of
percentage regrets varies depending on the type of spatial scenarios
considered. Overall, the CB classifier was found to be the second best
plug-in estimator after the ensemble of posterior means. This was true
for all spatial scenarios, except under SC2, where the triple-goal
estimator demonstrated better performance than the CB classifier,
except when considering a high degree of variability in the parameter ensemble.
The MLE-based classifiers behaved poorly throughout the entire set of
simulations, and the WRSEL plug-in classifier was consistently outperformed by its
counterparts, except for the SC1 scenario and under a high level of variability of the
ensemble distribution. 

Increasing the heterogeneity of the parameter ensemble tended to
produce easier classification problems for the plug-in estimators. This was especially apparent
when examining the posterior TCL under the optimal classifier, in
table \ref{tab:spatial_cmse_table}, where one can observe that the
posterior expected losses diminish with increased levels of
heterogeneity, in all four spatial scenarios. That is, most plug-in
estimators appeared to benefit from an increase of the parameter ensemble's dispersion. 
In terms of relative performance as measured by percentage posterior
regret, the areas in the SC1 scenario were found to be easier to
classify for all plug-in classifiers when greater levels of
heterogeneity was considered. Under SC2 and to a much lesser extent
under SC3, only the MLE and CB classifiers appeared to benefit from an
increase in the parameter ensemble's dispersion. For SC4, the CB, MLE and to a
lesser extent the GR plug-in classifiers appeared to benefit from
higher variability in the parameter ensemble. 
However, for the plug-in estimators, which failed to estimate
the overall shape of the ensemble distribution, such as the WRSEL
classifier, the \ti{number} of areas above or below the prescribed
threshold was more likely to be erroneous. Hence, we did not observe any
systematic improvement of the performance of the WRSEL classifier when the
parameter ensemble's dispersion increased. 

As was previously noted, the CAR Normal model was found to yield lower
posterior losses than the CAR Laplace model across all the studied spatial
scenarios. 
In chapter \ref{chap:mrrr}, we have emphasised that the simulation of
discrete two-category distributions in the SC1 and
SC2 scenarios had detrimental consequences on the
quantile estimation procedures. We here make a similar caveat for the
assessment of classification estimators. The fact that the simulated ensemble
distributions in SC1 and SC2 take a discrete form should be
taken into account when evaluating the performance of the
classification procedures under scrutiny in this chapter. In
particular, the classifiers, which are dependent on the entire
ensemble distribution will be more heavily penalised.
This dependency was especially detrimental 
to the GR-based classification because each point estimate in this set
of point estimates is dependent on the full ensemble distribution, as
is visible from table \ref{tab:spatial_cmse_table} for SC1 especially.
However, this detrimental effect on the performance of the GR
classifier was attenuated by an increase in the
variability of the parameter ensemble. 

\subsection{Plug-in Estimators under Weighted TCL}\label{sec:clas weighted tcl}
%
\begin{table}[t]
 \footnotesize
 \caption{
Posterior regrets based on $\op{TCL}_{0.8}(C,\bth,\bm\delta)$ with
$C:=1.0$ for five plug-in estimators
and with the posterior expected loss of the optimal estimator in the first column.
Results are presented for 3 different levels of variability and for 4 spatial scenarios:
an isolated cluster (SC1), a set of isolated clusters and isolated areas (SC2),
highly structured spatial heterogeneity (SC3), and spatial structure generated by a hidden covariate (SC4).
Entries are scaled by a factor of $10^3$, with posterior regrets expressed as percentage of the posterior loss
under the optimal estimator in parentheses.\label{tab:spatial_cmse80_table}} 
 \centering
 \begin{threeparttable}
 \begin{tabular}{>{\RaggedRight}p{50pt}>{\RaggedLeft}p{20pt}|>{\RaggedLeft}p{20pt}@{}>{\RaggedLeft}p{18pt}>{\RaggedLeft}p{20pt}@{}>{\RaggedLeft}p{18pt}>{\RaggedLeft}p{20pt}@{}>{\RaggedLeft}p{18pt}>{\RaggedLeft}p{20pt}@{}>{\RaggedLeft}p{18pt}>{\RaggedLeft}p{20pt}@{}>{\RaggedLeft}p{18pt}}\hline
\multicolumn{1}{c}{\itshape Scenarios}&
\multicolumn{1}{c}{}&
\multicolumn{10}{c}{\itshape Posterior regrets\tnote{a}}
\tabularnewline \cline{3-12}
\multicolumn{1}{>{\RaggedRight}p{50pt}}{}&\multicolumn{1}{c}{$\op{TCL}_{0.8}$}&
\multicolumn{2}{c}{MLE}&\multicolumn{2}{c}{SSEL}&\multicolumn{2}{c}{WRSEL}&\multicolumn{2}{c}{CB}&\multicolumn{2}{c}{GR}\tabularnewline
\hline
{\itshape  Low Variab.~}&&&&&&&&&&&\tabularnewline
\normalfont   BYM-SC1 &    $80$ &    $92$ &    ($114$) &    $58$ &    ($72$) &    $48$ &    ($ 59$) &    $60$ &    ($74$) &    $79$ &    ($ 98$)\tabularnewline
\normalfont   BYM-SC2 &    $52$ &    $54$ &    ($103$) &    $28$ &    ($53$) &    $30$ &    ($ 58$) &    $30$ &    ($58$) &    $55$ &    ($105$)\tabularnewline
\normalfont   BYM-SC3 &    $39$ &    $39$ &    ($100$) &    $20$ &    ($51$) &    $36$ &    ($ 91$) &    $20$ &    ($51$) &    $20$ &    ($ 50$)\tabularnewline
\normalfont   BYM-SC4 &    $47$ &    $55$ &    ($117$) &    $24$ &    ($51$) &    $44$ &    ($ 94$) &    $28$ &    ($59$) &    $37$ &    ($ 79$)\tabularnewline
\normalfont   L1-SC1 &    $84$ &    $85$ &    ($101$) &    $67$ &    ($80$) &    $61$ &    ($ 72$) &    $69$ &    ($82$) &    $85$ &    ($101$)\tabularnewline
\normalfont   L1-SC2 &    $57$ &    $46$ &    ($ 80$) &    $33$ &    ($58$) &    $37$ &    ($ 64$) &    $35$ &    ($62$) &    $57$ &    ($100$)\tabularnewline
\normalfont   L1-SC3 &    $48$ &    $36$ &    ($ 75$) &    $27$ &    ($55$) &    $38$ &    ($ 78$) &    $27$ &    ($55$) &    $27$ &    ($ 56$)\tabularnewline
\normalfont   L1-SC4 &    $53$ &    $44$ &    ($ 83$) &    $29$ &    ($56$) &    $46$ &    ($ 88$) &    $32$ &    ($61$) &    $42$ &    ($ 79$)\tabularnewline
\hline
{\itshape  Med.~ Variab.~}&&&&&&&&&&&\tabularnewline
\normalfont   BYM-SC1 &    $68$ &    $74$ &    ($109$) &    $46$ &    ($68$) &    $24$ &    ($ 35$) &    $47$ &    ($69$) &    $68$ &    ($101$)\tabularnewline
\normalfont   BYM-SC2 &    $25$ &    $24$ &    ($ 99$) &    $ 9$ &    ($37$) &    $ 9$ &    ($ 38$) &    $10$ &    ($40$) &    $38$ &    ($156$)\tabularnewline
\normalfont   BYM-SC3 &    $30$ &    $26$ &    ($ 86$) &    $14$ &    ($48$) &    $41$ &    ($134$) &    $14$ &    ($48$) &    $15$ &    ($ 51$)\tabularnewline
\normalfont   BYM-SC4 &    $34$ &    $35$ &    ($103$) &    $17$ &    ($49$) &    $48$ &    ($139$) &    $18$ &    ($54$) &    $24$ &    ($ 70$)\tabularnewline
\normalfont   L1-SC1 &    $70$ &    $67$ &    ($ 96$) &    $49$ &    ($70$) &    $26$ &    ($ 38$) &    $50$ &    ($72$) &    $71$ &    ($102$)\tabularnewline
\normalfont   L1-SC2 &    $28$ &    $19$ &    ($ 70$) &    $11$ &    ($41$) &    $11$ &    ($ 42$) &    $12$ &    ($45$) &    $41$ &    ($148$)\tabularnewline
\normalfont   L1-SC3 &    $37$ &    $24$ &    ($ 64$) &    $18$ &    ($50$) &    $38$ &    ($103$) &    $18$ &    ($50$) &    $21$ &    ($ 56$)\tabularnewline
\normalfont   L1-SC4 &    $38$ &    $30$ &    ($ 78$) &    $19$ &    ($50$) &    $46$ &    ($121$) &    $21$ &    ($55$) &    $28$ &    ($ 74$)\tabularnewline
\hline
{\itshape High Variab.~}&&&&&&&&&&&\tabularnewline
\normalfont   BYM-SC1 &    $52$ &    $55$ &    ($106$) &    $30$ &    ($58$) &    $37$ &    ($ 71$) &    $30$ &    ($58$) &    $61$ &    ($117$)\tabularnewline
\normalfont   BYM-SC2 &    $ 7$ &    $ 5$ &    ($ 66$) &    $ 1$ &    ($18$) &    $ 6$ &    ($ 82$) &    $ 1$ &    ($19$) &    $17$ &    ($236$)\tabularnewline
\normalfont   BYM-SC3 &    $28$ &    $24$ &    ($ 85$) &    $15$ &    ($51$) &    $45$ &    ($159$) &    $15$ &    ($52$) &    $15$ &    ($ 51$)\tabularnewline
\normalfont   BYM-SC4 &    $26$ &    $23$ &    ($ 90$) &    $12$ &    ($46$) &    $56$ &    ($217$) &    $13$ &    ($49$) &    $17$ &    ($ 66$)\tabularnewline
\normalfont   L1-SC1 &    $52$ &    $53$ &    ($102$) &    $31$ &    ($60$) &    $36$ &    ($ 69$) &    $31$ &    ($60$) &    $62$ &    ($119$)\tabularnewline
\normalfont   L1-SC2 &    $ 9$ &    $ 4$ &    ($ 45$) &    $ 2$ &    ($24$) &    $ 6$ &    ($ 67$) &    $ 2$ &    ($25$) &    $19$ &    ($224$)\tabularnewline
\normalfont   L1-SC3 &    $32$ &    $22$ &    ($ 68$) &    $17$ &    ($52$) &    $42$ &    ($128$) &    $17$ &    ($52$) &    $18$ &    ($ 56$)\tabularnewline
\normalfont   L1-SC4 &    $28$ &    $20$ &    ($ 71$) &    $13$ &    ($46$) &    $51$ &    ($179$) &    $14$ &    ($48$) &    $20$ &    ($ 70$)\tabularnewline
\hline
\end{tabular}
\begin{tablenotes}
   \item[a] Entries for both the posterior regrets and percentage 
            regrets have been truncated to the closest integers. 
\end{tablenotes}
\end{threeparttable}
\end{table}

Here, we consider the performance of various plug-in estimators under
a weighted TCL function. This
decision-theoretic framework reproduces the decision rule proposed by
\citet{Richardson2004} for CAR models in the context of spatial
epidemiology. In the notation adopted by \citet{Richardson2004}, the
$i\tth$ area in an ensemble was classified as having an ``increased
risk'' when the following condition was verified, 
\begin{equation}
       \p[\theta_{i} > C|\by]> p.
\end{equation}
For the BYM and BYM-L1 models, these parameters were given the
following values: $p=.80$, and $C=1.0$. This particular decision rule can easily
be seen to be equivalent to a weighted TCL based on $1-p$ as introduced in equation 
\ref{eq:ptcl}, such that 
\begin{equation}
     \op{TCL}_{0.8}(1.0,\bth,\bm\delta) := \frac{1}{n} \sum_{i=1}^{n}
     0.8\op{FP}(1.0,\theta_{i},\delta_{i}) +
     0.2\op{FN}(1.0,\theta_{i},\delta_{i}),
     \label{eq:tcl.20}
\end{equation}
which implies that \citet{Richardson2004} selected a conservative
decision rule, giving a larger penalty to FPs than to FNs. 
As a result, the number of potential false alarms is deprecated. This
rule indeed requires that a large amount of probability mass
($p=0.80$) is situated above threshold before declaring an area to have
``increased risk'' for a given medical condition of interest. Using proposition
\ref{pro:tcl}, the posterior expectation of the loss function in equation
(\ref{eq:tcl.20}) is minimised by the ensemble of posterior
$.20$-quantiles, denoted $\bth^{\op{TCL}}_{(.20)}$. We have evaluated the different
plug-in estimators of interest under this weighted classification
loss and reported the results of these simulations for the CAR Normal
and CAR Laplace models in table \ref{tab:spatial_cmse80_table} on page 
\pageref{tab:spatial_cmse80_table}.

The ordering of the plug-in estimators in terms of percentage
regrets reproduced the findings reported under the weighted TCL. 
Moreover, the performance of the plug-in classifiers was
found to be approximately consistent across the different spatial scenarios 
considered. Overall, the SSEL estimator exhibited the best performance
throughout this simulation study. Under the SC1 and SC2 scenarios, the
SSEL classifier was more or less on a par with the CB estimator,
especially for medium to high heterogeneity. This can be observed 
by considering the first two rows of each of the three sections of
table \ref{tab:spatial_cmse80_table} on page
\pageref{tab:spatial_cmse80_table}. However, the WRSEL classifier was
found to do marginally better than the SSEL and CB plug-in estimators
under SC2 and for a low level of variability in the ensemble distribution. 
Under the SC3 and SC4 scenarios, the SSEL, CB and GR classifiers
exhibited similar behaviour and outperformed their counterparts for
all levels of heterogeneity. This can be observed by comparing the
different columns in the third and fourth rows of each of the three
sections of table \ref{tab:spatial_cmse80_table}. This set of
simulations therefore showed that while the
set of posterior means constitute a good plug-in estimator under the
unweighted TCL, it also outperformed its counterparts 
under a weighted version of the same loss function, when a greater
penalty is given to false positives. In addition, we also conducted
some further simulations under $\op{TCL}_{0.2}$, which gives a greater penalty to false
negatives (results not shown). These simulations yielded a different ordering of the plug-in
classifiers. In that case, the SSEL estimator were
found to be outperformed by some of its counterparts under several
spatial scenarios. These findings are contrasted and discussed in
section \ref{sec:clas conclusion}.

\subsection{Plug-in Estimators under RCL}\label{sec:clas spatial rcl}
%
\begin{table}[t]
 \footnotesize
 \caption{
Posterior regrets based on $\op{RCL}(\ga,\bth,\bm\delta)$ with $\ga=.80$, for five plug-in estimators
and with the posterior expected loss of the optimal estimator in the first column.
Results are presented for 3 different levels of variability and for 4 spatial scenarios:
an isolated cluster (SC1), a set of isolated clusters and isolated areas (SC2),
highly structured spatial heterogeneity (SC3), and spatial structure generated by a hidden covariate (SC4).
Entries were scaled by a factor of $10^3$,
with posterior regrets expressed as percentage of the posterior loss
under the optimal estimator indicated in parentheses.\label{tab:spatial_rmse_table}} 
 \centering
 \begin{threeparttable}
 \begin{tabular}{>{\RaggedRight}p{50pt}>{\RaggedLeft}p{25pt}|>{\RaggedLeft}p{25pt}@{}>{\RaggedLeft}p{20pt}>{\RaggedLeft}p{25pt}@{}>{\RaggedLeft}p{15pt}>{\RaggedLeft}p{25pt}@{}>{\RaggedLeft}p{20pt}>{\RaggedLeft}p{25pt}@{}>{\RaggedLeft}p{15pt}>{\RaggedLeft}p{25pt}@{}>{\RaggedLeft}p{15pt}}\hline
\multicolumn{1}{c}{\itshape Scenarios}&
\multicolumn{1}{c}{}&
\multicolumn{10}{c}{\itshape Posterior regrets\tnote{a}}
\tabularnewline \cline{3-12}
\multicolumn{1}{>{\RaggedRight}p{50pt}}{}&\multicolumn{1}{c}{RCL}&
\multicolumn{2}{c}{MLE}&\multicolumn{2}{c}{SSEL}&\multicolumn{2}{c}{WRSEL}&\multicolumn{2}{c}{CB}&\multicolumn{2}{c}{GR}\tabularnewline
\hline
{\itshape  Low Variab.~}&&&&&&&&&&&\tabularnewline
\normalfont   BYM-SC1 &    $187.82$ &    $26.70$ &    ($14$) &    $0.36$ &    ($0$) &    $ 3.26$ &    ($ 2$) &    $0.36$ &    ($0$) &    $0.66$ &    ($0$)\tabularnewline
\normalfont   BYM-SC2 &    $106.49$ &    $12.03$ &    ($11$) &    $0.11$ &    ($0$) &    $ 9.61$ &    ($ 9$) &    $0.11$ &    ($0$) &    $0.49$ &    ($0$)\tabularnewline
\normalfont   BYM-SC3 &    $ 86.07$ &    $22.13$ &    ($26$) &    $0.07$ &    ($0$) &    $22.44$ &    ($26$) &    $0.07$ &    ($0$) &    $0.22$ &    ($0$)\tabularnewline
\normalfont   BYM-SC4 &    $105.46$ &    $16.45$ &    ($16$) &    $0.05$ &    ($0$) &    $17.01$ &    ($16$) &    $0.05$ &    ($0$) &    $0.36$ &    ($0$)\tabularnewline
\normalfont   L1-SC1 &    $204.99$ &    $11.55$ &    ($ 6$) &    $0.35$ &    ($0$) &    $ 2.97$ &    ($ 1$) &    $0.35$ &    ($0$) &    $0.67$ &    ($0$)\tabularnewline
\normalfont   L1-SC2 &    $120.84$ &    $ 3.81$ &    ($ 3$) &    $0.12$ &    ($0$) &    $ 9.68$ &    ($ 8$) &    $0.12$ &    ($0$) &    $0.49$ &    ($0$)\tabularnewline
\normalfont   L1-SC3 &    $101.70$ &    $10.31$ &    ($10$) &    $0.11$ &    ($0$) &    $29.85$ &    ($29$) &    $0.11$ &    ($0$) &    $0.42$ &    ($0$)\tabularnewline
\normalfont   L1-SC4 &    $118.72$ &    $ 4.79$ &    ($ 4$) &    $0.13$ &    ($0$) &    $18.33$ &    ($15$) &    $0.13$ &    ($0$) &    $0.62$ &    ($1$)\tabularnewline
\hline
{\itshape  Med.~ Variab.~}&&&&&&&&&&&\tabularnewline
\normalfont   BYM-SC1 &    $163.44$ &    $16.05$ &    ($10$) &    $0.33$ &    ($0$) &    $ 5.57$ &    ($ 3$) &    $0.33$ &    ($0$) &    $0.92$ &    ($1$)\tabularnewline
\normalfont   BYM-SC2 &    $ 35.37$ &    $ 2.36$ &    ($ 7$) &    $0.03$ &    ($0$) &    $ 7.75$ &    ($22$) &    $0.03$ &    ($0$) &    $0.11$ &    ($0$)\tabularnewline
\normalfont   BYM-SC3 &    $ 66.60$ &    $12.45$ &    ($19$) &    $0.03$ &    ($0$) &    $22.54$ &    ($34$) &    $0.03$ &    ($0$) &    $0.19$ &    ($0$)\tabularnewline
\normalfont   BYM-SC4 &    $ 75.85$ &    $ 7.33$ &    ($10$) &    $0.03$ &    ($0$) &    $21.10$ &    ($28$) &    $0.03$ &    ($0$) &    $0.30$ &    ($0$)\tabularnewline
\normalfont   L1-SC1 &    $169.40$ &    $ 5.63$ &    ($ 3$) &    $0.42$ &    ($0$) &    $ 5.88$ &    ($ 3$) &    $0.42$ &    ($0$) &    $1.17$ &    ($1$)\tabularnewline
\normalfont   L1-SC2 &    $ 42.21$ &    $ 0.71$ &    ($ 2$) &    $0.05$ &    ($0$) &    $12.85$ &    ($30$) &    $0.05$ &    ($0$) &    $0.15$ &    ($0$)\tabularnewline
\normalfont   L1-SC3 &    $ 78.77$ &    $ 3.64$ &    ($ 5$) &    $0.06$ &    ($0$) &    $32.29$ &    ($41$) &    $0.06$ &    ($0$) &    $0.37$ &    ($0$)\tabularnewline
\normalfont   L1-SC4 &    $ 83.99$ &    $ 2.25$ &    ($ 3$) &    $0.07$ &    ($0$) &    $22.24$ &    ($26$) &    $0.07$ &    ($0$) &    $0.40$ &    ($0$)\tabularnewline
\hline
{\itshape  High Variab.~}&&&&&&&&&&&\tabularnewline
\normalfont   BYM-SC1 &    $152.71$ &    $10.83$ &    ($ 7$) &    $0.42$ &    ($0$) &    $ 9.59$ &    ($ 6$) &    $0.42$ &    ($0$) &    $1.42$ &    ($1$)\tabularnewline
\normalfont   BYM-SC2 &    $  8.88$ &    $ 0.51$ &    ($ 6$) &    $0.05$ &    ($1$) &    $ 4.00$ &    ($45$) &    $0.05$ &    ($1$) &    $0.11$ &    ($1$)\tabularnewline
\normalfont   BYM-SC3 &    $ 61.47$ &    $ 8.96$ &    ($15$) &    $0.03$ &    ($0$) &    $21.32$ &    ($35$) &    $0.03$ &    ($0$) &    $0.16$ &    ($0$)\tabularnewline
\normalfont   BYM-SC4 &    $ 57.71$ &    $ 3.74$ &    ($ 6$) &    $0.02$ &    ($0$) &    $15.73$ &    ($27$) &    $0.02$ &    ($0$) &    $0.18$ &    ($0$)\tabularnewline
\normalfont   L1-SC1 &    $155.96$ &    $ 4.01$ &    ($ 3$) &    $0.39$ &    ($0$) &    $10.28$ &    ($ 7$) &    $0.39$ &    ($0$) &    $1.70$ &    ($1$)\tabularnewline
\normalfont   L1-SC2 &    $ 10.15$ &    $ 0.33$ &    ($ 3$) &    $0.06$ &    ($1$) &    $ 5.07$ &    ($50$) &    $0.06$ &    ($1$) &    $0.13$ &    ($1$)\tabularnewline
\normalfont   L1-SC3 &    $ 70.20$ &    $ 3.15$ &    ($ 4$) &    $0.08$ &    ($0$) &    $29.69$ &    ($42$) &    $0.08$ &    ($0$) &    $0.39$ &    ($1$)\tabularnewline
\normalfont   L1-SC4 &    $ 62.13$ &    $ 0.91$ &    ($ 1$) &    $0.03$ &    ($0$) &    $17.94$ &    ($29$) &    $0.03$ &    ($0$) &    $0.23$ &    ($0$)\tabularnewline
\hline
\end{tabular}
\begin{tablenotes}
   \item[a] Entries for the posterior regrets have been truncated to
     the closest second digit after the decimal point, and entries for the percentage
     regrets have been truncated to the closest integer. 
     For some entries, percentage regrets are smaller than 1 percentage point. 
\end{tablenotes}
\end{threeparttable}
\end{table}

The results of the spatial simulations under the posterior expected RCL are
summarised in table \ref{tab:spatial_rmse_table} on page
\pageref{tab:spatial_rmse_table}.
Overall, the SSEL and CB classifiers were found to outperform their
counterparts under all the spatially structured scenarios
studied. This corroborates our findings for the non-spatial
simulations in section \ref{sec:rcl non-spatial}. As indicated in that
section, the ranks obtained from these two classifiers are identical,
because of the monotonicity of the CB transformation of the posterior
means. The triple-goal estimator was also found to exhibit good
performance, which closely followed the ones of the SSEL and CB
plug-in classifiers. The MLE and WRSEL demonstrated the worse
performance overall, with the use of the MLE classifier yielding
lower percentage regret under the SC3 and SC4 scenarios. By contrast, the
WRSEL plug-in estimator outperformed the MLE classifier on the SC1 and
SC2 spatial scenarios, albeit as the variability of the ensemble
distribution increased, the MLE became better than the WRSEL estimator
under SC2. This discrepancy in performance between the MLE and WRSEL
classifiers may be explained by the extreme cases considered under the
SC1 and SC2 scenarios, for which the WRSEL plug-in estimator was
generally found to outperform the MLE-based classifier. Indeed, as
discussed in chapter \ref{chap:mrrr}, the discrete nature
of the true ensemble distributions in both SC1 and SC2 required a very
high level of countershrinkage, which was better achieved by the
artificial re-weighting of the WRSEL function than by the MLE
classifiers. Note, however, that for the more standard SC3 and SC4
scenarios, the MLE plug-in estimator was found to provide better ranks
than the WRSEL estimator.
%
\begin{table}[htbp]
 \footnotesize
 \caption{
Posterior regrets based on $\op{TCL}(C,\bth,\bm\delta)$ with
$C:=\E[\bth] + \op{sd}[\bth]$ for five plug-in estimators
and with the posterior expected loss of the optimal estimator in the first column.
Results are presented for 3 different levels of variability and for 4 spatial scenarios:
an isolated cluster (SC1), a set of isolated clusters and isolated areas (SC2),
highly structured spatial heterogeneity (SC3), and spatial structure
generated by a hidden covariate (SC4); as well as two different scaling (SF) of the expected
counts. Entries were scaled by a factor of $10^3$
with posterior regrets expressed as percentage of the posterior loss
under the optimal estimator indicated in parentheses.\label{tab:spatial_cmse_table_SF}} 
 \centering
 \begin{threeparttable}
 \begin{tabular}{>{\RaggedRight}p{50pt}>{\RaggedLeft}p{25pt}|>{\RaggedLeft}p{25pt}@{}>{\RaggedLeft}p{25pt}>{\RaggedLeft}p{25pt}@{}>{\RaggedLeft}p{15pt}>{\RaggedLeft}p{25pt}@{}>{\RaggedLeft}p{25pt}>{\RaggedLeft}p{25pt}@{}>{\RaggedLeft}p{15pt}>{\RaggedLeft}p{25pt}@{}>{\RaggedLeft}p{15pt}}\hline
\multicolumn{1}{c}{\itshape Scenarios}&
\multicolumn{1}{c}{}&
\multicolumn{10}{c}{\itshape Posterior regrets\tnote{a}}
\tabularnewline \cline{3-12}
\multicolumn{1}{>{\RaggedRight}p{50pt}}{}&\multicolumn{1}{c}{TCL}&
\multicolumn{2}{c}{MLE}&\multicolumn{2}{c}{SSEL}&\multicolumn{2}{c}{WRSEL}&\multicolumn{2}{c}{CB}&\multicolumn{2}{c}{GR}\tabularnewline
\hline
\multicolumn{5}{l}{\itshape $\op{SF}=0.1$ Low Variab.~}\tabularnewline
\normalfont   BYM-SC1 &    $128.3$ &    $253.5$ &    ($ 198$) &    $0.3$ &    ($0$) &    $ 92.0$ &    ($ 72$) &    $75.5$ &    ($59$) &    $66.4$ &    ($ 52$)\tabularnewline
\normalfont   BYM-SC2 &    $ 11.0$ &    $236.6$ &    ($2153$) &    $0.0$ &    ($0$) &    $ 17.8$ &    ($162$) &    $ 5.7$ &    ($52$) &    $ 6.5$ &    ($ 59$)\tabularnewline
\normalfont   BYM-SC3 &    $119.4$ &    $113.5$ &    ($  95$) &    $0.4$ &    ($0$) &    $ 84.9$ &    ($ 71$) &    $ 4.5$ &    ($ 4$) &    $ 6.3$ &    ($  5$)\tabularnewline
\normalfont   BYM-SC4 &    $ 53.9$ &    $204.3$ &    ($ 379$) &    $0.1$ &    ($0$) &    $ 88.9$ &    ($165$) &    $23.9$ &    ($44$) &    $17.4$ &    ($ 32$)\tabularnewline
\normalfont   L1-SC1 &    $109.5$ &    $270.0$ &    ($ 247$) &    $0.1$ &    ($0$) &    $ 61.8$ &    ($ 56$) &    $80.3$ &    ($73$) &    $61.7$ &    ($ 56$)\tabularnewline
\normalfont   L1-SC2 &    $  7.2$ &    $241.1$ &    ($3346$) &    $0.0$ &    ($0$) &    $  9.2$ &    ($127$) &    $ 6.7$ &    ($93$) &    $ 6.2$ &    ($ 86$)\tabularnewline
\normalfont   L1-SC3 &    $116.0$ &    $128.7$ &    ($ 111$) &    $1.4$ &    ($1$) &    $ 42.4$ &    ($ 37$) &    $23.3$ &    ($20$) &    $15.1$ &    ($ 13$)\tabularnewline
\normalfont   L1-SC4 &    $ 45.9$ &    $218.9$ &    ($ 477$) &    $0.1$ &    ($0$) &    $ 64.6$ &    ($141$) &    $20.4$ &    ($44$) &    $23.3$ &    ($ 51$)\tabularnewline
\hline
\multicolumn{5}{l}{\itshape $\op{SF}=0.1$ Med.~ Variab.~}\tabularnewline
\normalfont   BYM-SC1 &    $ 74.8$ &    $218.5$ &    ($ 292$) &    $0.5$ &    ($1$) &    $ 80.4$ &    ($108$) &    $24.0$ &    ($32$) &    $23.4$ &    ($ 31$)\tabularnewline
\normalfont   BYM-SC2 &    $ 33.5$ &    $116.1$ &    ($ 346$) &    $0.3$ &    ($1$) &    $140.9$ &    ($420$) &    $13.6$ &    ($41$) &    $10.3$ &    ($ 31$)\tabularnewline
\normalfont   BYM-SC3 &    $ 49.2$ &    $ 64.7$ &    ($ 131$) &    $0.3$ &    ($1$) &    $118.9$ &    ($241$) &    $ 2.4$ &    ($ 5$) &    $ 6.2$ &    ($ 13$)\tabularnewline
\normalfont   BYM-SC4 &    $ 55.4$ &    $129.7$ &    ($ 234$) &    $0.3$ &    ($0$) &    $125.4$ &    ($226$) &    $16.4$ &    ($30$) &    $15.8$ &    ($ 29$)\tabularnewline
\normalfont   L1-SC1 &    $ 48.3$ &    $252.5$ &    ($ 523$) &    $0.2$ &    ($0$) &    $ 48.8$ &    ($101$) &    $25.1$ &    ($52$) &    $23.2$ &    ($ 48$)\tabularnewline
\normalfont   L1-SC2 &    $ 35.1$ &    $116.6$ &    ($ 332$) &    $0.2$ &    ($1$) &    $146.0$ &    ($416$) &    $19.4$ &    ($55$) &    $13.6$ &    ($ 39$)\tabularnewline
\normalfont   L1-SC3 &    $ 67.3$ &    $ 61.2$ &    ($  91$) &    $1.1$ &    ($2$) &    $ 84.5$ &    ($125$) &    $12.5$ &    ($19$) &    $ 9.0$ &    ($ 13$)\tabularnewline
\normalfont   L1-SC4 &    $ 58.7$ &    $130.9$ &    ($ 223$) &    $0.1$ &    ($0$) &    $124.7$ &    ($213$) &    $19.8$ &    ($34$) &    $23.5$ &    ($ 40$)\tabularnewline
\hline
\multicolumn{5}{l}{\itshape $\op{SF}=0.1$ High Variab.~}\tabularnewline
\normalfont   BYM-SC1 &    $ 50.3$ &    $110.6$ &    ($ 220$) &    $0.1$ &    ($0$) &    $101.5$ &    ($202$) &    $ 2.2$ &    ($ 4$) &    $17.6$ &    ($ 35$)\tabularnewline
\normalfont   BYM-SC2 &    $ 30.0$ &    $ 37.0$ &    ($ 124$) &    $0.1$ &    ($0$) &    $163.8$ &    ($547$) &    $ 5.4$ &    ($18$) &    $ 4.1$ &    ($ 14$)\tabularnewline
\normalfont   BYM-SC3 &    $ 65.2$ &    $ 63.9$ &    ($  98$) &    $0.4$ &    ($1$) &    $106.5$ &    ($163$) &    $ 3.5$ &    ($ 5$) &    $ 4.2$ &    ($  6$)\tabularnewline
\normalfont   BYM-SC4 &    $ 51.1$ &    $ 62.9$ &    ($ 123$) &    $0.2$ &    ($0$) &    $188.8$ &    ($370$) &    $ 9.2$ &    ($18$) &    $ 7.4$ &    ($ 14$)\tabularnewline
\normalfont   L1-SC1 &    $ 83.5$ &    $ 82.0$ &    ($  98$) &    $0.2$ &    ($0$) &    $132.4$ &    ($159$) &    $11.4$ &    ($14$) &    $29.7$ &    ($ 36$)\tabularnewline
\normalfont   L1-SC2 &    $ 29.7$ &    $ 36.9$ &    ($ 124$) &    $0.2$ &    ($1$) &    $166.1$ &    ($560$) &    $ 6.6$ &    ($22$) &    $ 5.2$ &    ($ 18$)\tabularnewline
\normalfont   L1-SC3 &    $ 83.2$ &    $ 61.2$ &    ($  74$) &    $0.6$ &    ($1$) &    $ 92.2$ &    ($111$) &    $ 9.8$ &    ($12$) &    $ 9.9$ &    ($ 12$)\tabularnewline
\normalfont   L1-SC4 &    $ 62.1$ &    $ 57.2$ &    ($  92$) &    $0.3$ &    ($1$) &    $161.7$ &    ($260$) &    $12.8$ &    ($21$) &    $15.1$ &    ($ 24$)\tabularnewline
\hline
\multicolumn{5}{l}{\itshape $\op{SF}=2$ Low Variab.~}\tabularnewline
\normalfont   BYM-SC1 &    $ 36.4$ &    $ 66.1$ &    ($ 182$) &    $0.0$ &    ($0$) &    $ 78.3$ &    ($215$) &    $ 2.0$ &    ($ 6$) &    $16.0$ &    ($ 44$)\tabularnewline
\normalfont   BYM-SC2 &    $ 54.3$ &    $ 30.0$ &    ($  55$) &    $0.0$ &    ($0$) &    $130.9$ &    ($241$) &    $ 9.6$ &    ($18$) &    $ 3.9$ &    ($  7$)\tabularnewline
\normalfont   BYM-SC3 &    $ 49.6$ &    $ 13.3$ &    ($  27$) &    $0.1$ &    ($0$) &    $151.8$ &    ($306$) &    $ 0.7$ &    ($ 1$) &    $ 2.8$ &    ($  6$)\tabularnewline
\normalfont   BYM-SC4 &    $ 45.4$ &    $ 12.2$ &    ($  27$) &    $0.0$ &    ($0$) &    $192.5$ &    ($424$) &    $ 1.1$ &    ($ 2$) &    $ 1.7$ &    ($  4$)\tabularnewline
\normalfont   L1-SC1 &    $ 46.5$ &    $ 59.4$ &    ($ 128$) &    $0.0$ &    ($0$) &    $ 86.1$ &    ($185$) &    $ 1.8$ &    ($ 4$) &    $19.8$ &    ($ 43$)\tabularnewline
\normalfont   L1-SC2 &    $ 56.0$ &    $ 26.4$ &    ($  47$) &    $0.0$ &    ($0$) &    $128.1$ &    ($229$) &    $ 9.4$ &    ($17$) &    $ 2.4$ &    ($  4$)\tabularnewline
\normalfont   L1-SC3 &    $ 55.7$ &    $  9.2$ &    ($  16$) &    $0.0$ &    ($0$) &    $165.1$ &    ($296$) &    $ 2.0$ &    ($ 4$) &    $ 3.2$ &    ($  6$)\tabularnewline
\normalfont   L1-SC4 &    $ 50.0$ &    $  9.5$ &    ($  19$) &    $0.0$ &    ($0$) &    $201.2$ &    ($402$) &    $ 1.5$ &    ($ 3$) &    $ 2.4$ &    ($  5$)\tabularnewline
\hline
\multicolumn{5}{l}{\itshape $\op{SF}=2$ Med.~ Variab.~}\tabularnewline
\normalfont   BYM-SC1 &    $  4.7$ &    $  8.8$ &    ($ 186$) &    $0.0$ &    ($0$) &    $  2.5$ &    ($ 53$) &    $ 0.0$ &    ($ 0$) &    $ 5.0$ &    ($105$)\tabularnewline
\normalfont   BYM-SC2 &    $ 48.1$ &    $ 11.8$ &    ($  25$) &    $0.1$ &    ($0$) &    $ 99.8$ &    ($208$) &    $ 3.0$ &    ($ 6$) &    $ 2.1$ &    ($  4$)\tabularnewline
\normalfont   BYM-SC3 &    $ 30.6$ &    $  6.3$ &    ($  21$) &    $0.0$ &    ($0$) &    $ 91.1$ &    ($298$) &    $ 0.2$ &    ($ 1$) &    $ 0.3$ &    ($  1$)\tabularnewline
\normalfont   BYM-SC4 &    $ 28.0$ &    $  7.4$ &    ($  26$) &    $0.0$ &    ($0$) &    $109.4$ &    ($391$) &    $ 0.4$ &    ($ 2$) &    $ 1.7$ &    ($  6$)\tabularnewline
\normalfont   L1-SC1 &    $  9.3$ &    $  6.7$ &    ($  72$) &    $0.0$ &    ($0$) &    $ 25.0$ &    ($269$) &    $ 0.2$ &    ($ 3$) &    $ 4.4$ &    ($ 48$)\tabularnewline
\normalfont   L1-SC2 &    $ 49.4$ &    $ 10.0$ &    ($  20$) &    $0.0$ &    ($0$) &    $103.3$ &    ($209$) &    $ 4.2$ &    ($ 8$) &    $ 2.2$ &    ($  5$)\tabularnewline
\normalfont   L1-SC3 &    $ 34.0$ &    $  2.8$ &    ($   8$) &    $0.0$ &    ($0$) &    $ 96.2$ &    ($283$) &    $ 0.5$ &    ($ 2$) &    $ 0.8$ &    ($  2$)\tabularnewline
\normalfont   L1-SC4 &    $ 29.8$ &    $  5.4$ &    ($  18$) &    $0.0$ &    ($0$) &    $124.2$ &    ($417$) &    $ 0.6$ &    ($ 2$) &    $ 1.4$ &    ($  5$)\tabularnewline
\hline
\multicolumn{5}{l}{\itshape $\op{SF}=2$ High Variab.~}\tabularnewline
\normalfont   BYM-SC1 &    $  0.1$ &    $  0.0$ &    ($   0$) &    $0.0$ &    ($0$) &    $  0.0$ &    ($  0$) &    $ 0.0$ &    ($ 0$) &    $ 0.0$ &    ($  0$)\tabularnewline
\normalfont   BYM-SC2 &    $ 21.2$ &    $  1.1$ &    ($   5$) &    $0.0$ &    ($0$) &    $ 95.8$ &    ($452$) &    $ 0.4$ &    ($ 2$) &    $ 4.4$ &    ($ 21$)\tabularnewline
\normalfont   BYM-SC3 &    $ 15.3$ &    $  4.1$ &    ($  27$) &    $0.0$ &    ($0$) &    $ 16.5$ &    ($108$) &    $ 0.2$ &    ($ 1$) &    $ 3.5$ &    ($ 23$)\tabularnewline
\normalfont   BYM-SC4 &    $ 23.2$ &    $  2.7$ &    ($  12$) &    $0.0$ &    ($0$) &    $ 58.1$ &    ($251$) &    $ 0.4$ &    ($ 2$) &    $ 0.8$ &    ($  4$)\tabularnewline
\normalfont   L1-SC1 &    $  0.3$ &    $  0.0$ &    ($   0$) &    $0.0$ &    ($0$) &    $  0.0$ &    ($  0$) &    $ 0.0$ &    ($ 0$) &    $ 0.0$ &    ($  0$)\tabularnewline
\normalfont   L1-SC2 &    $ 21.0$ &    $  1.6$ &    ($   7$) &    $0.0$ &    ($0$) &    $100.2$ &    ($478$) &    $ 0.4$ &    ($ 2$) &    $ 4.7$ &    ($ 22$)\tabularnewline
\normalfont   L1-SC3 &    $ 17.8$ &    $  1.8$ &    ($  10$) &    $0.0$ &    ($0$) &    $ 21.3$ &    ($119$) &    $ 0.3$ &    ($ 1$) &    $ 3.6$ &    ($ 20$)\tabularnewline
\normalfont   L1-SC4 &    $ 23.8$ &    $  2.1$ &    ($   9$) &    $0.0$ &    ($0$) &    $ 66.6$ &    ($279$) &    $ 0.7$ &    ($ 3$) &    $ 1.0$ &    ($  4$)\tabularnewline
\hline
\end{tabular}
\begin{tablenotes}
   \item[a] Entries for the posterior regrets have been truncated to the closest first
     digit after the decimal point, and entries for the percentage
     regrets have been truncated to the closest integer. 
     For some entries, percentage regrets are smaller than 1 percentage point. 
\end{tablenotes}
\end{threeparttable}
\end{table}

%
\begin{table}[htbp]
 \footnotesize
 \caption{
Posterior regrets based on $\op{RCL}(\ga,\bth,\bm\delta)$ with
$\ga=.80$, for five plug-in estimators
and with the posterior expected loss of the optimal estimator in the first column.
Results are presented for 3 different levels of variability and for 4
spatial scenarios: an isolated cluster (SC1), a set of isolated
clusters and isolated areas (SC2),
highly structured spatial heterogeneity (SC3), and spatial structure
generated by a hidden covariate (SC4); as well as two different
scaling (SF) of the expected counts. Entries were scaled by a factor of $10^3$
with posterior regrets expressed as percentage of the posterior loss
under the optimal estimator indicated in parentheses.\label{tab:spatial_rmse_table_SF}} 
 \centering
 \begin{threeparttable}
 \begin{tabular}{>{\RaggedRight}p{50pt}>{\RaggedLeft}p{25pt}|>{\RaggedLeft}p{25pt}@{}>{\RaggedLeft}p{20pt}>{\RaggedLeft}p{25pt}@{}>{\RaggedLeft}p{15pt}>{\RaggedLeft}p{25pt}@{}>{\RaggedLeft}p{20pt}>{\RaggedLeft}p{25pt}@{}>{\RaggedLeft}p{15pt}>{\RaggedLeft}p{25pt}@{}>{\RaggedLeft}p{15pt}}\hline
\multicolumn{1}{c}{\itshape Scenarios}&
\multicolumn{1}{c}{}&
\multicolumn{10}{c}{\itshape Posterior regrets\tnote{a}}
\tabularnewline \cline{3-12}
\multicolumn{1}{>{\RaggedRight}p{50pt}}{}&\multicolumn{1}{c}{RCL}&
\multicolumn{2}{c}{MLE}&\multicolumn{2}{c}{SSEL}&\multicolumn{2}{c}{WRSEL}&\multicolumn{2}{c}{CB}&\multicolumn{2}{c}{GR}\tabularnewline
\hline
\multicolumn{5}{l}{\itshape $\op{SF}=0.1$ Low Variab.~}\tabularnewline
\normalfont   BYM-SC1 &    $277.38$ &    $15.82$ &    ($ 6$) &    $1.71$ &    ($1$) &    $ 4.32$ &    ($ 2$) &    $1.71$ &    ($1$) &    $1.77$ &    ($1$)\tabularnewline
\normalfont   BYM-SC2 &    $280.36$ &    $10.29$ &    ($ 4$) &    $1.21$ &    ($0$) &    $ 3.71$ &    ($ 1$) &    $1.21$ &    ($0$) &    $1.43$ &    ($1$)\tabularnewline
\normalfont   BYM-SC3 &    $176.56$ &    $64.78$ &    ($37$) &    $0.30$ &    ($0$) &    $23.14$ &    ($13$) &    $0.30$ &    ($0$) &    $0.83$ &    ($0$)\tabularnewline
\normalfont   BYM-SC4 &    $213.14$ &    $46.40$ &    ($22$) &    $0.50$ &    ($0$) &    $ 4.11$ &    ($ 2$) &    $0.50$ &    ($0$) &    $0.72$ &    ($0$)\tabularnewline
\normalfont   L1-SC1 &    $288.44$ &    $ 7.11$ &    ($ 2$) &    $0.84$ &    ($0$) &    $ 3.79$ &    ($ 1$) &    $0.84$ &    ($0$) &    $0.92$ &    ($0$)\tabularnewline
\normalfont   L1-SC2 &    $293.29$ &    $ 2.99$ &    ($ 1$) &    $0.84$ &    ($0$) &    $ 2.87$ &    ($ 1$) &    $0.84$ &    ($0$) &    $0.83$ &    ($0$)\tabularnewline
\normalfont   L1-SC3 &    $248.41$ &    $13.79$ &    ($ 6$) &    $0.34$ &    ($0$) &    $ 5.96$ &    ($ 2$) &    $0.34$ &    ($0$) &    $0.40$ &    ($0$)\tabularnewline
\normalfont   L1-SC4 &    $266.64$ &    $ 7.98$ &    ($ 3$) &    $0.44$ &    ($0$) &    $ 2.27$ &    ($ 1$) &    $0.44$ &    ($0$) &    $0.54$ &    ($0$)\tabularnewline
\hline
\multicolumn{5}{l}{\itshape $\op{SF}=0.1$ Med.~ Variab.~}\tabularnewline
\normalfont   BYM-SC1 &    $213.78$ &    $48.11$ &    ($23$) &    $0.83$ &    ($0$) &    $ 4.26$ &    ($ 2$) &    $0.83$ &    ($0$) &    $1.05$ &    ($0$)\tabularnewline
\normalfont   BYM-SC2 &    $196.60$ &    $18.45$ &    ($ 9$) &    $0.24$ &    ($0$) &    $ 5.29$ &    ($ 3$) &    $0.24$ &    ($0$) &    $0.55$ &    ($0$)\tabularnewline
\normalfont   BYM-SC3 &    $122.31$ &    $60.99$ &    ($50$) &    $0.13$ &    ($0$) &    $22.57$ &    ($18$) &    $0.13$ &    ($0$) &    $0.30$ &    ($0$)\tabularnewline
\normalfont   BYM-SC4 &    $191.33$ &    $34.96$ &    ($18$) &    $0.17$ &    ($0$) &    $ 4.76$ &    ($ 2$) &    $0.17$ &    ($0$) &    $0.32$ &    ($0$)\tabularnewline
\normalfont   L1-SC1 &    $280.59$ &    $ 4.85$ &    ($ 2$) &    $0.79$ &    ($0$) &    $ 3.17$ &    ($ 1$) &    $0.79$ &    ($0$) &    $0.75$ &    ($0$)\tabularnewline
\normalfont   L1-SC2 &    $218.70$ &    $ 6.35$ &    ($ 3$) &    $0.24$ &    ($0$) &    $ 3.53$ &    ($ 2$) &    $0.24$ &    ($0$) &    $0.45$ &    ($0$)\tabularnewline
\normalfont   L1-SC3 &    $208.18$ &    $ 6.12$ &    ($ 3$) &    $0.23$ &    ($0$) &    $ 3.23$ &    ($ 2$) &    $0.23$ &    ($0$) &    $0.31$ &    ($0$)\tabularnewline
\normalfont   L1-SC4 &    $230.52$ &    $ 6.08$ &    ($ 3$) &    $0.48$ &    ($0$) &    $ 2.55$ &    ($ 1$) &    $0.48$ &    ($0$) &    $0.61$ &    ($0$)\tabularnewline
\hline
\multicolumn{5}{l}{\itshape $\op{SF}=0.1$ High Variab.~}\tabularnewline
\normalfont   BYM-SC1 &    $182.54$ &    $44.78$ &    ($25$) &    $0.47$ &    ($0$) &    $ 4.02$ &    ($ 2$) &    $0.47$ &    ($0$) &    $1.04$ &    ($1$)\tabularnewline
\normalfont   BYM-SC2 &    $117.85$ &    $15.45$ &    ($13$) &    $0.07$ &    ($0$) &    $12.03$ &    ($10$) &    $0.07$ &    ($0$) &    $0.60$ &    ($1$)\tabularnewline
\normalfont   BYM-SC3 &    $112.30$ &    $49.51$ &    ($44$) &    $0.14$ &    ($0$) &    $15.31$ &    ($14$) &    $0.14$ &    ($0$) &    $0.42$ &    ($0$)\tabularnewline
\normalfont   BYM-SC4 &    $143.06$ &    $34.08$ &    ($24$) &    $0.09$ &    ($0$) &    $12.02$ &    ($ 8$) &    $0.09$ &    ($0$) &    $0.40$ &    ($0$)\tabularnewline
\normalfont   L1-SC1 &    $207.60$ &    $ 6.85$ &    ($ 3$) &    $0.19$ &    ($0$) &    $ 2.61$ &    ($ 1$) &    $0.19$ &    ($0$) &    $0.61$ &    ($0$)\tabularnewline
\normalfont   L1-SC2 &    $135.69$ &    $ 4.25$ &    ($ 3$) &    $0.05$ &    ($0$) &    $ 7.35$ &    ($ 5$) &    $0.05$ &    ($0$) &    $0.34$ &    ($0$)\tabularnewline
\normalfont   L1-SC3 &    $155.10$ &    $23.98$ &    ($15$) &    $0.19$ &    ($0$) &    $ 5.43$ &    ($ 4$) &    $0.19$ &    ($0$) &    $0.75$ &    ($0$)\tabularnewline
\normalfont   L1-SC4 &    $179.98$ &    $ 1.97$ &    ($ 1$) &    $0.10$ &    ($0$) &    $ 3.94$ &    ($ 2$) &    $0.10$ &    ($0$) &    $0.36$ &    ($0$)\tabularnewline
\hline
\multicolumn{5}{l}{\itshape $\op{SF}=2$ Low Variab.~}\tabularnewline
\normalfont   BYM-SC1 &    $176.52$ &    $30.11$ &    ($17$) &    $0.36$ &    ($0$) &    $ 4.98$ &    ($ 3$) &    $0.36$ &    ($0$) &    $1.25$ &    ($1$)\tabularnewline
\normalfont   BYM-SC2 &    $ 66.23$ &    $ 7.20$ &    ($11$) &    $0.01$ &    ($0$) &    $17.47$ &    ($26$) &    $0.01$ &    ($0$) &    $0.12$ &    ($0$)\tabularnewline
\normalfont   BYM-SC3 &    $ 82.39$ &    $13.60$ &    ($17$) &    $0.05$ &    ($0$) &    $21.08$ &    ($26$) &    $0.05$ &    ($0$) &    $0.38$ &    ($0$)\tabularnewline
\normalfont   BYM-SC4 &    $ 79.16$ &    $ 5.55$ &    ($ 7$) &    $0.08$ &    ($0$) &    $21.89$ &    ($28$) &    $0.08$ &    ($0$) &    $0.21$ &    ($0$)\tabularnewline
\normalfont   L1-SC1 &    $186.73$ &    $16.80$ &    ($ 9$) &    $0.76$ &    ($0$) &    $ 4.61$ &    ($ 2$) &    $0.76$ &    ($0$) &    $1.14$ &    ($1$)\tabularnewline
\normalfont   L1-SC2 &    $ 73.88$ &    $ 2.39$ &    ($ 3$) &    $0.03$ &    ($0$) &    $21.69$ &    ($29$) &    $0.03$ &    ($0$) &    $0.33$ &    ($0$)\tabularnewline
\normalfont   L1-SC3 &    $ 90.87$ &    $ 5.99$ &    ($ 7$) &    $0.12$ &    ($0$) &    $36.52$ &    ($40$) &    $0.12$ &    ($0$) &    $0.57$ &    ($1$)\tabularnewline
\normalfont   L1-SC4 &    $ 85.05$ &    $ 1.00$ &    ($ 1$) &    $0.04$ &    ($0$) &    $23.86$ &    ($28$) &    $0.04$ &    ($0$) &    $0.31$ &    ($0$)\tabularnewline
\hline
\multicolumn{5}{l}{\itshape $\op{SF}=2$ Med.~ Variab.~}\tabularnewline
\normalfont   BYM-SC1 &    $168.41$ &    $19.20$ &    ($11$) &    $0.51$ &    ($0$) &    $ 7.57$ &    ($ 4$) &    $0.51$ &    ($0$) &    $1.17$ &    ($1$)\tabularnewline
\normalfont   BYM-SC2 &    $ 26.11$ &    $ 2.37$ &    ($ 9$) &    $0.31$ &    ($1$) &    $14.22$ &    ($54$) &    $0.31$ &    ($1$) &    $0.40$ &    ($2$)\tabularnewline
\normalfont   BYM-SC3 &    $ 46.24$ &    $ 8.51$ &    ($18$) &    $0.01$ &    ($0$) &    $19.38$ &    ($42$) &    $0.01$ &    ($0$) &    $0.01$ &    ($0$)\tabularnewline
\normalfont   BYM-SC4 &    $ 54.57$ &    $ 2.33$ &    ($ 4$) &    $0.00$ &    ($0$) &    $14.51$ &    ($27$) &    $0.00$ &    ($0$) &    $0.17$ &    ($0$)\tabularnewline
\normalfont   L1-SC1 &    $172.85$ &    $ 7.70$ &    ($ 4$) &    $0.45$ &    ($0$) &    $ 9.23$ &    ($ 5$) &    $0.45$ &    ($0$) &    $1.67$ &    ($1$)\tabularnewline
\normalfont   L1-SC2 &    $ 25.80$ &    $ 2.41$ &    ($ 9$) &    $0.08$ &    ($0$) &    $15.72$ &    ($61$) &    $0.08$ &    ($0$) &    $0.36$ &    ($1$)\tabularnewline
\normalfont   L1-SC3 &    $ 52.17$ &    $ 3.37$ &    ($ 6$) &    $0.04$ &    ($0$) &    $23.93$ &    ($46$) &    $0.04$ &    ($0$) &    $0.11$ &    ($0$)\tabularnewline
\normalfont   L1-SC4 &    $ 58.02$ &    $ 0.77$ &    ($ 1$) &    $0.03$ &    ($0$) &    $23.41$ &    ($40$) &    $0.03$ &    ($0$) &    $0.31$ &    ($1$)\tabularnewline
\hline
\multicolumn{5}{l}{\itshape $\op{SF}=2$ High Variab.~}\tabularnewline
\normalfont   BYM-SC1 &    $163.70$ &    $ 9.72$ &    ($ 6$) &    $1.17$ &    ($1$) &    $ 9.28$ &    ($ 6$) &    $1.17$ &    ($1$) &    $1.91$ &    ($1$)\tabularnewline
\normalfont   BYM-SC2 &    $ 19.77$ &    $ 0.50$ &    ($ 3$) &    $0.06$ &    ($0$) &    $15.13$ &    ($77$) &    $0.06$ &    ($0$) &    $0.38$ &    ($2$)\tabularnewline
\normalfont   BYM-SC3 &    $ 41.49$ &    $ 4.46$ &    ($11$) &    $0.05$ &    ($0$) &    $15.40$ &    ($37$) &    $0.05$ &    ($0$) &    $0.07$ &    ($0$)\tabularnewline
\normalfont   BYM-SC4 &    $ 45.21$ &    $ 2.15$ &    ($ 5$) &    $0.00$ &    ($0$) &    $15.45$ &    ($34$) &    $0.00$ &    ($0$) &    $0.00$ &    ($0$)\tabularnewline
\normalfont   L1-SC1 &    $167.87$ &    $ 5.38$ &    ($ 3$) &    $0.37$ &    ($0$) &    $ 8.27$ &    ($ 5$) &    $0.37$ &    ($0$) &    $1.92$ &    ($1$)\tabularnewline
\normalfont   L1-SC2 &    $ 19.99$ &    $ 0.50$ &    ($ 2$) &    $0.18$ &    ($1$) &    $15.29$ &    ($76$) &    $0.18$ &    ($1$) &    $0.35$ &    ($2$)\tabularnewline
\normalfont   L1-SC3 &    $ 47.61$ &    $ 1.58$ &    ($ 3$) &    $0.07$ &    ($0$) &    $22.15$ &    ($47$) &    $0.07$ &    ($0$) &    $0.38$ &    ($1$)\tabularnewline
\normalfont   L1-SC4 &    $ 46.52$ &    $ 0.45$ &    ($ 1$) &    $0.05$ &    ($0$) &    $17.94$ &    ($39$) &    $0.05$ &    ($0$) &    $0.21$ &    ($0$)\tabularnewline
\hline
\end{tabular}
\begin{tablenotes}
   \item[a] Entries for the posterior regrets have been truncated to the closest second
     digit after the decimal point, and entries for the percentage
     regrets have been truncated to the closest integer. 
     For some entries, percentage regrets are smaller than 1 percentage point. 
\end{tablenotes}
\end{threeparttable}
\end{table}

Increasing the amount of heterogeneity present in the
ensemble distribution tended to systematically diminish the posterior
expected loss under the optimal estimator. As noted in section 
\ref{sec:clas spatial tcl} when considering the TCL function, greater
variability in the ensemble distribution tended to attenuate the effect of 
hierarchical shrinkage on rank estimation for all plug-in estimators. 
The performance of the SSEL, CB and GR plug-in classifiers was
approximately stable
under different levels of heterogeneity. The MLE-based estimator, however,
substantially benefited from increasing the dispersion of the
parameter ensemble. For the WRSEL classifier, the effect of 
increasing the parameter ensemble's heterogeneity was more difficult
to evaluate, and tended to vary with the spatial scenario considered. 

As for all other simulations, the posterior expected RCL when using
the optimal estimator was lower under the CAR Normal model than under
the CAR Laplace for all scenarios considered. 
In comparison to the TCL function, one can also observe that, as
discussed in section \ref{sec:rcl non-spatial}, 
the overall size of the posterior and percentage
regrets under the RCL function was found to be substantially lower than under the
TCL function. This indicated that the choice of a particular plug-in
classifier is more consequential under the TCL decision-theoretic
framework than under the RCL function. 

\subsection{Consequences of Scaling the Expected Counts}\label{sec:clas sf}
Tables \ref{tab:spatial_cmse_table_SF} and \ref{tab:spatial_rmse_table_SF} on pages 
\pageref{tab:spatial_cmse_table_SF} and \pageref{tab:spatial_rmse_table_SF} document the posterior and
percentage regrets of the different plug-in estimators of interest
under the TCL and RCL functions, respectively, for two different scaling of the
expected counts with $\op{SF}\in\lb 0.1, 2.0\rb$.
Overall, the posterior expected losses under both the TCL and RCL
functions were found to benefit from an increase of the level of the
expected counts. The percentage regrets of the different plug-in
classifiers, however, did not necessarily diminish with an increase in
SF. 

Under the TCL function, our results showed that the SSEL classifier tended to do
better for higher levels of expected counts. Although the differences
in posterior and percentage regrets tended to be small since the SSEL
plug-in estimator is close to optimal under the TCL function, a
systematic trend is nonetheless notable, whereby the SSEL classifier
showed consistent improvement for higher expected counts. Similarly,
the MLE and CB plug-in classifiers benefited from an increase in SF. Although
this tended to be also true for the triple-goal classifier, its
performance in terms of percentage regrets was sometimes worse with
larger levels of expected counts. This can be observed by considering
the percentage regrets in parentheses in the eleventh column of table 
\ref{tab:spatial_cmse_table_SF} on page
\pageref{tab:spatial_cmse_table_SF}, and comparing the first part of
the table for which $\op{SF}=0.1$ with the second part of the table for
which $\op{SF}=2.0$. A more confusing picture emerged for
the WRSEL classifier. In this latter case, the type of spatial
scenario considered and the level of heterogeneity of the simulated ensembles
both played a substantial role in determining whether the WRSEL
plug-in estimator benefited from an increase in SF. The ordering of
the different plug-in estimators, however, was left unchanged by the
use of different levels of expected counts. 

A similar trend can be observed for the RCL function in table 
\ref{tab:spatial_rmse_table_SF} on page
\pageref{tab:spatial_rmse_table_SF}. Here, changes in SF did not, in
general, modify the ordering of the plug-in estimators reported in 
section \ref{sec:clas spatial rcl}. The WRSEL classifier, however, was
found to be detrimentally affected by an increase of the
level of the expected counts under most spatial scenarios
considered. This appeared to be true both in terms of posterior and
percentage regrets. For the remaining plug-in estimators, the
differences in percentage regrets associated an increase in SF was too small to
allow the detection of any systematic trend, except perhaps for the MLE classifier, whose
performance notably improved when specifying larger expected counts,
albeit these improvements were restricted to the medium and high
heterogeneity simulations. 
The different classification procedures considered in this chapter
have also been applied to a real data example, which we describe in
the next section. 

\section{MRSA Prevalence in UK NHS Trusts}\label{sec:mrsa}
\begin{table}[t]
 \footnotesize
 \caption{Number of hospitals above threshold for the MRSA data set
   with three different choices of threshold.
   The number of hospitals above threshold using the optimal
   classifier for the TCL function is reported in the first column. 
   Classifications using plug-in estimators are reported as departures
   from the number of hospitals classified above threshold using the
   vector of posterior medians. For each plug-in estimator, the percentage
   departure has been indicated in parentheses.
   \label{tab:mrsa_departure}} 
 \centering
\begin{threeparttable}
\begin{tabular}{>{\RaggedRight}p{60pt}c|
>{\RaggedLeft}p{20pt}@{}>{\RaggedLeft}p{18pt}
>{\RaggedLeft}p{20pt}@{}>{\RaggedLeft}p{18pt}
>{\RaggedLeft}p{20pt}@{}>{\RaggedLeft}p{20pt}
>{\RaggedLeft}p{20pt}@{}>{\RaggedLeft}p{18pt}
>{\RaggedLeft}p{20pt}@{}>{\RaggedLeft}p{18pt}}
\hline
\multicolumn{1}{l}{\itshape Thresholds}
&\multicolumn{1}{c}{\itshape Optimal}
&\multicolumn{10}{c}{\itshape Number of Hospitals above Threshold\tnote{a}}
\tabularnewline 
\cmidrule(l){3-12} 
\multicolumn{1}{>{\RaggedRight}p{10pt}}{}
&\multicolumn{1}{c}{TCL}
&\multicolumn{2}{c}{MLE}
&\multicolumn{2}{c}{SSEL}
&\multicolumn{2}{c}{WRSEL}
&\multicolumn{2}{c}{CB}
&\multicolumn{2}{c}{GR}
\tabularnewline
\hline
\tabularnewline
$C=1/1.3$&$56$&$ 5$&($ 9$)&$-1$&($-2$)&$19$&($34$)&$2$&($ 4$)&$3$&($ 5$)\tabularnewline
$C=1.0$&$ 15$&$ 2$&($13$)&$0$&($ 0$)&$ 10$&($ 67$)&$ 1$&($ 7$)&$ 0$&($ 0$)\tabularnewline
$C=1.3$&$ 22$&$11$&($50$)&$3$&($14$)&$ 21$&($ 95$)&$ 8$&($36$)&$ 6$&($27$)\tabularnewline
\tabularnewline
\hline
\end{tabular}
\begin{tablenotes}
   \item[a] Entries for the percentage departure have been truncated to
     the closest integer. For some entries, the percentage departure
     is smaller than 1 percentage point. 
\end{tablenotes}
\end{threeparttable}
\end{table}

The prevalence of MRSA in UK hospitals has been under scrutiny for the
past 10 years, and
surveillance data on MRSA prevalence has been made publicly available. 
The classification of National Health Services (NHS) trusts on the
basis of MRSA risk may be of interest to medical practitioners and patients
wishing to minimise their probability to contract the
condition. Several statistical issues have been raised and discussed
on the basis of this data set, including the evaluation of hospital
performances over time \citep{Spiegelhalter2005}, and the monitoring
of over-dispersed MRSA counts \citep{Grigg2009}. 
In this section, we use this data set to illustrate the implementation of a decision-theoretic
approach to the classification of elements in a parameter ensemble.
Here, the NHS hospitals constitute the parameter ensemble of interest
and our main objective is to identify which hospitals have a level of
risk for MRSA above a given threshold. 

\subsection{Data Pre-processing}
The full data set was retrieved from the archives of the UK's
Department of Health (\rm www.dh.gov.UK). This data set documents MRSA
prevalence in all NHS trusts in the UK. Hospitals are classified in three categories: (i) general
acute trusts, (ii) specialist trusts and (iii) single speciality
trusts. For each hospital, the data set covers four distinct time
periods. In this thesis, we will focus on a subset of this longitudinal data,
corresponding to the period running from April 2003 to March 2004, and
consider how to classify NHS trusts according to the number of cases
in that particular year. The Department of Health provided the
prevalence of MRSA for each hospital and the number of bed days over that period. 
(Bed days are the number of inpatient hospital days per 1000 members of a health plan.)
The NHS trusts with missing data or where no cases of MRSA was
observed were eliminated, due to the impossibility of retrieving the
number of bed days for these hospitals. That is, since the Department
of Health only provided yearly numbers of observed cases and the rates
of MRSA per bed days, the number of bed days could not be computed for
the hospitals with no observed cases, since the MRSA rates was zero
for these trusts. Data from the following seven hospitals were therefore discarded: 
Birmingham Women's Hospital,
Moorfields Eye Hospital, 
Liverpool Women's Hospital, 
Medway Hospital, 
Royal National Hospital for Rheumatic Diseases, 
Sheffield Children's Hospital, 
and the Royal Orthopaedic Hospital. 
The final sample size was composed of 166 trusts with MRSA prevalence
for the year 2003-2004.
In previous statistical publications, this data set has been utilised
in order to illustrate the inherent difficulties
associated with the monitoring of changes in levels of risk over
several time points. Here, by contrast, we are mainly concerned with the
classification of hospitals according to their levels of risks, as opposed to a
consideration of the evolution of trust-specific risk over several years.

\subsection{Fitted Model} \label{sec:mrsa model}
\begin{table}[t]
 \footnotesize
 \caption{
Posterior regrets based on $\op{TCL}(C,\bth,\bm\delta)$
and $\op{RCL}(\gamma,\bth,\bm\delta)$ for the MRSA data set with three
different choices of thresholds in each case. 
The posterior expected loss of the optimal estimator is given in the
first column. In parentheses, posterior regrets are expressed as
percentage of the posterior loss under the optimal estimator. Entries
have been all scaled by a factor of $10^{3}$.
\label{tab:mrsa_regret}}
 \centering
 \begin{threeparttable}
 \begin{tabular}{>{\RaggedRight}p{58pt}>{\RaggedLeft}p{30pt}|
>{\RaggedLeft}p{22pt}@{}>{\RaggedLeft}p{18pt}
>{\RaggedLeft}p{18pt}@{}>{\RaggedLeft}p{12pt}
>{\RaggedLeft}p{18pt}@{}>{\RaggedLeft}p{18pt}
>{\RaggedLeft}p{18pt}@{}>{\RaggedLeft}p{12pt}
>{\RaggedLeft}p{18pt}@{}>{\RaggedLeft}p{12pt}}\hline
\multicolumn{1}{c}{\itshape Loss functions \& Thresholds}&
\multicolumn{1}{c}{}&
\multicolumn{10}{c}{\itshape Posterior regrets\tnote{a}}
\tabularnewline \cline{3-12}
\multicolumn{1}{>{\RaggedRight}p{58pt}}{}&
\multicolumn{1}{c}{Post.~ Loss}&
\multicolumn{2}{c}{MLE}&\multicolumn{2}{c}{SSEL}&\multicolumn{2}{c}{WRSEL}&\multicolumn{2}{c}{CB}&\multicolumn{2}{c}{GR}
\tabularnewline
\hline
\tabularnewline
{\tb{TCL}}&&&&&&&&&&&\tabularnewline
$C=1/1.3$&$121.69$&$0.80$&($ 1$)&$0.13$&($0$)&$27.93$&($23$)&$0.16$&($0$)&$0.33$&($0$)\tabularnewline
$C=1.0$ &$125.14$&$0.15$&($ 0$)&$0.15$&($0$)&$14.78$&($12$)&$0.15$&($0$)&$0.15$&($0$)\tabularnewline
$C=1.3$ &$ 65.36$&$8.18$&($13$)&$0.15$&($0$)&$45.62$&($70$)&$3.85$&($6$)&$3.77$&($6$)\tabularnewline
\hline
\tabularnewline
{\tb{RCL}}&&&&&&&&&&&\tabularnewline
$\gamma=.60$ & $131.58$&$0.40$&($0$)&$0.40$&($0$)&$0.32$&($0$)&$0.40$&($0$)&$0.40$&($0$) \tabularnewline 
$\gamma=.75$ & $105.63$&$1.43$&($1$)&$0.63$&($1$)&$5.48$&($5$)&$0.63$&($1$)&$1.55$&($1$) \tabularnewline
$\gamma=.90$ & $ 67.65$&$4.14$&($6$)&$1.13$&($2$)&$0.86$&($1$)&$1.13$&($2$)&$1.13$&($2$) \tabularnewline
\hline
\end{tabular}
\begin{tablenotes}
   \item[a] Entries for the posterior regrets have been truncated to
     the closest second digit after the decimal point, and the
     percentage regrets were truncated to the closest integer.
     For some entries, percentage regrets are smaller than 1 percentage point. 
\end{tablenotes}
\end{threeparttable}
\end{table}

We represent observed cases of MRSA by $y_{i}$ for each
hospital, with $i=1,\ldots,n$, and $n=166$. The expected counts for each
NHS trust were computed using the MRSA
prevalence per thousand bed days and the number of inpatient bed days 
in each trust. That is, 
\begin{equation}
     E_{i}:= p_{\op{MRSA}}\times\op{BD}_{i},
\end{equation}
where $p_{\op{MRSA}}:=\sum_{i=1}^{n}y_{i}/\sum_{i=1}^{n}\op{BD}_{i}$
is the population MRSA prevalence, and we have assumed this rate to be
constant across all UK hospitals. The $\op{BD}_{i}$'s denote the number
of bed days in the $i\tth$ NHS trust in thousands. 

A two-level hierarchical model with Poisson likelihood and a lognormal
exchangeable prior on the RRs was used to fit the 
data. This assumes that the counts are Poisson, but with potential
over-dispersion, due to clustering of cases caused by the
infectiousness of the disease or other unmeasured risk factors. 
The full model had therefore the following structure,
\begin{equation}
  \begin{aligned}
    y_i &\stack{\ind}{\sim} \poi(\theta_iE_{i}) \qq i=1,\ldots,n, \\ 
    \log\theta_i & = \alpha + v_i\\
     v_i &\stack{\iid}{\sim} N(0,\sig^2), \label{eq:mrsa model}
  \end{aligned}
\end{equation}
where the inverse of the variance parameter was given a `diffuse' gamma distribution,
$\sig^{-2} \sim \dgam(0.5, 0.0005)$, while a flat Normal distribution
was specified for the intercept, $\alpha \sim N(0.0,10^{-6})$. For
each hospital, we have $\op{RR}_{i}:=\theta_{i}$. Since the joint posterior
distribution of the $\theta_{i}$'s is not available in closed-form,
an MCMC algorithm was used to estimate this model. The MRSA data was
fitted using WinBUGS 1.4 \citep{Lunn2000}. The codes used for the estimation of the
model have been reproduced in Appendix \ref{app:winbugs}.

Five different ensembles of point estimates were derived from the
joint posterior distribution of the $\theta_{i}$'s, including the
MLEs, the posterior means and medians, and the WRSEL, CB and
triple-goal point estimates. Given the size of the parameter
ensemble in this data set, the vector of weights in the WRSEL function
was specified using $a_{1}=a_{2}=0.1$. (The sensitivity of the
performance of the WRSEL plug-in estimators to the choice of $a_{1}$
and $a_{2}$ will be discussed in chapter \ref{chap:discussion}).
We considered the performance of these plug-in estimators under the
TCL and RCL functions with three different choices of thresholds in
each case. Here, we selected $C\in\lb 0.77, 1.0, 1.3\rb$, where the
first and third thresholds are equidistant from $1.0$ on the
logarithmic scale --that is, $1/1.3\doteq 0.77$. For $C=1/1.3$, we
evaluated the number of hospitals classified \ti{below}
threshold. That is, we were interested in identifying the NHS trusts
characterised by a substantially lower level of risk for MRSA. 
A choice of a threshold of 1.3, in this study, implies that hospitals above this threshold
have an MRSA rate, which is $30\%$ higher than the national
average. A threshold lower than 1.0 (i.e. $C=1/1.3$) was also selected in order to
estimate which hospitals can be confidently classified as having 
substantially lower MRSA rates than the remaining trusts. Note that in
that case, the definitions of the false positives and false negatives
in equations (\ref{eq:abba1}) and (\ref{eq:abba2}) become
inverted. That is, for $C<1.0$, we now have
\begin{equation}
    \op{FP}(C,\theta,\delta) := \cI\lt\lb \theta > C, \delta \leq C \rt\rb,
\end{equation}
and
\begin{equation}
    \op{FN}(C,\theta,\delta) :=  \cI\lt\lb  \theta \leq C, \delta > C \rt\rb.
\end{equation}
However, since we will solely be interested in unweighted classification
loss, in this section, this inversion does not affect the computation
of the posterior and percentage regrets of the plug-in
estimators. The results are presented for the TCL and RCL functions,
in turn.

\begin{figure}[htbp]
    \centering
    \includegraphics[width=15cm]{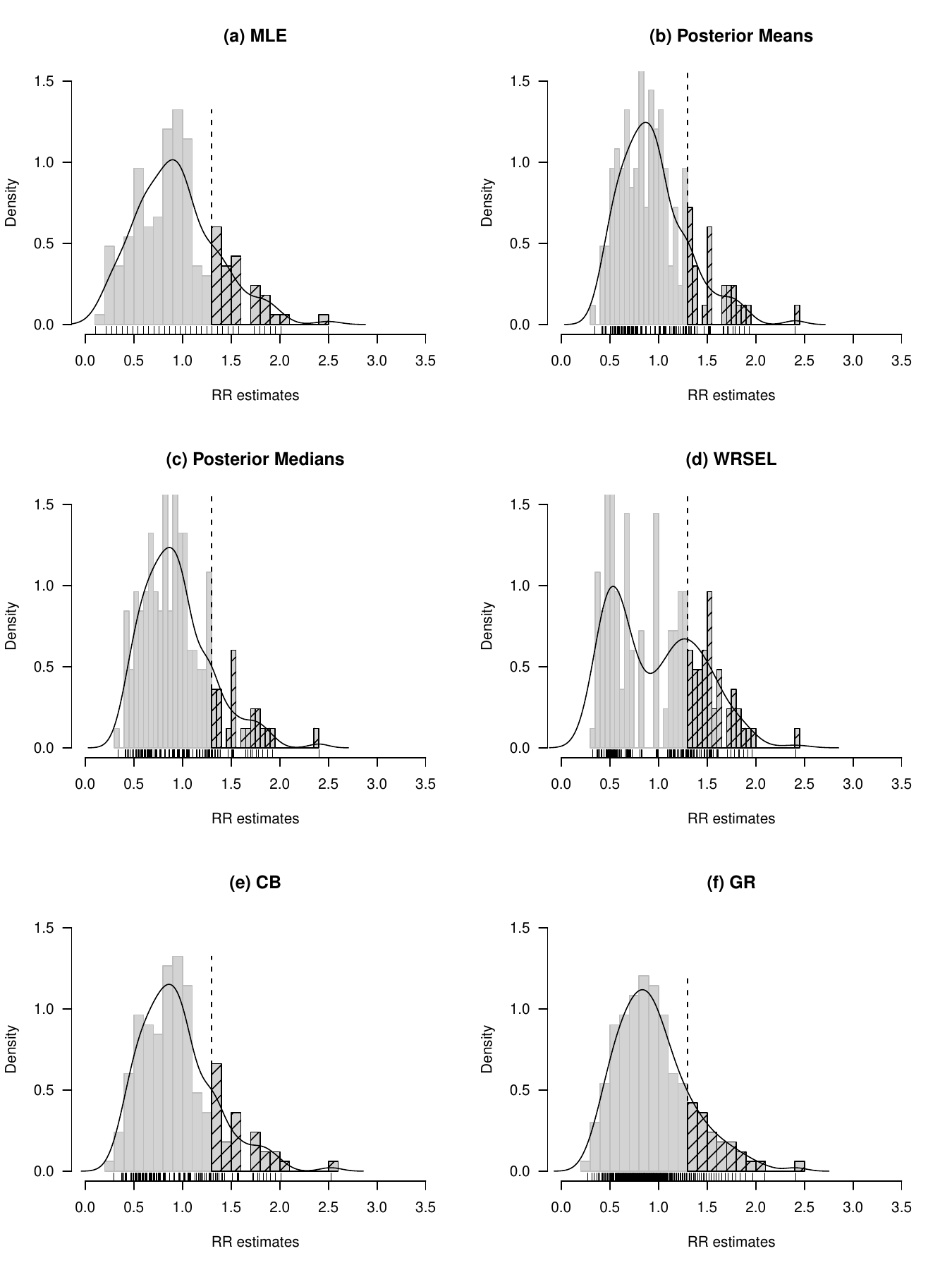}
    \caption{Ensembles of point estimates of MRSA RRs
    under different loss functions for 
    166 NHS trusts during the 2003--2004 period. The panels correspond to the (a)
    MLEs, (b) posterior means, (c) posterior medians, (d)
    point estimates under WRSEL, (e) constrained Bayes point estimates
    and (f) triple-goal point estimates. Classification of these point
    estimates is conducted with respect to a threshold taken to be $C=1.3$
    (dashed line). A smoothed version of the histograms has also been
    superimposed. 
    \label{fig:mrsa his}}
\end{figure}
\begin{figure}[htbp]
    \centering
    \includegraphics[width=15cm]{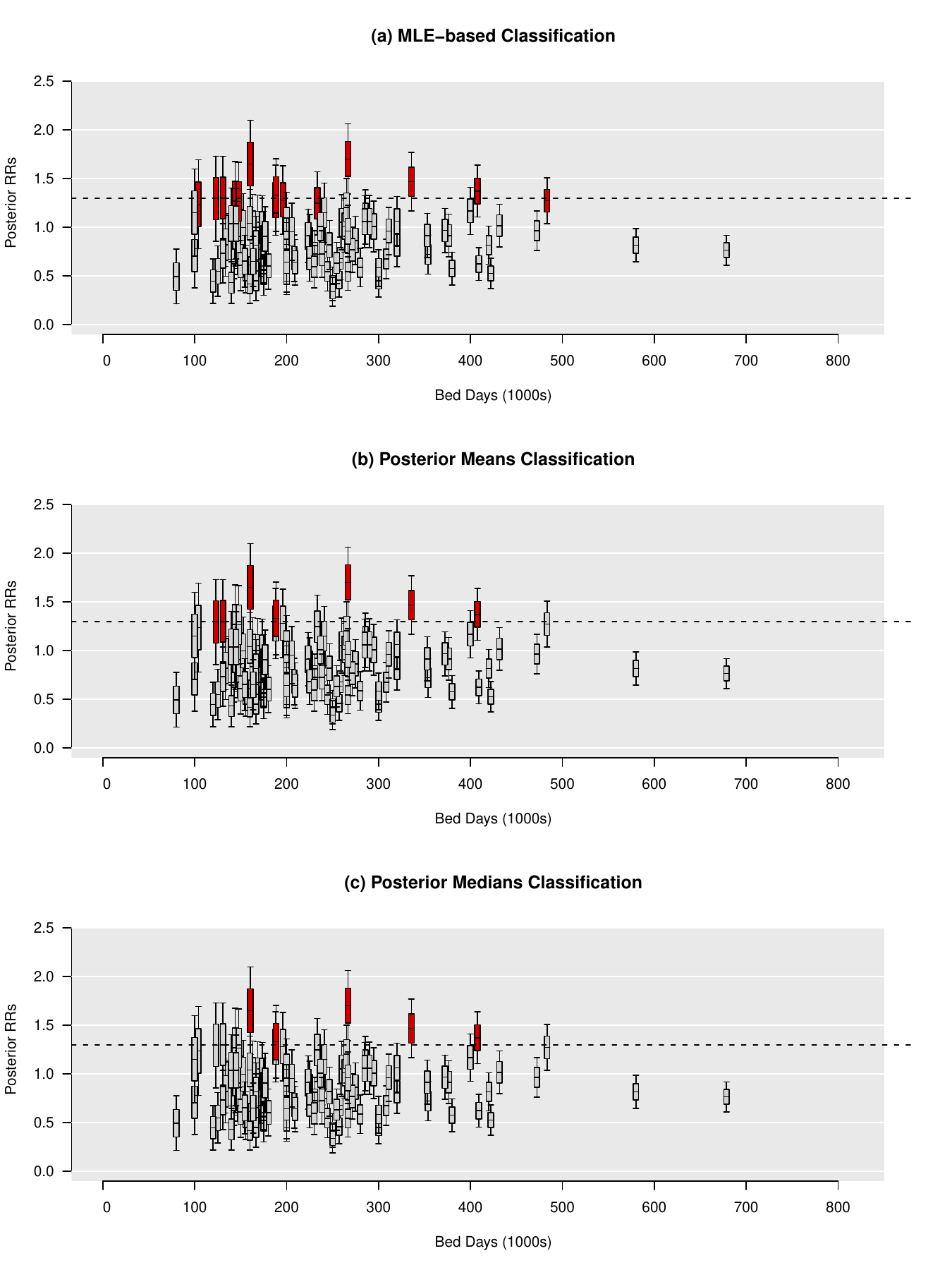}
    \caption{Classification of individual `general acute' NHS trusts ($n=110$)
      during the year 2003--2004, on the basis of three
      different families of point estimates: (a) MLEs, (b) posterior
      means, and (c) posterior medians. The marginal posterior
      distributions of trust-specific RRs for MRSA are represented by
      box plots (Median, $\pm \op{sd}$,$\pm 2\op{sd}$). In each panel, the trusts
      classified above threshold, $C=1.3$ (dashed line), are indicated in red.
      \label{fig:mrsaGA}}
\end{figure}
\begin{figure}[htbp]
    \centering
    \includegraphics[width=15cm]{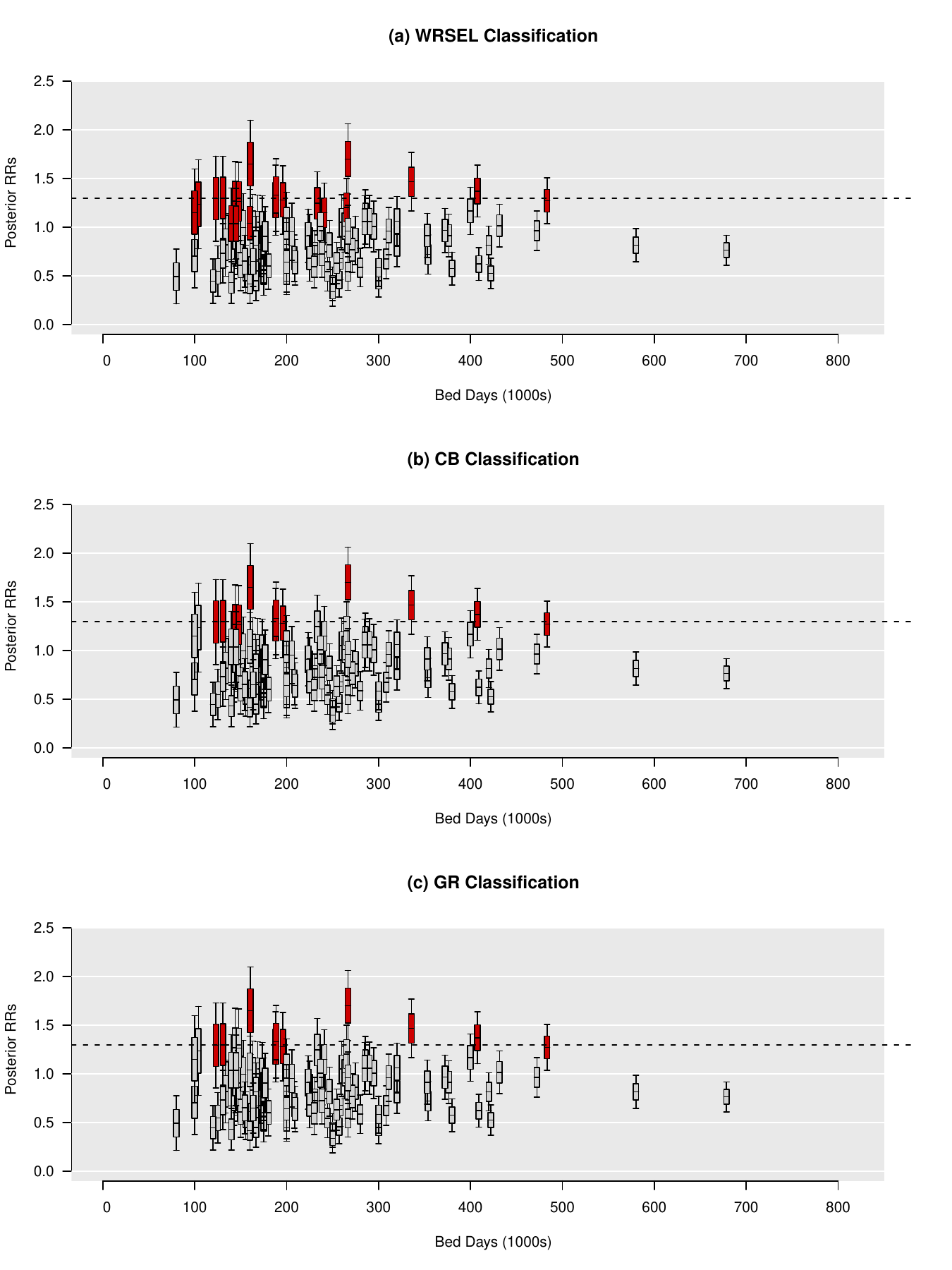}
    \caption{Classification of individual `general acute' NHS trusts ($n=110$)
      during the year 2003--2004, on the basis of three
      different families of point estimates: (a) WRSEL, (b) CB
      and (c) triple-goal estimates. The marginal posterior
      distributions of trust-specific RRs for MRSA are represented by
      box plots (Median, $\pm \op{sd}$,$\pm 2\op{sd}$). In each panel, the trusts
      classified above threshold, $C=1.3$ (dashed line), are indicated
      in red. Note the particular poor performance of the WRSEL
      plug-in classifier in this case. 
      \label{fig:mrsaGA2}}
\end{figure}
\begin{figure}[htbp]
    \centering
    \includegraphics[width=15cm]{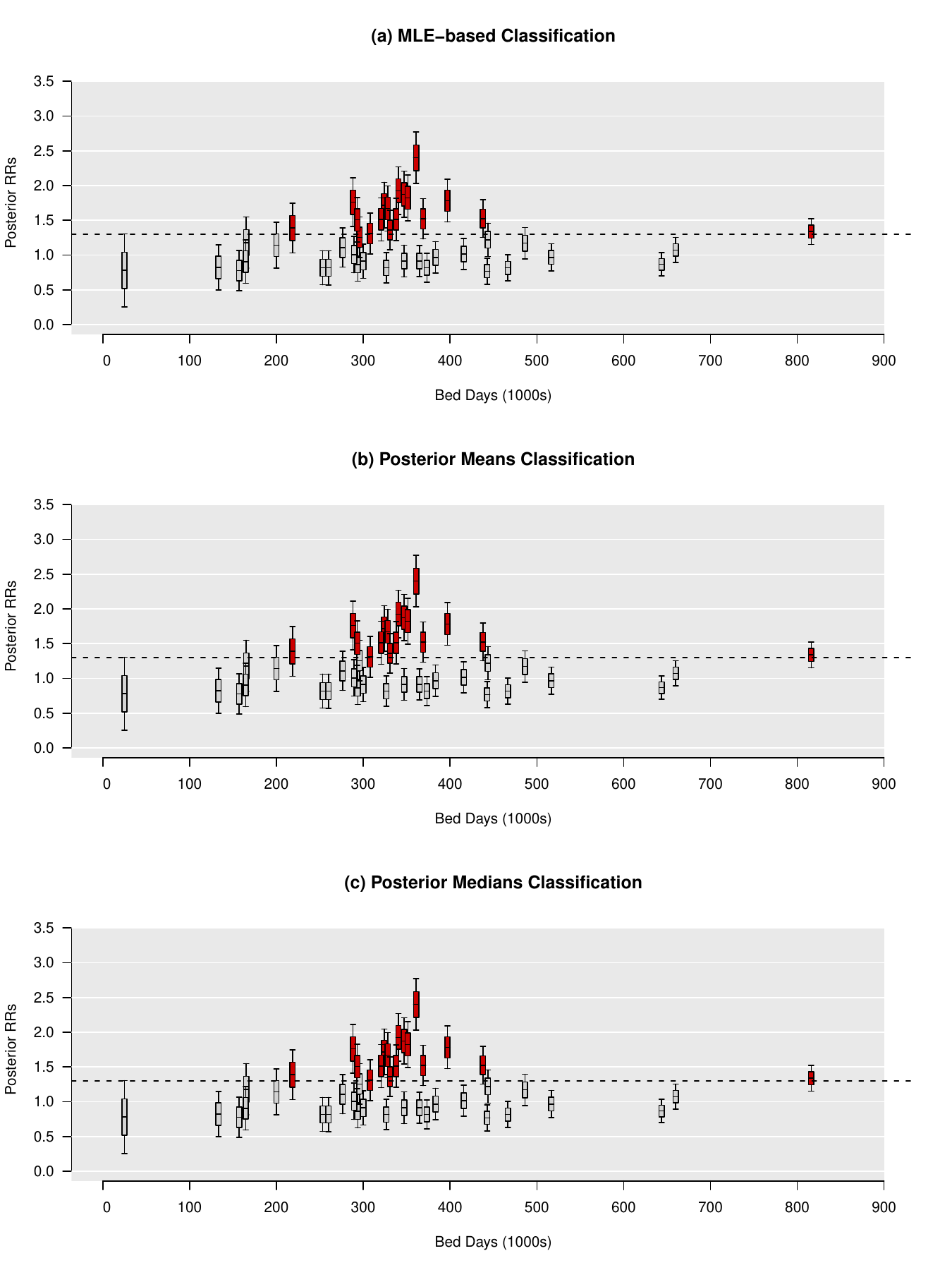}
    \caption{Classification of individual `specialist' NHS trusts ($n=43$)
      during the year 2003--2004, on the basis of three
      different families of point estimates: (a) MLEs, (b) posterior
      means, and (c) posterior medians. The marginal posterior
      distributions of trust-specific
      RRs are represented by box plots (Median,
      $\pm \op{sd}$,$\pm 2\op{sd}$). In each panel, the trusts
      classified above threshold, $C=1.3$ (dashed line), are indicated in red.
      \label{fig:mrsaSp}}
\end{figure}
\begin{figure}[htbp]
    \centering
    \includegraphics[width=15cm]{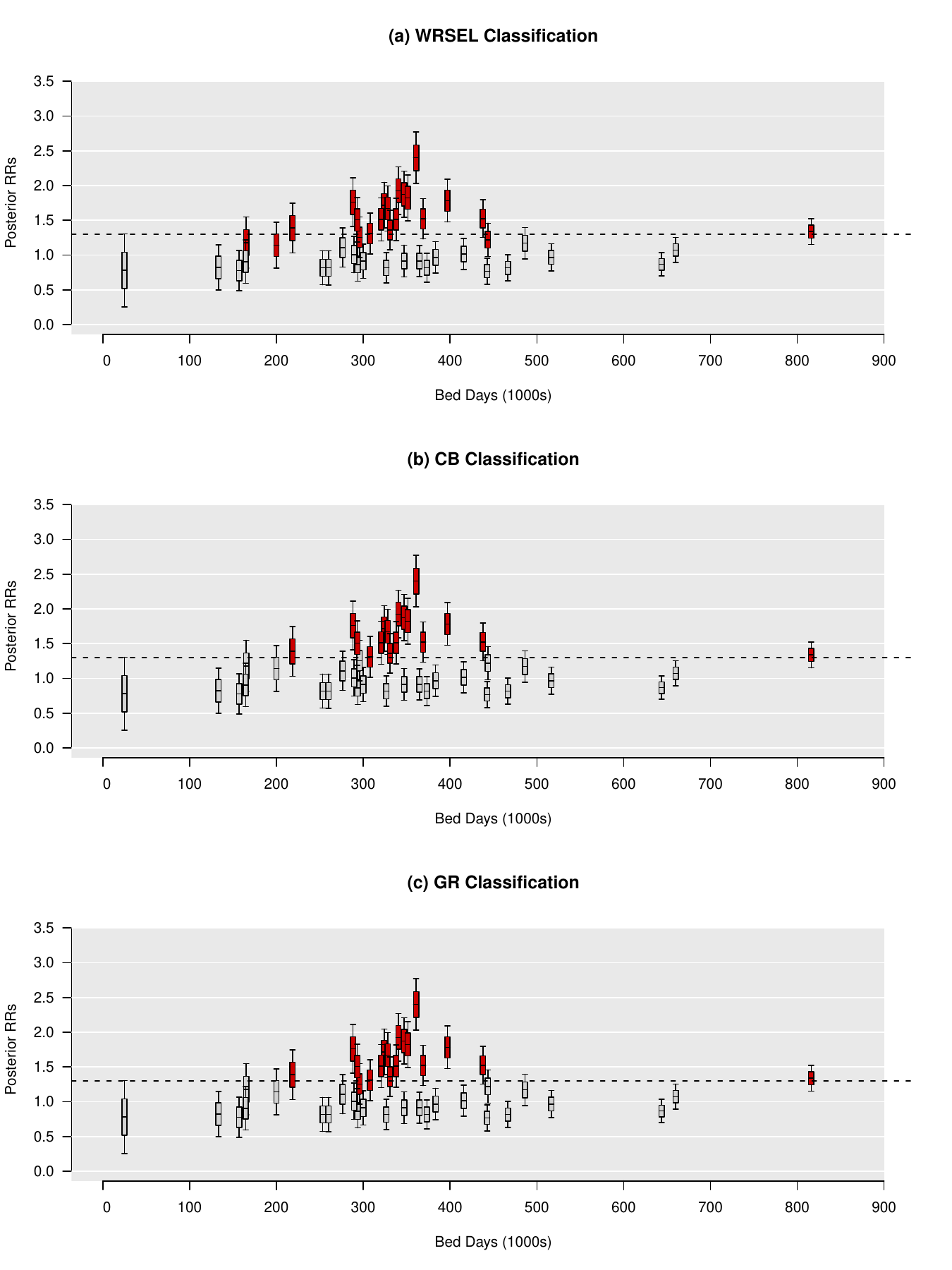}
    \caption{Classification of individual `specialist' NHS trusts ($n=43$)
      during the year 2003--2004, on the basis of three
      different families of point estimates: (a) WRSEL, (b) CB
      and (c) triple-goal estimates. The marginal posterior
      distributions of trust-specific
      RRs are represented by box plots (Median,
      $\pm \op{sd}$,$\pm 2\op{sd}$). In each panel, the trusts
      classified above threshold, $C=1.3$ (dashed line), are indicated in red.
      \label{fig:mrsaSp2}}
\end{figure}

\subsection{TCL Classification}
In table \ref{tab:mrsa_departure} on page
\pageref{tab:mrsa_departure}, we have reported the departure of each
plug-in classifier from the optimal TCL estimator in terms of number of
NHS hospitals classified above --or below, if $C=1/1.3$--
threshold. Remarkably, for the three thresholds ($C\in\lb 1/1.3, 1.0, 1.3\rb$), 
almost all plug-in classifiers were found to yield a greater number of NHS
trusts classified above (respectively below) threshold than when using
the set of posterior medians. The sole exception to this trend was for
the SSEL-based classifiers, which were modestly more conservative than the posterior
medians under $C=1/1.3$. That is, the ensemble of posterior means
produced a smaller set of hospitals considered to have RRs lower than $1/1.3$.
This indicates therefore that the optimal TCL classifier generally tends to produce more
conservative classifications than its plug-in counterparts. 

Moreover, we also note that, for this particular data set, the
GR-based classification was found to be closer to the optimal
classification than the one based on the CB point estimates. These
results should be contrasted with our spatial and non-spatial data
simulations, which have shown that the CB and GR plug-in estimators
tended to behave similarly under the TCL function. Not surprisingly,
however, the ensemble of posterior means was found to provide the
smallest number of misclassified areas in comparison to the optimal
classifier. For $C=1.0$, one can observe that both the SSEL and GR
plug-in classifiers and the posterior medians yielded an identical
number of NHS trusts above threshold.

In addition, the performance of the different sets of plug-in estimators in terms
of posterior regret is also reported in table \ref{tab:mrsa_regret} on page
\pageref{tab:mrsa_regret} for the TCL and RCL functions. As
aforementioned, one can see that the smallest posterior regret is achieved by the SSEL
classifier across all three thresholds considered. In line with the
results of our synthetic simulations, the
triple-goal and CB plug-in estimators were found to exhibit the second
best performance. Both of these classifiers tended to behave
similarly under both the TCL and RCL functions. These plug-in estimators were followed 
by the MLE and WRSEL classifiers in increasing order of posterior regret.
Figure \ref{fig:mrsa his} on page \pageref{fig:mrsa his} shows the
empirical ensemble distributions of point estimates for the six
ensembles of point estimates that we have considered. In that figure, the RRs
for MRSA for each NHS trust are classified above and below $C=1.3$. The pathological behaviour
of the WRSEL point estimates first discussed in section
\ref{sec:plug-in q-sel} is here particularly visible in panel (d), where 
the bimodality of the WRSEL ensemble of point estimates can be seen to 
emerge. The classification of NHS trusts using the six different types
of classifiers can also be examined in
greater detail in figures \ref{fig:mrsaGA} and \ref{fig:mrsaGA2} on pages \pageref{fig:mrsaGA} and 
\pageref{fig:mrsaGA2}, which portray the classifications of the general acute 
NHS trusts. Similarly, the classification of specialist NHS trusts
under these different ensembles of point estimates can be compared in figures
\ref{fig:mrsaSp} and \ref{fig:mrsaSp2} on pages \pageref{fig:mrsaSp} and 
\pageref{fig:mrsaSp2}, respectively. These box plots provide a summary
of the hospital-specific marginal posterior distributions of the level of risk
for MRSA. Specifically, one can see that for general NHS
trusts, the set of hospitals classified above an RR of 1.3
is lower when we use the set of posterior medians to classify
them. This contrasts with the adoption of the ensemble of MLEs and
posterior means or other sets of point estimates for the same task. A
similar pattern is visible for the specialist NHS trusts in figures
\ref{fig:mrsaSp} and \ref{fig:mrsaSp2}, albeit to a lesser degree, since both the posterior
means and posterior medians produce identical classifications for this 
particular class of hospitals. 

\subsection{RCL Classification}
Results on the use of plug-in estimators under the RCL function, in
this data set, are reported in table \ref{tab:mrsa_regret} on page
\pageref{tab:mrsa_regret}. The different plug-in classifiers can here be
compared in terms of posterior and percentage regrets. We have
computed the performance of these plug-in estimators using three different
proportions, $\ga\in\lb .60,.75,.90\rb$. As pointed out in section
\ref{sec:rcl non-spatial}, the SSEL and CB classifications under RCL
are necessarily identical. In addition, the performance of these two
classifiers was found to be very close to the one of the GR
plug-in estimator. These three plug-in estimators outperformed the MLE
and WRSEL classifiers, as expected from our spatial and non-spatial
simulation results. 
As previously mentioned, it is important to note that the differences
in posterior regrets between different choices of plug-in classifiers
were found to be small in comparison to the differences in posterior
regrets between different choices of classifiers under the TCL
function, thereby indicating that the choice of plug-in estimator is
more important under the latter function than when considering
rank-based classification. 

\section{Conclusion}\label{sec:clas conclusion}
In this chapter, we have investigated a standard classification
question, which often arises in both epidemiology and spatial
epidemiology. This problem centers on
the allocation of a subset of elements in a parameter ensemble of interest to 
an elevated-risk cluster according to the estimated RRs for
each of the elements in that ensemble.
We have showed that such a problem can be reformulated
in a decision-theoretic framework, where standard machinery can be
implemented to optimise the solution of the problem. Doing so, we
showed that the posterior expected TCL function is minimised by the
ensemble of posterior medians. In
addition, we have also considered the RCL function, whose properties
and minimiser have already been documented by \citet{Lin2006}.

As in chapter \ref{chap:mrrr}, our main focus in this chapter has been on
evaluating the performance of various plug-in estimators under both the TCL and
RCL functions. Overall, the ensemble of posterior means was found to
perform close to optimality under both decision-theoretic paradigms,
thus indicating that this standard choice of point estimates could
constitute a good candidate for routine epidemiological
investigations. In particular, the posterior means have the
non-negligible advantage of familiarity, which may aid the
reporting of such point estimates in public health settings. The good
performance of the SSEL point estimates under the TCL function can be explained
by noting that, in most applications, 
the marginal posterior distributions of the parameters of
interest will be asymptotically normally distributed. In such cases, 
as we gather more information about each parameter in the ensemble, 
the set of posterior means converges towards the set of posterior
medians. 

In section \ref{sec:clas weighted tcl}, we noted that 
the ensemble of SSEL point estimates also performed well
under weighted TCL. In that section, we reported the results of a
set of simulations based on $\op{TCL}_{0.8}$, which gives a greater
penalty to false positives. However, in a separate 
set of simulations (results not shown), we also evaluated the
performance of the different plug-in estimators under
$\op{TCL}_{0.2}$, which gives a greater penalty to false negatives. 
In that case, we have found that the set of posterior means
was outperformed by other plug-in classifiers under several
experimental conditions. These discrepancies between
different weighted TCLs can be explained in terms of hierarchical
shrinkage. The posterior means tend to be shrunk towards the prior
mean, and will therefore constitute a naturally conservative choice
that is likely to produce less false positives, but a greater amount of false
negatives. 

The GR and CB classifiers produced good performance on the TCL and RCL
functions, as measured by the posterior and percentage
regrets. These plug-in estimators were followed by the MLE and WRSEL
classifiers. However, the performance differences between the candidate estimators
under the TCL decision-theoretic paradigm were much larger than the
differences in posterior regrets between the plug-in estimators under
the RCL function. These discrepancies may be explained in terms of the
substantial differences between these two loss functions. As described
in section \ref{sec:rcl} on page \pageref{sec:rcl}, the optimisation
of the RCL function only requires an adequate ordering of the elements
in the ensemble under scrutiny. For the TCL function, however,
minimisation necessitates not only a good ordering of the elements in
the ensemble, but also a reasonably precise estimation of the values
of each of these elements. It appears that the combination of
these two desiderata makes the task of optimising the posterior TCL
more difficult than the one of minimising the RCL one, thereby
yielding greater discrepancies between the different candidate plug-in
estimators. 

In spatial epidemiology, it is often of interest to consider weighted
classification loss functions, which may, for instance, privilege
true positives over false positives, or the converse. This is an issue,
which has been addressed by \citet{Richardson2004}, who discussed different
re-weighting of the false positive and false negative rates. They used
a numerical minimisation procedure based on synthetic data sets, which 
showed that an area should be classified above a threshold if
approximately $80\%$ of the mass of the posterior distribution of the
level of risk in that area is above the threshold of interest (for the BYM and
BYM-L1 models). In section \ref{sec:clas weighted tcl}, we have seen
that this choice of decision rule is equivalent to the specification
of a weighted $\op{TCL}_{p}$ with $p=.80$. Thus, the decision rule
proposed by \citet{Richardson2004} gives a greater penalty to
false positives. This represents a 
natural choice of classification framework in the context of spatial
epidemiology, which deprecates the number of potential false alarms. 
The adoption of a conservative approach to the identification
of institutions or geographical areas as characterised by
``elevated risk'' is amply justified by the sensitive nature of
public health studies and their extensive media coverage.

In spite of our emphasis on the use of suboptimal classifiers, we also
note that our analysis of the MRSA data set has shown that
the use of the set of posterior medians yield the most conservative
classification of the NHS trusts. In an area as sensitive as the level
of risk for contracting MRSA, one may prefer to err on the side of
caution by classifying as few hospitals as possible as part of an
elevated-risk cluster. From a public health perspective, 
\citet{Grigg2009} among others have argued in favour of the utilisation of
conservative communication strategies when reporting surveillance
data. When communicating such epidemiological
findings to a general audience, the use of the optimal estimator under
the TCL function may therefore be usually preferred in order to
attenuate the potential detrimental effects of too liberal a
classification. 

\pagebreak[4]
{\singlespacing
\section{Proof of TCL Minimisation}\label{sec:clas proof}
\sub{Proof of proposition \ref{pro:tcl} on page \pageref{pro:tcl}.}
Let $\rho_{p}(C,\bth,\bth^{\op{est}})$ denote $\E[\op{TCL}_{p}(C,\bth,\bth^{\op{est}})|\by]$.
We prove the result by exhaustion over three cases. In order to prove that 
\begin{equation}
   \rho_{p}(C,\bth,\bth^{(1-p)})\leq\rho_{p}(C,\bth,\bth^{\op{est}}), 
\end{equation}
for any $\bth^{\op{est}}\in\bTh$ with $\theta_{i}^{(1-p)}:=Q_{\theta_{i}|\by}(1-p)$, it suffices to show that 
$\rho_{p}(C,\theta_{i},\theta_{i}^{(1-p)})\leq\rho_{p}(C,\theta_{i},\theta_{i}^{\op{est}})$
holds, for every $i=1,\ldots,n$. Expanding these unit-specific risks,
\begin{equation}
  \begin{aligned}
    p\cI\lb&\theta_{i}^{(1-p)}>C\rb\p\lt[\theta_{i}\leq
    C|\by\rt] + (1-p)\cI\lb\theta_{i}^{(1-p)}\leq
    C\rb\p\lt[\theta_{i}> C|\by\rt] \\
    &\leq\,
    p\cI\lb\theta_{i}^{\op{est}}>C\rb\p\lt[\theta_{i}\leq
    C|\by\rt] + (1-p)\cI\lb\theta_{i}^{\op{est}}\leq
    C\rb\p\lt[\theta_{i}> C|\by\rt].
   \label{eq:expansion}
  \end{aligned}
\end{equation}

Now, fix $C$ and $p\in[0,1]$ to arbitrary values. Then, for any
point estimate $\theta_{i}^{\op{est}}$, we have
\begin{equation}
  \rho_{p}(C,\theta_{i},\theta_{i}^{\op{est}}) = 
  \begin{cases}
    p\p[\theta_{i}\leq C|\by], & \te{if }\theta_{i}^{\op{est}}> C,\\
    (1-p)\p[\theta_{i}> C|\by], & \te{if }\theta_{i}^{\op{est}}\leq C.
  \end{cases}
\end{equation}
The optimality of $\theta^{(1-p)}_{i}$ over
$\theta^{\op{est}}_{i}$ as a point estimate is therefore directly
dependent on the relationships between $\theta_{i}^{(1-p)}$ and $C$,
and between $\theta_{i}^{\op{est}}$ and $C$. This determines the
following three cases:
\begin{description}
  \item[i.] If $\theta_{i}^{(1-p)}$ and $\theta_{i}^{\op{est}}$ are
    on the same side of $C$, then clearly, 
    \begin{equation}
    \rho_{p}(C,\theta_{i},\theta_{i}^{(1-p)}) = \rho_{p}(C,\theta_{i},\theta_{i}^{\op{est}}),
    \label{eq:case1}
    \end{equation}
  \item[ii.] If $\theta_{i}^{(1-p)} \leq C$ and $\theta_{i}^{\op{est}}> C$, then,
    \begin{equation}
    \rho_{p}(C,\theta_{i},\theta_{i}^{(1-p)}) 
    = (1-p)\p[\theta_{i}> C|\by]
    \,\leq\,
    p\p[\theta_{i}\leq C|\by] =
    \rho_{p}(C,\theta_{i},\theta_{i}^{\op{est}}),
    \label{eq:case3}
    \end{equation}
  \item[iii.] If $\theta_{i}^{(1-p)} > C$ and $\theta_{i}^{\op{est}}\leq C$, then, 
    \begin{equation}
    \rho_{p}(C,\theta_{i},\theta_{i}^{(1-p)}) 
    = p\p[\theta_{i}\leq C|\by]
    \,<\,
    (1-p)\p[\theta_{i}> C|\by] =
    \rho_{p}(C,\theta_{i},\theta_{i}^{\op{est}}),     
    \label{eq:case4}
    \end{equation} 
\end{description}
Equation (\ref{eq:case1}) follows directly from
an application of the result in (\ref{eq:expansion}), and cases two
and three follow from consideration of the following
relationship:
\begin{equation}
  p\p[\theta_{i}\leq C|\by] \gtreqless (1-p)\p[\theta_{i}>
  C|\by],
  \label{eq:grteqless1}
\end{equation}
where $\gtreqless$ means either $<$, $=$ or $>$. 
Using $\p[\theta_{i}> C|\by]=1-\p[\theta_{i}\leq C|\by]$, this gives
\begin{equation}
    \p[\theta_{i}\leq C|\by] = F_{\theta_{i}|\by}(C) \gtreqless 1-p.
     \label{eq:grteqless3}
\end{equation}
Here, $F_{\theta_{i}|\by}$ is the posterior CDF
of $\theta_{i}$. Therefore, we have
\begin{equation}
     C \gtreqless F^{-1}_{\theta_{i}|\by}(1-p) =:
     Q_{\theta_{i}|\by}(1-p) :=: \theta_{i}^{(1-p)},
     \label{eq:grteqless2}
\end{equation}
where $\gtreqless$ takes the same value in equations
(\ref{eq:grteqless1}), (\ref{eq:grteqless3}) and (\ref{eq:grteqless2}).

This proves the optimality of $\bth^{(1-p)}$. Moreover, since one can
construct a vector of point estimates $\theta_{i}^{\op{est}}$
satisfying $\theta_{i}^{\op{est}}\gtreqless C$, whenever 
$\theta^{(1-p)}_{i}\gtreqless C$, for every $i$, it
then follows that $\bth^{(1-p)}$ is not unique.
}

%
%
\chapter{Discussion}\label{chap:discussion}
In this final chapter, we review the main findings of the thesis, and
explore possible extensions of this work in several directions. In
particular, we consider whether the loss functions used in this thesis
can be generalised in order to take into account more sophisticated
inferential requirements. Moreover, we also discuss how different
modelling assumptions may better serve the decision-theoretic
questions addressed in the thesis. 

\section{Main Findings}
In the present thesis, we have adopted a formal decision-theoretic approach
to two inferential issues that naturally arise in epidemiology and
spatial epidemiology. Firstly, we have considered how the estimation
of the heterogeneity of a parameter ensemble can be
optimised. Secondly, we have derived the optimal estimator for the
classification of the elements of a parameter ensemble above or below
a given threshold. For consistency, epidemiologists are generally
under pressure to report a single set of point estimates when
presenting their research findings. We have therefore also explored
the utilisation of various plug-in estimators based on more
commonly used sets of point estimates in order to identify the ensembles
of point estimates that may simultaneously satisfy several of these
inferential objectives. 

Overall, our experimentation and real data analyses have shown that 
the GR plug-in estimator is very close to optimality when evaluating
the dispersion of a parameter ensemble, as quantified by the posterior
QR. By contrast, the best quasi-optimal plug-in estimator under both the TCL and
RCL functions was found to be the one based on the SSEL point
estimates. Taken together, these results confirm the inherent
difficulties associated with attempting to satisfy several inferential
desiderata using a single set of point estimates. Ultimately, the two objectives
considered in this thesis have been found to be optimised by two
different sets of point estimates. Nonetheless, further research could
investigate the formulation of nested decision-theoretic procedures in 
the spirit of the triple-goal methods introduced by \citet{Shen1998},
which could sub-optimally satisfy these two goals using a single set
of point estimates. Specifically, one could construct a
decision-theoretic framework where the two inferential objectives are
weighted, hence allowing different modellers to express their
preference for one goal over another in a formal manner. 

Another of the consistent findings in this thesis has been the relatively poor
performance of the WRSEL introduced by \citet{Wright2003}. In most of
the spatial and non-spatial synthetic simulations considered, this
plug-in estimator tended to perform on a par with the MLE plug-in
estimator. The main objective of the WRSEL function is to
counteract the effect of hierarchical shrinkage in the models
studied. Point estimates based on WRSEL have proved to be useful in
improving the estimation of the `spread' of parameter ensembles. 
However, the WRSEL estimation procedure was found to lack portability
from one data set to another. Specifically, the main difficulty with
this approach resides in the necessary 
pre-specification of the set of weights that recalibrate the loss
function. We have seen that these $\phi_{i}$'s are determined by both
the number of elements in the ensemble and the values taken by two
tuning parameters, $a_{1}$ and $a_{2}$, which control the symmetry and the
magnitude of the re-weighting effected by the WRSEL function. A step
towards the automatisation of the use of the WRSEL for any data set
may, for instance, include the specification of both $a_{1}$ and
$a_{2}$ as functions of the size of the ensemble. 

Our work on classification loss was originally motivated by the paper
of \citet{Richardson2004} on the identification of geographical areas
with ``elevated risk''. Our exploration of various classification loss
functions in chapter \ref{chap:clas} has demonstrated that the
decision rule proposed by \citet{Richardson2004} is equivalent to the
utilisation of a weighted TCL function based on
$p=.80$. This particular choice of decision rule can be shown to give
a greater penalty to false positives than to false negatives, thereby
safeguarding the public and decision makers against potential 
false alarms. 

\section{Generalised Classification Losses}
In chapter \ref{chap:clas}, we have described and used a 
classification loss function based on the ranking of the elements in a
parameter ensemble. The RCL function was originally
introduced by \citet{Lin2006} under a variety of guises. These authors
suggested the use of several generalised RCL functions that combined 
penalties for misclassifications with explicit penalties for the
distance of the point estimates from the threshold of interest. Their
decision-theoretic framework was developed on the basis of rank
classification but could easily be adapted to the problem of classifying
elements in a parameter ensemble with respect to a threshold on the
natural scale of these elements. The basic principles underlying the
TCL function introduced in this thesis, could
therefore be extended following the lead of \citet{Lin2006}, in the
following three directions. Here, each loss function takes three
parameters, $p,q,b\geq 0$, 
\begin{equation}
\begin{aligned}
  L^{|\cdot|}_{C}(p,q,b) & := \frac{1}{n} \sum_{i=1}^{n} 
  |C-\theta^{\op{est}}_{i}|^{p} 
  \op{FP}(C,\theta_{i},\theta_{i}^{\op{est}}) +
  b|C-\theta^{\op{est}}_{i}|^{q}
  \op{FN}(C,\theta_{i},\theta_{i}^{\op{est}}), \\
  L^{\dagger}_{C}(p,q,b) & := \frac{1}{n} \sum_{i=1}^{n} 
  |C-\theta_{i}|^{p} 
  \op{FP}(C,\theta_{i},\theta_{i}^{\op{est}}) +
  b|C-\theta_{i}|^{q}
  \op{FN}(C,\theta_{i},\theta_{i}^{\op{est}}), \\
  L^{\ddagger}_{C}(p,q,b) & := \frac{1}{n} \sum_{i=1}^{n} 
  |\theta^{\op{est}}_{i}-\theta_{i}|^{p} \op{FP}(C,\theta_{i},\theta_{i}^{\op{est}}) +
  b|\theta^{\op{est}}-\theta_{i}|^{q}\op{FN}(C,\theta_{i},\theta_{i}^{\op{est}}); \\
\end{aligned}
\label{eq:tcl generalisations}
\end{equation}
where FP and FN are defined as in equations (\ref{eq:abba1}) and
(\ref{eq:abba2}), respectively, on page \pageref{eq:abba1}.
As for the TCL, none of these families of loss functions produce a penalty if the
true parameter  $\theta_{i}$ and the point estimate
$\theta^{\op{est}}_{i}$ are both above or below the cut-off point
$C$. (Note, however, that this is not true for the \ti{expected}
TCL, where integration over the parameter space will necessarily yield
a loss greater than zero, regardless of the correctness of the
classification). If $\theta_{i}$ and $\theta_{i}^{\op{est}}$ are not on the same side
of $C$, $L^{|\cdot|}_{C}$ penalises the estimation procedure by
an amount which is proportional to the distance of $\theta^{\op{est}}_{i}$
from $C$; $L^{\dagger}_{C}$ penalises by an amount which is
proportional to the distance of $\theta_{i}$ from $C$; and
$L^{\ddagger}_{C}$ penalises by an amount which is proportional to
the distance between $\theta^{\op{est}}_{i}$ and $\theta_{i}$. The parameters
$p$ and $q$ permits to vary the strength of the penalties associated
with the false positives and the false negatives, respectively. 
Moreover, a final parameter, $b$, allows to further adjust these two types of
penalties by directly re-weighting their relative importance. 
Although $L^{\dagger}_{C}$ 
and $L^{\ddagger}_{C}$ are of theoretical interest, 
it should be clear that these particular loss families
would be difficult to implement, in practice, because they 
rely on the unknown quantities, $\theta_{i}$'s.
 
The three generalisations of the TCL family in equation (\ref{eq:tcl
  generalisations}) will require specific optimisation procedures as
was demonstrated by \citet{Lin2006}, who considered similar
generalisations with respect to rank-based classification. Since the
minimisation of the expected TCL and RCL functions were found to
differ substantially, it is not expected that the work of \citet{Lin2006}
on generalised RCLs will necessarily be relevant to the minimisation
of the generalised TCLs described in equation (\ref{eq:tcl
  generalisations}). Further research will therefore be needed to explore
these issues and investigate the behaviour of various plug-in
estimators under these generalised versions of the TCL function. 

\section{Modelling Assumptions}
Since the choice of a particular loss function only affects the
post-processing of the joint posterior distribution of the parameter
ensemble of interest, such methods are only as good as the statistical
model on which they are based. The use of different models may
therefore lead to substantial improvements in either estimating the
true heterogeneity of the ensemble or in classifying the elements of
that ensemble. This is a concern, which has already been voiced by
several authors. \citet{Shen1999}, for instance, have noted the dependence of their
triple-goal estimator's performance on the quality of the ensemble prior
distribution specified for the parameters of interest. It is this type
of concern that led these authors to propose the Smoothing by
Roughening (SBR) algorithm in order to produce an Empirical Bayes (EB)
prior distribution for the $\theta_{i}$'s \citep{Shen1999}. For
spatial models, the critical influence of modelling assumptions on the validity of 
the subsequent estimation of the level of heterogeneity in a parameter ensemble has
been highlighted by \citet{Lawson2009} and \citet{Ugarte2009}.

In chapter \ref{chap:review}, we have classified the models studied
throughout the thesis in three categories depending on the choice of
second-level priors. This included (i) proper conjugate iid priors, (ii)
non-conjugate proper iid priors and (iii) non-conjugate improper
non-idd priors on the $\theta_{i}$'s. It may be of interest to
consider non-parametric extensions of these models, such as ones based
on the Dirichlet process prior \citep{MacEachern1994,MacEachern1998}. 
The specification of such priors in BHMs tends to
give more flexibility to the model. In our case, this may
particularly help to evaluate the shape of the ensemble distribution
with greater accuracy, which could yield better estimates of the
heterogeneity of the ensemble. The combination of a parametric
likelihood function with a non-parametric hierarchical prior is often
referred to as a semi-parametric model. When combined with the Q-SEL or QR-SEL
functions, such semi-parametric Bayesian models may be particularly
well-suited to the type of inferential problems encountered in
epidemiology, where the estimation of the properties of the empirical
distributions of parameter ensembles is especially important. 

In terms of the classification of the elements of an ensemble, 
\citet{Ugarte2009a} have compared the performance of a HBM
with an EB strategy, and have concluded that the full Bayesian
framework may be preferable for the classification of elements in a parameter
ensemble. In addition, a natural modelling approach to this problem
may be the use of mixture models. 
One of the important limitations of the procedure developed in chapter
\ref{chap:clas} is that we are imposing the number of categories and
fixing a particular cut-off point. When
one is simply interested in identifying the elements of an ensemble,
which are characterised by a level of risk greater than a certain
threshold, this classification procedure could be sufficient. However,
if one is interested in the number of categories per se, a more sophisticated
modelling strategy may be adopted, where inference about the
number of categories can be directly conducted. 
\citet{Richardson2004}, for instance, have considered the use of the
reversible-jump MCMC algorithm in conjunction with a
decision-theoretic framework in order to classify the elements of an
ensemble \citep[see also][]{Green1995,Richardson1997}.
As explained in the introduction of chapter \ref{chap:clas}, 
\citet{Richardson2004} have calibrated their choice of classification
thresholds differently, when utilising different models. While we have
shown that the use of the posterior median is optimal under the
posterior TCL function, it may nonetheless be of interest to
investigate how specific modelling outputs, such as the distribution
of the number of mixture components in a semi-parametric mixture
models could be used in the context of classifying the elements of an
ensemble in terms of levels of risk. More specifically, the posterior
distribution of the number of mixture components could aid with the
determination of the ideal cut-off value upon which the classification
exercise is conducted.

\appendix 
\chapter{Non-Spatial Simulations}\label{app:non-spatial}
In this appendix, we provide some descriptive statistics 
of the non-spatial simulated data sets used in chapters 
\ref{chap:mrrr} and \ref{chap:clas}. 
%
\begin{table}[t]
 \small
 \caption{
\tb{Descriptive statistics for the simulated $y_{i}$'s.}
Presented for the Normal-Normal model in equation
(\ref{eq:normal-normal}) and the Gammma-Inverse gamma model in equation (\ref{eq:gamma-inverse gamma}), and for 3 different levels of RLS (ratio of the largest to the smallest $\sig_{i}$), and 3 different values for $n$, averaged over 100 synthetic data sets.\label{tab Y_table}} 
 \centering
 \begin{tabular}{>{\RaggedRight}p{100pt}>{\centering}p{50pt}>{\centering}p{50pt}>{\centering}p{50pt}>{\centering}p{50pt}>{\centering}p{50pt}}\hline
\multicolumn{1}{c}{\itshape Models \& Shrinkage}&
\multicolumn{5}{c}{\itshape Descriptive Statistics}
\tabularnewline \cline{2-6}
\multicolumn{1}{>{\RaggedRight}p{100pt}}{}&\multicolumn{1}{c}{Mean}&\multicolumn{1}{c}{SD}&\multicolumn{1}{c}{$2.5\tth$ Qua.}&\multicolumn{1}{c}{Median}&\multicolumn{1}{c}{$97.5\tth$ Qua.}\tabularnewline
\hline
{\itshape $\op{RLS}\doteq1$}&&&&&\tabularnewline
\normalfont   N-N, $n=100$ &    -0.002 &    1.414 &    -2.682 &    -0.006 &     2.641\tabularnewline
\normalfont   N-N, $n=200$ &     0.001 &    1.410 &    -2.709 &    -0.001 &     2.707\tabularnewline
\normalfont   N-N, $n=1000$ &     0.004 &    1.419 &    -2.766 &     0.006 &     2.775\tabularnewline
\normalfont   G-IG, $n=100$ &     1.000 &    1.253 &     0.027 &     0.588 &     4.252\tabularnewline
\normalfont   G-IG, $n=200$ &     0.992 &    1.325 &     0.024 &     0.575 &     4.308\tabularnewline
\normalfont   G-IG, $n=1000$ &     0.997 &    1.396 &     0.020 &     0.572 &     4.478\tabularnewline
\hline
{\itshape $\op{RLS}\doteq20$}&&&&&\tabularnewline
\normalfont   N-N, $n=100$ &     0.014 &    1.569 &    -2.984 &     0.016 &     3.014\tabularnewline
\normalfont   N-N, $n=200$ &     0.022 &    1.554 &    -2.998 &     0.026 &     3.053\tabularnewline
\normalfont   N-N, $n=1000$ &     0.005 &    1.558 &    -3.101 &     0.007 &     3.090\tabularnewline
\normalfont   G-IG, $n=100$ &     1.408 &    2.099 &     0.001 &     0.673 &     6.688\tabularnewline
\normalfont   G-IG, $n=200$ &     1.413 &    2.190 &     0.001 &     0.667 &     6.955\tabularnewline
\normalfont   G-IG, $n=1000$ &     1.412 &    2.235 &     0.000 &     0.659 &     7.071\tabularnewline
\hline
{\itshape $\op{RLS}\doteq100$}&&&&&\tabularnewline
\normalfont   N-N, $n=100$ &     0.023 &    1.786 &    -3.419 &    -0.001 &     3.647\tabularnewline
\normalfont   N-N, $n=200$ &    -0.003 &    1.767 &    -3.562 &    -0.010 &     3.563\tabularnewline
\normalfont   N-N, $n=1000$ &     0.006 &    1.773 &    -3.609 &     0.002 &     3.635\tabularnewline
\normalfont   G-IG, $n=100$ &     2.191 &    3.807 &     0.000 &     0.733 &    11.791\tabularnewline
\normalfont   G-IG, $n=200$ &     2.101 &    3.690 &     0.000 &     0.712 &    11.589\tabularnewline
\normalfont   G-IG, $n=1000$ &     2.130 &    3.749 &     0.000 &     0.704 &    11.885\tabularnewline
\hline
\end{tabular}

\end{table}

%
\begin{table}[t]
 \small
 \caption{
\tb{Descriptive statistics for the simulated $\sig_{i}$'s.}           
Presented for the Normal-Normal model in equation
(\ref{eq:normal-normal}) and the Gammma-Inverse gamma model in equation (\ref{eq:gamma-inverse gamma}), and for 3 different levels of RLS (ratio of the largest to the smallest $\sig_{i}$), and 3 different values for $n$, averaged over 100 synthetic data sets.\label{tab Sig_table}} 
 \centering
 \begin{tabular}{>{\RaggedRight}p{100pt}>{\centering}p{50pt}>{\centering}p{50pt}>{\centering}p{50pt}>{\centering}p{50pt}>{\centering}p{50pt}}\hline
\multicolumn{1}{c}{\itshape Models \& Shrinkage}&
\multicolumn{5}{c}{\itshape Descriptive Statistics}
\tabularnewline \cline{2-6}
\multicolumn{1}{>{\RaggedRight}p{100pt}}{}&\multicolumn{1}{c}{Mean}&\multicolumn{1}{c}{SD}&\multicolumn{1}{c}{$2.5\tth$ Qua.}&\multicolumn{1}{c}{Median}&\multicolumn{1}{c}{$97.5\tth$ Qua.}\tabularnewline
\hline
{\itshape $\op{RLS}\doteq1$}&&&&&\tabularnewline
\normalfont   $n=100$ &    1.000 &    0.006 &    0.991 &    1.000 &    1.009\tabularnewline
\normalfont   $n=200$ &    1.000 &    0.006 &    0.991 &    1.000 &    1.009\tabularnewline
\normalfont   $n=1000$ &    1.000 &    0.006 &    0.991 &    1.000 &    1.010\tabularnewline
\hline
{\itshape $\op{RLS}\doteq20$}&&&&&\tabularnewline
\normalfont   $n=100$ &    1.398 &    1.135 &    0.246 &    0.993 &    4.027\tabularnewline
\normalfont   $n=200$ &    1.426 &    1.152 &    0.244 &    1.017 &    4.125\tabularnewline
\normalfont   $n=1000$ &    1.415 &    1.147 &    0.241 &    1.001 &    4.147\tabularnewline
\hline
{\itshape $\op{RLS}\doteq100$}&&&&&\tabularnewline
\normalfont   $n=100$ &    2.158 &    2.503 &    0.117 &    1.022 &    8.627\tabularnewline
\normalfont   $n=200$ &    2.119 &    2.464 &    0.115 &    1.006 &    8.663\tabularnewline
\normalfont   $n=1000$ &    2.139 &    2.487 &    0.113 &    0.999 &    8.839\tabularnewline
\hline
\end{tabular}

\end{table}

\chapter{Spatial Simulations}\label{app:spatial}
In this appendix, we provide some descriptive statistics of
the spatial simulated data sets used in chapters 
\ref{chap:mrrr} and \ref{chap:clas}. 
%
\begin{table}[t]
 \small
 \caption{
            \tb{Descriptive statistics for the $E_{i}$'s used in the
            synthetic data simulations.} 
            These are here presented with respect to 
            three different values of the scaling factor (SF), controlling the
            level of the expected counts. The $E_{i}$'s reported here
            correspond to the expected counts for lung cancer in West
            Sussex adjusted for age only, occurring among males
            between 1989 and 2003. These data points have been
            extracted from the Thames Cancer Registry (TCR).
            \label{tab:E_table}} 
 \centering
 \begin{tabular}{>{\RaggedRight}p{100pt}>{\centering}p{50pt}>{\centering}p{50pt}>{\centering}p{50pt}>{\centering}p{50pt}>{\centering}p{50pt}}\hline
\multicolumn{1}{c}{\itshape Scaling Factor}&
\multicolumn{5}{c}{\itshape Descriptive Statistics}
\tabularnewline \cline{2-6}
\multicolumn{1}{>{\RaggedRight}p{100pt}}{}&\multicolumn{1}{c}{Mean}&\multicolumn{1}{c}{SD}&\multicolumn{1}{c}{$2.5\tth$ Qua.}&\multicolumn{1}{c}{Median}&\multicolumn{1}{c}{$97.5\tth$ Qua.}\tabularnewline
\hline
{\itshape }&&&&&\tabularnewline
\normalfont   $SF=0.1$ &     17 &      9 &      5 &     15 &     39\tabularnewline
\normalfont   $SF=1$ &    170 &     88 &     52 &    149 &    389\tabularnewline
\normalfont   $SF=2$ &    340 &    175 &    105 &    298 &    778\tabularnewline
\hline
\end{tabular}

\end{table}

%
\begin{table}[t]
 \small
 \caption{
\tb{Descriptive statistics for the Relative Risks (RRs) in the
  synthetic spatial simulations.}
Presented under three different levels of variability (low, medium and
high), where the parameters controlling the levels of variability
are modified according to the spatial scenario (i.e. SC1, SC2, SC3 and
SC4) considered. Note that while the RRs were simulated from specific
statistical models in SC3 and SC4, they were set to specific values
in SC1 and SC2. \label{tab:RR_table}} 
 \centering
 \begin{tabular}{>{\RaggedRight}p{100pt}>{\centering}p{50pt}>{\centering}p{50pt}>{\centering}p{50pt}>{\centering}p{50pt}>{\centering}p{50pt}}\hline
\multicolumn{1}{c}{\itshape Scenarios \& Variability}&
\multicolumn{5}{c}{\itshape Descriptive Statistics}
\tabularnewline \cline{2-6}
\multicolumn{1}{>{\RaggedRight}p{100pt}}{}&\multicolumn{1}{c}{Mean}&\multicolumn{1}{c}{SD}&\multicolumn{1}{c}{$2.5\tth$ Qua.}&\multicolumn{1}{c}{Median}&\multicolumn{1}{c}{$97.5\tth$ Qua.}\tabularnewline
\hline
{\itshape Low Variab.~}&&&&&\tabularnewline
\normalfont   SC1 ($LR=1.5$) &    1.018 &    0.093 &    1.000 &    1.000 &    1.500\tabularnewline
\normalfont   SC2 ($LR=1.5$) &    1.107 &    0.206 &    1.000 &    1.000 &    1.500\tabularnewline
\normalfont   SC3 ($\sig=0.1$) &    1.027 &    0.283 &    0.604 &    0.992 &    1.622\tabularnewline
\normalfont   SC4 ($\beta=0.2$) &    1.025 &    0.243 &    0.696 &    0.971 &    1.590\tabularnewline
\hline
{\itshape Medium Variab.~}&&&&&\tabularnewline
\normalfont   SC1 ($LR=2$) &    1.036 &    0.187 &    1.000 &    1.000 &    2.000\tabularnewline
\normalfont   SC2 ($LR=2$) &    1.215 &    0.412 &    1.000 &    1.000 &    2.000\tabularnewline
\normalfont   SC3 ($\sig=0.2$) &    1.194 &    0.513 &    0.535 &    1.077 &    2.408\tabularnewline
\normalfont   SC4 ($\beta=0.3$) &    1.055 &    0.367 &    0.612 &    0.954 &    1.938\tabularnewline
\hline
{\itshape High Variab.~}&&&&&\tabularnewline
\normalfont   SC1 ($LR=3$) &    1.072 &    0.373 &    1.000 &    1.000 &    3.000\tabularnewline
\normalfont   SC2 ($LR=3$) &    1.430 &    0.824 &    1.000 &    1.000 &    3.000\tabularnewline
\normalfont   SC3 ($\sig=0.3$) &    1.193 &    0.598 &    0.465 &    1.049 &    2.641\tabularnewline
\normalfont   SC4 ($\beta=0.4$) &    1.095 &    0.515 &    0.531 &    0.935 &    2.358\tabularnewline
\hline
\end{tabular}

\end{table}

%
\begin{table}[t]
 \small
 \caption{
            \tb{Descriptive statistics for the simulated $y_{i}$'s.}
            Presented for three choices of the Scaling Factor (SF) of the
            expected counts, three different levels of variability (low, medium and high), 
            and four different spatial scenarios (i.e. SC1, SC2, SC3
            and SC4).\label{tab:Y_table}} 
 \centering
 \begin{tabular}{>{\RaggedRight}p{110pt}>{\centering}p{50pt}>{\centering}p{50pt}>{\centering}p{50pt}>{\centering}p{50pt}>{\centering}p{50pt}}\hline
\multicolumn{1}{c}{\itshape Scenarios \& Variability}&
\multicolumn{5}{c}{\itshape Descriptive Statistics}
\tabularnewline \cline{2-6}
\multicolumn{1}{>{\RaggedRight}p{110pt}}{}&\multicolumn{1}{c}{Mean}&\multicolumn{1}{c}{SD}&\multicolumn{1}{c}{$2.5\tth$ Qua.}&\multicolumn{1}{c}{Median}&\multicolumn{1}{c}{$97.5\tth$ Qua.}\tabularnewline
\hline
{\itshape $SF=0.1$, Low Variab.~}&&&&&\tabularnewline
\normalfont   SC1 ($LR=1.5$) &     4.300 &     3.061 &     1.000 &     3.775 &     12.144\tabularnewline
\normalfont   SC2 ($LR=1.5$) &     4.312 &     3.084 &     1.000 &     3.825 &     11.725\tabularnewline
\normalfont   SC3 ($\sig=0.1$) &     4.314 &     3.182 &     1.000 &     3.650 &     12.219\tabularnewline
\normalfont   SC4 ($\beta=0.2$) &     4.325 &     3.291 &     1.000 &     3.550 &     12.463\tabularnewline
\hline
{\itshape $SF=0.1$, Medium Variab.~}&&&&&\tabularnewline
\normalfont   SC1 ($LR=2$) &     4.311 &     3.382 &     1.000 &     3.625 &     12.494\tabularnewline
\normalfont   SC2 ($LR=2$) &     4.325 &     3.458 &     1.000 &     3.275 &     13.350\tabularnewline
\normalfont   SC3 ($\sig=0.2$) &     4.331 &     3.529 &     1.000 &     3.300 &     13.344\tabularnewline
\normalfont   SC4 ($\beta=0.3$) &     4.328 &     3.666 &     1.000 &     3.100 &     13.669\tabularnewline
\hline
{\itshape $SF=0.1$, High Variab.~}&&&&&\tabularnewline
\normalfont   SC1 ($LR=3$) &     4.304 &     3.982 &     1.000 &     3.050 &     14.475\tabularnewline
\normalfont   SC2 ($LR=3$) &     4.345 &     4.177 &     1.000 &     3.000 &     15.331\tabularnewline
\normalfont   SC3 ($\sig=0.3$) &     4.341 &     3.649 &     1.000 &     3.300 &     13.619\tabularnewline
\normalfont   SC4 ($\beta=0.4$) &     4.355 &     4.210 &     1.000 &     3.050 &     15.456\tabularnewline
\hline
{\itshape $SF=1$, Low Variab.~}&&&&&\tabularnewline
\normalfont   SC1 ($LR=1.5$) &    42.440 &    24.587 &    10.294 &    36.850 &    104.000\tabularnewline
\normalfont   SC2 ($LR=1.5$) &    42.440 &    25.761 &     9.931 &    36.125 &    104.469\tabularnewline
\normalfont   SC3 ($\sig=0.1$) &    42.440 &    25.681 &     9.531 &    36.475 &    106.369\tabularnewline
\normalfont   SC4 ($\beta=0.2$) &    42.440 &    28.559 &     8.575 &    36.650 &    114.831\tabularnewline
\hline
{\itshape $SF=1$, Medium Variab.~}&&&&&\tabularnewline
\normalfont   SC1 ($LR=2$) &    42.440 &    27.544 &    10.350 &    36.000 &    122.906\tabularnewline
\normalfont   SC2 ($LR=2$) &    42.440 &    30.502 &     8.750 &    33.225 &    122.244\tabularnewline
\normalfont   SC3 ($\sig=0.2$) &    42.440 &    30.418 &     7.381 &    34.625 &    117.656\tabularnewline
\normalfont   SC4 ($\beta=0.3$) &    42.440 &    32.439 &     6.813 &    35.300 &    125.587\tabularnewline
\hline
{\itshape $SF=1$, High Variab.~}&&&&&\tabularnewline
\normalfont   SC1 ($LR=3$) &    42.440 &    34.910 &     9.638 &    34.100 &    175.525\tabularnewline
\normalfont   SC2 ($LR=3$) &    42.440 &    38.826 &     6.725 &    29.275 &    151.644\tabularnewline
\normalfont   SC3 ($\sig=0.3$) &    42.440 &    31.448 &     7.744 &    33.500 &    123.769\tabularnewline
\normalfont   SC4 ($\beta=0.4$) &    42.440 &    38.143 &     6.081 &    32.700 &    135.731\tabularnewline
\hline
{\itshape $SF=2$, Low Variab.~}&&&&&\tabularnewline
\normalfont   SC1 ($LR=1.5$) &    84.886 &    48.263 &    22.631 &    73.575 &    200.469\tabularnewline
\normalfont   SC2 ($LR=1.5$) &    84.886 &    51.058 &    21.744 &    71.700 &    212.281\tabularnewline
\normalfont   SC3 ($\sig=0.1$) &    84.886 &    53.573 &    18.131 &    72.200 &    220.244\tabularnewline
\normalfont   SC4 ($\beta=0.2$) &    84.886 &    56.077 &    17.500 &    74.275 &    227.494\tabularnewline
\hline
{\itshape $SF=2$, Medium Variab.~}&&&&&\tabularnewline
\normalfont   SC1 ($LR=2$) &    84.886 &    53.721 &    22.181 &    72.175 &    216.163\tabularnewline
\normalfont   SC2 ($LR=2$) &    84.886 &    59.674 &    20.056 &    66.075 &    248.344\tabularnewline
\normalfont   SC3 ($\sig=0.2$) &    84.886 &    56.546 &    18.444 &    69.750 &    231.025\tabularnewline
\normalfont   SC4 ($\beta=0.3$) &    84.886 &    65.128 &    14.856 &    70.825 &    250.919\tabularnewline
\hline
{\itshape $SF=2$, High Variab.~}&&&&&\tabularnewline
\normalfont   SC1 ($LR=3$) &    84.886 &    68.050 &    20.881 &    69.400 &    268.369\tabularnewline
\normalfont   SC2 ($LR=3$) &    84.886 &    77.636 &    16.775 &    57.625 &    316.769\tabularnewline
\normalfont   SC3 ($\sig=0.3$) &    84.886 &    64.130 &    15.100 &    67.325 &    244.894\tabularnewline
\normalfont   SC4 ($\beta=0.4$) &    84.886 &    75.035 &    12.675 &    66.125 &    271.725\tabularnewline
\hline
\end{tabular}

\end{table}

\chapter{WinBUGS Codes}\label{app:winbugs}
\section{CAR Normal (BYM) Model}
\begin{Verbatim}[baselinestretch=1.15, xrightmargin=2pt,
    samepage=true,frame=lines,label={[CAR Normal]}]

# CAR (convolution) Normal.
#######################################
model{
for (i in 1:N){
    y[i] ~ dpois(lambda[i])
    log(lambda[i]) <- log(E[i]) + theta[i]
    theta[i] <- alpha + v[i] + u[i]
    RR[i]    <- exp(theta[i])
    v[i] ~ dnorm(0,tau2_v)
}

###############
# CAR prior:
u[1:N] ~ car.normal(adj[],weights[],num[],tau2_u)
for (j in 1:sumNumNeigh){
    weights[j] <- 1.0
}

###############
# Hyperpriors.
alpha ~ dflat()
# Scaling Parameters.
tau2_v ~ dgamma(0.5,0.0005)
tau2_u ~ dgamma(0.5,0.0005)
sig_v <- sqrt(1/tau2_v)
sig_u <- sqrt(1/tau2_u)

}# EoF
\end{Verbatim}
\pagebreak[4]

\section{CAR Laplace (L1) Model}
\begin{Verbatim}[baselinestretch=1.15, xrightmargin=2pt,
    samepage=true,frame=lines,label={[CAR Laplace]}]

# CAR (convolution) L1.

##################################################
model{
for (i in 1:N){
    y[i] ~ dpois(lambda[i])
    log(lambda[i]) <- log(E[i]) + theta[i]
    theta[i]  <- alpha + v[i] + u[i]
    RR[i] <- exp(theta[i])
    v[i] ~ dnorm(0,tau2_v)
}

###############
# CAR prior
u[1:N] ~ car.l1(adj[],weights[],num[],tau2_u)
for (j in 1:sumNumNeigh){
    weights[j] <- 1.0
}

###############
# Hyperpriors.
alpha ~ dflat()

# Scaling Parameters.
tau2_v ~ dgamma(0.5,0.0005)
tau2_u ~ dgamma(0.5,0.0005)
sig_v <- sqrt(1/tau2_v)
sig_u <- sqrt(1/tau2_u)

}# EoF
\end{Verbatim}

\pagebreak[4]
\section{MRSA Model}
\begin{Verbatim}[baselinestretch=1.15, xrightmargin=2pt,
    samepage=true,frame=lines,label={[MRSA Model]}]

# Loglinear Model.
##################################################
model{
for (i in 1:n){
    y[i] ~ dpois(lambda[i])
    log(lambda[i]) <- log(E[i]) + theta[i]
    theta[i] <- alpha + v[i] 
    RR[i]    <- exp(theta[i])
    v[i] ~ dnorm(0,tau2_v)
}

###############
# Hyperpriors.
alpha ~ dnorm(0.0,1.0E-6)

# Scaling Parameters.
tau2_v ~ dgamma(0.5,0.0005)
sig_v <- sqrt(1/tau2_v)

}# EoF
\end{Verbatim}


\addcontentsline{toc}{section}{References}
\bibliography{/home/cgineste/ref/bibtex/Statistics}
\bibliographystyle{/home/cgineste/ref/style/oupced3}


\addcontentsline{toc}{section}{Index}

\end{document}